\documentclass[A4paper,12pt,twoside ]{article}
\usepackage{amsmath,amsthm,amssymb,mathabx}

\usepackage[left=25mm, right=25mm,top=25mm,bottom=25mm]{geometry}
\usepackage{imakeidx}
\usepackage{xr-hyper} 
\usepackage{xr}
\usepackage[all]{xy}
\usepackage{hyperref} 

\usepackage{cleveref,fancyhdr}

\usepackage{seqsplit}
\usepackage{xstring}
\usepackage[xcdraw]{xcolor}
\setlength{\marginparwidth}{2cm}

\usepackage{todonotes}
\usepackage[nottoc]{tocbibind}

\definecolor{mycolor}{rgb}{0.122, 0.435, 0.698}

\hypersetup{
	colorlinks,
	citecolor=blue,
	filecolor=blue,
	linkcolor=blue,
	urlcolor=blue
}
\makeatletter
\DeclareOldFontCommand{\rm}{\normalfont\rmfamily}{\mathrm}
\DeclareOldFontCommand{\sf}{\normalfont\sffamily}{\mathsf}
\DeclareOldFontCommand{\tt}{\normalfont\ttfamily}{\mathtt}
\DeclareOldFontCommand{\bf}{\normalfont\bfseries}{\mathbf}
\DeclareOldFontCommand{\it}{\normalfont\itshape}{\mathit}
\DeclareOldFontCommand{\sl}{\normalfont\slshape}{\@nomath\sl}
\DeclareOldFontCommand{\sc}{\normalfont\scshape}{\@nomath\sc}
\makeatother

\usepackage[shortlabels]{enumitem}
\setlist{nolistsep} %

\newlist{asslist}{enumerate}{1} 
\setlist[asslist]{label=(\roman*), ref=\thethmT(\roman*)}
\crefalias{asslistenumi}{Assumption} 

\newlist{asslisttief}{enumerate}{1} 
\setlist[asslisttief]{label=(\roman*), ref=\thethmlistT(\roman*)}
\crefalias{asslisttiefenumi}{Property}

\newlist{thmlist}{enumerate}{1} 
\setlist[thmlist]{label=(\alph*), ref=\thethmT(\alph*)}

\usepackage{tikz} 

\usepackage{xcolor} 
\definecolor{ocre_old}{RGB}{243,102,25} 
\definecolor{ocre}{RGB}{64,64,64}
\definecolor{bluee}{rgb}{0.122, 0.435, 0.698} 

\usepackage{amsmath,amsfonts,amssymb,amsthm} 

\newcommand{\omicron}{\operatorname{sec}}

\newcommand{\AsF}{{\AHs_F}}
\newcommand{\AshochF}{{\AHs^F}}
\newcommand{\AHs}{{ A_\bH(s)}}

\makeatletter
\newtheoremstyle{ocrenumbox}
{0pt}
{0pt}
{\sl}
{}
{\small\bf\sffamily\color{ocre}}
{\;}
{0.25em}
{\small\sffamily\color{ocre}\thmname{#1}\nobreakspace\thmnumber{\@ifnotempty{#1}{}\@upn{#2}}
	\thmnote{\nobreakspace\the\thm@notefont\sffamily\bfseries\color{black}---\nobreakspace#3.}} 

\newtheoremstyle{ocrenumhypbox}
{0pt}
{0pt}
{}
{}
{\small\bf\sffamily\color{ocre}}
{\;}
{0.25em}
{\small\sffamily\color{ocre}\thmname{#1}\nobreakspace\thmnumber{\@ifnotempty{#1}{}\@upn{#2}}
	\thmnote{\nobreakspace\the\thm@notefont\sffamily\bfseries\color{black}---\nobreakspace#3.}} 

\newtheoremstyle{blacknumex}
{5pt}
{5pt}
{\sl}
{} 
{\small\bf\sffamily}
{\;}
{0.25em}
{\small\sffamily{\tiny\ensuremath{\blacksquare}}\nobreakspace\thmname{#1}\nobreakspace\thmnumber{\@ifnotempty{#1}{}\@upn{#2}}
	\thmnote{\nobreakspace\the\thm@notefont\sffamily\bfseries---\nobreakspace#3.}}

\newtheoremstyle{blacknumbox} 
{0pt}
{0pt}
{\normalfont}
{}
{\small\bf\sffamily}
{\;}
{0.25em}
{\small\sffamily\thmname{#1}\nobreakspace\thmnumber{\@ifnotempty{#1}{}\@upn{#2}}
	\thmnote{\nobreakspace\the\thm@notefont\sffamily\bfseries---\nobreakspace#3.}}

\newtheoremstyle{ocrenum}
{5pt}
{5pt}
{\sl}
{}
{\small\bf\sffamily\color{ocre}}
{\;}
{0.25em}
{\small\sffamily\color{ocre}\thmname{#1}\nobreakspace\thmnumber{\@ifnotempty{#1}{}\@upn{#2}}
	\thmnote{\nobreakspace\the\thm@notefont\sffamily\bfseries\color{black}---\nobreakspace#3.}} 
\makeatother


\theoremstyle{ocrenumbox}
\newtheorem{thmT}{Theorem}[section]
\newtheorem{theoT}{Theorem}
\newtheorem{theoremeT}[thmT]{Theorem}
\newtheorem{lemT}[thmT]{Lemma}

\theoremstyle{ocrenumhypbox}
\newtheorem{hypT}[thmT]{Hypothesis}
\theoremstyle{blacknumex}

\theoremstyle{blacknumbox}
\newtheorem{definitionT}[thmT]{Definition}
\newtheorem{notationT}[thmT]{Notation}

\theoremstyle{ocrenum}

\newtheorem{propT}[thmT]{Proposition}

\newtheorem{corollaryT}[thmT]{Corollary}

\RequirePackage[framemethod=default]{mdframed} 

\newmdenv[skipabove=7pt,
skipbelow=7pt,
backgroundcolor=black!5,
linecolor=ocre,
innerleftmargin=5pt,
innerrightmargin=5pt,
innertopmargin=5pt,
leftmargin=0cm,
rightmargin=0cm,
innerbottommargin=5pt]{tBox}

\newmdenv[skipabove=7pt,
skipbelow=7pt,
rightline=false,
leftline=true,
topline=false,
bottomline=false,
backgroundcolor=ocre!10,
linecolor=ocre,
innerleftmargin=5pt,
innerrightmargin=5pt,
innertopmargin=5pt,
innerbottommargin=5pt,
leftmargin=0cm,
rightmargin=0cm,
linewidth=4pt]{eBox}	

\newmdenv[skipabove=7pt,
skipbelow=7pt,
rightline=false,
leftline=true,
topline=false,
bottomline=false,
linecolor=ocre,
innerleftmargin=5pt,
innerrightmargin=5pt,
innertopmargin=0pt,
leftmargin=0cm,
rightmargin=0cm,
linewidth=4pt,
innerbottommargin=0pt]{dBox}	

\newmdenv[skipabove=7pt,
skipbelow=7pt,
rightline=false,
leftline=true,
topline=false,
bottomline=false,
linecolor=gray,
backgroundcolor=black!5,
innerleftmargin=5pt,
innerrightmargin=5pt,
innertopmargin=5pt,
leftmargin=0cm,
rightmargin=0cm,
linewidth=4pt,
innerbottommargin=5pt]{cBox}

\newenvironment{theorem}{\begin{tBox}\begin{theoremeT}}{\end{theoremeT}\end{tBox}}
\newenvironment{thmbox}{\begin{tBox}\begin{theoremeT}}{\end{theoremeT}\end{tBox}}
\newenvironment{hyp}{\begin{tBox}\begin{hypT}}{\end{hypT}\end{tBox}}
\newenvironment{theo}{\begin{tBox}\begin{theoT}}{\end{theoT}\end{tBox}}

\newenvironment{defi}{\begin{dBox}\begin{definitionT}}{\end{definitionT}\end{dBox}}	
\newenvironment{notation}{\begin{dBox}\begin{notationT}}{\end{notationT}\end{dBox}}	
\newenvironment{lem}{\begin{dBox}\begin{lemT}}{\end{lemT}\end{dBox}}	
\newenvironment{prop}{\begin{dBox}\begin{propT}}{\end{propT}\end{dBox}}

\newenvironment{cor}{\begin{dBox}\begin{corollaryT}}{\end{corollaryT}\end{dBox}}	



\makeatletter
\renewcommand{\@seccntformat}[1]{\llap{\textcolor{ocre}{\csname the#1\endcsname}\hspace{1em}}} 
\renewcommand{\section}{\@startsection{section}{1}{\z@}
	{-4ex \@plus -1ex \@minus -.4ex}
	{1ex \@plus.2ex }
	{\normalfont\large \bf \color{ocre}}}
\renewcommand{\subsection}{\@startsection {subsection}{2}{\z@}
	{-3ex \@plus -0.1ex \@minus -.4ex}
	{0.5ex \@plus.2ex }
	{\normalfont\large\bf\color{ocre} }}

\newcommand{\EGF}{{E(\GF)}}
\pagestyle{myheadings}
\def\titlerunning#1{\gdef\titrun{#1}}
\makeatletter
\def\author#1{\gdef\autrun{\def\and{\unskip, }#1}\gdef\@author{#1}}
\def\address#1{{\def\and{\\\hspace*{18pt}}\renewcommand{\thefootnote}{}%
		\footnote {#1}}%
	\markboth{\titrun}{\titrun}}
\makeatother
\def\email#1{e-mail: #1}
\def\subjclass#1{{\renewcommand{\thefootnote}{}%
		\footnote{\emph{Mathematics Subject Classification (2010):} #1}}}

\newcommand{\tw}[1]{{}^#1\!}

\newcommand{\otw}{\text{otherwise}}
\newcommand{\inv}{^{-1}}




\theoremstyle{definition}

\newtheorem{rem}[thmT]{Remark}

{\color{ocre}}
\theoremstyle{plain}

\theoremstyle{definition}
\newtheorem{condi}[thmT]{Condition}


\numberwithin{equation}{section}
\numberwithin{table}{section}



\newcommand{\id}{\operatorname {id}}
\newcommand{\ad}{{\operatorname{ad}}}

\newcommand{\wt}{\widetilde}
\newcommand{\wc}{\widecheck}
\newcommand{\wh}{\widehat }

\newcommand{\wbT}{{\widetilde {\mathbf T}}}

\newcommand{\Ai}{{{A}(\infty)}}
\newcommand{\Ap}{{{A}'(\infty)}}

\newcommand{\wG}{{\widetilde G}}

\newcommand{\uE}{{\underline E}}
\newcommand{\bC}{{\mathbf C}}
\newcommand{\la}{\ensuremath{\lambda}}

\newcommand{\tDlsc}{\tD_{l,\mathrm{sc}}}

\newcommand{\tDlprimesc}{\tD_{l',\mathrm{sc}}}
\newcommand{\tDlad}{\tD_{l,\mathrm{\mathrm{ad}}}}
\newcommand{\twDlsc}{\tw 2\tD_{l,\mathrm{sc}}}

\newcommand{\bT}{{\mathbf T}}

\newcommand{\bG}{{{\mathbf G}}}

\newcommand{\bH}{{{\mathbf H}}}
\newcommand{\wbH}{{{\wt \bH}}}

\newcommand{\bK}{{{\mathbf K}}}

\newcommand{\bHssF}{\bH_{\textrm{ss}}^{F}}
 
	\newcommand{\Jor}{\operatorname{Jor}}
\newcommand{\oJor}{\ov\Jor}

\newcommand{\bM}{{\mathbf M}}
\newcommand{\bS}{{\mathbf S}}
\newcommand{\HF}{{{\bH}^F}}
\newcommand{\HFss}{{{\bH}_{\textrm{ss}}^F}}

\newcommand{\wHF}{{\wt\bH}^F}

\newcommand{\wbG}{\wt{\mathbf G}}

\newcommand{\wbK}{\wt{\mathbf K}}
\newcommand{\Irr}{{\mathrm{Irr}}}
\newcommand{\Lin}{\mathrm{Lin}}
\newcommand{\desc}{\mathrm{desc}}
\newcommand{\Tr}{\mathrm{Tr}}
\newcommand{\CF}{\mathrm{CF}}

\newcommand{\Char}{\mathrm{Char}}

\newcommand{\bZ}{{\mathbf Z}}

\newcommand{\SL}{\operatorname{SL}}

\newcommand{\GL}{\operatorname{GL}}
\newcommand{\SO}{\operatorname{SO}}

\newcommand{\PGL}{\operatorname{PGL}}

\newcommand{\ZZ}{\ensuremath{\mathbb{Z}}}
\newcommand{\CC}{\ensuremath{\mathbb{C}}}
\newcommand{\UU}{{\mathbb{U}}}
\newcommand{\rmUU}{{\mathrm{U}}}

\newcommand{\EE}{{\mathbb{E}}}

\newcommand{\EEnull}{{\EE}}

\newcommand{\ovEE}{{\ov\EE}}

\newcommand{\TT}{\ensuremath{\mathbb{T}}}

\newcommand{\ov}{\overline }

\newcommand{\xx}{\mathbf x }

\newcommand{\h}{\mathbf h }

\newcommand{\Cent}{\ensuremath{{\rm{C}}}}
\newcommand{\NNN}{\ensuremath{{\mathrm{N}}}}

\newcommand{\Sym}{{\mathcal{S}}}

\def\restr#1|#2{\left.#1\right\rceil_{#2}}

\usepackage{imakeidx}
\makeindex[columns=2,intoc]

\def\III#1{\index{#1@$#1$}{\color{ocre}#1}}
\def\II#1@#2{\index{#1@$#2$}{{\color{ocre}#2}}}

\newcommand{\QQ}{\ensuremath{\mathbb{Q}}}
\newcommand{\tD}{\ensuremath{\mathrm{D}}}
\newcommand{\Cy}{\mathrm C}
\newcommand{\tC}{\mathrm C}
\newcommand{\tB}{\mathrm B}
\newcommand{\JJ}{{\mathrm J}}
\newcommand{\cE}{\mathcal E}
\newcommand{\ocE}{\overline{\mathcal E}}
\newcommand{\cM}{\mathcal M}
\newcommand{\cK}{\mathcal K}

\newcommand{\calM}{\mathcal M}
\newcommand{\calL}{\mathcal L}

\newcommand{\calC}{\mathcal C}
\newcommand{\calZ}{\mathcal Z}

\newcommand{\Cl}{\mathfrak {Cl}}
\newcommand{\al}{{\alpha}}
\newcommand{\eps}{{\epsilon}}

\newcommand{\UCh}{\operatorname{Uch}}
\newcommand{\Uch}{\operatorname{Uch}}
\newcommand{\oUCh}{\ov{\UCh}}
\newcommand{\spannh}{\spann<h_0>}
\newcommand{\spannhHnull}{{\spa{ h_0^{(\bH_0)}}}}

\newcommand{\FF}{{\mathbb{F}}}
\newcommand{\FFtimes}{{\mathbb{F}^\times}}
\newcommand{\si}{\ensuremath{\sigma}}

\newcommand{\GF}{{{\bG^F}}}

\newcommand{\spannsi}{{\spann<\si>}}

\newcommand{\epsFa}{v_a}

\newcommand{\bGos}{{\Cent^\circ_\bH(s)}}
\newcommand{\bGs}{{\Cent_\bH(s)}}
\newcommand{\CoHFs}{{\Cent^\circ_\bH(s)^F}}
\newcommand{\CHFs}{{\Cent_\HF(s)}}

\newcommand{\wGF}{{{{\wbG}^F}}}
\newcommand{\wKF}{{{{\wbK}^F}}}

\newcommand{\wGFnull}{{{{\wbG}^{F_0}}}}
\newcommand{\GFnull}{{{{\bG}^{F_0}}}}
\newcommand{\HFnull}{{{{\bH}^{F_0}}}}

\makeatletter
\def\Set#1{\Set@h#1@}
\def\Lset#1{\Lset@h#1@}
\def\Set@h#1|#2@{\left\{\left.#1\vphantom{#2}\hskip.1em\,\right\mid \,\relax #2\right\}}
\def\Lset@h#1@{\left\{#1\right\}}
\def\CALC#1{\CALC@h#1@}
\def\CALC@h#1|#2@{\calC^{#1}(#2)}
\def\CALCrad#1{\CALCrad@h#1@}
\def\CALCrad@h#1|#2@{\calC_\radic^{#1}(#2)}

\def\CALCNC#1{\CALCNC@h#1@}
\def\CALCNC@h#1|#2@{\calC_{\radic,nc}^{#1}(#2)}

\def\restr#1|#2{\left.#1\right\rceil_{#2}}
\def\spann<#1>{\left\langle#1\right\rangle}
\def\spa#1{\left\langle#1\right\rangle}

\def\Spann<#1>{\Spann@h#1@}
\def\Spann@h#1|#2@{\left\langle\left.#1\vphantom{#2}\hskip.1em\right.\mid\relax #2 \right\rangle}
\def\Set#1{\Set@h#1@}
\def\Set@h#1|#2@{\left\{\left.#1\vphantom{#2}\hskip.1em\,\right.
	\mid\relax #2\right\}}
\def\set#1{\set@h#1@}
\def\set@h#1@{\left\{#1\right\}}
\def\spann<#1>{\left\langle#1\right\rangle}
\makeatother

\newcommand{\Aut}{\mathrm{Aut}}

\newcommand{\End}{\mathrm{End}}
\newcommand{\Hom}{\mathrm{Hom}}

\newcommand{\Out}{\ensuremath{\mathrm{Out}}}
\newcommand{\Z}{\operatorname Z}
\newcommand{\B}{ B}
\newcommand{\Sp}{\operatorname{Sp}}

\newcommand{\frakC}{{\mathfrak C}}

\newcommand{\forevery}{{\text{\quad\quad for every }}}
\newcommand{\und}{{\text{ and }}}

\newcommand{\btau}{\ensuremath{\overline{\tau}}}

\newcommand{\ra}{\rightarrow}
\newcommand{\lra}{\longrightarrow}
\newcommand{\tE}{\mathrm E}
\newcommand{\tA}{\mathrm A}

\newcommand{\Sh}{\operatorname{Sh}}
\newcommand{\cC}{\mathcal C}
\newcommand{\cF}{\mathcal F}
\newcommand{\cI}{\mathcal I}
\newcommand{\cJ}{\mathcal J}
\newcommand{\cZ}{\mathcal Z}

\newcommand{\cusp}{\Irr_{cusp}}
\newcommand{\wrt}{{with respect to\ }}


\titlerunning{Inductive McKay Condition in type D, II}
\title{Extensions of characters in type D and the inductive McKay condition, II\\
	{\small \textit{Dedicated to Gunter Malle for his fundamental contributions} }}
\author{Britta Sp\"ath \thanks{
		Part of the research for this paper was conducted in the framework of the research training group \emph{GRK 2240: Algebro-Geometric Methods in Algebra, Arithmetic and Topology}, funded by the DFG. The author would like to thank the Isaac Newton Institute for Mathematical Sciences for support during the program \textit{Groups, Representations and Applications}, when work on this paper was undertaken. This program was supported by EPSRC Grant Number EP/R014604/1.}
}
\date{}
\RequirePackage[normalem]{ulem} 
\RequirePackage{color}\definecolor{RED}{rgb}{1,0,0}\definecolor{BLUE}{rgb}{0,0,1} 

\begin{document} 
\maketitle
\abstract{We determine the action of the automorphism group Aut$(G)$ on the set of irreducible characters Irr$(G)$ for all finite quasi-simple groups $G$. For groups of Lie type, this includes the construction of an Aut$(G)$-equivariant Jordan decomposition of characters (Theorem B). We prove a property called $A(\infty)$ which includes an extendibility statement, known previously in types not $\mathrm{D}$ (Theorem A). Our methods blend here Shintani descent ideas introduced for type $\mathrm{B}$ along with an analysis of semisimple classes in the dual group $G^*$. The condition $A(\infty)$ originates in the program to prove the McKay conjecture using the classification of finite simple groups. Theorem C establishes the McKay conjecture for the prime 3.}
\address{School of Mathematics and Natural Sciences, 
University of Wuppertal, Gau\ss str. 20, 42119 Wuppertal, Germany, \email{bspaeth@uni-wuppertal.de}
}
	\subjclass{ 20C20 (20C33 20C34)}
{\small{\tableofcontents}}
	\section{Introduction} Several conjectures on representations of finite groups have been reduced to problems about finite simple groups. This has led to spectacular progress by use of the classification of finite simple groups (CFSG) and our knowledge of their representations, see for instance the survey \cite{MaLoGlo}. This is the case for several of the questions and conjectures brought forward by Brauer in his famous list of problems \cite{Brauer}. However, the comparatively more recent counting conjectures stemming from McKay's conjecture (McKay's conjecture, Alperin's weight conjecture, Dade's conjectures and their refinements) demand a stronger control on $\Irr(G)$ for quasi-simple groups $G$. In particular it seems indispensable to know the action of outer automorphisms on $\Irr(G)$ for each finite quasi-simple group $G$, see \cite[Problem 2.7]{MaLoGlo}. The main aim of this paper is to complete the answer to that question. 
	
	Even though these outer automorphism groups $\Out(G)$ are small, they can pose deep problems both to determine their action on $\Irr(G)$ and also to assess extendibility of characters in the case of inclusions $G\unlhd A$. Measuring the latter problem by the 2-rank of $\Out(G)$, it is expected that quasi-simple groups $G$ of type $\tA$, $\tD$ or $^2\tD$ are the most problematic. Type $\tA$ was solved thanks to the existence and remarkable properties of Kawanaka's so-called generalized Gelfand-Graev representations, see \cite{CS17A}. Since other types with nicer $\Out(G)$ groups were solved in \cite{CS13,CS17A,CS18B}, the present paper therefore deals a lot, though not only, with groups of type $\tD$ and $^2\tD$. 
	
	In what follows, $G$ is the universal covering of a simple group of Lie type $G/\Z(G)$ assumed to be realized by $G=\bG^F$ the group of rational points of a simple simply connected algebraic group $\bG=\bG_{\mathrm{sc}}$ defined over a finite field. We thus leave aside the 17 simple groups of Lie type whose universal coverings, though not of the form $\GF$ (see \cite[Table 6.1.3]{GLS3}), have a well-known character table complete with the action of automorphisms, see \cite{atlas}.
	
	One distinguishes three types of outer automorphisms of $G$, namely \emph{diagonal}, \emph{graph} and \emph{field} automorphisms. Diagonal automorphisms can be realized via a regular embedding of algebraic groups $\bG\leq \wbG$ (see \ref{not} below), resulting in $G=[\wG ,\wG]\unlhd \wG$ for $\wG=\wbG^F$, while graph and field automorphisms form a group $E(G)$ also acting on $\wG$. The semi-direct product $\wG\rtimes E(G)$ then induces all automorphisms of $G$, see \cite[Sect. 1.15]{GLS3}. 
	
	We show that the actions of $\wG$ and $E(G)$ on $\Irr(G)$ are of {\textit {transversal}}\ nature. Namely, there exists some $E(G)$-stable $\wG$-transversal in $\Irr(G)$. This transversality condition is called $\Ap$, see \ref{Ainfty} below. It is also equivalent to saying that for any $D\leq E(G)$ a $D$-stable $\wG$-orbit in $\Irr(G)$ always contains some $D$-invariant character. 
	This complements well known consequences of Lusztig's theory about $\Irr(\wG)$, explaining how $E(G)$ acts on the $\wG$-orbits in $\Irr(G)$. Then $\Ap$ fully determines  $\Irr(G)$ as a $\wG E(G)$-set, see \cite[Rem. 2.5]{CS18B} or Lemma~\ref{ZEsets} below.
	
	Our main result also addresses the extendibility question mentioned above:

	\begin{theo}\label{thm1}
		Let $G=\bG_{\mathrm{sc}}^F$ be the universal covering of a finite simple group of Lie type with $\wG$ and $E(G)$ as above, see also \ref{not}. Then there exists an $E(G)$-stable $\wG$-transversal $\TT$ in $\Irr(G)$ such that every character $\chi\in\TT$ extends to its stabilizer in $G E(G)$.
	\end{theo}

The above property of $\Irr(G)$ appeared first as Assumption 2.12(v) in \cite{S12} and is called Condition $\Ai$ in \cite[Def.~2.2]{CS18B}. For groups of types different from $\tD$ and $^2\tD$, for which Theorem A has been proved, it has already lead to many results by several authors towards the verification of inductive conditions for counting conjectures. In the generality given here, it is  used in \cite{FS} and in \cite{R_iAM3} through \cite{R_Adv}, hence contributing to the proof of Brauer's Height Zero Conjecture and the Alperin--McKay Conjecture for the prime $2$. 

Of course the determination of the action of $\wG E(G)$ on $\Irr(G)$ and hence $\Ap$ are strongly related with the possibility of parametrizing $\Irr(G)$ in an $\Out(G)$-equivariant way, typically by constructing some Jordan decomposition of characters compatible with $\Out(G)$-action, see for instance A.9 of \cite{GM}. For the groups of the form $\wG =\wbG^F$ where $\wbG$ has connected center, this was deduced in \cite{CS13} from the uniqueness of the Jordan decomposition proved by Digne-Michel, see \cite{DM90}. But devising a satisfactory uniqueness condition for quasi-simple groups $G$ of Lie type seems a quite difficult goal. 

 Here we show that such a Jordan decomposition exists, essentially as a consequence of Theorem A, though in our inductive proof the equivariant Jordan decomposition is used in lower ranks to prove $\Ai$, see \ref{sec8B}.

Let $G^*$ be dual to $G$ as finite group of Lie type, see \ref{duals} below. As a result of duality, for any semisimple element $s$ of $G^*$ there is a group morphism $\wh\omega _s\colon \wG/G\Z(\wG)\to \Lin(\Cent_{G^*}(s))$ associating a linear character of $\Cent_{G^*}(s)$ to any (diagonal) automorphism of $G$ induced by an element of $\wG$, see \Cref{omegahat}. Given any graph-field automorphism $\si\in E(G)$ one can  define a dual automorphism $\si^*\colon G^*\to G^*$, see \ref{duals}. With those notions we prove the following equivariant Jordan decomposition of characters where $G$ is as in Theorem A (note that types $\tA$ and $\tC$ were already known, see \cite[Sect. 8]{CS17A} and \cite[Sect. 5.B]{Li}).

\begin{theo}  [$\Out(G)$-equivariant Jordan decomposition] \label{theoB} Let $\Jor(G)$ be the set of $G^*$-conjugacy classes of pairs $(s,\phi)$ where $s$ is a semisimple element of $G^*$ and $\phi\in\UCh(\Cent_{G^*}(s))$ is a unipotent character of $\Cent_{G^*}(s)$. One defines an action of $\Out(G)=\wG/G\Z(\wG)\rtimes E(G)$ on $\Jor(G)$  by \begin{align*}
	z.(s,\phi)&=(s,\wh\omega _s(z)\phi)\ \ \  \text{\ for\ } z\in \wG/G\Z(\wG) \text{, and} \\ \si.(s,\phi)&=(\si^*{}^{-1}(s),\phi\circ \si^*) \ \ \text{ for }\si\in E(G).
	\end{align*}
	Then there is an $\Out(G)$-equivariant bijection $$\Jor(G)\xrightarrow{\sim}\Irr(G), \text{  \  }(s,\phi)\mapsto \chi_{s,\phi}$$ where $\chi_{s,\phi}$ is an element of the (rational) Lusztig  series $\cE(G,[s])$.
\end{theo}

The proof of Theorem B as a consequence of $\Ap$ is mapped out in \Cref{ZEsets}. A key step consists in showing that the action of $\wG E(G)$ on $\Jor(G)$ satisfies an analogue of $\Ap$, see \Cref{lem3_7}. Interestingly, this makes clear that $\Ap$ itself is in fact equivalent to the existence of an $\Out(G)$-equivariant Jordan decomposition of characters of $G$, a more natural condition - while on the other hand the proof of $\Ap$ for type $\tA$ in \cite{CS17A} points to a plausible property of unipotent supports. In the rest of the proof of Theorem B, each $\wG$-orbit in $\Irr(G)$ is identified with the (non irreducible) character given by its sum, a feature also used in the rest of the paper. 

Let us now comment on how we prove Theorem A which is the main task of the paper. Since all other types are solved, we take $\bG$ a simple simply connected algebraic group of type $\tD_l$ for $l\geq 4$ and $F\colon \bG\to \bG$ a Frobenius endomorphism defining $\bG$ over a finite field $\FF_q$. Excluding groups of type $^3\tD_4$ already studied in \cite[Sect. 5]{CS13}, we get $\bG^F=\tDlsc(q) = \text{Spin}_{2l}(q)$ or $\bG^F={}^2\tDlsc(q) = \text{Spin}^-_{2l}(q)$ according to $F$ being twisted or not. \Cref{hyp_cuspD_ext} is assumed, essentially stating that groups of simply connected type $\tD_{l'}$ ($4\leq l'<l$) satisfy $\Ai$. This serves as an induction hypothesis on the rank $l$ while \cite[Thm A]{TypeD1} implies that $\GF$ satisfies $\Ap$. Our main goal is then to ensure separately two main statements:
	 \begin{itemize} 
		\item when $\GF=\tw 2 \tDlsc(q)$ there exists an $E(\GF)$-stable $\wGF$-transversal in $\Irr(\GF)$; and
		\item when $\GF= \tDlsc(q)$, for each non-cyclic $D\leq E(\GF)$ every $D$-stable $\wGF$-orbit in $\Irr(\GF)$ contains a $D$-invariant character that extends to $\GF D$. 
	 \end{itemize}

	Via a careful application of Shintani descent, \Cref{prop82} and \Cref{thm77} show that both statements can be proved by counting certain sets of characters. 
	Shintani descent arguments rely on the fact that we can often focus on automorphisms of $\GF$ in $E(\GF)$ that are induced by Frobenius endomorphisms of $\bG$ and $\wbG$. 

	\Cref{prop82} considers the case where $\GF=\tw 2 \tDlsc(q)$ and shows roughly speaking that there exists an $E(\GF)$-stable $\wGF$-transversal in $\Irr(\GF)$ if (using the exponent notation for fixed points, see \ref{not11})
	$$| \Irr(\GF)^{\spann<\wGF, F_0>}|=|\Irr(\GFnull)^{\spann<\wGFnull,F >}| \forevery F_0\in \cF,$$
	where $\cF$ denotes a well chosen set of \textit{untwisted} Frobenius endomorphisms of $\wbG$.

	\Cref{thm77} deals with $\GF=\tDlsc(q)$ and focuses on the $D$-invariant characters in $\Irr(\GF)$ for some important non-cyclic $2$-subgroups $D\leq E(\GF)$: if $F_0$ and $\gamma$ denote certain generators of $D$ where $F_0$ is a (possibly \textit{twisted}) Frobenius endomorphism of $\bG$ with $[\Z(\bG),F_0]=1$, then there exist exactly $$\frac 1 2 \left( |\Irr(\GF)^D| + |\Irr(\GFnull)^{\spann<\gamma>}| \right) $$ 
	characters of $\GF$ that extend to $\GF D $. Afterwards labels for $\Irr(\GF)^D$ and $\Irr(\GFnull)^{\spann<\gamma>}$ are compared and through close examination of the associated unipotent characters we find exactly $\frac 1 2 \left( |\Irr(\GF)^D| - |\Irr(\GFnull)^{\spann<\gamma>}| \right) $ characters that are $D$-invariant, have no extension to $\GF D$ and lie in different $\wGF$-orbits of length $2$, see \Cref{cor822}.  Here we single out the corresponding $D$-stable $\wGF$-orbits of length $2$ and study them via associated characters of $\wGF$ and their Jordan decomposition of characters.  Altogether this allows us to find in each $D$-stable $\wGF$-orbit a character extending to $\GF D$.

	As the argument uses \cite[Thm~A]{TypeD1}, \Cref{hyp_cuspD_ext} on cuspidal characters has to be assumed. Via an inductive proof on the rank of $\bG$ this limitation can be removed in the final chapter and we obtain \Cref{thm1}. 

	Two important ideas are used throughout Chapters \ref{sec4_2D}, \ref{sec_ext_D} and \ref{SecEC}:
	 \begin{itemize} 
		\item For any semisimple conjugacy class $\cC$ of $\bG^*$ there exists some $s\in \cC$ and a complement $\wc A(s)$ of $\Cent_{\bG^*}^\circ(s)$ in $\Cent_{\bG^*}(s)$ that behaves well with respect to relevant automorphisms, see \cite[Thm~A]{CS22}.
		\item We develop and apply a variant $\ov\JJ$ of the Jordan decomposition for certain non-irreducible characters of $\GF$ that is equivariant with respect to automorphisms, see Chapter \ref{sec3}. 
	 \end{itemize} 
Those two ingredients enable us to consider simultaneously characters of different rational series in the same geometric series of $\GF$. We apply \Cref{cor_2Becht} and relate the characters to unipotent characters of different finite groups coming from the same algebraic group, see \Cref{newJdec}. Another key ingredient is the bijection from \Cref{propCSB} which relates to the \textit{generic} nature of unipotent characters in the sense of Brou\'e-Malle-Michel and was established in \cite{CS18B}. For a connected reductive group $\bC$ with two commuting Frobenius endomorphisms $F'$ and $F''$ there exists a bijection 
	\begin{equation*}
		f_{F',F''}:\UCh(\bC ^{F'})^{\spa{F''}}\lra \UCh(\bC ^{F''})^{\spa{F'}}.
	\end{equation*} which is equivariant for algebraic automorphisms of $\bC$ commuting with $F'$ and $F''$.
	This allows us to relate characters of $\GF$ to those of $\GFnull$ in \Cref{cor96} and \Cref{thm_desc}.
	
	Theorem \ref{thm1} also contributes to the general program to prove J. McKay's conjecture on character degrees for any prime $\ell$, see \cite{McK}. The reduction theorem from \cite{IMN}, the reformulation of the inductive McKay condition from \cite{S12} and its verification in various publications by Cabanes, Malle and the author have left open only the case of groups of types $\tD$ and $^2\tD$. Recall that for each prime $\ell$ it is necessary to verify the global condition $\Ai$ and some local conditions called $A(d)$ and $B(d)$ from \cite[Def.~2.2]{CS18B} for a quasi-simple group $G$ of Lie type $\tD$ or $^2\tD$ and $1\leq d\leq \ell -1$. The McKay conjecture is known for $\ell =2$ by \cite{MS16}. We obtain here the case $\ell =3$.

\begin{theo} \label{thmC} 
	Let $X$ be a finite group, let $P$ be a Sylow 3-subgroup of $X$. Then $X$ and $\NNN_{X}(P)$ have the same number of irreducible characters of degree prime to 3.
\end{theo}

	\textcolor{ocre}{\bf Structure of the paper.}  Our strategy has been largely described above. It is split as follows into the various chapters. In \Cref{sec2} we introduce most of the notation and recall some relevant results from \cite{TypeD1}. In \Cref{EquJor} we review known results on Jordan decomposition of characters and rational Lusztig series for simple groups of simply connected type.
	An important step is then \Cref{propunipext} on extendibility of unipotent characters. This is key to relate Condition $\Ap$ with the existence of an equivariant Jordan decomposition of characters. We then show there how Theorem A implies Theorem B. 
	
	From there the rest of the paper is about proving Theorem A for groups of type $\tD$ in odd characteristic, as \Cref{prop_ext_wG} implies already the statement of Theorem A in even characteristic. In \Cref{sec3} we deal with geometric and rational series of characters, $\cE(G,(s))$ and $\cE(G,[s])$, recalling and using the results of \cite{CS22} about centralizers of semisimple elements in the dual group $\bH$. This is also where we start to make use of orbit sums of characters, mostly under the action of $\wG$, thus leading to sets $\ov\cE(\GF,\cC)$ on which we then consider the action of $E(\GF)$. In preparation for some counting arguments we introduce certain disjoint unions ${\Bbb U}(s,F,\si)$ of sets of unipotent characters, see \Cref{lem717}.
	 This leads in \Cref{sec4_2D} to a proof of \Cref{thm1} for groups $\GF=\twDlsc(q)$ assuming  \Cref{hyp_cuspD_ext}.
	 
	
For a non-cyclic 2-subgroup $D$ of $E(\GF)$  we determine in \Cref{sec_ext_D} the cardinality of the set of $D$-invariant characters  that have no extension to $\GF D$. Then we introduce character sets ${\EE}(\cC)$ (in a geometric Lusztig series $\cE(\GF,\cC)$) with suitable cardinalities. In \Cref{SecEC} we study those characters of ${\EE}(\cC)$ and show that after some small modifications ${\EE}(\cC)$ is actually the set of all $D$-invariant but non-extendible elements of $\cE(\GF,\cC)$, see \Cref{prop821}. This ultimately ensures that enough $D$-invariant characters extend to $\GF D$, thus establishing the Condition $\Ai$. We conclude the paper with a proof of Theorems B and C. 

{\bf Acknowledgements:} 	Gunter Malle was my PhD advisor on the subject of McKay's conjecture. His guidance and support have immensely benefited my work, not speaking of his evident influence on the subject through his talks, books and papers. The present results build in a crucial way on \cite{Ma17} but my personal debt goes well beyond and I am very happy to express that here. 

The author would like to thank the anonymous referees for their questions and suggestions that greatly improved the text. Thanks also go to Marc Cabanes and Gunter Malle for a careful reading of several versions, and also to Julian Brough and Lucas Ruhstorfer for various discussions.

	\section{Some notation and previous results}\label{sec2}
	In this chapter we recall some notation around characters, finite groups of Lie type, their characters and the conditions $\Ap$ and $\Ai$.
 
	\subsection{Group actions and characters} \label{not11} \index{maximal extendibility}
	Group actions will be mostly on the left. When the group $G$ acts on the set $M$, we denote by $g.m$ or $^gm=m^{g\inv}$ the action of $g\in G$ on $m\in M$. The notation $\III{M^G}$ refers to the set of fixed points of $G$ in $M$. For $g\in G$ we also write $M^g:=M^{\spa{g}}=\{m\in M\mid g.m=m  \}$. For $m\in M$, we denote its $G$-orbit by $G.m$ and its stabilizer by $G_m:=\{g\in G\mid g.m=m  \}$. If $ M$ is a group on which $G$ acts by automorphisms, we write ${[M,g]}=\{m\inv .g\inv(m) \mid m\in M \}$. This is a subgroup when included in $\Z(M)$ and we then have $M/M^{\spa{g}}\cong [M,g]$. We define $\III{M_g}=M/[M,g]$ and ${[M,G]}=\spa{[M,g]\mid g\in G}$.
	
	The group $G$ acts on any normal subgroup $N$ by conjugation. We use the same letter for the element of $G$ and the automorphism it induces on $N$. For instance if $g\in G$ and $\si\in\Aut(N)$, then $g\si$ denotes the automorphism $n\mapsto g\si(n)g\inv$ of $N$.
	
About characters of finite groups we follow mostly the notation introduced in \cite{Isa}. For $X$ a finite group we denote by $\II LinX@{\Lin(X)}\subseteq\II IrrX@{\Irr(X)}$ the set of irreducible (complex) characters of $X$ and its subset of linear characters, seeing $\Lin(X)$ as a multiplicative group acting on $\Irr(X)$.
	
	 Let $X\leq Y$ be an inclusion of finite groups. If $\la\in\Irr(X)$, and $\psi\in\Irr(Y)$, we write $\II lambdaY@{\chi^Y}$ for the character induced \index{induced character} to $Y$ and $\II psiX@{\psi \rceil_X}$ for the {restricted character}\index{restricted character}.  We also use this notation when restricting a character to a subset of $Y$. 
	 For a generalized character $\chi\in\ZZ\Irr(X)$ we denote by $\II Irrchi@{\Irr(\chi)}$ the set of its irreducible constituents. We deal often with sets $\Irr(\chi^Y)\subseteq \Irr(Y)$ for $\chi\in\Irr(X)$. We then also define $\II IrrX@{\Irr(Y\mid \chi)}:= \Irr(\chi^Y)$.  If $\si\colon X\xrightarrow\sim  X'$ is an isomorphism of finite groups (possibly coming from an inclusion $X=X'\unlhd Y$ and conjugation by some $y\in Y$) and $\chi\in\ZZ\Irr(X')$ we write 
	$\II lambdasigma@{\chi^\sigma}= \,^{\sigma^{-1} }\chi$ for the character of $X$ defined by $\,^{\sigma^{-1}} \chi(x)= \chi^\si(x)=\chi(\si(x))$ for $x\in X$.

		Assume $X\unlhd Y$ both finite. If $\cM\subseteq \Irr(X)$, we say that \textit{maximal extendibility holds \wrt $X\unlhd Y$ for $\cM$ }if every $\chi\in \cM$ extends to an irreducible character of its stabilizer  $Y_\chi$. 
		Whenever $\cM=\Irr(X)$ we omit to mention $\cM$. If maximal extendibility holds \wrt $X\unlhd Y$ for $\cM$ and
		 $\calM$ is $Y$-stable, an \textit{extension map \wrt $X\unlhd Y$ for $\calM$} is a map $$\Lambda:\calM\lra \bigsqcup_{X\leq I \leq Y } \Irr(I)$$ such that for every $\chi\in\Irr(X)$, $\Lambda(\chi)\in\Irr(Y_\chi)$ satisfies $\restr\Lambda(\chi)|{X}=\chi$. It is easily seen that $\Lambda$ can be chosen to be $Y$-equivariant. Note also that whenever $Y/X$ is abelian, maximal extendibility is equivalent to the fact that for every $\chi\in\Irr(Y)$ the restriction $\restr\chi|{X}$ has no irreducible constituent with multiplicity $\geq 2$.

	\subsection{Quasi-simple groups of Lie type} \label{not}

We introduce now the groups and automorphisms considered afterwards. More specific notation will be given in Chapter~\ref{sec3} for groups of type $\tD$.

	Let $\II p @{p}$ be a prime and $\II FF@{\FF}$ the algebraic closure of $\FF_p$. Let $\II Gb @\bG$ be a simple simply connected linear algebraic group over ${\FF}$. We fix a maximal torus $\bT_0$ of $\bG$ and a basis $\Delta$ of the root system $\Phi(\bG,\bT_0)\subseteq X(\bT_0)$. For $\al\in\Phi(\bG,\bT_0)$ recall the Chevalley generators $\xx_\al(a)\in \bG$ ($a\in \FF$) satisfying $t \xx_\al(a)t\inv =\xx_\al(\al(t)a)$ for any $t\in \bT_0$, $a\in\FF$, see \cite[Sect. 1.3]{GLS3}, \cite[Thm 8.17]{MT}. Let $\II Fp@{F_p}\colon \bG\to\bG$ be defined by $F_p(\xx_\al(a))=\xx_\al(a^p)$ for any $\al\in \Phi{(\bG,\bT_0)}$. For $\tau\colon \Delta\to\Delta$ an automorphism of the Dynkin diagram, let ${{\btau}}\colon\bG\to\bG$ be the algebraic automorphism defined by $${{\btau}}(\xx_{\eps\beta}(a))=\xx_{\eps\tau(\beta)}(a)\text{ for any }\beta\in\Delta, \ \eps\in\{-1,1\}, \ a\in\FF .$$ Let $\III{E(\bG)}$ be the group of abstract automorphisms generated by $F_p$ and the ${{\btau}}$'s. Let $\III F:=F_p^{m}{{\btau}}$ for $\III{m}\geq 1$ and $\tau$ an automorphism (possibly trivial) of the Dynkin diagram. This is a Frobenius endomorphism  $ F\colon\bG\ra \bG$ defining an $\FF_q$-structure on $\bG$ for $q=p^{m}$. The group $\bG$ being classically denoted as $\tA_{l,\mathrm{sc}}(\FF)$, $\tB_{l,\mathrm{sc}}(\FF)$, $\dots$ according to its rank $l$ and type, we denote the finite group $\GF$ by $^o\tA_{l,\mathrm{sc}}(q)$, $\tB_{l,\mathrm{sc}}(q)$, $\dots$ where $o$ is the order of $\tau$ when $o>1$.

		We let $\III{E(\GF)}$ be the subgroup of $\Aut(\GF)$ generated by restrictions to $\GF$ of elements of $\Cent_{E(\bG)}(F)$, compare \cite[Thm~2.5.1]{GLS3} and \cite[Def.~2.1]{CS18B}. Note that the kernel of that restriction $\Cent_{E(\bG)}(F)\to\Aut(\GF)$ is generated by $F$, see \cite[Lem. 2.5.7]{GLS3}. This allows for instance to consider stabilizers $\big(\GF E(\GF)\big)_\bK$ for $\bK$ any closed $F$-stable subgroup of $\bG$.

		Let ${\bG\leq \wbG}$ be a regular embedding, i.e., a closed inclusion of algebraic groups with 
		$\II Gbtilde@{\wbG}
		=\Z( \wbG)\bG$ and connected $\Z( \wt\bG)$. Then $\wbG$ can be chosen so that the action of $E(\bG)$ extends to $\wbG$, see \cite[Sect. 2]{MS16}, \cite[Sect. 2.A]{CS18B} or \cite[Prop.~1.7.5]{GM}. We then obtain $F:\wbG\ra \wbG$ a Frobenius endomorphism extending the one of $\bG$ and an action of $E(\GF)$ on $\wGF$.
		
		We will often deal with the semi-direct products $\bG\rtimes E(\bG)$, $\bG^F\rtimes E(\bG^F)$ and $\wbG^F\rtimes E(\GF)$. Let us recall that $ \wGF\rtimes E(\GF)$ induces all automorphisms of $\GF$ when $\GF$ is a perfect group, see \cite[Thms 2.2.7, 2.5.1]{GLS3}.
		
		The automorphisms of $\GF$ induced by $\wbG^F$ or $(\bG/\Z(\bG))^F\cong \wGF/\Z(\wbG)^F$ are called \textit{diagonal automorphisms}.\index{diagonal automorphisms}\ 
		If we are only interested in the action on characters, $\wGF$ may be replaced by the action of $\II ZF@{\calZ_F}:=\wGF/(\GF\Z(\wGF))$. The isomorphism \begin{equation}\label{cZF}
		\calZ_F=(\bG\Z(\wbG))^F/\GF\Z(\wbG)^F\xrightarrow\sim \Z(\bG)_F=\Z(\bG)/[\Z(\bG),F]
		\end{equation} given by Lang's theorem (see \cite[Prop. 8.1(i)]{CE04}) allows one to associate the diagonal automorphism of $\GF$ induced by $\wt g\in\wGF$ with the element $t^{-1}F(t) [\Z(\bG),F]\in \Z(\bG)_F$ where $t\in \bG\cap \wt g\Z(\wbG)$, see also \cite[Prop.~6.3]{Cedric}.

\subsection{The Conditions $\Ap$ and $\Ai$}\label{Ainfty}

We recall the conditions $ {A}(\infty)$ and $\Ap$ in the form of \cite[Cond. 2.3]{TypeD1} and its reformulation following \cite[Lem.~2.4]{TypeD1}. We keep $\bG$, $\wbG$, $F$, $E(\GF)$ as in \ref{not}.
\begin{condi}\label{recallAinfty} 
		\index{$\Ai$}\index{$\Ap$}
		\noindent
\begin{itemize} 
    \item[$\Ai$:] There exists some $E(\GF)$-stable $\wGF$-transversal $\TT$ in $\Irr(\GF)$, such that every $\chi\in\TT$ extends to $\GF E(\GF)_\chi$.
    Equivalently there exists for every $\chi\in \Irr(\GF)$ some $\wGF$-conjugate $\chi_0$ of $\chi$ such that $(\wGF  E(\GF))_{\chi_0}=\wGF_{\chi_0}  E(\GF)_{\chi_0}$ and  $\chi_0$ extends to $\GF E(\GF)_{\chi_0}$.
	\item [$\Ap$:] There exists some $E(\GF)$-stable $\wGF$-transversal $\TT$ in $\Irr(\GF)$.
 Equivalently there exists for every $\chi\in \Irr(\GF)$ some $\wGF$-conjugate $\chi_0$ of $\chi$ with $(\wGF  E(\GF))_{\chi_0}=\wGF_{\chi_0}  E(\GF)_{\chi_0}$.
		 \end{itemize} 
	\end{condi}

	\noindent Condition $\Ai$ implies Assumption 2.12(v) of \cite{S12} according to \cite[Lem.~2.4]{TypeD1}, and has been essential to our approach to the McKay Conjecture. As recalled earlier, it has been established in all types different from $\tD$ and $^2\tD$, see \cite{CS18B} and the references given there. 
	
	For types $\tD$ and $^2\tD$, a first step towards Condition $\Ai$ is the result of \cite{TypeD1} that assumes the same condition in smaller ranks for cuspidal characters. 
We write $\II{IrrcuspGF}@{\cusp(\bG^F)}$ for the set of characters of $\bG^F$ that are cuspidal in the usual sense of \cite[Sect. 9.1]{Carter2}.

 Note that $\cusp(\bG^F)$, the set of irreducible cuspidal characters of $\GF$, is stable under $\wGF E(\GF)$ and satisfies a stabilizer statement similar to $\Ap$ thanks to a theorem of Malle, see \cite[Thm 2.13]{TypeD1}. 
 
 The following would make sense in the same generality as in \ref{not} but we concentrate on the groups of type untwisted $\tD$  where $\Ai$ is not yet known.
	
	\begin{hyp}[Extension of cuspidal characters of $\tDlprimesc(q)$]\label{hyp_cuspD_ext}
		Let $\bG'=\tD_{l',\mathrm{sc}}(\FF)$ ($l'\geq 4$) and $F=F_p^{m'}:\bG'\ra \bG'$ an untwisted Frobenius endomorphism with $m'\geq 1$ and $p$, the characteristic of $\FF$, an odd prime. Considering the action of $\Z(\bG')_F\rtimes E(\bG'{}^F)$ on $\Irr(\bG'{}^F)$ thanks to (\ref{cZF}), one assumes that there exists some $E(\bG'{}^F)$-stable $\Z(\bG')_F$-transversal $\TT$ in $\cusp(\bG'{}^F)$ such that every $\chi\in \TT$ extends to $\bG'{}^F E(\bG'{}^F)_\chi$.
	\end{hyp}
This hypothesis is of course true whenever ${\bG'}^{F}$ satisfies Condition $\Ai$. The main result of \cite{TypeD1}, a weak version of \Cref{thm1}, will be our starting point. 
\begin{theorem}[{\cite[Thm~A]{TypeD1}}]\label{thm_typeD1}
Let $\bG$, $\wbG$ and $F$ as in \ref{not} such that $\GF=\tDlsc(q)$ with odd $q$. If Hypothesis \ref{hyp_cuspD_ext} holds for all $4\leq l'<l$ and $1\leq m'$, then there exists some $E(\GF)$-stable $\wGF$-transversal in $\Irr(\GF)$. 
\end{theorem}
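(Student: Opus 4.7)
The plan is to verify Condition $A'(\infty)$ for $\GF = \tDlsc(q)$ by induction on the rank $l$, using Harish-Chandra theory to reduce to cuspidal characters of proper $F$-stable Levi subgroups, where Hypothesis \ref{hyp_cuspD_ext} (for the smaller $\tD$-factors) together with the known results for the other classical types provide the inductive input. Throughout I would use the reformulation of $A'(\infty)$ from \ref{recallAinfty}: it suffices, for every $D \leq E(\GF)$ and every $D$-stable $\wGF$-orbit $\Omega \subseteq \Irr(\GF)$, to exhibit a $D$-fixed representative in $\Omega$.

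First I would dispose of the cuspidal case. For $\chi \in \cusp(\GF)$, the theorem of Malle recalled in \cite[Thm 1.13]{TypeD1} asserts that $\cusp(\GF)$ already satisfies a stabilizer-splitting statement analogous to $A'(\infty)$, so the desired $D$-fixed representative in $\Omega$ exists without any inductive input. This reduces matters to the non-cuspidal part of $\Irr(\GF)$.

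Next, for non-cuspidal $\chi \in \Omega$, Harish-Chandra theory attaches a $\GF$-conjugacy class of cuspidal pairs $(\bL, \la)$ with $\bL$ a proper $F$-stable Levi of $\bG$ and $\la \in \cusp(\bL^F)$; the $\wGF$-orbit $\Omega$ thus singles out a $\wGF E(\GF)$-orbit of such pairs, and $\chi$ corresponds to some $\eta \in \Irr(W_{\GF}(\bL, \la))$ via Howlett--Lehrer theory. Every proper $F$-stable Levi of $\bG$ decomposes, up to central factors, as a product of simply connected factors of type $\tA$ and at most one factor of type $\tDlprimesc$ or ${}^2\tDlprimesc$ with $l' < l$. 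Hypothesis \ref{hyp_cuspD_ext} applied to the $\tD$-factor, combined with the results of \cite{CS17A, CS18B} for the remaining factors, yields an $E(\GF)_\bL$-stable, $\wGF_\bL$-transversal in $\cusp(\bL^F)$ with splitting stabilizers. After replacing $\chi$ by a suitable $\wGF$-conjugate, one may therefore assume that $(\bL, \la)$ is chosen from this transversal, and the $D$-stability of $\Omega$ then forces $(\bL, \la)$ itself to be $D$-stable.

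The main obstacle is to adjust $\chi$ further within the Harish-Chandra series above $(\bL, \la)$ so that the associated $\eta \in \Irr(W_{\GF}(\bL, \la))$ becomes $D$-invariant. The relative Weyl group is a complex reflection group of type $G(e,e,m)$ or a Weyl group of type $\tB$ or $\tD$, and the residual action of $D$ modulo its intersection with the stabilizer of $(\bL, \la)$ in $\wGF \GF$ factors through an explicit outer automorphism of order at most $2$ on $W_{\GF}(\bL, \la)$. The final step argues, case by case using the explicit parametrization of $\Irr(W_{\GF}(\bL, \la))$ by symbols or bi-partitions, that every orbit of this outer action on $\Irr(W_{\GF}(\bL, \la))$ contains an invariant element; a standard Clifford-theoretic comparison of stabilizers then transports this conclusion back to $\Irr(\GF \mid \la^\GF)$ and produces the desired $D$-fixed representative in $\Omega$, completing the induction.
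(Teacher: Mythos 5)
First, a point of comparison: the paper does not reprove this statement at all — it is quoted verbatim as Theorem~A of \cite{TypeD1} — so your attempt has to be measured against the Harish--Chandra-theoretic argument of that paper. Your outline has the right overall shape (cuspidal characters of $\GF$ handled by Malle's theorem, then work inside Harish--Chandra series over cuspidal pairs $(\bL,\la)$, with the hypothesis feeding in through the type-$\tD$ factor of $[\bL,\bL]$), but one of your steps is false as stated and the decisive ingredient of the hypothesis is never used.

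The false step is the claim that every orbit of the residual order-$2$ outer action on $\Irr(W_{\GF}(\bL,\la))$ contains an invariant character. Already in the principal series ($\bL$ a maximally split torus, $\la=1$) with $l$ even, the graph automorphism acts on $W(\tD_l)$ as the outer automorphism coming from $W(\tB_l)$, and the two degenerate characters attached to an unordered pair $\{\alpha,\alpha\}$ are swapped, so that orbit has no invariant element. What is actually needed is a statement only about those orbits corresponding to $D$-stable $\wGF$-orbits in $\Irr(\GF)$, and identifying these — i.e.\ matching $D$-stability on the $\Irr(W)$-side with $D$-stability of $\wGF$-orbits on the $\Irr(\GF)$-side — requires the Howlett--Lehrer parametrization of $\Irr(\GF\mid(\bL,\la))$ to be equivariant for $\wGF\rtimes E(\GF)$. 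This is exactly where the \emph{extendibility} part of Hypothesis~\ref{hyp_cuspD_ext} (cuspidal characters extend to their stabilizers in $\bG'{}^F E(\bG'{}^F)$) is indispensable in \cite{TypeD1}: it is what allows one to construct equivariant extension maps for the cuspidal pairs and to control the Clifford-theoretic twists (cocycles, extensions of $\la$ inside $\NNN_{\GF}(\bL)$ and to automorphism stabilizers) so that a $D$-invariant label really produces a $D$-invariant character. Your proposal only uses a stabilizer-splitting transversal in $\cusp(\bL^F)$; but the stabilizer statement alone is already known unconditionally by Malle's theorem, so if your argument were complete the theorem would not need Hypothesis~\ref{hyp_cuspD_ext} at all — a sign that the concluding ``standard Clifford-theoretic comparison of stabilizers'' hides the main difficulty. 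Finally, deducing the needed statement about $\cusp(\bL^F)$ from the hypothesis on $\tD_{l',\mathrm{sc}}(q')$ is itself nontrivial ($\bL$ is only a central product of its semisimple factors with a torus, and $\wGF\cap\bL$ need not induce all diagonal automorphisms of the $\tD$-factor); your sketch compresses this into one sentence, whereas it occupies a substantial part of \cite{TypeD1}.
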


    \subsection{Adjoint groups as dual groups} \label{duals} We now consider a group $\II{H}@{\bH}=\bG^*$ dual to the group $\bG$ of \ref{not} above. This essentially means that a maximal torus $\II{S0}@{\bS_0}$ has been chosen in $\bH$ along with an isomorphism $\II{delta}@{\delta}\colon \III{X(\bT_0)}:=\Hom(\bT_0,\FFtimes)\xrightarrow{\sim} \III{Y(\bS_0)}:=\Hom(\FFtimes,\bS_0)$ compatible with roots. In particular the fact that $\bG$ is simply connected is equivalent to the equality $\ZZ\Phi(\bH,\bS_0)=X(\bS_0)$ so that $\bH =\bH_{{\mathrm{ad}}}$ is of adjoint type with a root system dual to the one of $\bG$ (types $\tB_l$ and $\tC_l$ for $l\geq 2$ are exchanged, other types are unchanged). We will also consider the simply connected covering 
    \begin{equation}\label{piH0}
\II{pi}@{\pi}\colon\bH_0\to \bH.
	\end{equation}
	
	The groups $E(\bH)\cong E(\bH_0)$ are defined on root subgroups in the same way as in \ref{not}, the choice of a basis $\Delta$ implying also a choice $\Delta^\vee$ in $\bH$. We define a group antiautomorphism \begin{equation}\label{sigma*}E(\bG)\to E(\bH),\ \ \ \si\mapsto \II sigmastar@{\si^*}\end{equation} by sending $F_p$ to the element of $E(\bH)$ defined in the same way, also denoted by $F_p$ and if $\tau\colon\Delta\to\Delta$ is an automorphism of the Dynkin diagram, we define $\btau^*$ as $\ov{\tau^\vee}\inv$ where $\tau^\vee$ is the same symmetry of $\Delta^\vee$. This makes $\si\colon \bG\to\bG$ and $\si^*\colon\bH\to\bH$ dual in the sense of \cite[Def. 2.1]{CS13} since the map induced by $\si$ on $X(\bT_0)$ is the scalar $p$ (hence self-adjoint) when $\si =F_p$, while $\si$ induces an isometry in the cases where $\si=\btau$ and so our choice of $\si^*$ is in each case the adjoint of $\si$ through the isomorphism $X(\bT_0)\xrightarrow{\sim} Y(\bS_0)\cong\text{Hom}(\ZZ\Phi^\vee,\ZZ)$.
	
We define $F^*\colon\bH\to\bH$ according to the above (i.e. $F^*=F_p^{m}\btau^*$ if $F=F_p^{m}\btau$ for $m\geq 1$ and $\tau\colon\Delta\to\Delta$ an automorphism of the Dynkin diagram) but we still denote it as $F\colon\bH\to\bH$. This also lifts into $F\colon\bH_0\to\bH_0$ with the same definition. We define $E(\HF)$ as before by restriction of $E(\bH)$ to $\HF$.

	\subsection{Semisimple classes} \label{ssclasses} The semisimple classes in the dual group $\bH$ and the finite group $\bH^F$ play a major role in the parametrization of $\Irr(\GF)$ and will be our main theme in the whole paper. We write $\II Hss@{\bH_{\textrm{ss}}}$ for the set of semisimple elements of $\bH$ and for $s\in \bH_{\textrm{ss}}$, we write $\II s@{(s)} $ for its $\bH$-conjugacy class and $\II AHs@{\AHs}:=\Cent_{\bH}(s)/\Cent^\circ_{\bH}(s)$.
	
	Let $\II{ClssH}@{\Cl_{\textrm{ss}}}(\bH):=\{(s) \mid s\in \bH_{\textrm{ss}} \}$ denote the set of semisimple conjugacy classes of $\bH$. For a semisimple element $s\in\HF$, we denote by $[s]_\HF$ its conjugacy class in $\HF$. The set ${\Cl_{\textrm{ss}}}(\bH)$ is acted upon by $E(\bH)$ and we often consider the set $\Cl_{\textrm{ss}}(\bH)^F$ of $F$-stable classes. If $\cC\in \Cl_{\textrm{ss}}(\bH)$ is a semisimple class of $\bH$ with $F(\cC)=\cC$, then $\cC^F\neq\emptyset$ by Lang's theorem \cite[Thm 21.7]{MT}. If $s\in\cC^F$, and therefore $\cC=(s)$, then \begin{equation}\label{CFs_a}\cC^F=\bigsqcup_{a\in A_\bH(s)_F}{[s_a]_\HF}\text{\ \ \ (disjoint union),}\end{equation} where $s_a={}^hs$ for some $h\in \bH$ with $h\inv F(h)$ an element of $\Cent_\bH(s)$ whose image in $A_\bH(s)$ represents $a\in A_\bH(s)/[A_\bH(s),F]$, see \cite[Thm 26.7(b)]{MT}.
	
	Let us also recall how this relates with the simply connected covering $\II {pi}@{\pi}\colon \II{H0}@{\bH_0}\to\bH$, see (\ref{piH0}). For $s\in\bH_{\textrm{ss}}$ an injective group morphism 
\begin{equation}\label{omegas}\II omegas@{\omega_{s}}:\AHs \lra \Z(\bH_0)=\pi\inv(1) \text{\ \ is given by\ \ } a \Cent^\circ _{\bH}(s) \mapsto [a, s_0],\end{equation}
	where $ s_0 \in \pi\inv(s)$, see \cite[Sect. 8.A]{Cedric}. We denote its image as $\II{Bs}@{B(s)}:=\omega_{  s}(\AHs)$.
 By abuse of notation we also denote by $\omega_s$ the induced morphism $\Cent_\bH(s)\lra \Z(\bH_0)$. 
 Note that whenever $s\in\HF$ then $\omega_s$ is clearly $F$- and $\Cent_{\HF E(\HF)} (s) $-equivariant.
	
\subsection{Lusztig series of irreducible characters} \label{series}
When $\bT$ is an $F$-stable maximal torus of $\bG$ and $\theta\in\Irr(\bT^F)$, Deligne-Lusztig have defined $R_\bT^\bG (\theta)\in\ZZ\Irr(\bG^F)$ (see for instance \cite[Sect.~2.2]{GM}), a generalized character that depends only on the $\GF$-conjugacy class of the pair $(\bT,\theta)$. Duality between $(\bG,F)$ and $(\bH,F)$ yields a bijection 
\begin{equation}
	\label{Ttheta}
	(\bT,\theta)\stackrel{\bG}{\longleftrightarrow }(\bT^*,s)
\end{equation}
 between $\GF$-conjugacy classes of pairs $(\bT,\theta)$ as above and $\HF$-conjugacy classes of pairs $(\bT^*,s)$ with $\bT^*$ an $F$-stable maximal torus of $\bH$ and $s\in \bT^*{}^F$. One then sets ${R}_{\bT^*}^\bG(s) :={R}^\bG_\bT(\theta) $, see \cite[Sect. 2.6]{GM}. If $s\in\bH_{\textrm{ss}}^F$ we denote by $\II EG@{\cE(\GF,{[s]})}$ (Lusztig's {\it rational series} of characters) the subset of $\Irr(\GF)$ defined as $\cE(\GF,{[s]}):=\bigcup_{\bT^*}\Irr({R}_{\bT^*}^\bG(s))$ where $\bT^*$ ranges over the $F$-stable maximal tori of $\bH$ containing $s$ (i.e. $F$-stable maximal tori of $\Cent^\circ_\bH(s)$), see \cite[Def. 2.6.1]{GM}. Then Lusztig's theorem tells us that \begin{equation}\label{RatSer}
\Irr(\GF)=\bigsqcup_{[s]}\cE(\GF,{[s]}),
\end{equation} a disjoint union where ${[s]}$ ranges over the semisimple classes of $\HF$, see \cite[Thm 2.6.2]{GM}. One has a similar notion of {\it geometric series} $\cE(\GF,\cC)$ for $\cC\in\Cl_{\textrm{ss}}(\bH)^F$ which is the union of all sets $\Irr(R_{\bT^*}^\bG(s))$ for an $F$-stable maximal torus $\bT^*$ and $s\in \bT^*{}^F\cap \cC$. Note that when $s\in \bH_{\textrm{ss}}^F$, then 
\begin{equation}\label{SplitSer}\cE(\GF,(s))=\bigsqcup_{a\in A_\bH(s)_F}\cE(\GF,[s_a])\end{equation}	as a consequence of the splitting of $(s)^F$ into $\HF$-conjugacy classes recalled in \ref{ssclasses}.

	Concerning automorphisms, $\si\in E(\GF)$ acts on the rational series of characters of $\GF$ via applying $\si^*$ (see \ref{duals}) on the semisimple element or the associated $\HF$-class, i.e. 
\begin{equation}	\chi^\si \in \cE(\GF,[\si^*{}(s)]) \ \ \text{ for every } s\in\bH_{\textrm{ss}}^F\ ,\ \ \chi \in \cE(\GF,[s]),\label{rem_ntt} \end{equation} by a result of Bonnaf\'e, see \cite[Cor.~2.4]{NTT}.
	
The above splittings (\ref{RatSer}) and (\ref{SplitSer}) among series apply in fact to any connected reductive group $\bK$ with a Frobenius endomorphism $F$ defining it over a finite field. Then the set of {\it unipotent characters} is $\II UCh@{\UCh(\bK^F)}:=\cE(\bK^F,(1))=\cE(\bK^F,[1])$.  Restriction and inflation maps induce natural bijections \begin{equation}\UCh(\bK^F)\longleftrightarrow \UCh([\bK,\bK]^F)\longleftrightarrow \UCh((\bK/\Z(\bK))^F),\label{UchKK}\end{equation}  see \cite[Prop. 11.3.8]{DiMi2}.

Returning to $(\bG,F)$ a simple simply connected group with dual $(\bH,F)$, a {\it Jordan decomposition of characters} defines for every $s\in\bH_{\textrm{ss}}^F$, a bijection $$\cE(\GF,[s])\xrightarrow{\sim}  \II UChCent@{\Uch(\Cent_\bH(s)^F)}:=\bigcup_{\phi\in\Uch(\Cent_\bH^\circ(s)^F)}\Irr(\phi^{\Cent_\bH(s)^F}),$$
see for instance \cite[Sect. 11.5]{DiMi2}, \cite[Thm 2.6.22]{GM}.


	\section{An equivariant Jordan decomposition of characters}\label{EquJor}
	Let $\GF$ be a finite group of Lie type as in \ref{not} above. The aim of this chapter is to establish that a Jordan decomposition of characters of $\GF$ (see \ref{series} above) can be chosen to be compatible with group automorphisms, whenever Condition $\Ap$ holds, see \Cref{thm66}. 

Let $(\bH, F)$ be associated with $(\bG ,F)$ as in \ref{duals}. We consider the following set that parametrizes $\Irr(\GF)$ through Jordan decomposition.

\begin{defi} \label{JorGF}
	Let $\II JorGF@{\Jor(\GF)}$ be the set of $\bH^F$-conjugacy classes of pairs $(s,\phi)$ where $s\in\bH_{\textrm{ss}}^F$ and $\phi\in   \Uch(\Cent_\bH(s)^F)$, where 
 $\Uch(\Cent_\bH(s)^F):=\bigcup_{\phi\in\Uch(\Cent_\bH^\circ(s)^F)}\Irr(\phi^{\Cent_\bH(s)^F})$ as in \ref{series}.
\end{defi}
	 As sketched in our introduction, an action of $\Out(\GF)\cong\cZ_F\rtimes E(\GF)$ can be defined on $\Jor(\GF)$, see \Cref{lem3_7} below. Then our strategy to define a $\cZ_F\rtimes E(\GF)$-equivariant Jordan decomposition $\Irr(\GF)\to \Jor(\GF)$ is given by the following trivial lemma. It is somehow implicit in the proof of \cite[Prop. 2.12]{S12} and the idea was applied on several similar occasions.  
	 
\begin{lem}\label{ZEsets} Let $Z$ and $E$ be finite groups, such that $E$ acts on $Z$ and $Z$ is abelian. Let $\cI, \cJ$ be $Z\rtimes E$-sets. 
	 	Let $\ov \cI$, $\ov \cJ$ be the $E$-sets given by the $Z$-orbits of $\cI$ and $\cJ$ respectively. Let $$\ov{\frak J}\colon\ov \cI\longrightarrow{} \ov \cJ$$ be an $E$-equivariant map. 
	 	 Assume that\begin{asslist} \item $\cI$ and $\cJ$ satisfy $(A')$, namely every $Z$-orbit in $\cK\in\{ \cI,\cJ \}$ contains some $x$ with  $(ZE)_x=Z_xE_x$ ;
   			\item  $Z_x=Z_y$ for every $x\in \cI$ with $Z$-orbit $\ov x$ and $y\in \ov{\frak J}(\ov x)$ ; and	\item  $\ov{\frak J}$ is a bijection.
 		\end{asslist} 
	 Then \ $\ov{\frak J}$ lifts to a $ZE$-equivariant  bijection $${\frak J}\colon \cI\xrightarrow{\sim} \cJ .$$
	 	\end{lem}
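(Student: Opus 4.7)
The plan is to construct $\mathfrak{J}$ by a disjoint gluing over $(Z\rtimes E)$-orbits. By (iii), if $\{\ov x_i\}_i$ is a system of representatives for the $E$-orbits in $\ov\cI$, then $\{\ov y_i:=\ov{\mathfrak{J}}(\ov x_i)\}_i$ is one for $\ov\cJ$, so that $\cI=\bigsqcup_i (Z\rtimes E).x_i$ and $\cJ=\bigsqcup_i (Z\rtimes E).y_i$ for any choices $x_i\in\ov x_i$ and $y_i\in\ov y_i$. Using (i), I would pick these representatives so that $(Z\rtimes E)_{x_i}=Z_{x_i}E_{x_i}$ and $(Z\rtimes E)_{y_i}=Z_{y_i}E_{y_i}$.

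The key technical step is the identity $E_{x_i}=E_{\ov x_i}$, i.e., that the $E$-stabilizer of an $(A')$-representative coincides with the $E$-stabilizer of its $Z$-orbit. Indeed, any $e\in E_{\ov x_i}$ satisfies $e.x_i=z.x_i$ for some $z\in Z$, so $z^{-1}e\in (Z\rtimes E)_{x_i}=Z_{x_i}E_{x_i}$; writing $z^{-1}e=z_1 e_1$ with $z_1\in Z_{x_i}$, $e_1\in E_{x_i}$, the uniqueness of the $Z\cdot E$-decomposition in $Z\rtimes E$ (valid since $Z\cap E=\{1\}$) forces $e=e_1\in E_{x_i}$. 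The same argument applies to $y_i$. Combining this with $E$-equivariance of $\ov{\mathfrak{J}}$, which together with its injectivity from (iii) gives $E_{\ov x_i}=E_{\ov y_i}$, and with condition (ii), which gives $Z_{x_i}=Z_{y_i}$, one obtains
\[
(Z\rtimes E)_{x_i}=Z_{x_i}E_{x_i}=Z_{y_i}E_{y_i}=(Z\rtimes E)_{y_i}.
\]

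With matching stabilizers in hand, I would define $\mathfrak{J}\colon (Z\rtimes E).x_i\to(Z\rtimes E).y_i$ by $g.x_i\mapsto g.y_i$; this is well-defined and $(Z\rtimes E)$-equivariantly bijective by orbit--stabilizer. Gluing over $i$ yields the desired $\mathfrak{J}\colon\cI\to\cJ$. A final check confirms that $\mathfrak{J}$ lifts $\ov{\mathfrak{J}}$: for any $g=ze\in Z\rtimes E$, the $Z$-orbit of $g.x_i$ is $e.\ov x_i$, whose image under $\ov{\mathfrak{J}}$ is $e.\ov y_i$ by $E$-equivariance, which is precisely the $Z$-orbit of $g.y_i$.

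The only non-routine point — and the reason $(A')$ is the natural hypothesis — is the stabilizer identity $E_x=E_{\ov x}$ on $(A')$-representatives; once this is secured, the rest is orbit bookkeeping, in the spirit of the argument implicit in \cite[Prop.~2.12]{S12}.
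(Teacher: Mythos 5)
Your argument is correct. The one step that needed care, the identity $E_{x_i}=E_{\ov x_i}$ for an $(A')$-representative via uniqueness of the $Z\cdot E$ decomposition in $Z\rtimes E$, is proved properly, and together with (ii) and the injectivity of $\ov{\frak J}$ it does give $(Z\rtimes E)_{x_i}=(Z\rtimes E)_{y_i}$, after which the orbit-by-orbit gluing and the final lifting check are routine. Your route differs from the paper's in mechanism: the paper converts hypothesis (i) into the existence of $E$-stable $Z$-transversals $\Bbb I\subseteq\cI$ and $\Bbb J\subseteq\cJ$ by citing \cite[Lem.~1.4]{TypeD1}, restricts $\ov{\frak J}$ to an $E$-equivariant bijection $\Bbb I\to\Bbb J$, and then extends $Z$-linearly by ${\frak J}(z.x_0)=z.{\frak J}(x_0)$, with (ii) ensuring well-definedness; you instead decompose $\cI$ and $\cJ$ into $(Z\rtimes E)$-orbits, choose just one $(A')$-representative per orbit on each side, match full stabilizers, and apply orbit--stabilizer. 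Your version is self-contained (the stabilizer identity you prove is essentially the content behind the cited equivalence between the stabilizer form of $(A')$ and the transversal form) and needs only one good point per $ZE$-orbit rather than a globally $E$-stable system of $Z$-orbit representatives; the paper's version is shorter modulo the citation and keeps the transversal picture that is reused elsewhere in the paper. Both hinge on the same factorization condition, so the difference is one of packaging rather than substance, but your packaging is a legitimate and slightly more elementary alternative.
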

	 \begin{proof}
	 	 By (i) there are $E$-stable $Z$-transversals $\Bbb I$ and $\Bbb J$ in $\cI$ and $\cJ$, respectively, see \cite[Lem.~2.4 ]{TypeD1}. Then $\ov{\frak J}$ induces an $E$-equivariant bijection ${\frak J}\colon\Bbb I\to\Bbb J$ by (iii). For any $x\in \cI$, one has $x=z.x_0$ for some $z\in Z$ and a unique $x_0\in \Bbb I$. We then define ${\frak J}(x)=z.{\frak J}(x_0)$. This is independent of the choice of $z$ since $Z_{x_0}=Z_{{\frak J}(x_0)}$ by (ii). The map ${\frak J}$ so defined is clearly bijective and $ZE$-equivariant. 
 	 \end{proof}
  
	We plan to apply the lemma with $\cI$ being $\Irr(\GF)$, $\cJ$ being $\Jor(\GF)$ and $\cZ_F\rtimes E(\GF)$ acting on them. We will prove first in \ref{ExUnCh} below that $\Jor(\GF)$ indeed satisfies an analogue of Condition $\Ap$ from \ref{Ainfty}. Then we will have to check an isomorphism of $E(\GF)$-sets between the $\cZ_F$-orbits in $\Irr(\GF)$ and $\Jor(\GF)$. This is essentially due to Lusztig through the theorem \cite[Prop. 5.1]{L88} comparing his Jordan decompositions   $\Irr(\wGF)\longleftrightarrow\Jor(\wGF)$ and  $\Irr(\GF)\longleftrightarrow\Jor(\GF)$, see also \cite[Chap.~15] {CE04}, \cite[Sect. 2.6]{GM}. We rephrase this in \Cref{Jordandec} as a variant of Jordan decomposition $\ov \JJ\colon \Irr(\GF)/\cZ_F\text{-action}\to \Jor(\GF)/\cZ_F\text{-action}$ for characters of $\GF$ in terms of $\wGF$-orbit sums. Those orbit sums are non-irreducible characters that are introduced in \ref{3B_orbitsums}. They will be used in our discussion of  Condition $\Ai$ in terms of counting invariant characters throughout the rest of the paper.

\subsection{Extending unipotent characters}\label{ExUnCh}
Our goal is to prove that the parametrizing set $\Jor(\GF)$ satisfies an analogue of $\Ap$, see \Cref{lem3_7}. The main task is to extend the unipotent characters of some $\Cent_{\bH}^\circ(s)^F$ to their stabilizer in $\Cent_{\bH^FE(\HF)}(s)$. We apply results of Lusztig and Malle, see also \cite[Hyp. 3.3]{DM90} and \cite[Prop. 11.5.3]{DiMi2}. A weak version of the statement below was shown in the proof of \cite[Thm 7.2]{FS} with a different argument.

\begin{theorem}\label{propunipext}
	Let $\bH$ be a simple algebraic group of adjoint type and $F\colon \bH\to\bH$ be a Frobenius endomorphism as in \ref{duals}. Denote by $\wh H:=\bH^F E(\HF)$ the semidirect product of the finite group $\HF$ with its field and graph automorphisms. 
	 Let $\bC$ be a closed connected $F$-stable reductive subgroup of $\bH$ and let $\wh C$ be its stabilizer under the action of $\bH^F E(\HF)$. Then maximal extendibility (see \ref{not11}) holds \wrt $\bC^F\unlhd \wh C$ for $\UCh(\bC^F)$.
		
\end{theorem}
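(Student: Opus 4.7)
The plan is to reduce from an arbitrary reductive $\bC$ to the case of a simple adjoint algebraic group through the canonical bijections (\ref{UchKK}). Both $[\bC,\bC]$ and $\Z([\bC,\bC])$ are characteristic in $\bC$, hence stable under every element of $\wh C$, and the bijections (\ref{UchKK}) are equivariant for automorphisms of $\bC^F$. Writing $\bC_{\mathrm{ad}}:=[\bC,\bC]/\Z([\bC,\bC])$, the kernel of $\bC^F\twoheadrightarrow \bC_{\mathrm{ad}}^F$ acts trivially on unipotent characters, and $\wh C$ projects onto a subgroup $\wh C_{\mathrm{ad}}$ of $\bC_{\mathrm{ad}}^FE(\bC_{\mathrm{ad}}^F)$ containing $\bC_{\mathrm{ad}}^F$. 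Extending $\phi\in\UCh(\bC^F)$ to $\wh C_\phi$ thus becomes equivalent to extending the associated $\bar\phi\in\UCh(\bC_{\mathrm{ad}}^F)$ to $(\wh C_{\mathrm{ad}})_{\bar\phi}$.

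Next, $\bC_{\mathrm{ad}}$ decomposes as an internal direct product $\bK_1\times\dots\times\bK_r$ of simple adjoint algebraic groups, the factors possibly being permuted by $F$ and by the image of $\wh C$. Grouping the factors into $F\wh C_{\mathrm{ad}}$-orbits and writing $\bar\phi=\phi_1\boxtimes\dots\boxtimes\phi_r$, a standard wreath-product/Clifford argument reduces the extension question to the following statement: \emph{for $\bK$ a simple adjoint algebraic group with Frobenius endomorphism $F'$, every unipotent character $\phi\in\UCh(\bK^{F'})$ extends to its stabilizer in $\bK^{F'}E(\bK^{F'})$.} This last claim is Malle's theorem on extendibility of unipotent characters, proved case by case via Lusztig's classification of $\UCh(\bK^{F'})$ by families and the explicit description of the action of field and graph automorphisms on families and Fourier matrices; the main genuinely nontrivial case is type $\tD_\ell$, where families are permuted by the nontrivial graph automorphism but the invariant characters still extend.

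The hard part is the wreath-product step, specifically when $F$ cyclically permutes several isomorphic simple factors $\bK_1,\dots,\bK_t$. The fixed-point subgroup $(\bK_1\times\dots\times\bK_t)^F$ is then identified via Shintani descent with $\bK_1^{F^t}$, and the diagonally defined unipotent characters must be tracked through this identification. One also has to verify that the stabilizer of $\bar\phi$ in $\wh C_{\mathrm{ad}}$ acts on each orbit of simple factors through a semidirect product $(\bK_1^{F^t} E(\bK_1^{F^t}))\wr C_t$ in a way compatible with the single-factor extensions, so that these extensions can be assembled coherently. Both ingredients are standard, but require careful bookkeeping of the interaction between field automorphisms, graph automorphisms, and the permutation action on factors; the relevant identifications are implicit in the description of $E(\GF)$ in \ref{not} and have been used in similar form in \cite{CS18B} and the references therein.
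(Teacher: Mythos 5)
Your proposal is correct and follows essentially the same route as the paper's proof: pass to the adjoint quotient using the automorphism-equivariant bijections (\ref{UchKK}), decompose it into simple factors grouped into $F$-orbits, reduce along the resulting wreath-product structure by standard Clifford theory of wreath products, and invoke Malle's extendibility theorem for unipotent characters of a single simple factor. The only cosmetic differences are that the paper channels the single-factor input through Lemmas~\ref{sim_uni} and \ref{ext_ker} (which in particular cover the degenerate small cases where $\bK^{F'}$ modulo its centre is not a simple group), and that what you call Shintani descent is just the elementary identification $(\bK_1\times\dots\times\bK_t)^F\cong\bK_1^{F^t}$ used in the paper.
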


A main ingredient is Malle's result from \cite{MaExt} asserting that every unipotent character of a simple group $S$ of Lie type extends to its stabilizer in $\Aut(S)$, see also \Cref{rem_extun} below.

\begin{lem}\label{sim_uni} 
	Let $\bM$ be a simple simply connected algebraic group endowed with an $\FF_q$-structure given by a Steinberg endomorphism $F:\bM\lra \bM$. Let $\Aut_F(\bM^F)$ be the group of automorphisms of $\bM^F$ induced by bijective algebraic endomorphisms of $\bM$ commuting with $F$. Then any element of $\UCh(\bM^F)$ has a kernel containing $\Z(\bM^F)=\Z(\bM)^F$ (see \cite[Rem.~6.2]{Cedric}) and, seen as a character of $\bM^F/\Z(\bM^F)$, extends to its stabilizer in $\Aut_F(\bM^F)$.
\end{lem}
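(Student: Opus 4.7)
The plan is to invoke two known results: (i) the standard fact that unipotent characters of a connected reductive group are trivial on the $F$-fixed centre, and (ii) Malle's theorem \cite{MaExt} asserting that every unipotent character of a finite simple group of Lie type extends to its stabilizer in $\Aut(S)$. The lemma will follow from (ii) by pulling back extensions along the natural map $\Aut_F(\bM^F)\to \Aut(\bM^F/\Z(\bM^F))$.

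For the kernel statement I would quote directly \cite[Rem.~6.2]{Cedric}: any $\chi\in\UCh(\bM^F)$ is an irreducible constituent of some $R_\bT^\bM(1)$ with $\bT$ an $F$-stable maximal torus containing $\Z(\bM)$, and the central character on $\Z(\bM)^F\subseteq \bT^F$ of every such constituent equals the restriction of the inducing character $1$, which is trivial. Hence $\Z(\bM^F)=\Z(\bM)^F\subseteq \ker\chi$, so that $\chi$ factors through $S:=\bM^F/\Z(\bM^F)$.

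For the extension statement, observe that every element $\alpha\in\Aut_F(\bM^F)$ comes from a bijective algebraic endomorphism of $\bM$; such a map preserves $\Z(\bM)$ and hence $\Z(\bM)^F$, yielding a canonical homomorphism $\rho\colon \Aut_F(\bM^F)\to \Aut(S)$. In the generic case $S$ is a non-abelian finite simple group of Lie type, so Malle's theorem provides an extension $\tilde\chi$ of $\chi$ to $S\rtimes \Aut(S)_\chi$. Since $\rho(\Aut_F(\bM^F)_\chi)\subseteq \Aut(S)_\chi$, pulling $\tilde\chi$ back along the induced morphism $S\rtimes \Aut_F(\bM^F)_\chi\to S\rtimes \Aut(S)_\chi$ yields the required extension of $\chi$ to its stabilizer in $\Aut_F(\bM^F)$.

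The main obstacle is the finite list of exceptional pairs $(\bM, F)$ for which $S$ fails to be a non-abelian finite simple group of Lie type (essentially the very small ranks or fields, e.g.\ $\SL_2(2)$, $\SL_2(3)$, $\SU_3(2)$). In each such case $\UCh(\bM^F)$ reduces to the trivial character and the Steinberg character, both of which are manifestly $\Aut_F(\bM^F)$-invariant and extend by a standard cohomological argument or by direct consultation of \cite{atlas}. Apart from this finite inspection, the proof is a straightforward bookkeeping translation of the two cited results into the algebraic framework used throughout the paper.
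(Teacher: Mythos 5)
Your kernel argument and your treatment of the generic case are fine, and in that case your route (Malle's theorem for the simple quotient plus pulling back along $\Aut_F(\bM^F)_\chi\to\Aut(S)_\chi$) is essentially the paper's: when $S=\bM^F/\Z(\bM^F)$ is simple the paper simply quotes \cite[Thm~2.4]{MaExt}, and the pull-back mechanism you describe is the content of its auxiliary Lemma~\ref{ext_ker} (note that Malle extends $\chi$ to the subgroup $\Aut(S)_\chi$ of $\Aut(S)$ containing $\Inn(S)$, so you should pull back along $\Aut_F(\bM^F)_\chi\to\Aut(S)_\chi$ rather than form an extra semidirect product $S\rtimes\Aut(S)_\chi$, but this is cosmetic).

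The genuine gap is in your handling of the exceptional cases where $S$ is not simple. First, your factual claim that $\UCh(\bM^F)$ then consists only of the trivial and Steinberg characters is false: $\SU_3(2)$ has a cuspidal unipotent character, and the remaining excluded groups $\Sp_4(2)$, $\tG_2(2)$, ${}^2\tB_2(2)$, ${}^2\tG_2(3)$, ${}^2\tF_4(2)$ (which you omit from your list) have $6$, $10$, $4$, and more unipotent characters respectively, so the "manifestly invariant and extendible" shortcut does not apply. Second, several of these quotients ($\PSL_2(3)$, $\PSU_3(2)$, ${}^2\tB_2(2)$) are solvable and do not appear in \cite{atlas}, so "direct consultation of the Atlas" cannot settle invariance and extendibility there, and a cohomological argument is not supplied. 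The paper avoids any case inspection by a different observation: in Malle's proof the simplicity of $S$ is used only to know that every automorphism of $S$ is induced by a bijective algebraic endomorphism of $\bM$ commuting with $F$; since the lemma only concerns $\Aut_F(\bM^F)$, whose elements are by definition of this form, Malle's \emph{proof} (not merely the statement) applies verbatim to all simple simply connected $\bM$, non-simple quotients included. Without either that observation or a correct explicit verification for the genuinely exceptional groups, your argument does not cover these cases.
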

\begin{proof} When 
	$S:=\bM_{\mathrm{}}^F/\Z(\bM_{\mathrm{}}^F)$ is a simple group, this is indeed Malle's theorem, see \cite[Thm~2.4]{MaExt}. Note however that the proof given in \cite{MaExt} actually applies when assuming only that $\bM$ is simple simply connected since the simplicity of $S$ is used there only through the fact that the automorphisms of $S$ then come from bijective algebraic endomorphisms of $\bM_{\mathrm{ }}$ commuting with $F$, see beginning of proof of \cite[Prop.~2.2]{MaExt}. 
\end{proof}

The above will be used in conjunction with the following general statement.

\begin{lem}
	\label{ext_ker} Let $N$, $A$ be finite groups, let $\chi\in\Irr(N)$ with $\Z(N)$ in its kernel. Assume $N/\Z(N)\leq A\leq \Aut(N)$ and that $\chi$, seen as a character of $N/\Z(N)$ extends to $A_\chi$. Then the character  $\chi$ extends to $G_\chi$, whenever $G$ is a finite group with $N\unlhd G$ and the image of $G$ in $\Aut(N)$ is a subgroup of $A$. 
\end{lem}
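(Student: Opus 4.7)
The natural strategy is to transfer the given extension on the quotient $N/\Z(N)$ back up to $G_\chi$ by inflation along the conjugation homomorphism. The plan is to show that all the relevant data (stabilizers, restrictions, irreducibility) are preserved under this pullback, reducing everything to the hypothesis.

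First I would let $\pi\colon G\to\Aut(N)$ denote the conjugation homomorphism and set $\bar G:=\pi(G)$. By hypothesis $\bar G\leq A$, and moreover $\pi(N)=N/\Z(N)\leq\bar G$ since $\Z(N)$ is exactly the kernel of $N$ acting on itself by conjugation. Since $\Z(N)\leq\ker\chi$, the character $\chi$ descends to a character $\bar\chi\in\Irr(N/\Z(N))$. I would then observe the key identification of stabilizers: because $\chi$ is the inflation of $\bar\chi$ through $\pi|_N\colon N\twoheadrightarrow N/\Z(N)$ and $G$ acts on $\Irr(N)$ through $\bar G\leq\Aut(N)$, an element $g\in G$ fixes $\chi$ if and only if $\pi(g)$ fixes $\bar\chi$, hence
\[
G_\chi \;=\; \pi^{-1}\bigl(\bar G_{\bar\chi}\bigr),\qquad\text{where}\qquad \bar G_{\bar\chi}=\bar G\cap A_{\bar\chi}.
\]

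Next I would invoke the hypothesis: $\bar\chi$, viewed as a character of $N/\Z(N)$, extends to some $\tilde{\bar\chi}\in\Irr(A_{\bar\chi})$. Restricting gives an extension (still denoted $\tilde{\bar\chi}$) to the subgroup $\bar G_{\bar\chi}$. I would then pull this back via $\pi|_{G_\chi}\colon G_\chi\to\bar G_{\bar\chi}$, obtaining a character $\psi:=\tilde{\bar\chi}\circ\pi$ of $G_\chi$. Inflation along a surjective group homomorphism preserves irreducibility (it preserves inner products of characters), so $\psi\in\Irr(G_\chi)$. Finally, for $n\in N$ one has $\pi(n)\in N/\Z(N)\leq\bar G_{\bar\chi}$ and $\tilde{\bar\chi}(\pi(n))=\bar\chi(\pi(n))=\chi(n)$, which shows $\psi\rceil_N=\chi$. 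Thus $\psi$ is the desired extension.

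The argument is essentially routine bookkeeping and I do not expect any real obstacle; the only point requiring a moment of care is the stabilizer identification $G_\chi=\pi^{-1}(\bar G_{\bar\chi})$, which relies on the fact that $\chi$ is inflated from $N/\Z(N)$ and hence its stabilizer is determined by that of $\bar\chi$ inside $\Aut(N)$.
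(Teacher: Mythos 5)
Your proof is correct and is essentially the paper's own argument: the paper extends $\chi$ to $N\Cent_G(N)$ with $\Cent_G(N)$ in the kernel and then passes to $G/\Cent_G(N)\leq A$, which is exactly your inflation of the given extension along the conjugation map $\pi$ (whose kernel on $G_\chi$ is $\Cent_G(N)$). Your stabilizer identification $G_\chi=\pi^{-1}(\bar G_{\bar\chi})$ and the final check that $\psi\rceil_N=\chi$ (which also gives irreducibility) are fine.
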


\begin{proof}
We first extend $\chi$ into some $\wt\chi\in \Irr(N\Cent_G(N))$ by imposing that $\Cent_G(N)$ is in the kernel of $\wt\chi$, which is possible since $\Cent_G(N)\unlhd G$ and $N\cap\Cent_G(N)=\Z(N)$ is in the kernel of $\chi$. Now extending $\chi$ to its stabilizer in $G$ is equivalent to extending $\wt\chi$ seen as a character of $N \Cent_G(N)/\Cent_G(N)$ to its stabilizer in $G/\Cent_G(N)$. But $G/\Cent_G(N)$ is the image of $G$ in $\Aut(N)$, so our problem reduces to the initial situation $N/\Z(N)\leq A$.
\end{proof}

\begin{proof}[Proof of Theorem~\ref{propunipext}.]
	Recall $\wh H=\bH^F E(\HF)$ and $\wh C=\wh H_\bC$. As explained in 1.B, we can define the stabilizer under $\wh H$ of an $F$-stable subgroup of $\bH$, such as $\bC$. Define $\bZ=\Z(\bC)$ and $\bC_\ad:=\bC/\bZ$. Then $\wh C$ acts on both $\bC^F$ and $\bC_\ad^F$. On the other hand $\bC_\ad$ is a direct product of simple algebraic groups $\bC_1,\dots ,\bC_t$ with trivial centers that are the minimal normal closed connected subgroups of $\bC_\ad$, see \cite[Thm 8.21]{MT}. They are permuted by any element of $\bH^F\Cent_{E(\bH)}(F)$ normalizing $\bC$, in particular by $F$. Let $m_i$ be the cardinality of the orbit of $i$ under $F$, so that $F^{m_i}(\bC_i)=\bC_i$ and $$\bC_\ad^F= \prod_{i\in\Bbb I}C_i\text{ with }C_i:=\bC_i^{F^{m_i}}$$ for a transversal $\Bbb I$ of the $i$'s under the permutation induced by $F$. Denote by $\Aut_{F^{m_i}}(C_i)$ the restrictions to $C_i=\bC_i^{F^{m_i}}$
	of the bijective endomorphisms of $\bC_i$ commuting with $ {F^{m_i}}$. One has $\Aut_{F^{m_i}}(C_i)=C_i E(C_i)$ in the notation of \ref{duals}, see \cite[Thm 1.15.6]{GLS3}.

	Both $\bC^F$ and $\Cent_{\wh C}(\bC_\ad^F) $ are normal in $\wh C$, their intersection being $\bZ^F$ since $\Cent_{\bC_\ad}(\bC_\ad^F)=\Z(\bC_\ad)=1$ by \cite[Lem.~6.1]{Cedric}. Let us show that the action of $\wh C$ on $\bC_\ad^F $ induces an injection $$ \wh C/\Cent_{\wh C}(\bC_\ad^F) \hookrightarrow \prod_{i\in\ov{\Bbb I}}\Aut_{F^{m_i}}(C_i)\wr \Sym_{c_i}$$ where $\ov{\Bbb I}$ is a transversal in ${\Bbb I}$ modulo isomorphism between the corresponding $C_i =\bC_i^{ F^{m_i}}$'s and $c_i$ is the number of $j\in\Bbb I$ with $C_j\cong C_i$. This essentially comes from what has been said above about $\wh C$ acting on the components $\bC_i$. An element of $\wh C$ permutes the components $\bC_i$ of $\bC_\ad$, so it induces an automorphism of $\bC_\ad^F= \prod_{i\in\Bbb I}\bC_i^{F^{m_i}}$ that permutes the terms $C_i=\bC_i^{F^{m_i}}$ ($i\in\Bbb I$) of the latter product. For a given $i\in\Bbb I$, the action of $\wh C_{C_i}$ on $C_i=\bC_i^{F^{m_i}}$ is by the restriction of a bijective algebraic endomorphism of $\bC_i$ that commutes with ${F^{m_i}}$. This gives the claimed wreath product structure.
	
	Let's take $\phi\in\UCh(\bC^F)$. It is trivial on  $\bC^F\cap\Cent_{\wh C}(\bC_\ad^F) =\bZ^F $ so we may extend $\phi$ into $\phi_\ad\in \Irr (\bC^F\Cent_{\wh C}(\bC_\ad^F) )$ by imposing that the subgroup $\Cent_{\wh C}(\bC_\ad^F) $, normal in $\wh C$, is in the kernel of $\phi_\ad$. One has clearly $\wh C_{\phi_\ad}=\wh C_\phi$.  To show our extendibility statement it now suffices to check the image of our situation through the injection above, namely that $\phi_\ad$ extends to its stabilizer in  $\prod_{i\in\ov{\Bbb I}}\Aut_{F^{m_i}}(C_i)\wr \Sym_{c_i}$, since $\phi_\ad $ has   $\Cent_{\wh C}(\bC_\ad^F) $ in its kernel.
	
	Let $C'_i:= \mathrm{O}^{p'}(C_i)=\pi_i((\bC_i)_\mathrm{sc}^{ F^{m_i}})  $ where $\pi_i$ is the simply connected covering of $\bC_i$, see \cite[Thm~24.15]{MT}.
	
	The adjoint quotient $\bC\to\bC_\ad$ yields an injection of $\bC^F/\bZ^F$ into  $C'$ such that   $\prod_{i\in\Bbb I}C_i
	'\leq C'\leq\prod_{i\in\Bbb I}C_i =\bC_\ad^F$, see \cite[Prop.~24.21]{MT}. 
	We then get a character $\phi'\in\UCh(C')$ which extends to a unique $\phi_{\Bbb I}\in \UCh( \prod_{i\in\Bbb I}C_i)$ and restricts to a unique $\phi_{\mathrm{sc}}\in \UCh(\prod_{i\in\Bbb I}C_i')$ through the natural identifications of (\ref{UchKK}) $$\UCh (C')\longleftrightarrow \UCh (\bC^F)\longleftrightarrow \UCh (\bC_\ad^F)\longleftrightarrow \UCh (\bC_{\mathrm{sc}}^F)=\UCh(\prod_{i\in\Bbb I}C_i')$$ which also imply that $\Aut_F(\bC_\ad^F)_{\phi'_{ }}$ fixes $\phi_{\Bbb I}$ and $\phi_{\mathrm{sc}}$. Now Lemma~\ref{ext_ker} and Lemma~\ref{sim_uni} imply that $\phi_{\mathrm{sc}}$ and therefore $\phi '$ extends to its stabilizer in $\prod_{i\in {\Bbb I}}\Aut_{F^{m_i}}(C_i)$. Then the following lemma gives an extension of $\phi'$ to its stabilizer in $\prod_{i\in\ov{\Bbb I}}\Aut_{F^{m_i}}(C_i)\wr \Sym_{c_i}$.
\end{proof}

\begin{lem}
	Let $X\unlhd Y$ be finite groups and assume maximal extendibility holds for a $Y$-stable subset $\cM$ of $\Irr(X)$. Let $n\geq1$ and $\cM^n$ be the corresponding subset of $\Irr(X^n)=\Irr(X)^n$. Then maximal extendibility holds \wrt $X^n\unlhd Y\wr\Sym_n$ for $\cM^n$.
\end{lem}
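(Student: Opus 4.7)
The plan is to reduce to the case of a single $Y$-orbit of components and then exploit the natural $\Sym_n$-action on tensor powers of the extending module.

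Given $\chi=\chi_1\otimes\cdots\otimes\chi_n\in\cM^n$, I would first partition $\{1,\ldots,n\}$ into blocks $I_1,\ldots,I_r$ according to the $Y$-orbits of the $\chi_i$'s. Then $(Y\wr\Sym_n)_\chi$ is contained in $\prod_{k=1}^r (Y\wr\Sym_{I_k})$ and decomposes as the direct product of the stabilizers inside each factor, so it suffices to treat the case where all $\chi_i$ lie in a single $Y$-orbit. Using that $\cM$ is $Y$-stable, I fix a representative $\chi_0\in\cM$ of that orbit and replace $\chi$ by its conjugate under an appropriate element of $Y^n$ (which does not change the extendibility problem) to arrange $\chi=\chi_0^{\otimes n}$. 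A direct check then shows that the stabilizer becomes $H=Y_{\chi_0}\wr\Sym_n=Y_{\chi_0}^n\rtimes\Sym_n$, since for identical tensor factors every permutation is admissible.

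At this point I would invoke the hypothesis: maximal extendibility for $\cM$ yields an irreducible extension $\wt\chi_0\in\Irr(Y_{\chi_0})$ of $\chi_0$, afforded by some module $V$. The module $V^{\otimes n}$ carries a natural $H$-action in which $Y_{\chi_0}^n$ acts factor-wise and $\Sym_n$ permutes the tensor slots; restriction to $Y_{\chi_0}^n$ gives the external tensor $V^{\boxtimes n}$, which affords $\wt\chi_0^{\otimes n}$, and further restriction to $X^n$ recovers $\chi$. Thus the character of $V^{\otimes n}$ is a genuine extension of $\chi$ to $H$. Irreducibility comes for free: any $H$-submodule, being in particular an $X^n$-submodule, would restrict to a subrepresentation of the irreducible $\chi$ and hence be $0$ or all of $V^{\otimes n}$.

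I do not expect any genuine obstacle. The only point that needs a little care is the stabilizer computation in the reduction step, namely that after conjugating to $\chi_0^{\otimes n}$ the whole $\Sym_n$ sits inside the stabilizer; but this is immediate once the tensor factors are literally equal. The $\Sym_n$-twist on the tensor power is the classical source of extensions in wreath products, and the $Y$-stability of $\cM$ is exactly what keeps the whole argument inside $\cM^n$.
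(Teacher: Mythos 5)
Your argument is correct and follows essentially the same route as the paper: reduce via $Y^n$-conjugation and the block decomposition of the stabilizer to the case $\chi=\chi_0^{\otimes n}$ with stabilizer $Y_{\chi_0}\wr\Sym_n$, then extend using an extension $\wt\chi_0\in\Irr(Y_{\chi_0})$ of $\chi_0$. The only difference is cosmetic: where the paper cites the standard representation theory of wreath products (\cite[Cor.~10.2]{Navarro_book}) for the extension of $\wt\chi_0^{\otimes n}$, you construct it directly as the slot-permutation action on $V^{\otimes n}$, with irreducibility immediate from the irreducibility of the restriction to $X^n$ — which is exactly the content of that citation.
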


\begin{proof} Let $\chi_1,\dots ,\chi_n\in\cM$ and $\chi=\chi_1\times\dots\times\chi_n\in \Irr(X^n)$. Extending $\chi$ to its stabilizer in $Y\wr\Sym_n$ is equivalent to doing the same for $\chi^y$ for $y\in Y\wr\Sym_n$. Taking $y$ in $Y^n$, this allows to arrange that, for any $1\leq i,j\leq n$, $\chi_j$ is in the $Y$-orbit of $\chi_i$ if and only if $\chi_i=\chi_j$. This being now assumed, the stabilizer of $\chi$ in $Y\wr\Sym_n$ is a direct product along the fibers of  $i\mapsto \chi_i$, so we can assume that $\chi_1=\dots=\chi_n$ and therefore $(Y\wr\Sym_n)_\chi = Y_{\chi_1}\wr\Sym_n$. Taking now $\wt\chi_1\in\Irr(Y_{\chi_1}) $ an extension of $\chi_1$, ensured by assumption, we get $(\wt\chi_1)^n\in\Irr((Y_{\chi_1})^n)$ which is fixed under $\Sym_n$ and extends to $Y_{\chi_1}\wr\Sym_n$ by the representation theory of wreath products, see \cite[Cor.~10.2]{Navarro_book}.
\end{proof}

\begin{rem} \label{remtoTh2.3} (a) \Cref{propunipext} will be applied through \Cref{extCent} below where $\bH$ is as in \ref{duals} the dual group of $\bG$, hence the assumption that $\bH$ is of adjoint type. In view of the proof, this assumption could be removed up to defining $E(\bH^F)$ as a group of automorphisms of $\bH^F$ obtained by restriction of bijective endomorphisms of $\bH$ commuting with $F$.
	
	(b) The proof of \Cref{propunipext} simplifies whenever $p\geq 5$ since then the wreath product action argument is made more evident by the fact that the finite groups $C_i$ ($i\in \Bbb I$) are quasisimple, thus implying that the full automorphism group of $\prod_{i\in \Bbb I} C_i$ is a wreath product, see for instance \cite[Lem. 10.24(a)]{Navarro_book}.
\end{rem}

A consequence of the above \Cref{propunipext} is the answer to a question of M. Brou\'e (see \cite{Broue}), originally on $d$-cuspidal unipotent characters. See also Question (P) in \cite[Introduction]{TypeD1} for a more general problem.

\begin{cor}
	Let $\bG$ be a connected reductive group defined over a finite field with associated Frobenius $F\colon\bG\to\bG$, let $\bM$ be an $F$-stable Levi subgroup of $\bG$. Then maximal extendibility holds \wrt $\bM^F\unlhd\NNN_{\bG}(\bM)^F$ for $\UCh(\bM^F)$.
\end{cor}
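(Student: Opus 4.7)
The plan is to reduce the corollary to \Cref{propunipext} by working at the adjoint level. Let $\phi \in \UCh(\bM^F)$ and set $N := \NNN_\bG(\bM)^F$, noting that $N$ acts on $\bM$ by $\bG$-conjugation, hence by bijective algebraic endomorphisms commuting with $F$. Using the natural bijection (\ref{UchKK}), I would replace $\phi$ by the corresponding $\phi_\ad \in \UCh(\bM_\ad^F)$, where $\bM_\ad := \bM/\Z(\bM)$ is semisimple of adjoint type. Since this bijection is $N$-equivariant, it suffices to extend $\phi_\ad$ to its stabilizer in the image of $N$ in $\Aut(\bM_\ad^F)$; an application of \Cref{ext_ker} will then lift such an extension back across the quotient $\bM^F \to \bM_\ad^F$ to yield the required extension of $\phi$ to $N_\phi$.

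Once at the adjoint level, I would imitate the proof of \Cref{propunipext} verbatim. Decompose $\bM_\ad = \bC_1 \times \cdots \times \bC_t$ into its minimal closed connected normal subgroups; these are simple adjoint algebraic groups permuted both by $F$ and by $N$. Letting $\Bbb I \subseteq \{1,\ldots,t\}$ be a transversal for the $F$-action on indices with $m_i$ the orbit size of $i$, one has $\bM_\ad^F = \prod_{i \in \Bbb I} C_i$ with $C_i := \bC_i^{F^{m_i}}$, and the action of $N$ factors through an injection $N/\Cent_N(\bM_\ad^F) \hookrightarrow \prod_{i \in \ov{\Bbb I}} \Aut_{F^{m_i}}(C_i) \wr \Sym_{c_i}$ exactly as in \Cref{propunipext}. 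Then \Cref{sim_uni} (Malle's extendibility theorem), \Cref{ext_ker}, and the wreath product lemma combine to extend $\phi_\ad$ to its stabilizer.

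No genuine obstacle is expected: the argument is essentially a direct transcription of the proof of \Cref{propunipext}, with the ambient group $\wh H = \bH^F E(\HF)$ and its subgroup stabilizer $\wh C$ replaced throughout by $N = \NNN_\bG(\bM)^F$. The only substantive check is that conjugation by $N$-elements induces bijective algebraic endomorphisms of each simple factor $\bC_i$ commuting with $F^{m_i}$, which is immediate from the definition of $N$ as the $F$-fixed points of a subgroup of $\bG$.
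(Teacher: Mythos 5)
Your argument is correct, but it is not the route the paper takes. The paper's proof of this corollary is a two-line reduction: since $\bG$ is defined over a finite field it embeds $F$-equivariantly as a closed subgroup of $\GL_n(\FF)$, and further into $\bH:=\PGL_{n+1}(\FF)$, which is simple of adjoint type; then \Cref{propunipext} is applied as a black box with $\bC:=\bM$, and since $\NNN_{\GF}(\bM,\phi)\leq \NNN_{\HF E(\HF)}(\bM,\phi)$ the extension obtained there simply restricts to the group in the statement. You instead re-prove a normalizer version of \Cref{propunipext} by running its internal argument with $N=\NNN_{\bG}(\bM)^F$ in place of $\wh C=\wh H_\bC$; this does go through, because that proof only ever uses that the ambient group acts on the reductive subgroup by bijective algebraic automorphisms commuting with $F$, which conjugation by $F$-fixed elements certainly provides. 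The paper's route buys brevity and reuse of the theorem; yours makes visible that the hypothesis that the ambient group be $\bH^F E(\HF)$ with $\bH$ simple adjoint is inessential, at the cost of duplicating the work. One caveat about your first paragraph: the shortcut ``replace $\phi$ by $\phi_\ad\in\UCh(\bM_\ad^F)$ and lift back with \Cref{ext_ker}'' is too quick as stated, since the image of $\bM^F$ in $\bM_\ad^F$ is in general a proper subgroup (sandwiched between $\mathrm{O}^{p'}(\bM_\ad^F)$ and $\bM_\ad^F$), and \Cref{ext_ker} operates with $\bM^F/\Z(\bM^F)$ inside $\Aut(\bM^F)$ rather than with $\bM_\ad^F$; but since your second paragraph follows the proof of \Cref{propunipext} verbatim, which handles exactly this point through the intermediate group $C'$ and the characters $\phi'$, $\phi_{\Bbb I}$, $\phi_{\mathrm{sc}}$, this imprecision does not affect the validity of your argument.
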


\begin{proof} By definition of algebraic groups defined over a finite field, the group $\bG$ embeds as a closed subgroup of $\GL_n(\FF)$ where $F$ extends into the raising of matrix entries to the $q$-th power. Embedding further $\GL_n(\FF)\hookrightarrow \bH:=\PGL_{n+1}(\FF)$, we can now apply Theorem~\ref{propunipext} to that $\bH$ and $\bC:=\bM$. We get that every $\phi\in\UCh(\bM^F)$ extends to $\NNN_{\HF E(\HF)}(\bM,\phi)\geq \NNN_{\GF}(\bM,\phi)$. 
\end{proof}

Our main application of \Cref{propunipext} is to centralizers of semisimple elements. 
For any semisimple $s\in \bH^F$ the group $\Cent^\circ_\bH(s)$ is an $F$-stable connected reductive group hence taking $\Cent^\circ_\bH(s)$ as $\bC$ in \Cref{propunipext} immediately implies the following.

\begin{cor}\label{extCent} Let $(\bH , F)$ as in \ref{duals}.
	 If $s\in \bH_{\textrm{ss}}^F$ and $\phi\in \UCh(\Cent^\circ_\bH(s)^F)$, then $\phi$ has a $\Cent_{\bH^F E(\bH^F)}(s)_\phi$-invariant extension to $\Cent_\bH(s)^F_\phi$ (and in fact also extends to $\Cent_{\bH^F E(\bH^F)}(s)_\phi$).
\end{cor}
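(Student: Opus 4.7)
The corollary is meant to be an immediate consequence of \Cref{propunipext} with the choice $\bC := \Cent^\circ_\bH(s)$, so my proof proposal is simply to verify the hypotheses of that theorem for this $\bC$ and then trace the extension back to the groups in the statement. I would organize the verification as follows.

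First, I would check that $\bC = \Cent^\circ_\bH(s)$ is a closed connected $F$-stable reductive subgroup of $\bH$. Closedness and connectedness are built into the definition of identity component. The $F$-stability is immediate from $s \in \HF$: $F$ preserves $\Cent_\bH(s)$, hence its identity component. Reductivity is the standard fact that the connected centralizer of a semisimple element in a connected reductive group is reductive (e.g.\ \cite[Thm~14.2]{MT}-type statement). So \Cref{propunipext} applies and yields an extension $\wt\phi \in \Irr(\wh C_\phi)$ of $\phi$, where $\wh C = (\bH^F E(\HF))_{\bC}$.

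Second, I would observe that $\Cent_{\bH^F E(\bH^F)}(s) \leq \wh C$: any element of $\bH^F E(\HF)$ centralizing $s$ normalizes $\Cent_\bH(s)$ and therefore preserves its identity component $\bC$. Consequently $\Cent_{\bH^F E(\bH^F)}(s)_\phi \leq \wh C_\phi$, and the restriction $\wt\phi\rceil_{\Cent_{\bH^F E(\bH^F)}(s)_\phi}$ is an extension of $\phi$ (it is irreducible, since its further restriction to $\Cent^\circ_\bH(s)^F$ is the irreducible character $\phi$). This proves the parenthetical assertion that $\phi$ extends to $\Cent_{\bH^F E(\bH^F)}(s)_\phi$. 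Restricting once more to the intermediate subgroup $\Cent_\bH(s)^F_\phi$ gives an extension of $\phi$ to $\Cent_\bH(s)^F_\phi$ which, being obtained as the restriction of a character already defined on $\Cent_{\bH^F E(\bH^F)}(s)_\phi$, is automatically stable under that group — yielding the main assertion.

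There is no real obstacle here since \Cref{propunipext} has already absorbed all the work (Malle's extendibility of unipotent characters together with the wreath product reduction). The only point requiring any care is the containment $\Cent_{\bH^F E(\bH^F)}(s)_\phi \leq \wh C_\phi$ and the fact that restricting an irreducible extension to an intermediate normal overgroup of $\bC^F$ still gives an irreducible extension; both are routine.
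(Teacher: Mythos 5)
Your proposal is correct and follows exactly the paper's route: the paper likewise obtains the corollary by taking $\bC=\Cent^\circ_\bH(s)$ in Theorem~\ref{propunipext}, noting $\Cent_{\bH^F E(\HF)}(s)\leq \wh H_\bC$, and your write-up merely makes explicit the routine restriction/normality checks that the paper leaves implicit.
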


The above \Cref{extCent} is the main point in proving that $\Jor(\GF)$ (see \Cref{JorGF}) satisfies an analogue of Condition $\Ap$. We first define the action of $\wbG^F E(\GF)$ on  $\Jor(\GF)$,  sketched  in our Introduction.

Recall that ${\calZ_F}:=\wGF/(\GF\Z(\wGF))$ from (\ref{cZF}) induces the diagonal outer automorphisms of $\GF$ and acts on $\Irr(\GF)$. Let us recall how to associate a linear character of $\AHs^F=\CHFs/ 
\CoHFs$ to every element of $\calZ_F$.

\begin{defi}\label{omegahat} 
Let $s\in \bH_{\textrm{ss}}^F$. Combining the isomorphism $\calZ_F\ra \Lin(\Z(\bH_0)^F)$ given by duality, with the morphism 
$\omega_s: \AHs\lra \Z(\bH_0)$ (see (\ref{omegas}) in \ref{ssclasses}) we obtain a surjective group morphism 
$$\II omegahat0s@{\protect{\wh\omega _{ s}}}: {\mathcal Z}_F\to \Lin(\AHs^F) ,$$
see also \cite[Sect.~8A]{Cedric} and \cite[Sect.~8]{CS17A} where the map is denoted by $\wh\omega^0 _{\bG,s}$.
\end{defi}

\begin{prop}\label{lem3_7}  For $s\in\bHssF$, $\phi\in\UCh(\Cent_{\HF}(s))$, $z\in\cZ_F$, $\si\in E(\GF)$, set
	$$z.(s,\phi):=(s,\wh \omega _{s}(z)\phi )  \text{ \ \ \ \ 
	and \ \ \ \ \ }\sigma.(s,\phi )=(\sigma^*{}^{-1}(s), \phi\circ \sigma^*) $$
	where $\si^*\in E(\HF)$ is associated with $\si$ as in \ref{duals}. 
	\begin{thmlist}
	\item	This defines an action of $\calZ_F\rtimes E(\GF)$ on $\Jor(\GF)$.
		
	\item	$\Jor(\GF)$ satisfies $\Ap$, namely any element of $\Jor(\GF)$ has a $\cZ_F$-conjugate $x$ such that $(\cZ_FE(\GF))_x=(\cZ_F)_xE(\GF)_x$.
	\end{thmlist}
	
\end{prop}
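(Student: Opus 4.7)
For part (a), I would check each of the three required properties separately. Well-definedness on $\HF$-conjugacy classes follows, for the $\cZ_F$-formula, from the $\HF$-equivariance of $\omega_s$ recalled just after \eqref{omegas}; for the $E(\GF)$-formula it follows because $\HF$-conjugation by $h$ of $(s,\phi)$ corresponds, after applying $\sigma$, to $\HF$-conjugation by $\sigma^{*-1}(h) \in \HF$ of $(\sigma^{*-1}(s),\phi\circ \sigma^*)$. Each formula is a left action: automatically for $\cZ_F$ since $\wh\omega_s$ is a homomorphism into an abelian group; and for $E(\GF)$ because the antihomomorphism property $(\sigma_1\sigma_2)^* = \sigma_2^*\sigma_1^*$ is exactly compensated by the inverse on the first coordinate in the formula $\sigma.(s,\phi) = (\sigma^{*-1}(s),\phi\circ\sigma^*)$. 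Semidirect product compatibility reduces to the equivariance identity
\begin{equation*}
\wh\omega_{\sigma^{*-1}(s)}(\sigma z \sigma^{-1}) \;=\; \wh\omega_s(z) \circ \sigma^*,
\end{equation*}
which I would derive from the evident naturality of the commutator formula $\omega_s(a) = [a,s_0]$ defining $\omega_s$ in \eqref{omegas} under algebraic automorphisms of $\bH$, combined with the $E(\GF)$-equivariance of the duality isomorphism $\cZ_F \cong \Lin(\Z(\bH_0)^F)$ implicit in \eqref{cZF}.

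For part (b), the strategy is to construct a canonical representative of each $\cZ_F$-orbit using the extension result \Cref{extCent}. I begin by recalling from \Cref{omegahat} that $\wh\omega_s \colon \cZ_F \to \Lin(\AHs^F)$ is surjective; combined with Clifford theory for $\Cent_\bH^\circ(s)^F \unlhd \Cent_\bH(s)^F$, this implies that if $\psi \in \UCh(\Cent_\bH^\circ(s)^F)$ denotes the unipotent constituent of $\phi\rceil_{\Cent_\bH^\circ(s)^F}$, then the $\cZ_F$-orbit of $(s,\phi)$ in $\Jor(\GF)$ is exactly the $\HF$-class of $\{(s,\phi') : \phi' \in \Irr(\Cent_\bH(s)^F \mid \psi)\}$. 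Consequently $\cZ_F$-orbits in $\Jor(\GF)$ are parametrized, up to $\HF$-conjugacy, by pairs $(s,\psi)$.

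Let $D' \subseteq E(\GF)$ consist of those $\sigma$ with $\sigma.(s,\phi) \in \cZ_F.(s,\phi)$; equivalently $D'$ stabilizes the $\HF$-class of $(s,\psi)$. For each $\sigma \in D'$ choose $h_\sigma \in \HF$ with $h_\sigma \sigma^{*-1}(s) h_\sigma^{-1} = s$, and form the twisted automorphism $\tilde\sigma := \mathrm{conj}(\sigma^*(h_\sigma)^{-1}) \circ \sigma^*$ of $\Cent_\bH(s)$. Then $\tilde\sigma$ fixes $s$, stabilizes $\Cent_\bH^\circ(s)$, and permutes the Clifford constituents of $\phi\rceil_{\Cent_\bH^\circ(s)^F}$; after further modifying $h_\sigma$ by a suitable element of $\Cent_\bH(\sigma^{*-1}(s))^F$ I may arrange $\tilde\sigma.\psi = \psi$, not merely its $\Cent_\bH(s)^F$-orbit. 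Now \Cref{extCent} produces an extension $\wt\psi \in \Irr(\Cent_\bH(s)^F_\psi)$ invariant under $\Cent_{\HF E(\HF)}(s)_\psi$, hence in particular under every $\tilde\sigma$. Setting $\phi_0 := \wt\psi^{\Cent_\bH(s)^F}$, the Clifford correspondence gives $\phi_0 \in \Irr(\Cent_\bH(s)^F \mid \psi)$, so that $(s,\phi_0)$ lies in the $\cZ_F$-orbit of $(s,\phi)$; and the $\tilde\sigma$-invariance of $\wt\psi$ transfers to $\phi_0$, yielding $\sigma.(s,\phi_0) = (s,\phi_0)$ in $\Jor(\GF)$ for every $\sigma \in D'$. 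This gives the desired splitting $(\cZ_F E(\GF))_{(s,\phi_0)} = (\cZ_F)_{(s,\phi_0)} E(\GF)_{(s,\phi_0)}$.

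The main obstacle I anticipate is the bookkeeping in this last step: carefully adjusting the elements $h_\sigma$ so that each $\tilde\sigma$ fixes $\psi$ pointwise while still having $\tilde\sigma(s) = s$, and then verifying that the resulting family $\{\tilde\sigma\}_{\sigma \in D'}$ genuinely arises from the image of $\Cent_{\HF E(\HF)}(s)_\psi$ acting on $\Cent_\bH(s)^F$, so that the single $\Cent_{\HF E(\HF)}(s)_\psi$-invariant extension from \Cref{extCent} really covers all of the $\tilde\sigma$ simultaneously.
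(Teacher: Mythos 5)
Your proposal is correct and takes essentially the same route as the paper: part (a) rests on the equivariance identity $\si^*\circ\wh\omega_s=\wh\omega_{\si^*(s)}\circ\si$ coming from duality and the commutator formula for $\omega_s$, and part (b) on picking a unipotent constituent $\psi$ of $\restr\phi|{\Cent^\circ_\bH(s)^F}$, extending it $\Cent_{\HF E(\HF)}(s)_\psi$-invariantly via \Cref{extCent} and inducing to get the good $\cZ_F$-representative. The paper finishes with a short Clifford-theoretic stabilizer computation instead of your element-by-element twisted automorphisms $\tilde\si$, but this is only presentational, and your closing worry is harmless: each $\tilde\si$ is by construction a product of an $\HF$-conjugation with $\si^*$ fixing $s$ and $\psi$, hence literally an element of $\Cent_{\HF E(\HF)}(s)_\psi$, so the single invariant extension covers them all.
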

\begin{proof} We prove (a). First $\phi\circ\si^*\in \UCh(\Cent_{ \bH}(\sigma^*{}\inv (s))^F)$ by (\ref{rem_ntt}). Then to check that our formulas define a group operation it suffices to verify 
	$$\sigma^*\circ\wh\omega _{s}=\wh\omega _{\sigma^*(s)}\circ \sigma \text{ on }{\calZ}_F$$
	whenever $s\in\HFss$ and $\sigma\in\EGF$. The proof is similar to the one of \cite[Lem.~8.4]{CS17A}. One just has to replace the argument given there about the graph automorphism in type $\tA$ by the remark made in \ref{duals} above that $\si$ and $\si^*$ are dual in the sense of \cite[Def.~2.1]{CS13}. This implies that $\calZ_F\rtimes E(\GF)$ acts on the set (called $\wh{\Jor}(\GF)$ in \cite[Sect. 8]{CS17A}) of pairs $(s,\phi)$ with $s\in\bHssF$, $\phi\in\UCh(\Cent_{\HF}(s))$. To get our claim (a) it remains to show that this action preserves the $\HF$-conjugacy relation. We see that as a consequence of $$\wh\omega _{s^h}(z)= \wh\omega _{s}(z)\circ c_h\ \text{in} \ \Lin(\Cent_{\HF}(s^h))$$ for $h\in \HF$, $z\in {\calZ}_F$ and $c_h\colon \Cent_{\HF}(s^h)=\Cent_{\HF}(s)^h\to \Cent_{\HF}(s)$ being conjugacy by $h$. The above can be in turn deduced from $\omega_{s^h}=\omega_{s}\circ c_h$ again as in the proof of \cite[Lem.~8.4]{CS17A}.

	We now show (b). Let $s\in \bHssF$ and $\phi\in\UCh(\Cent_\HF(s))$. Denote by $\ov{(s,\phi)}\in\Jor(\GF)$ its orbit under $\HF$-conjugacy. We must prove that there is $\ov{(s,\phi')}\in\Jor(\GF)$, a $\cZ_F$-conjugate of $\ov{(s,\phi)}$, such that $\left(\cZ_FE(\GF)\right)_{\ov{(s,\phi ')}}=(\cZ_F)_{\ov{(s,\phi')}}E(\GF)_{\ov{(s,\phi')}}$. Define  $$C:=\Cent_\bH^\circ(s)^F\leq\wt C:=\Cent_\bH(s)^F\leq\wh C:=\Cent_{\HF E(\HF)}(s)$$ with both $C$ and $\wt C$ normal in $\wh C$.
	
	By definition of $\UCh(\wt C)$, $\restr\phi|{C}$ is a sum of elements of $\UCh(C)$, let $\phi_0$ be one of them. Then \Cref{extCent} ensures the existence of an extension $\wt \phi_0$ of $\phi_0$ to $\wt C_{\phi_0}$ that is $\wh C_{\phi_0}$-invariant. Let $$\phi ':=(\wt \phi_0)^{\wt C}.$$ 
	By Clifford correspondence this is an irreducible character such that $\wh C_{\phi '}= \wh C_{\phi_0} \wt C= \wh C_{{\restr\phi'|{C}}}$. By definition $\phi '\in\UCh(\wt C)$. From the surjectivity of $\wh\omega_{s} $ we see that the action of $\cZ_F$ induces the multiplication by the whole $\Lin(\wt C/C)$ on $\UCh(\wt C)$. By Clifford theory again $\phi '$ differs from $\phi$ by a linear character of $\wt C$ with $\wt C_{\phi_0}$ in its kernel, so this  implies that $\ov{(s,\phi ')}$ is a $\cZ_F$-conjugate of the original $\ov{(s,\phi )}$. We now show that $(\cZ_F\rtimes E(\GF))_{\ov{(s,\phi ')}}=(\cZ_F)_{\ov{(s,\phi ')}} E(\GF)_{\ov{(s,\phi ')}}$ or equivalently $(\cZ_F\rtimes E(\GF))_{\ov{(s,\phi ')}}\leq (\cZ_F)_{\ov{(s,\phi ')}} E(\GF)_{\ov{(s,\phi ')}}$.

	Since $\wh C_{\phi '}=  \wh C_{{\restr\phi'|{C}}}$ the action of $\Lin(\wt C/C) \rtimes \wh C$ on $\Irr(\wt C)$ satisfies
	\[   (\Lin(\wt C/C) \rtimes \wh C)_ {\phi'}\leq  (\Lin(\wt C/C) \rtimes \wh C)_ {\restr \phi '|C}=  \Lin(\wt C/C) \rtimes \wh C_ {{\restr\phi'|{C}}}= \Lin(\wt C/C) \rtimes \wh C_ { \phi'} .\eqno(2.1) \]
	
	Let now $z\in \cZ_F$ and $\si\in E(\GF)$ such that $z\si$ fixes $\ov{(s,\phi')}$. We must show that at least $\si$, and therefore both $\si$ and $z$, fix $\ov{(s,\phi')}$. Let $\tau :=(\si^*)\inv\in E(\HF)$. Then $z\si .\ov{(s,\phi')}=\ov{(\tau (s),{}^\tau\phi' .\wh\omega _{\tau\inv(s)}(z))}$. The fact that $z\si$ fixes $\ov{(s,\phi')}$ amounts to $$ (\tau (s),{}^\tau\phi' .\wh\omega _{\tau\inv(s)}(z))  ={(s,\phi')}^h \eqno(2.2)$$ for some $h\in\HF$. Then $h\tau \in \wh C$ and we get a pair $(\omega,h\tau)$ with $\omega :={}^h \left(\wh\omega _{\tau\inv(s)}(z)\right)\in \Lin(\wt C/C)$ and $h\tau\in\wh C$ to which we can apply ($2.1$) above. This tells us that $h\tau$ fixes $\phi '$. Then ($2.2$) also holds if we substitute $z=1$, keeping the same $h$. This now means that $\sigma$ fixes $\ov{(s,\phi')}$ as claimed.	
\end{proof}

	\subsection{Orbit sums and disjoint unions}\label{3B_orbitsums}
	In the following we introduce some notation for the character sets involved in the Jordan decomposition of characters. 

	Recall that for any finite group $X$, every (non necessarily irreducible) character $\chi$ of $X$ defines the subset $\II Irrchi@{\Irr(\chi)}\subseteq\Irr(X)$. Conversely a subset $\calM\subseteq \Irr(X)$ defines the character $\sum_{\psi\in \calM} \psi$. Let $\II CharX @ {\Char(X)}={\Bbb Z_{\geq 0}}\Irr(X)$ be the set of characters of $X$, that is sums of elements of $\Irr(X)$. If a group $A$ acts on a finite group $X$, we study the $A$-orbits in $\Irr(X)$ by the associated (in general non-irreducible) characters of $X$ given by $A$-orbit sums. We explore this idea and introduce some notation. 

	\begin{defi}\label{def31}
		For finite groups $X\unlhd \wt X$ we define $\II Pi tildeX @{\Pi_{\tilde X }}: \Irr(X)\lra \Char(X)$ by $\phi\mapsto \sum_{x\in \wt X/\wt X_\phi} \phi^x$. We call $\Pi_{\wt X}(\phi)$ \textit{the $\tilde X$-orbit sum of $\phi$}. \end{defi}
Note that $\Irr(\Pi_{\wt X }( \phi))= \Pi^{-1}_{\wt X}(\Pi_{\wt X }( \phi))$ is the $\wt X$-orbit of $\phi$ and hence $|\Irr(\Pi_{\wt X}( \phi))|=|\wt X:\wt X_\phi|$.
 
 \medskip

 \begin{defi}[Disjoint unions and subsets]\label{multi}
We often consider in the following formal disjoint unions (coproducts) of sets: 
For $r\geq 1$ and sets $M_i$ ($1\leq i \leq r$), we denote by $\II coprod Mi@{\bigsqcup_{1\leq i \leq r} M_i}$  the (formal) disjoint union of the sets $M_i$ ($1\leq i \leq r$). If $M'_i\subseteq M_i$ for any $1\leq i \leq r$, one has ${\bigsqcup_{1\leq i \leq r} M'_i}\subseteq {\bigsqcup_{1\leq i \leq r} M_i}$. We have of course $|{\bigsqcup_{1\leq i \leq r} M_i}|={\sum_{1\leq i \leq r} |M_i|}$.
 	\end{defi}
 
The disjoint unions $\bigsqcup_{1\leq i \leq r} M_i$ considered later will be defined using character sets $M_i$, in general with some $r\leq 16$. The union of those sets form a finite set $M$ and the natural map  $\bigsqcup_{1\leq i \leq r} M_i\to M$ will have fibers of cardinality $\leq 4$, see sections 3.B, 3.C, 3.D, 4.B, 5.C below.

\begin{lem}\label{defPI_allg}
Let $X\unlhd \wt X$ be finite groups, $A:= \wt X/X$ and $\si\in \Aut(\wt X)_X$. Set $\II IrroverlineX@{{\ov{\Irr}}(X) }:=\Pi_{\wt X}(\Irr(X))$.
	\begin{thmlist}
		\item Let $\phi\in\Irr(X)$. Then $\Pi_{\wt X}(\phi)$ is $\si$-invariant if and only if $\phi$ is $b\sigma$-invariant for some $b\in A$, i.e., 		
		$$\Pi^{-1}_{\wt X} (\ov \Irr(X)^{\spa\si})= \bigcup_{b\in A}\Irr(X)^{\spa{b \sigma} }\subseteq \Irr(X).$$ 
 
			\item \label{lem72b}
			The map $\Pi_{\wt X}$ induces on the disjoint union $\bigsqcup_{b\in A}\Irr(X)^{\spa{b\si}}$ the surjection 
			$$\II PiXtildeXsi@{\Pi_{\tilde X,\si}}: \bigsqcup_{b\in A}\Irr(X)^{\spa{b \sigma}}\lra \ov \Irr(X)^{\spa\si}, $$
			that satisfies $|\Pi_{\wt X,\si}^{-1} (\ov \phi)|=|A|$ for each $\ov \phi\in \ov \Irr(X)^{\spa\si}$.
			
			\item \label{bij_Pi} Assume $A$ is abelian.
			Let $\wt \cE\subseteq \Irr(\wt X)$ be $\Lin(A)$-stable and $\cE:=\bigcup_{\wt \phi\in \wt \cE} \Irr(\restr \wt \phi|X)$. Assume maximal extendibility holds \wrt $X\unlhd \wt X$ for $\cE$. Then the map defined by $ \wt \phi\mapsto \restr \wt \phi|X   $ induces a bijection  between $ \Lin(A)$-orbits in $\wt \cE$ and $  \Pi_{\wt X}(\cE) $. Moreover, if $\wt\phi\in\wt\cE$ and $\phi\in\Irr(\restr \wt \phi|X)$, then $\Lin(A)_{\wt\phi}=(A_\phi)^\perp$ where orthogonality is for the perfect pairing $\Lin(A)\times A\to\CC^\times$ defined by $(\alpha ,a)\mapsto\alpha(a)$.
		\end{thmlist}
	\end{lem}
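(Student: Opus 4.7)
\textit{Setup and part (a).} Since $X\unlhd \wt X$, the conjugation action of $\wt X$ on $\Irr(X)$ factors through $A=\wt X/X$, so the $\wt X$-orbit of $\phi\in\Irr(X)$ equals $A.\phi$ and $\Pi_{\wt X}(\phi)=\sum_{a\in A/A_\phi}a.\phi$. The automorphism $\sigma\in\Aut(\wt X)_X$ normalises $X$ and so permutes the $A$-orbits on $\Irr(X)$; hence $\sigma$ fixes $\Pi_{\wt X}(\phi)$ iff it preserves the orbit $A.\phi$, iff $\sigma.\phi=a.\phi$ for some $a\in A$, iff $\phi$ is $(a^{-1}\sigma)$-invariant. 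Setting $b=a^{-1}$ yields (a).

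\textit{Part (b).} Surjectivity is immediate from (a). For the fibre count, fix $\ov\phi=A.\phi_0\in\ov{\Irr}(X)^{\spa\sigma}$. Elements of $\Pi_{\wt X,\sigma}^{-1}(\ov\phi)$ are pairs $(b,\phi)$ with $\phi\in A.\phi_0$ and $b\sigma.\phi=\phi$. For each such $\phi$, because $\sigma$ preserves $A.\phi_0$ there is $c\in A$, unique modulo $A_\phi$, with $\sigma.\phi=c.\phi$; then $b\sigma.\phi=\phi$ iff $b\in A_\phi c^{-1}$, contributing $|A_\phi|$ pairs. Summing over the $|A/A_\phi|$ choices of $\phi$ gives $|A|$.

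\textit{Part (c).} The approach is Clifford theory together with maximal extendibility. Take $\wt\phi\in\wt\cE$, a constituent $\phi\in\Irr(\restr{\wt\phi}|{X})\subseteq\cE$, and an extension $\wh\phi\in\Irr(\wt X_\phi)$ of $\phi$. By the Clifford correspondence $\wt\phi=\Ind_{\wt X_\phi}^{\wt X}\wh\phi$, and consequently $\restr{\wt\phi}|{X}=\sum_{a\in A/A_\phi}a.\phi=\Pi_{\wt X}(\phi)$. Thus the restriction map lands in $\Pi_{\wt X}(\cE)$ and is constant on $\Lin(A)$-orbits, as $\Lin(A)$ is trivial on $X$. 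Surjectivity is built into the definition of $\cE$. For injectivity on orbits, if $\wt\phi_1,\wt\phi_2\in\wt\cE$ have equal restriction to $X$ they share a constituent $\phi$, so by Gallagher's theorem (applied via the chosen extensions) the two associated extensions in $\Irr(\wt X_\phi\mid\phi)$ differ by some $\mu\in\Lin(\wt X_\phi/X)$. Using that $A$ is abelian, $\mu$ lifts to $\la\in\Lin(A)$, and the identity
$$\la\wt\phi=\Ind_{\wt X_\phi}^{\wt X}\big((\restr{\la}|{\wt X_\phi})\,\wh\phi\big)$$
gives $\la\wt\phi_1=\wt\phi_2$. The same identity, combined with Clifford uniqueness, shows $\la\wt\phi=\wt\phi$ iff $\restr{\la}|{\wt X_\phi}$ is trivial on $\wt X_\phi/X=A_\phi$, i.e.\ $\la\in A_\phi^\perp$.

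\textit{Main difficulty.} The arguments are essentially mechanical; the step meriting care is the use in (c) of maximal extendibility to ensure that $\restr{\wt\phi}|{X}$ is multiplicity-free and thus equals $\Pi_{\wt X}(\phi)$ on the nose --- this is the equivalence recorded at the end of \ref{not11}. Once that is in hand, the Clifford/Gallagher machine delivers both the bijection and the stabiliser description without further hitches.
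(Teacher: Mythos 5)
Your parts (a) and (b) are correct and essentially the paper's own argument: (a) is the standard Clifford-theoretic observation that the orbit sum is $\si$-fixed exactly when $\si$ maps the $A$-orbit to itself, and your fibre count in (b) — $|A_\phi|$ occurrences of each orbit element in the disjoint union times $|A:A_\phi|$ orbit elements — is exactly the computation given in the paper. For part (c) the paper does not argue at all but cites Lusztig \cite[Sect.~9(a)]{L88} and \cite[Thm~15.12(ii)]{CE04}, whereas you give a self-contained Clifford--Gallagher proof: multiplicity-freeness (equivalent to maximal extendibility for abelian $Y/X$) forces $\restr{\wt\phi}|X=\Pi_{\wt X}(\phi)$, Gallagher plus lifting a linear character of $A_\phi$ to $A$ gives injectivity on $\Lin(A)$-orbits, and the same induction identity with Clifford uniqueness yields $\Lin(A)_{\wt\phi}=(A_\phi)^\perp$. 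This buys independence from the cited references at the cost of a page of standard character theory; both routes are fine, and your stabiliser computation is exactly the content of the cited results.

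One small imprecision worth fixing: you write ``an extension $\wh\phi\in\Irr(\wt X_\phi)$ of $\phi$'' and then assert $\wt\phi=\Ind_{\wt X_\phi}^{\wt X}\wh\phi$ by the Clifford correspondence. For an arbitrary choice of extension this induced character is only $\la'\wt\phi$ for some $\la'\in\Lin(A)$; you should instead take $\wh\phi$ to be the Clifford correspondent of $\wt\phi$ over $\phi$, which is itself an extension of $\phi$ because maximal extendibility together with Gallagher shows every element of $\Irr(\wt X_\phi\mid\phi)$ is an extension. With that adjustment the rest of your argument (restriction equals $\Pi_{\wt X}(\phi)$, the lifting step using that $A$ is abelian, and the orthogonality statement) goes through as written.
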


Note that in (b) above the disjoint union is between subsets of $\Irr(X)$ that have non-trivial intersection since the trivial character belongs to all.
\begin{proof} Part (a) is clear from the fact that $\Pi\inv _{\wt X}(\Pi_{\wt X}(\phi))$ is an $\wt X$-orbit. 

	For part (b) assume that $\ov \phi\in \ov \Irr(X)$ is $\si$-invariant and let $\phi\in \Pi_{\wt X,\si}^{-1} (\ov \phi)$. Then $\phi\in \Irr(X)^{\spa{b_0 \sigma}}$ for some $b_0\in A$ and then  $\phi \in \Irr(X)^{\spa{b'b_0 \sigma}}$ if and only if $b'\in A_\phi$. Hence $\phi$ occurs in the summands of $\bigsqcup_{b\in A}\Irr(X)^{\spa{b\si}}$ exactly $|A_\phi|$-times. Any other character of the $\wt X$-orbit of $\phi$ has a $\wt X$-conjugate stabilizer and occurs with the same multiplicity. This clearly leads to 
	\[|\Pi_{\wt X,\si}^{-1} (\ov \phi)|=|A_\phi| \ \cdot \ |A:A_\phi|=|A|.\] 

	 Part (c) can be found in  \cite[Sect.~9(a)]{L88} and \cite[Thm~15.12(ii)]{CE04}, recalling that maximal extendibility is equivalent to restrictions being without multiplicities, see \ref{not11}. 
	\end{proof}

\medskip

As an illustration of  \Cref{defPI_allg}(b) consider the case where $|A|=4$ and $\si=\id$. The preimage of an orbit sum has always cardinality $4$. If  $\ov \chi\in \ov \Irr(X)$ and $\phi\in \Pi_{\wt X,\id}\inv (\ov \chi)$, then 
$$  \Pi_{\wt X,\id}\inv (\ov \chi)=\begin{cases}
\{ \phi^{}\}\sqcup \{ \phi^{}\}\sqcup \{ \phi^{}\}\sqcup \{ \phi^{}\} & \text{ if } |A_\phi|=4,\\
\{ \phi^{}\}\sqcup \{ \phi^{}\}\sqcup \{ \phi^{a}\}\sqcup \{ \phi^{a}\} & \text{ if } |A_\phi|=2, \ a\in A\setminus A_\phi\\
 \{ \phi^{}\}\sqcup \{ \phi^{a}\}\sqcup \{ \phi^{b}\}\sqcup \{ \phi^{c}\} & \text{ if } |A_\phi|=1, \ A=\{1,a,b,c\}.\\
\end{cases}$$

Later we determine the cardinality of certain character sets $\cM\subseteq \ov \Irr(X)^{\spa \si}$ using the equality $|\Pi\inv_{\wt X,\si}(\cM) | = |A|\ \cdot |\cM|$, which is only true because we are here considering a disjoint union. 

\bigskip

\subsection{An equivariant Jordan decomposition of characters}\label{sec3B}

We go back to our groups $\bG^F$, $E(\GF)$, $\wbG$, $\cZ_F$, $\bH$, defined in \ref{not} and \ref{duals}.

The aim of this chapter is to prove \Cref{thm66}, namely that if $\GF$ satisfies Condition $\Ap$, then we can construct an equivariant Jordan decomposition $$\Irr(\GF)\longrightarrow\Jor(\GF).$$ Following the plan laid out by \Cref{ZEsets} and in view of \Cref{lem3_7}(b), we turn our attention to the sets of $\cZ_F$-orbits in $\Irr(\GF)$ and $\Jor(\GF)$ respectively, and exhibit an $E(\GF)$-equivariant bijection between them. 
This is ensured by Lusztig's results \cite{L88} about $\Irr(\GF)$ and the Jordan decomposition for $\wGF$. We give in \Cref{Jordandec} a formulation in terms of $\wGF$-orbit sums that will also be used in the rest of the paper.

In the following we essentially expound the content of \cite{L88}, see also \cite[Sect.~15.4]{CE04}, \cite[Sect.~2.6]{GM}, \cite[Sect.~11.5]{DiMi2}. We emphasize the action of automorphisms, using orbit sums.

The regular embedding $\bG\leq \wbG$ has the important property that the inclusion of finite groups $\GF\unlhd \wbG^F$ satisfies maximal extendibility according to  \cite[Prop. 10]{L88}, see also  \cite[Thm~15.11]{CE04}, or in other words character restrictions from $\wbG^F$ to $\bG^F$ are without multiplicities. Hence \Cref{defPI_allg}(c) can be applied to it. The embedding $\bG\leq\wbG$ has a dual 
$$ \II pitilde@{\wt \pi}\colon \wbH\to \bH\ ,$$ an epimorphism with connected central kernel, where $\II Htilde@{\wbH}:=(\wbG)^*$. Note that the simply connected covering $\pi$ from (\ref{piH0}) factors through $\wbH$, and this defines a regular embedding $\bH_0\leq \wt\bH$. A Frobenius endomorphism also called $F$ is defined on $\wt\bH$, and it extends the one given on $\bH_0$. 

\medskip

\begin{notation}\label{not3_4} 	If $v\in \bH$ and $s\in\bH_{\textrm{ss}}^{vF}$, with $C:=\Cent_{\bH}^\circ(s)^{vF}\unlhd\wt C:=\Cent_{\bH}(s)^{vF}$, we set
		$$\II Uchoverline C@{\oUCh(C)}:= \Pi_{\wt C }(\UCh (C)),$$ 
		where $\Pi_{\wt C}:\Irr(C)\lra \Char(C)$ is defined as in Definition~\ref{def31} with respect to $C\unlhd \wt C$.
		Let $\Pi_\wGF:\Irr(\GF)\lra \Char(\GF)$ be the map defined \wrt $\GF\unlhd\wGF$ and set $$\II IrrGFov@{\ov\Irr(\GF)}=\Pi_\wGF(\Irr(\GF))=\{ \restr\wt\chi|{\GF}\mid \wt\chi\in\Irr(\wGF) \}.$$ 
		
		Let $\II JoroGF@{\overline {\textrm {Jor}}(\GF)}$ be the set of $\HF$-orbits in $\{ (s, \ov \phi) \mid s\in\bH_{\textrm{ss}}^F, \ \ov \phi\in \oUCh(\Cent^\circ_\bH(s)^F ) \}$. We get an action of $E(\GF)$ on $\ov\Jor(\GF)$ by the same formula as in \Cref{lem3_7}. 	Let 
		$\II JortildeGF@{\Jor(\wGF)}$ be the set of $\wHF$-orbits on $\{ (\wt s, \phi) \mid \wt s\in\wbH_{\textrm{ss}}^F, \phi\in \UCh(\Cent_{\wHF}(\wt s) ) \}$.
		
		Let $s\in\HFss$.  We set 
		$$\II EGFs@{{\ocE} (\GF,(s))} :=\Pi_{\wGF}\big(\cE(\GF,(s))\big) \und 
		\II EeGFsrat@{{\ocE} (\GF,[s])}:= \Pi_{\wGF}\big( \cE(\GF,[s])\big),$$ see \ref{series} for Lusztig's series of characters. Recall $A_\bH(s)=\Cent_{ \bH}(s)/\Cent_{ \bH}^\circ(s)$ and the epimorphism $\omega_s\colon A_\bH(s)\to\Z(\bH_0)$ of (\ref{omegas}). Recall ${B(s)}:=\omega_s(A_\bH(s))= \Z(\bH_0)\cap\{[s_0,g^*]\mid s_0\in\pi\inv(s), g^*\in\bH_0 \}$ which is also a subgroup of $\Z(\wbH)$. The duality between $\wbG$ and $\wbH$ then gives a map $B(s)^F\to\Lin(\wGF/\GF)$ (see \cite[Prop. 11.4.12]{DiMi2}), hence an action of $B(s)^F$ on $\Irr(\wGF)$.
\end{notation}
	
The Jordan decomposition for characters of groups with connected center can be summed up as follows, see \cite[Thm~15.13]{CE04} and \cite[Thm 2.6.21]{GM}. The equivariance comes from \cite[Thm~3.1]{CS13}.   
			 
\begin{prop}\label{JorTilde} There is an $E(\GF)$-equivariant bijection  
			 	\[ \II{Jtilde}@{\protect{\wt\JJ}} : \Irr(\wGF) \lra \Jor(\wGF) \]
			 	inducing bijections on rational series
			 	$$\II{Psis}@{\Psi_{\wt s}}: \cE(\wGF,[{\wt s}]_{})\xrightarrow{1-1} \UCh(\Cent_{\wt\bH}({\wt s})^F) \text{ with }\wt\chi \mapsto \phi, $$
			 	whenever $({\wt s},\phi)\in \wt \JJ(\wt \chi)$. 
    For $s:=\wt\pi(\wt s)$, $\Psi_{\wt s}$ is equivariant for the given actions of $B({  s})^F$ on $\cE(\wGF,[{\wt s}]_{})$ and $A_\bH(s)^F$ on $\UCh(\Cent_{\wt\bH}({\wt s})^F)$   via the isomorphism $B({  s})^F\cong A_\bH(s)^F$ by $\omega_{  s}$, i.e. for a character $\wt \chi\in\cE(\wGF,[{\wt s}]_{})$ and for $(\wt s,\phi)\in \wt \JJ (\wt \chi)$, we have
	\[A_\bH ({ s})^F_{\phi}\cong \omega_{  s}(A_\bH ({  s})^F_{\phi}) =B({  s})^F_{\wt \chi}.\] \end{prop}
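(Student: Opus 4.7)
The plan is to assemble three established ingredients. First, since the regular embedding $\bG\leq\wbG$ forces $\Z(\wbG)$ to be connected, dually every semisimple centralizer $\Cent_{\wt\bH}(\wt s)$ for $\wt s\in \wt\bH_{\textrm{ss}}$ is connected, and $\UCh(\Cent_{\wt\bH}(\wt s)^F)$ is the usual set of unipotent characters of a connected reductive group. The existence of a bijection $\wt\JJ\colon\Irr(\wGF)\to\Jor(\wGF)$ preserving rational series and inducing the bijections $\Psi_{\wt s}\colon\cE(\wGF,[\wt s])\to\UCh(\Cent_{\wt\bH}(\wt s)^F)$ is then the classical Jordan decomposition theorem for groups with connected center, which I would invoke from \cite[Thm~15.13]{CE04} or \cite[Thm~2.6.21]{GM}.

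Second, the $E(\GF)$-equivariance of $\wt\JJ$ would be deduced from \cite[Thm~3.1]{CS13}. The decisive input is that, when $\wbG$ has connected center, Digne--Michel's uniqueness refinement of the Jordan decomposition from \cite{DM90} guarantees that $\wt\JJ$ can be chosen compatibly with every bijective algebraic endomorphism of $\wbG$ commuting with $F$; applying this to the lifts to $\wbG$ of the field and graph automorphisms in $E(\GF)$ yields the required equivariance.

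Third, for the compatibility of $\Psi_{\wt s}$ with the $B(s)^F$- and $A_\bH(s)^F$-actions, I would argue as follows. Via the duality $\Z(\wt\bH)^F\cong\Lin(\wGF/\GF)$, the subgroup $B(s)^F$ corresponds to exactly the set of linear characters of $\wGF/\GF$ whose tensoring action on $\Irr(\wGF)$ preserves $\cE(\wGF,[\wt s])$. Indeed, for $z=\omega_s(a)\in B(s)^F$ with $a\in A_\bH(s)^F$, the definition of $\omega_s$ recalled in \ref{ssclasses} provides $g^*\in\Cent_\bH(s)\setminus\Cent^\circ_\bH(s)$ representing $a$ together with $s_0\in\pi^{-1}(s)$ such that $z=[s_0,g^*]$, and hence $\wt s^{g^*}=\wt s z$ in $\wt\bH$. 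Standard theory, essentially \cite[Prop.~5.1]{L88} together with \cite[Prop.~15.14]{CE04}, then shows that multiplying $\wt\chi\in\cE(\wGF,[\wt s])$ by the linear character $\hat z$ associated with $z$ shifts the Jordan parameter from $\wt s$ to $\wt s z$, and re-conjugation by $g^*$ back to $\wt s$ translates under $\Psi_{\wt s}$ into conjugation of $\phi\in\UCh(\Cent_{\wt\bH}(\wt s)^F)$ by the image of $g^*$ in $A_\bH(s)^F$, i.e.\ the natural action of $a$. The stabilizer identity $A_\bH(s)^F_\phi\cong B(s)^F_{\wt\chi}$ then follows directly from this equivariance together with the fact (from \ref{defPI_allg}(c)) that $\Res^{\wGF}_{\GF}\wt\chi$ is multiplicity-free.

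The main obstacle is the bookkeeping in the third step: several dualities must be tracked simultaneously (central characters of $\wt\bH$ versus linear characters of $\wGF/\GF$, the injection $\omega_s\colon A_\bH(s)\hookrightarrow\Z(\bH_0)$, and the Clifford-theoretic action on unipotent characters coming from the disconnected centralizer in $\bH$), and the representative $g^*\in\bH_0\leq\wt\bH$ must be chosen consistently so that the action of $A_\bH(s)^F$ on $\UCh(\Cent_{\wt\bH}(\wt s)^F)$ matches the one induced by tensoring with $\hat z$ on the $\wGF$-side. Once this matching is in place, the remaining assertions are formal.
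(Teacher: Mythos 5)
Your proposal is correct and follows essentially the same route as the paper, which states this proposition as a summary of known results: the bijection and its compatibility with the series come from the classical Jordan decomposition for connected centre (\cite[Thm~15.13]{CE04}, \cite[Thm~2.6.21]{GM}), and the $E(\GF)$-equivariance from \cite[Thm~3.1]{CS13} resting on the Digne--Michel uniqueness. Your third step (tensoring by $\hat z$ shifting $\wt s$ to $\wt s z$ and re-conjugating by a representative of $a$) is exactly the standard argument the paper implicitly invokes; note only that the stabilizer identity already follows from this equivariance alone, so the appeal to multiplicity-free restriction is superfluous.
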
 

The natural epimorphism ${\wt \pi:\wbH\ra\bH}$ induces a surjective map \[\II{pi}@{\pi^*}: \Jor(\wGF)\ra \ov\Jor(\GF) \] 
defined by $(\wt s,\phi)\mapsto ( \wt \pi(\wt s), \Pi_{\Cent_\bH(\wt \pi(\wt s))^F } (\wc \phi) )$, where $\wc \phi$ is the character of $\Cent_\bH^\circ (\wt \pi(\wt s))^F$ given by $\phi$ via $\wt \pi$. This is well-defined since $\wt\pi(\Cent_\wbH(\wt s))=\Cent^\circ_\bH(\wt\pi(\wt s))$ and $\wt\pi(\Cent_\wbH(\wt s)^F)=\Cent^\circ_\bH(\wt\pi(\wt s))^F$, remembering that $\wt\pi$ has connected kernel. This also gives $\wt\pi(\wHF)=\HF$ and the surjectivity on orbits.

\begin{prop}[Rational Jordan decomposition of $\ov \Irr(\GF)$]\label{Jordandec}
The following diagram is commutative
\[\xymatrix{
	\ov\Irr(\GF) \ar[r]^{\ov\JJ}_{\sim}&\ov\Jor(\GF)\\
	\Irr(\wGF)\ar[r]_{\wt\JJ}^{\sim}\ar@{->>}[u]^{\mathrm{res}} &\Jor(\wGF)\ar@{->>}[u]_{\pi^*},
}\]
where $\mathrm{res}$ is the restriction map $\wt\chi\mapsto \restr\wt\chi|{\GF}$  and  $\II{J overline}@{\ov\JJ}:\ov\Irr(\GF)\lra \oJor(\GF)$ is well-defined by 
	$\restr \wt \chi|{\GF}\mapsto \ \ \pi^*(\wt\JJ(\wt \chi))$. Moreover,
\begin{thmlist} \item ${\ov\JJ}$ is an $E(\GF)$-equivariant bijection. 
\item The following statements for $\wt \chi\in\Irr(\wGF)$ and $\ov \chi \in \ov\Irr(\GF)$ are equivalent: 
\begin{asslist}
	\item $\restr \wt\chi|\GF=\ov \chi$,
	\item there exists some $\wt s\in \wt \bH^F_{{\textrm {ss}}}$ and $\phi'\in\UCh(\Cent_\wbH(\wt s)^F)$ with $(\wt s, \phi')\in \wt \JJ(\wt \chi)$ such that $(s, \Pi_{\Cent_\bH(s)^F}(\phi))\in \ov \JJ(\ov \chi)$, where $s:=\wt \pi(\wt s)$ and $\phi\in \Irr(\Cent_\bH^\circ(s)^F)$ is the character of $\wt\pi(\Cent_{\wHF}(\wt s))=\Cent_\bH^\circ(s)^F$ induced by $\wt\pi$ and $\phi'$.
\end{asslist}
\item  \label{Jdnuconst}
If $\ov \chi\in \ov \Irr(\GF)$, $\chi\in \Irr(\ov \chi)$, $(s,\ov \phi)\in \ov \JJ(\ov \chi)$ and $\phi_0\in \Irr(\ov \phi)$, then $\wGF_\chi$ coincides with the intersection of the kernels of all linear characters of $\wGF$ corresponding to the elements of $\omega_s(\AHs^F_{\phi_0})$ through the isomorphism $\Z(\bH_0)^F\to\Lin(\wbG^F/\GF\Z(\wbG^F))$ given by the duality between $\bG$ and $\bH$, see  \cite[Prop.~11.4.12]{DiMi2}. Then 
\[|\Irr(\ov \chi)|\, \cdot \, |\Irr(\ov \phi)| =|\AHs^F|.\] 
\end{thmlist}
\end{prop}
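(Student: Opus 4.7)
The plan is to prove the five claims in sequence, with the crux being that $\pi^*\circ\wt\JJ$ factors through the restriction map $\mathrm{res}\colon\Irr(\wGF)\twoheadrightarrow\ov\Irr(\GF)$. All remaining parts then follow by tracking equivariances and applying orbit--stabilizer arguments. The central technical input is the identification of the $\Lin(\wGF/\GF)$-action on $\Irr(\wGF)$ with central multiplication on the Jordan decomposition side, extending the $B(s)^F$-equivariance of \Cref{JorTilde}.

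First I would establish well-definedness of $\ov\JJ$ and commutativity of the diagram. By maximal extendibility for $\GF\unlhd\wGF$ (see \cite[Prop.~10]{L88}) combined with Clifford theory, two characters $\wt\chi_1,\wt\chi_2\in\Irr(\wGF)$ restrict to the same character of $\GF$ if and only if $\wt\chi_2=\alpha\wt\chi_1$ for some $\alpha\in\Lin(\wGF/\GF)$. Via duality (see \cite[Prop.~11.4.12]{DiMi2}), $\Lin(\wGF/\GF)$ is naturally identified with $\Z(\wbH)^F$; writing $\alpha\leftrightarrow\wt z$, the action $\wt\chi\mapsto\alpha\wt\chi$ translates through $\wt\JJ$ to the map $(\wt s,\phi)\mapsto(\wt z\wt s,\phi)$, since central multiplication preserves $\Cent_\wbH(\wt s)$ and merely shifts the central character. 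As $\bH$ is of adjoint type, $\Z(\bH)=1$ forces $\ker\wt\pi=\Z(\wbH)$, so $\wt\pi(\wt z\wt s)=\wt\pi(\wt s)=s$ and $\pi^*(\wt z\wt s,\phi)=\pi^*(\wt s,\phi)$ in $\ov\Jor(\GF)$. This yields both the well-definedness of $\ov\JJ$ and the commutativity of the square.

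For part (a), bijectivity is immediate: surjectivity is inherited from surjectivity of $\mathrm{res}$, $\wt\JJ$ and $\pi^*$, while injectivity is obtained by reversing the above argument, since any two preimages with equal image under $\pi^*$ must differ by a central twist coming from $\Lin(\wGF/\GF)$, hence have identical restriction to $\GF$. The $E(\GF)$-equivariance of $\ov\JJ$ follows from the $E(\GF)$-equivariance of $\wt\JJ$ in \Cref{JorTilde} together with the manifest equivariance of $\mathrm{res}$ and $\pi^*$. Part (b) is then a direct unpacking of the definition $\ov\JJ(\restr\wt\chi|\GF)=\pi^*(\wt\JJ(\wt\chi))$ and the explicit formula for $\pi^*$.

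For part (c) I would handle the two cardinalities by separate orbit--stabilizer computations. Since $\Z(\wGF)$ acts trivially by inner automorphisms on $\Irr(\GF)$, the $\wGF$-action on $\Irr(\GF)$ factors through $\cZ_F=\wGF/\GF\Z(\wGF)^F$, giving $|\Irr(\ov\chi)|=[\cZ_F:(\cZ_F)_\chi]$. Chaining the isomorphisms $\cZ_F\cong B(s)^F\cong\AHs^F$ (the first from \cite[Prop.~11.4.12]{DiMi2}, the second via $\omega_s$ from \ref{ssclasses}) and carrying the $B(s)^F$-equivariance of \Cref{JorTilde} through $\mathrm{res}$ and $\pi^*$, one identifies $(\cZ_F)_\chi$ as the set of elements of $\cZ_F$ annihilated by every linear character of $\wGF$ associated with $\omega_s(\AHs^F_{\phi_0})$; this yields both the stated kernel description and, by the perfect pairing, $|\Irr(\ov\chi)|=|\omega_s(\AHs^F_{\phi_0})|=|\AHs^F_{\phi_0}|$. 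On the unipotent side, since inner automorphisms fix characters we have $\Cent_\bH(s)^F_{\phi_0}\supseteq\Cent_\bH^\circ(s)^F$, while Lang's theorem applied to the connected group $\Cent_\bH^\circ(s)$ gives $\Cent_\bH(s)^F/\Cent_\bH^\circ(s)^F\cong\AHs^F$; hence $|\Irr(\ov\phi)|=|\AHs^F|/|\AHs^F_{\phi_0}|$, and the product equals $|\AHs^F|$ as required. The main obstacle throughout is carefully extending the $B(s)^F$-equivariance of \Cref{JorTilde} to the full group $\Z(\wbH)^F$ of central twists and matching all dualities through $\omega_s$.
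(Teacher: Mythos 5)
Your proposal is correct and follows essentially the same route as the paper: define $\ov\JJ$ by factoring $\pi^*\circ\wt\JJ$ through restriction, using that characters of $\wGF$ with equal restriction to $\GF$ differ by an element of $\Lin(\wGF/\GF)$ and that this action is compatible with $\wt\JJ$, and deduce (c) from the kernel description of $\wGF_\chi$ plus the perfect pairing. Two caveats. First, the chain $\cZ_F\cong B(s)^F\cong\AHs^F$ is false in general (take $s=1$: then $B(s)=1$ while $\cZ_F\neq 1$); what duality provides, and what your count actually uses, is the isomorphism $\Z(\bH_0)^F\cong\Lin(\cZ_F)$ together with the surjection $\wh\omega_s\colon\cZ_F\to\Lin(\AHs^F)$, so the slip is not load-bearing but should be corrected. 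Second, the compatibility of $\wt\JJ$ with multiplication by $\alpha\in\Lin(\wGF/\GF)$, i.e. $(\wt s,\phi)\mapsto(\wt z\wt s,\phi)$, is a property of the specific bijection of \Cref{JorTilde} and does not follow from the remark that central multiplication ``merely shifts the central character''; it needs a citation (the paper uses \cite[Prop.~8.26]{CE04} here, and later the uniqueness argument via \cite[8.1(a)]{L88}). Relatedly, in your injectivity step two preimages with equal $\pi^*$-image may differ by an $\AHs^F$-conjugation of the unipotent part, not only by a central translation of $\wt s$; converting the former into the latter modulo $\wHF$-conjugacy (lift a representative of $a\in\AHs^F$ to $\wHF$ via Lang's theorem in the connected group $\Z(\wbH)$, its commutator with $\wt s$ being central) is exactly the $B(s)^F$-equivariance the paper invokes from \Cref{JorTilde}, and should be said explicitly.
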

Note that Lusztig's Jordan decomposition of characters of $\GF$ given in 
\cite[Cor.~15.14]{CE04} or \cite[Thm~2.6.22]{GM} is deduced from the bijections  
\[ \wGF\text{-orbits in } \cE(\GF,[s]) \longleftrightarrow \Lin(\Cent_\HF(s)/\Cent_\bH^\circ(s)^F) \text{-orbits on }\UCh(\Cent_\HF(s)),\]
for any $s\in\bH_{\textrm{ss}}^F$. 
The map $\ov \JJ$ is a rephrasement of this bijection, since 
the $\Lin(A_\bH(s)^F)$-orbits on $\UCh(\Cent_\HF(s))$ are in bijection with $\Pi_{\CHFs}(\UCh(\CoHFs))$, by Lemma \ref{bij_Pi} and \Cref{extCent} (or  \cite[Prop. 11.5.3]{DiMi2}).

\begin{proof} First we show that $\ov \JJ$ is well-defined. For a given $\ov \chi$ in $\ov \Irr(\GF)$ let $\wt \chi_1,\wt \chi_2\in \Irr(\wGF)$ with $\ov \chi=\restr \wt \chi_1|{\GF}=\restr \wt \chi_2|{\GF}$ and therefore $\wt \chi_2 = \wh z \wt \chi_1$ for some $z\in \Z(\wHF)$.  If $(\wt s, \phi_1)\in \wt \JJ(\wt \chi_1)$, then $(\wt s z, \phi_1)\in \wt \JJ(\wt \chi_2)$ by \cite[Thm~4.7.1(3)]{GM} and it is then clear that the $\wt\bH^F$-orbits of $(\wt s, \phi_1)$ and $(\wt sz, \phi_1)$ have the same image under $\pi^*$. So the map $\ov \JJ$ is well and uniquely defined by the commutative diagram expressing $\pi^*\circ \wt \JJ=\ov \JJ\circ \textrm{res}$. This also gives surjectivity and $E(\GF)$-equivariance of $\ov \JJ$ by the corresponding properties of the three other maps, see \Cref{JorTilde} for $\wt \JJ$. 
To finish proving (a) we must show that $\ov \JJ$ is injective, namely that if $\wt\chi_1,\wt\chi_2\in\Irr(\wbG^F)$ have same image under $\pi^*\circ \wt \JJ$ then they have same restriction to $\GF$. Since $ \wt \JJ$ is a bijection it is equivalent to take $(\wt s_1,\phi_1 ), (\wt s_2,\phi_2 )\in \JJ(\wbG^F)$ with same image by $\pi^*$. The consequence on the semisimple elements is that $\wt s_2=\wt z\wt s_1$ for a central $\wt z$. But then as seen above up to multiplying $\wt \chi_2$ by a linear character trivial on $\GF$, one can assume $\wt s_2=\wt s_1$. Then $(\wt s_1,\phi_1 ) \und (\wt s_1,\phi_2 )$ having the same image by $\pi^* $ would mean that $\phi_1$ and $\phi_2$ are conjugate under $A_\bH(\wt\pi(s_1))^F$ and using the compatibility of $\wt J$ with the actions of $A_\bH(\wt\pi(s_1))^F$ and $B(\wt\pi(s_1))^F$ recalled in \Cref{JorTilde}, we indeed get that $\wt\chi_1$ and $\wt\chi_2$ have same restriction to $\GF$.

The above discussion also proves (b).


(c) By Lemma~\ref{bij_Pi}  $\wGF_\chi$ is with the intersection of the kernels of linear characters of $\wGF$ given by the elements of  $\omega_s(\AHs^F_{\phi_0})=B(s)^F_\chi$. Then the index $|\wGF:\wGF_\chi|$  coincides with $|\AHs^F_{\phi_0}|$. Hence by orthogonality in the perfect pairing between $\AHs^F$ and $\Lin(\AHs^F)$, we obtain 
\begin{align*}
|\AHs^F| &=|\Irr(\ov \chi)|\, \cdot \, |\Irr(\ov\phi)|.
\qedhere\end{align*} 
\end{proof}

We can also deduce from \Cref{Jordandec} a parametrization of $\ov \cE(\GF,[s])$ for every $s\in \HFss$. 
\begin{cor}\label{cor_Jordandec}
For every $s\in\HFss$ the map $\ov \JJ$ from \Cref{Jordandec} induces a bijection 
$$\II {Psi overline s }@{\ov\Psi_s}:\ocE(\GF,[s]) \lra \oUCh(\Cent_\bH^\circ(s)^F) $$
with the following properties: 
\begin{thmlist}
\item If $\ov \chi\in\ocE(\GF,[s])$ and $\sigma\in E(\GF)$ with dual 
$\si^*\in E(\HF)$ as in \ref{duals}, then 
\[\ov\Psi_s(\ov \chi)^{\si^*{}\inv}=\ov\Psi_{\si^*{} (s)}(\ov \chi^{\si })\ \ \text{in}\ \  \ov\UCh(\Cent_{ \bH}^\circ(\si^*(s))^F) 
\und \]
\[
|\AHs^F|=|\Irr(\ov \chi)|\, \cdot \, |\Irr(\ov\Psi_s(\ov\chi))|  .\]
\item If $\si\in E(\GF)$ and $\si^*(s)=s^h$ for some $h\in\HF$, then 
\[\ov\Psi_s(\ov \chi)^{(h\si^*{})\inv}=\ov\Psi_{s}(\ov \chi^{\si })
\]
in particular
\[\ov \Psi_s(\ov \cE(\GF,[s])^{\spannsi}) = \oUCh( \Cent^\circ _\bH(s)^F )^{\spann<h\si^*>} .\]
\end{thmlist}
\end{cor}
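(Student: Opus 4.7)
The plan is to obtain $\ov\Psi_s$ by restricting the bijection $\ov\JJ\colon\ov\Irr(\GF)\xrightarrow{\sim} \oJor(\GF)$ of Proposition~\ref{Jordandec} to the rational series $\ocE(\GF,[s])$. For $\ov\chi\in\ocE(\GF,[s])$, I would pick a constituent $\chi\in\cE(\GF,[s])\cap \Irr(\ov\chi)$ and lift it to $\wt\chi\in\Irr(\wGF)$ with $\restr{\wt\chi}|{\GF}=\ov\chi$; then through $\wt\JJ$ and $\pi^*$ the orbit $\ov\JJ(\ov\chi)\in\oJor(\GF)$ contains a representative whose first component can be conjugated to $s$. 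Since $\Cent_\HF(s)=\Cent_\bH(s)^F$ and second components are already $\Cent_\bH(s)^F$-orbit sums, the resulting $\ov\phi\in\oUCh(\Cent_\bH^\circ(s)^F)$ is canonical, and I would set $\ov\Psi_s(\ov\chi):=\ov\phi$. Bijectivity then follows from that of $\ov\JJ$: the subset of $\oJor(\GF)$ of orbits with first component in $[s]_\HF$ parametrises $\oUCh(\Cent_\bH^\circ(s)^F)$ by choosing the representative with first coordinate $s$, and its preimage under $\ov\JJ$ is exactly $\ocE(\GF,[s])$.

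Part~(a) should come from combining the $E(\GF)$-equivariance of $\ov\JJ$ (Proposition~\ref{Jordandec}(a)) with the action formula of \Cref{lem3_7}(a). Writing the action as $\si.\ov\chi=\ov\chi^{\si^{-1}}$ and using $\si^{-1}.(s,\ov\phi)=(\si^*(s),\ov\phi\circ\si^{*{-1}})$ (the inversion on $\si^*$ being due to the antiautomorphism of~(\ref{sigma*})), applying $\ov\JJ$ to $\ov\chi^\si=\si^{-1}.\ov\chi$ yields the representative $(\si^*(s),\ov\Psi_s(\ov\chi)\circ\si^{*{-1}})$ of $\ov\JJ(\ov\chi^\si)$, whence $\ov\Psi_{\si^*(s)}(\ov\chi^\si)=\ov\Psi_s(\ov\chi)\circ\si^{*{-1}}=\ov\Psi_s(\ov\chi)^{\si^{*{-1}}}$. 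The cardinality equality $|\AHs^F|=|\Irr(\ov\chi)|\cdot|\Irr(\ov\Psi_s(\ov\chi))|$ is a direct invocation of Proposition~\ref{Jordandec}(c).

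For part~(b), suppose $\si^*(s)=s^h=h^{-1}sh$ with $h\in\HF$. The extra ingredient I would need beyond part~(a) is the behaviour of $\ov\Psi$ when the semisimple representative changes within its $\HF$-orbit: transporting the pair $(s,\ov\Psi_s(\ov\chi))$ by conjugation with $h^{-1}$ identifies the representative of $\ov\JJ(\ov\chi)$ having first component $\si^*(s)$ as $(\si^*(s),\ov\Psi_s(\ov\chi)\circ c_h)$, where $c_h$ is conjugation by $h$ seen as an isomorphism $\Cent_\bH(s^h)^F\to\Cent_\bH(s)^F$. Hence $\ov\Psi_{\si^*(s)}(\ov\chi)=\ov\Psi_s(\ov\chi)\circ c_h$. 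Combining with the formula of part~(a) for a $\si$-invariant $\ov\chi$ then gives $\ov\Psi_s(\ov\chi)\circ c_h=\ov\Psi_s(\ov\chi)\circ\si^{*{-1}}$, or equivalently that $\ov\Psi_s(\ov\chi)$ is invariant under the automorphism $c_h\circ\si^*=h\si^*$ of $\Cent_\bH(s)^F$. The reverse inclusion is obtained by running this chain of equivalences backwards, using bijectivity of $\ov\Psi_s$.

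The principal difficulty is bookkeeping rather than ideas: one has to simultaneously keep track of the left action $\si.\ov\chi=\ov\chi^{\si^{-1}}$ against the right notation $\ov\chi^\si=\ov\chi\circ\si$, of the antiautomorphism $\si\mapsto\si^*$ from~(\ref{sigma*}), and of $\HF$-conjugation transport between equivalent representatives. No ingredient beyond Proposition~\ref{Jordandec} and \Cref{lem3_7}(a) appears to be required.
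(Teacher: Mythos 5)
Your proposal is correct and follows essentially the same route as the paper: define $\ov\Psi_s$ by taking the unique representative of $\ov\JJ(\ov\chi)$ with first coordinate $s$ (uniqueness because the second components are already $\Cent_\bH(s)^F$-orbit sums), get bijectivity from that of $\ov\JJ$, deduce the first part of (a) from the $E(\GF)$-equivariance of $\ov\JJ$ together with the action formula of \Cref{lem3_7}, quote \Cref{Jordandec}(c) for the counting, and prove (b) by combining (a) with the transport rule for changing the base point within the $\HF$-class (the paper's identity $\ov\Psi_s=(\ov\Psi_{^hs})^h$, your $\ov\Psi_{\si^*(s)}(\ov\chi)=\ov\Psi_s(\ov\chi)\circ c_h$), yielding invariance under $h\si^*$. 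The bookkeeping of conventions ($\chi^\si=\chi\circ\si$, the antiautomorphism $\si\mapsto\si^*$, and the direction of $c_h$) is handled consistently, so no gap.
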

\begin{proof}
By the construction of $\ov \JJ$ using $\wt \JJ$ we see that in each $\HF$-orbit of some $\chi\in\ov\cE(\HF,[s])$ there is exactly one pair of the form $(s,\ov \phi)$ with some $\ov \phi\in \oUCh(\Cent_\bH^\circ (s)^F)$. As $\ov \JJ$ is a bijection the map $\ov \Psi_s$ is bijective as well. This ensures that the required bijection $\ov \Psi_s$ exists. Note that $\ov\Psi_s=(\ov\Psi_{^h s})^{h }$ for any $h\in\HF$ since $\ov\JJ$ has values in $\HF$-orbits.

The first part of (a) is a natural consequence of the $E(\GF)$-equivariance of $\ov \JJ$ and the definition of $\ov \Jor(\GF)$. The second part of (a) is a reformulation of Proposition~\ref{Jdnuconst}. For (b) we can write, for any $\ov\chi\in\ov\cE(\GF,[s])$, $\ov\Psi_{s}(\ov \chi^\si)^{}=\ov\Psi_{^h\si^*(s)}(\ov \chi^\si)^{}=\ov\Psi_{\si^*(s)}(\ov \chi^\si)^{h\inv}=\ov\Psi_{s}(\ov \chi)^{(h\si^*)\inv}$, the last equality thanks to (a). This tells us that $\ov\chi$ is $\si$-invariant if and only if  $\ov\Psi_{s}(\ov \chi)$ is ${h\si^*}$-invariant as claimed. \end{proof}

The next theorem shows that Condition $\Ap$ for $\GF$ implies the existence of an equivariant Jordan decomposition for $\Irr(\GF)$.

The above \Cref{Jordandec} gave us a commutative square  where horizontal arrows are bijections and vertical arrows are onto with fibers being $\Lin(\wGF/\GF)$-orbits. Once we show that $\ov\Jor(\GF)$ is also $\Jor(\GF)$ modulo $\wGF$-conjugation, \Cref{ZEsets} tells us that we need only to check an equality of stabilizers.

\begin{theorem}\label{thm66}
Assume that Condition $\Ap$ from \ref{Ainfty} holds for $\GF$. 
\begin{thmlist}
\item Then there exists a $ \calZ_F\rtimes E(\GF) $-equivariant bijection 
$$ \II J@{\JJ}: \Irr(\GF)\lra \Jor (\GF)$$ where the action of $ \calZ_F\rtimes E(\GF) $ on $ \Jor (\GF)$ is given by \Cref{lem3_7}(a).
\item \label{thm66_c}
If $\chi\in\Irr(\GF)$, $\ov \chi:=\Pi_\wGF(\chi)$ and $(s,\phi)\in \JJ(\chi)$, then 
\[(s, \restr \phi |{\Cent_\bH^\circ (s)^F} ) \in \ov \JJ(\ov \chi).\]
\end{thmlist}
\end{theorem}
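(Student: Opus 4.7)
The plan is to invoke Lemma~\ref{ZEsets} with $Z=\cZ_F$, $E=E(\GF)$, $\cI=\Irr(\GF)$, $\cJ=\Jor(\GF)$, and $\ov{\frak J}=\ov\JJ$ the $E(\GF)$-equivariant bijection $\ov\Irr(\GF)\to\ov\Jor(\GF)$ of Proposition~\ref{Jordandec}(a). Condition~(i) of that lemma holds since $\Irr(\GF)$ satisfies $\Ap$ by hypothesis and $\Jor(\GF)$ satisfies the analogous property by Proposition~\ref{lem3_7}(b); condition~(iii) is another part of Proposition~\ref{Jordandec}(a).

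The technical heart is condition~(ii): for every $\chi\in\Irr(\GF)$ with $\cZ_F$-orbit $\ov\chi=\Pi_\wGF(\chi)$ and every representative $(s,\phi)\in\Jor(\GF)$ of $\ov\JJ(\ov\chi)$, one must show $(\cZ_F)_\chi=(\cZ_F)_{(s,\phi)}$. Fix an irreducible constituent $\phi_0\in\UCh(\Cent_\bH^\circ(s)^F)$ of $\restr\phi|{\Cent_\bH^\circ(s)^F}$. On the one hand, Proposition~\ref{Jdnuconst} identifies $\wGF_\chi$ with the intersection of the kernels of those characters of $\wGF/\GF\Z(\wbG^F)$ attached via duality to $\omega_s(\AHs^F_{\phi_0})\leq\Z(\bH_0)^F$; combined with Definition~\ref{omegahat}, this translates into $(\cZ_F)_\chi=\{z\in\cZ_F:\wh\omega_s(z)|_{\AHs^F_{\phi_0}}=1\}$. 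On the other hand, the $\cZ_F$-action on $\Jor(\GF)$ fixes $s$ and multiplies $\phi$ by the linear character $\wh\omega_s(z)\in\Lin(\AHs^F)$; by Clifford theory applied to $\Cent_\bH^\circ(s)^F\unlhd\Cent_\HF(s)$, using the maximal extendibility provided by Corollary~\ref{extCent}, the stabilizer of $\phi$ in $\Lin(\AHs^F)$ under multiplication is precisely the annihilator of $\AHs^F_{\phi_0}$, giving the same description of $(\cZ_F)_{(s,\phi)}$.

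Lemma~\ref{ZEsets} then delivers the desired $\cZ_F\rtimes E(\GF)$-equivariant bijection $\JJ$, establishing part~(a). For part~(b), the lifting construction in that lemma makes $\JJ$ automatically compatible with $\ov\JJ$ through the quotient $\Jor(\GF)\to\ov\Jor(\GF)$ introduced in Notation~\ref{not3_4} and appearing in Proposition~\ref{Jordandec}. That quotient sends the class of $(s,\phi)$ to $(s,\Pi_{\Cent_\HF(s)}(\phi_0))$, and Corollary~\ref{extCent} guarantees that $\restr\phi|{\Cent_\bH^\circ(s)^F}$ is multiplicity-free, so equals the sum of $\AHs^F$-conjugates of $\phi_0$, which by Definition~\ref{def31} coincides with $\Pi_{\Cent_\HF(s)}(\phi_0)$. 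Hence $(s,\restr\phi|{\Cent_\bH^\circ(s)^F})\in\ov\JJ(\ov\chi)$.

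The main obstacle I anticipate is the identification of stabilizers in condition~(ii): one has to reconcile two very different-looking $\cZ_F$-actions---outer-diagonal automorphisms on $\Irr(\GF)$ on one side, multiplication by $\wh\omega_s(z)$ on $\Jor(\GF)$ on the other---through the duality between $\bG$ and $\bH$ and the perfect pairing between $\Z(\bH_0)^F$ and $\cZ_F$. This compatibility is built into the very definition of $\wh\omega_s$ (Definition~\ref{omegahat}), but turning it into the equality of stabilizers requires both Proposition~\ref{Jdnuconst} and care with Clifford theory over $\AHs^F$, which may fail to be abelian in types $\tD$ and $^2\tD$.
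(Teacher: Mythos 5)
Your argument is essentially the paper's own proof: both apply Lemma~\ref{ZEsets} with $Z=\cZ_F$, $E=E(\GF)$ and $\ov{\frak J}=\ov\JJ$, identify the $\cZ_F$-orbits in $\Jor(\GF)$ with $\ov\Jor(\GF)$ via Corollary~\ref{extCent} together with Lemma~\ref{defPI_allg}(c), and verify assumption (ii) by matching both $\cZ_F$-stabilizers with $\wh\omega_s\inv\bigl((\AHs^F_{\phi_0})^\perp\bigr)$ using Proposition~\ref{Jordandec}(c), so that part (b) is simply the statement that $\JJ$ lifts $\ov\JJ$. Your closing concern is unnecessary: $\omega_s$ embeds $\AHs$ into $\Z(\bH_0)$ by (\ref{omegas}), so $\AHs^F$ is abelian in every type and the Clifford-theoretic pairing argument needs no extra care.
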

 
\begin{proof} As announced above, we first show that $\ov\Jor(\GF)$ identifies with the set of $\cZ_F$-orbits in $\Jor(\GF)$. Let $(s,\phi)\in \Jor(\GF)$, define $C=\Cent_\bH^\circ(s)^F$, $\wt C=\Cent_\bH(s)^F$. Since $\omega_s\colon \cZ_F\to\Lin(A_\bH(s)^F)$ is onto, the $\cZ_F$-orbit of $(s,\phi)$ is $\{s\}\times \Lin(A_\bH(s)^F).\phi$ where the second term identifies with $\Pi_{\wt C}(\Irr(\restr\phi|{C}))$ by Lemma~\ref{bij_Pi} and \Cref{extCent} as recalled in the  proof of \Cref{Jordandec}. This gives our claim and therefore $\ov\JJ$ of \Cref{Jordandec} can be seen as an $E(\GF)$-equivariant bijection between the sets of $\cZ_F$-orbits in $\Irr(\GF)$ and $\Jor(\GF)$.
	This corresponds to Assumption (iii) of \Cref{ZEsets}.

Let us keep $\ov{(s,\phi)}\in\Jor(\GF)$, so that the $\HF$-orbit of $(s,\restr\phi|C)=(s,\ov\phi) $ belongs to $\ov\Jor(\GF)$. Let $\wt\chi\in\Irr(\wGF)$ such that $ (s,\ov\phi)\in\ov\JJ(\restr{\wt\chi}|\GF)$ and pick $\chi\in\Irr(\restr\wt\chi|\GF)$. This is the situation of \Cref{Jordandec}(c). We check that $\ov{(s,\phi)}$ and $\chi $ have same stabilizer in $\cZ_F$. Let $z\in\cZ_F$. Then $z.\ov{(s,\phi)}=\ov{(s,\phi)}$ means that ${(s,\wh\omega_{  s} (z)\phi)}={(s,\phi)}^h$ for some $h\in\HF$. This forces $h\in\Cent_{ \bH}(s)^F$ and the condition we get on $z$ is that $\wh\omega_{  s} (z)\in \Lin(A_\bH(s)^F)_\phi$. The latter group is the orthogonal of $A_\bH(s)^F_{\phi_0}$ for any $\phi_0\in\Irr(\restr\phi|{\Cent_{ \bH}^\circ(s)^F})$, again by combining Lemma~\ref{bij_Pi} and \Cref{extCent}. So $(\cZ_F)_{\ov{ (s,\phi)}}=\wh\omega_s \inv((A_\bH(s)^F_{\phi_0})^\perp)$. On the other hand \Cref{Jordandec}(c) tells us that $\wbG^F_\chi/\GF\Z(\wbG^F)$ identifies through $\omega_s$ and duality{} with the same orthogonal of $A_\bH(s)^F_{\phi_0}$. So we get our claim by \Cref{omegahat} on $\wh\omega_s $. This corresponds to Assumption (ii) of \Cref{ZEsets}.

Assumption (i) of \Cref{ZEsets} is ensured by our hypothesis for $\Irr(\GF)$ and by \Cref{lem3_7}(b) for $\Jor(\GF)$. We can now apply \Cref{ZEsets} as explained at the start of the chapter. This means that the $E(\GF)$-equivariant bijection $\ov\JJ$ of \Cref{Jordandec} lifts into a  $\cZ_F\rtimes E(\GF)$-equivariant bijection $\Irr(\GF)\to\Jor(\GF)$. This gives (a), while part (b) just expresses the fact that $\JJ$ lifts $\ov\JJ$.  
\end{proof}

As usual we obtain bijections $\cE(\GF,[s])\xleftarrow{\sim}\UCh(\Cent_\bH(s)^F)$ for every $s\in \bH^F_{\textrm{ss}}$. The following is a translation of the $\calZ_F\rtimes \EGF$-action on $\Jor(\GF)$ from \Cref{thm66}(b).
\begin{cor} \label{cor_Jordandec2}
Assume that Condition $\Ap$ holds for $(\bG,F)$. Let $\JJ$ be the map from \ref{thm66} and $(\ov \Psi_s)_{s\in \bHssF}$  the bijections from  \Cref{Jordandec}. For every  $s\in \ \bH_{\textrm{ss}}^F$ the map
$$\II Psis@{\Psi_s}:\cE(\GF,[s]) \lra \UCh(\Cent_\bH(s)^F)$$ 
given by $\chi\mapsto \phi$ whenever $(s,\phi)\in \JJ(\chi)$, is well-defined and  bijective. 
\begin{thmlist}
\item	The maps $(\Psi_s)_{s\in \bH^F_{\textrm{ss}}}$ satisfy
\begin{asslist}
		\item $\Psi_s(\chi)^{\si^*{}\inv} =\Psi_{\si^*(s)}(\chi^\si)$ for every $\sigma\in  E(\GF)$ with dual $\si^*\in E(\HF)$ as in (\ref{sigma*}) of \ref{duals}, 
		\item $\Psi_s(\chi ^g) = \Psi_s(\chi) \wh\omega _{s}(\ov g\inv) $ for every $g\in \wGF$ where $\ov g\in\cZ_F$ is the  $\GF\Z(\wGF)$-coset containing $g$, and
		\item $\Psi_s\big(\Irr(\Pi_{\wGF}(\chi))\big)=\Irr\big(\Cent_\bH(s)^F\mid \ov \Psi_s(\Pi_{\wGF}(\chi))\big)$, where $\ov \Psi_s$ is from \Cref{cor_Jordandec}.
\end{asslist}
\item If $\si\in E(\GF)$ and $s\in\bH^F_{\textrm{ss}}$ with $\si^*(s)=s^h$ for some $h\in\HF$, then $$\Psi_s(\cE(\GF,[s])^{\spannsi}) = \UCh (\CoHFs) ^{\spann<h\si^*>} .$$
\end{thmlist}
\end{cor}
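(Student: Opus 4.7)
The plan is to obtain this corollary as a series-by-series rephrasement of \Cref{thm66}. The first step is well-definedness and bijectivity of $\Psi_s$: because the equivariant Jordan decomposition $\JJ$ built in \Cref{thm66} is compatible with rational Lusztig series (inherited from the compatibility of $\wt\JJ$ and $\ov\JJ$ via \Cref{Jordandec}), its restriction to $\cE(\GF,[s])$ takes values in the $\HF$-orbits of pairs whose first coordinate lies in the $\HF$-class of $s$; one then normalises representatives to have first coordinate exactly $s$, and bijectivity is inherited from $\JJ$.

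Parts (a)(i) and (a)(ii) should follow immediately from unfolding the $E(\GF)$- and $\cZ_F$-equivariances of $\JJ$ using the action formulas of \Cref{lem3_7}. Since $E(\bG)\to E(\bH)$, $\sigma\mapsto\sigma^*$, is a group antiautomorphism (see \ref{duals}) and the paper's convention (\ref{not11}) makes $\chi^\sigma=\sigma\inv.\chi$ and $\chi^g=\ov g\inv.\chi$ for $g\in\wGF$, the formulas $\sigma.(s,\phi)=(\sigma^*{}\inv(s),\phi\circ\sigma^*)$ and $z.(s,\phi)=(s,\wh\omega_s(z)\phi)$ translate directly into (a)(i) and (a)(ii).

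Part (a)(iii) is the step that requires most care. Combining (a)(ii) with the surjectivity of $\wh\omega_s$ (see \Cref{omegahat}) identifies $\Psi_s$ applied to the $\wGF$-orbit of $\chi$ with the $\Lin(A_\bH(s)^F)$-orbit of $\Psi_s(\chi)$. On the other hand, \Cref{extCent} provides maximal extendibility for $\UCh(\CoHFs)$ with respect to $\CoHFs\unlhd\CHFs$, so \Cref{bij_Pi} identifies that same $\Lin(A_\bH(s)^F)$-orbit with $\Irr(\CHFs\mid\ov\Psi_s(\Pi_\wGF(\chi)))$, once \Cref{thm66_c} is invoked to rewrite $\ov\Psi_s(\Pi_\wGF(\chi))$ as the $\CHFs$-orbit sum of $\restr{\Psi_s(\chi)}|{\CoHFs}$.

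Finally for (b), the key observation is the intrinsic identity $\Psi_{s^h}(\chi)=\Psi_s(\chi)^h$ for $h\in\HF$, valid because $\JJ$ takes values in $\HF$-orbits. Paired with (a)(i) and the hypothesis $\sigma^*(s)=s^h$, this shows $\chi^\sigma=\chi$ is equivalent to $\Psi_s(\chi)^{\sigma^*{}\inv}=\Psi_{\sigma^*(s)}(\chi)=\Psi_{s^h}(\chi)=\Psi_s(\chi)^h$, i.e.\ to $\Psi_s(\chi)$ being $h\sigma^*$-invariant. The only anticipated difficulty throughout is the bookkeeping of left- versus right-action conventions; there is no new representation-theoretic content beyond \Cref{thm66}, \Cref{lem3_7}, and the orbit-sum formalism of Section~\ref{3B_orbitsums}.
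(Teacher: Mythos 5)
Your proposal is correct and follows the same route the paper intends: the paper states this corollary without a separate proof, presenting it as a direct translation of the equivariant bijection $\JJ$ of \Cref{thm66} together with the action formulas of \Cref{lem3_7}, which is exactly your unfolding for well-definedness, (a)(i) and (a)(ii), and your (a)(iii) reproduces the $\Lin(A_\bH(s)^F)$-orbit identification via \Cref{extCent} and \Cref{bij_Pi} that the paper already carried out in the proof of \Cref{thm66}. Your argument for (b), using $\Psi_{s^h}(\chi)=\Psi_s(\chi)^h$ and (a)(i), is the same computation the paper performs explicitly for $\ov\Psi_s$ in \Cref{cor_Jordandec}(b).
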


\subsection{Extendibility in groups with connected center}\label{subsecGtilde}

We take here the opportunity to generalize a theorem from \cite{CS18B} showing that maximal extendibility holds \wrt $\wGF\unlhd \wGF\rtimes E(\GF)$. This was originally used for type $\tE_6$. The statement could be seen as a corollary of Theorem A, but it is more a precursor with a very direct proof. In fact it will be useful to us when dealing with type $\tD$ in characteristic 2 or with the graph automorphism of order 3 in $\tD_4$. Shintani descent arguments will be decisive again in the proofs of \Cref{prop82} and \Cref{thm77}.

\begin{theorem}\label{prop_ext_wG} Let $\wbG$, $F$, $E(\GF)$ be as in \ref{not}. Then every $\chi\in\Irr(\wGF)$ extends to its stabilizer in $\wGF \EGF$. 
\end{theorem}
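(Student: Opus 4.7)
The plan is to reduce the extendibility problem for $\chi\in\Irr(\wGF)$ to extendibility of a corresponding unipotent character, using the $E(\GF)$-equivariant Jordan decomposition of Proposition~\ref{JorTilde}, and then invoke Theorem~\ref{propunipext}. Given $\chi\in\Irr(\wGF)$, let $(\wt s,\phi)\in\wt\JJ(\chi)$ with $\wt s\in\wbH^F_{\textrm{ss}}$ and $\phi\in\UCh(\Cent_\wbH(\wt s)^F)$. Since $\wbH$ has simply connected derived group (being dual to $\wbG$, which has connected center), $\Cent_\wbH(\wt s)$ is connected, and $\phi$ is a single irreducible unipotent character.

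Using the $E(\GF)$-equivariance of $\wt\JJ$, I would first show that one may choose the representative $(\wt s,\phi)$ of its $\wbH^F$-orbit so that for each $\sigma\in E(\GF)_\chi$ there exists $h_\sigma\in\wbH^F$ with the element $h_\sigma\sigma^*\in\wbH^F E(\HF)$ fixing both $\wt s$ and $\phi$. Setting $s:=\wt\pi(\wt s)$, the natural identification (\ref{UchKK}) provides a bijection between $\UCh(\Cent_\wbH(\wt s)^F)$ and $\UCh(\Cent_\bH^\circ(s)^F)$ that is equivariant for the relevant automorphisms. Applying Theorem~\ref{propunipext} to the simple adjoint group $\bH$ with $\bC=\Cent_\bH^\circ(s)$, we obtain an extension of the image of $\phi$ to its stabilizer in $\Cent_\bH^\circ(s)^F\cdot E(\HF)_{(s,\phi)}$, and hence an extension of $\phi$ to a subgroup of $\Cent_\wbH(\wt s)^F\cdot E(\HF)$ containing the image of $E(\GF)_\chi$ under the dual map.

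The final and most delicate step is to transport this extension through Jordan decomposition to an actual extension of $\chi$ to $\wGF E(\GF)_\chi$. Here I expect the main obstacle: one must match the ``character-triple'' $(\wGF,\wGF E(\GF)_\chi,\chi)$ with the corresponding unipotent triple in such a way that vanishing of the obstruction 2-cocycle for the latter forces it to vanish for the former. Concretely, one would exploit that the $E(\GF)$-equivariance of $\wt\JJ$, combined with Lusztig's explicit realization of Jordan decomposition via Deligne--Lusztig characters (and the $B(s)^F\cong A_\bH(s)^F$-equivariance in Proposition~\ref{JorTilde}), implies an isomorphism of character triples between the two sides at $\chi$ and $\phi$ respectively, reducing extendibility of $\chi$ to extendibility of $\phi$. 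This is essentially a refinement of Digne--Michel-type uniqueness results and is where careful bookkeeping of how $\sigma^*$ and the conjugation by $h_\sigma$ act on the Deligne--Lusztig induction $R_{\bT^*}^{\wbG}(\wt s)$ is required; the non-abelian structure of $E(\GF)$ in type $\tD_4$ means the extension cannot simply be built one generator at a time, so the character-triple isomorphism must be invoked globally.
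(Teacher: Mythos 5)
Your reduction to unipotent characters via the equivariant Jordan decomposition stalls exactly where you flag it: the transport of the extension back to $\chi$. The equivariance of $\wt\JJ$ in \Cref{JorTilde} (and the Digne--Michel uniqueness it rests on) only matches \emph{stabilizers}; it says nothing about the $2$-cocycle obstructing the extension of $\chi$ to $\wGF\EGF_\chi$ versus the one obstructing the extension of $\phi$ on the dual side. The assertion that equivariance plus the Deligne--Lusztig realization of $\wt\JJ$ ``implies an isomorphism of character triples'' is precisely the theorem you would have to prove, and neither \cite{DM90}, \cite{CS13}, nor anything in this paper supplies such a Clifford-theoretic compatibility of Jordan decomposition over $E(\GF)$; statements of that strength are a separate, substantially harder project. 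There is also a smaller unproved step earlier: choosing one representative $(\wt s,\phi)$ whose stabilizer in $\wbH^F E(\HF)$ covers \emph{all} of $E(\GF)_\chi$ simultaneously (not just each $\sigma$ separately up to $\wbH^F$-conjugacy) needs an argument when $E(\GF)_\chi$ is non-cyclic; this is an analogue of Condition $\Ap$ on the dual side and does not come for free from \Cref{JorTilde}.

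The paper avoids the character-triple issue altogether. It first reduces, via \cite[Cors~11.22, 11.31]{Isa}, to showing that $\chi\in\Irr(\wGF)^{E'}$ extends to $\wGF E'$ for $E'$ a non-cyclic $\ell$-subgroup of $\EGF$; such $E'$ exist only for $\ell\in\{2,3\}$ and have the form $\spa{F_0,\gamma}$ with $F_0$ a power of $F_p$ and $\gamma$ a graph automorphism of order $\ell$. The case $\ell=2$ is \cite[Cor.~3.9]{CS18B}, and for $\ell=3$ (triality in $\tD_4$) the argument is a counting one: the equivariant Jordan decomposition for the connected-center group is used only to establish the numerical identity $|\Irr(\wGF)^{\spa{F_0,\gamma}}|=|\Irr(\wbG^{F_0})^{\spa{\gamma}}|$, while Shintani descent identifies $\mathrm{Tr}(\gamma_1)$ on $\CF_{\wGF\spa{F_0}}(\wGF F_0)$ with $|\Irr(\wbG^{F_0})^{\spa{\gamma}}|$; computing that same trace in the basis of restrictions $\restr\Lambda(\chi)|{\wGF F_0}$ of extensions to $\wGF\rtimes\spa{F_0}$ expresses it as $|\Irr(\wGF)^{\spa{F_0,\gamma}}|$ minus a sum of nontrivial cube roots of unity indexed by the invariant-but-non-extendible characters, forcing that set to be empty. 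If you want to keep your Jordan-decomposition framing, you would either have to prove the character-triple compatibility you invoke, or convert your setup into a counting argument of this kind, where equivariance of $\wt\JJ$ is only asked to produce an equality of cardinalities.
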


\begin{proof} By \cite[Cor.~11.31]{Isa} it suffices to check that for any prime $\ell$ and any non-cyclic $\ell$-subgroup $E'$ of $E(\GF)$, every element of $\Irr(\wGF)^{E'}$ extends to $\wGF.E'$. Whenever  $E(\GF)$ itself is non-cyclic one has $F=F_p^{m}$ for some $m\geq 1$ and $E(\GF)=  \spann<\restr{F_p}|\GF >\times \Gamma$ for $\Gamma$ the group of diagram automorphisms $\ov\tau$ defined in \ref{not}. Then $E'$ must contain a non-trivial element of $\Gamma$, otherwise $E'$ would inject into the cyclic group $E(\GF)/\Gamma$. This forces $\ell =2$ or $3$ and $$E'=\spann<F_0,\gamma>,$$ 
where $F_0$ is the restriction to $\wGF$ of $F_p^{m'}$ for some positive divisor $m'$ of $m$, and $\gamma$ is the restriction to $\wGF$ of a graph automorphism $\ov\tau\colon \bG\to\bG$ of order $\ell$ suitably extended to $\wbG$.
	
	Now \cite[Cor.~3.9]{CS18B} gives our claim when $\gamma$ has order 2. Let us show how the proof of that theorem can be adapted in the case of a $\gamma$ of order $3$.
	
	The proof of \cite[Cor.~3.9]{CS18B}  is in two steps. The aim of the first step is to ensure the equality $|\Irr({\wt\bG}^F)^{\spann<F_0,\gamma >}|=|\Irr({\wt\bG}^{F_0})^{\spann<\gamma >}|$, called equation (3.8.1) in \cite[Thm~3.8]{CS18B}. This is established in the proof of \cite[Cor.~3.9]{CS18B} as an application of the equivariant Jordan decomposition of characters of $\wGF$ and it does not use that $\gamma$ has order 2. The second step, which is the main part of the proof of \cite[Thm~3.8]{CS18B}, needs the following adaptation.

	If $X$ is a finite group and $Y\subseteq X$, a union of $X$-conjugacy classes, then we denote by $\II CFXG@{\CF_X(Y)}$ the $\CC$-vector space of class functions on $Y$, namely the $\CC$-valued functions on $Y$ that are constant on $X$-conjugacy classes.
	
	We consider the semi-direct product $\wGF\rtimes\spa{F_0}$. Since $\gamma$ commutes with $F_p$, hence with $F$ and $F_0$, it induces linear maps $\gamma_1\colon \CF_{\wGF\spa{F_0}}(\wGF F_0)\lra \CF_{\wGF\spa{F_0}}(\wGF F_0)$ and $\gamma_0: \CF_{\wt\bG^{F_0}}({\wt\bG^{F_0}})\lra \CF_{\wt\bG^{F_0}}({\wt\bG^{F_0}}) $. We clearly have $ \mathrm{Tr}(\gamma_0)=|\Irr({\wt\bG^{F_0}})^{\spann<\gamma >}|$ by using irreducible characters as a basis. On the other hand, the Shintani descent (see \cite[Sect.~3.A]{CS18B}) induces a $\Bbb C$-linear ${\gamma }$-equivariant isomorphism 
	$\Sh_{F,F_0}: \CF_{\wGF\spa{F_0}}(\wGF F_0)\lra \CF_{\wt\bG^{F_0}}({\wt\bG^{F_0}})$ and we get a commutative diagram of bijections \begin{align}\label{comdiag_Shint}
		\xymatrix{
			\CF_{\wGF\spann<F_0>}(\wGF F_0) \ar[r]^{\gamma_1} \ar[d]_{\Sh_{F, F_0}}&\CF_{\wGF\spann<F_0>}(\wGF F_0)\ar[d]^{\Sh_{F, F_0}}\\
			\CF_{{\wt\bG^{F_0}}}({\wt\bG^{F_0}})\ar[r]_{\gamma_0} &\CF_{{\wt\bG^{F_0}}}({\wt\bG^{F_0}})
		}
	\end{align} ensuring that $\mathrm{Tr}(\gamma_1)=\mathrm{Tr}(\gamma_0)=|\Irr({\wt\bG^{F_0}})^{\spann<\gamma >}|$ by using $\Irr({\wt\bG^{F_0}})$ as a $\gamma$-stable basis of $\CF_{{\wt\bG^{F_0}}}({\wt\bG^{F_0}})$. Thanks to the equality recalled in the first step, we then get $$\mathrm{Tr}(\gamma_1) =
	|\Irr({\wt\bG}^F)^{\spann<F_0,\gamma >}|.$$
	For the inclusion $\wGF\unlhd\wbG^{F}\rtimes\spann<F_0>$, we denote by $\Lambda\ \colon\ \Irr(\wGF)^{\spann<F_0>}\ \longrightarrow\ \Irr({\wt\bG}^{F}\rtimes\spann<F_0>)$ the restriction of an {extension map} for $\Irr(\wGF)$, that is for any $\chi\in \Irr(\wGF)^{\spann<F_0>}$, $\Lambda(\chi)\in \Irr({\wt\bG}^{F}\rtimes\spann<F_0>)$ is such that $\restr\Lambda(\chi)|{\wGF}  =\chi$ (see end of \ref{not11}). Let $\EE'$ be the set of characters in $\Irr(\wGF)^{\spa{F_0,\gamma}}$ that have no extension to $\wGF\rtimes \spa{F_0,\gamma}=\wGF\rtimes E'$. By taking representatives, we can assume that $\Lambda$ is $\gamma$-equivariant on $\Irr({\wt\bG} ^F)^{\spa{F_0}}\setminus \EE'$. 
	Now the action of $\gamma$ on the $\Lambda(\chi)$'s can be described as follows, see also proof of \cite[Thm 3.8]{CS18B}. It permutes the  $\Lambda(\chi)$'s for $\chi\in\Irr(\wGF)^{\spann<F_0>}\setminus\Irr(\wGF)^{\spa{F_0,\gamma}}$ without fixing any. It
	fixes $\Lambda(\chi)$ whenever $\chi\in \Irr(\wGF)^{\spa{F_0,\gamma}}$ extends to $\wGF E'$, and it acts on the others, i.e. the elements of $\EE'$, by multiplying with a non-trivial linear character. As $\restr \Lambda(\chi)|{\wGF F_0}$ ($\chi\in \Irr(\wGF)^{\spa{F_0}}$) defines a basis of $\CF_{\wGF\spa{F_0}}(\wGF F_0)$ (see \cite[Prop. 3.1]{CS18B}) we can conclude that $\Tr(\gamma_1)$ is the sum of the integer $|\Irr(\wGF)^{\spa{F_0,\gamma}} \setminus \EE'|$ plus a sum of $|\EE'|$ complex roots of unity of order 3. 
	Since we know that $\Tr(\gamma_1)=|\Irr(\wGF)^{\spa{F_0,\gamma}} | $, we get $\EE'=\emptyset$ and hence our claim that every $\chi\in \Irr({\wt\bG} ^F)^{E'}$ extends to ${\wt\bG} ^F\rtimes E'$.
\end{proof}

\begin{rem}\label{rem_extun}
	Note that \Cref{sim_uni} above is an easy consequence of \Cref{prop_ext_wG} and (\ref{UchKK}).
\end{rem}

\section{Jordan decomposition for geometric series and labellings of stable $\wGF$-orbits}\label{sec3}

 We keep the notation introduced in \ref{not} and \ref{duals} but we assume from now on that $\bG=\tDlsc(\FF) $ for $l\geq 4$ and $\FF$ is the algebraic closure of the field with $p$ elements, where $p$ is an {\it odd} prime. We also take $F\colon \bG\to\bG$ with $F=F_{p^{m}}\ov\tau$ for $m\geq 1$ and $\tau$ a graph automorphism of order 1 or 2. We recall that we have fixed a regular embedding $\bG=[\wbG,\wbG]\leq \wbG$ in \ref{not} and  see $F$ also as Frobenius endomorphism of $\wbG$. 
 
 Note that $\GF/\Z(\GF)$ is always a simple group for which $\GF$ is a universal covering, see \cite[Thm 6.1.4]{GLS3}. We keep the notation $(\bH ,F)$ for a dual of $(\bG,F)$, that is $\bH=\tDlad(\FF) $, see \ref{duals}. We have $\bH\cong \bG/\Z(\bG)$ as algebraic groups since $p\neq2$, see \cite[Prop.~2.4.4]{DiMi2}. Also $\bH_0$, the simply connected covering of $\bH$ introduced in (\ref{piH0}) is here isomorphic to $\bG$.
 
The Jordan decomposition $\ov \JJ$ from \Cref{Jordandec} gives a labelling of $\wGF$-orbits on $\Irr(\GF)$ in terms of orbit sums of unipotent characters. 
	We give in \Cref{newJdec} a more precise version of this result in type $\tD$: For a given semisimple $F$-stable conjugacy class $\cC$ of $\bH$ and $s\in \cC^F$ we apply \Cref{cor_2Becht} below and obtain a finite group $\wc A(s)$ with $\Cent_\bH(s)=\Cent_\bH^\circ(s)\rtimes \wc A(s)$. If we denote by $\II{va}@{v_a}$ an element of $\wc A (s)$ representing $a\in \AHs_F$ as in \Cref{cor_CS22} and \Cref{noteps} respectively, we deduce from $\ov \JJ$ a bijection 
$$ \ov\Psi_s':\ocE(\GF,\cC) \lra \bigsqcup_{a\in \AHs_F} \oUCh(\Cent_\bH^\circ(s)^{v_a F}) .$$ 
Note that in \cite{DiMi21} the authors study similar maps 
$$ \cE(\GF,\cC) \lra \bigsqcup_{a\in \AHs_F}\UCh(\Cent_\bH(s)^{v_a F})$$
deduced from Jordan decomposition. 

Since the bijection $\ov \JJ$ is equivariant with respect to automorphisms of $\GF$, we obtain with \Cref{labelEGFFnull} a labelling of the $\si'$-stable $\wGF$-orbits contained in $\cE(\GF,\cC)$, whenever $\si'\in E(\bG)$ and $\cC\in \Cl_{\textrm{ss}}(\bH)^{\spa{\si',F}}$.
In \ref{ssec4C} we label the $\wGF$-orbits that are stable under a given non-cyclic subgroup of $E(\GF)$.

\medskip

Some more specific notation will be used throughout. 

\begin{notation}\label{not2_1} The group $\bG$ has a graph automorphism $\II{gamma}@{\gamma}\in E(\bG)$ of order 2 specified as sending the fundamental root $\al_2$ to $\al_1$ in the notation of \cite[Not.~3.3]{TypeD1}. It is the unique non trivial graph automorphism in $E(\bG)$ when $l>4$.
Let $\II{E2G}@{\uE(\bG)}:=\spa{F_p,\gamma}$ considered as a group of abstract automorphisms of $\bG$ and $\II{E2GF}@{\uE(\GF)}$ the corresponding subgroup of $\Aut(\GF)$. We define $\II{E2plus}@{\uE^+(\bG)}:=\{ F_p^i\gamma^j\mid i\geq 1,\ j=0,1  \}$ and assume $F\in \uE^+(\bG)$. We let $\uE(\bH)$, $\uE^+(\bH)$ and $\uE(\HF)$ be defined in the same fashion. Note that the duality $\si\mapsto\si^*$ of (\ref{sigma*}) sends the elements $F_p$ and $\gamma$ of $\uE(\bG)$ to elements of $\uE(\bH)$ defined in the same way, so we will routinely omit the ${}^*$.

For $\si\in\End(\bH)$ we denote by $\II{span}@{\protect{ \left\langle\sigma \right\rangle}^+}:=\{\sigma^i\mid i\geq 1 \}$
the set of positive powers of $\si$.

Recall that $p$ is an \textit{odd} prime number. Concerning the action of $\uE(\bG)$ on  $\Z(\bG)$, see \cite[Table 1.12.6]{GLS3}. The group $\Z(\bG)$ has order 4, being cyclic if and only if $l$ is odd. The action of $F_p$ on $\Z(\bG)$ is by raising elements to their $p$-th power hence trivial unless $l$ is odd and $p\equiv 3 \mod 4$. In all cases $\Z(\bG)^{\spann<\gamma>}=\Z(\bG)^{\spann<F_p,\gamma>}$ is a group of order 2 generated by an element denoted by $\III{h_0}$. We define $\III{Z_0}=\spa{h_0}$. The same applies to $\bH_0$ and we denote the generator of $\Z(\bH_0)^{\spann<\gamma>}=\Z(\bH_0)^{\spann<F_p,\gamma>}$ by $\III{h_0^{(\bH_0)}}$.
\end{notation}

\subsection{A lift of the component groups in $\tDlad(\FF)$}\label{sec1}
We recall here the result from \cite{CS22} exhibiting a lift $\wc A(s)$ of the component group of the centralizer of a semisimple element $s\in\bH=\tD_{l,\rm {\mathrm{ad}}}(\FF)$ that behaves remarkably well with respect to automorphisms. Its elements are very useful in the context of Jordan decomposition of characters of $\GF=\tDlsc(q)$. 

\begin{theorem} [{\cite[Thm~A]{CS22}}] \label{cor_2Becht}
	Let $\calC\in\Cl_{\textrm{ss}}(\bH)^F$ and $s\in\calC^{F}$.
	\begin{thmlist}
		\item \label{CS22_3A} There exists $\II{Acheck}@{\protect{\wc A(s)}}$ an $F$-stable abelian subgroup of  $\Cent_\bH(s)$ with \[\Cent_\bH(s)=\Cent_\bH^\circ(s)\rtimes \wc A(s).\]
		\item \label{CS22_3B} 
		If $\si'\in \uE(\bH)$ with $\si'(s)\in[s]_\HF$, then the above group $\wc A(s)$ is $\si$-stable for some $\si \in \Cent_{\HF \si'}(s)$.
		\item \label{CS22_3C}
		If $\si',F_0\in \uE(\bH)$ with $F\in\spa{F_0}^+$, $F_0(s)=s$ and $\si'(s)\in[s]_{\HFnull}$, then the group $\wc A(s)$ from \ref{CS22_3A} is 
		$\spa{\si,F_0}$-stable for some $\si \in \Cent_{\HFnull \si'}(s)$.
	\end{thmlist}
\end{theorem}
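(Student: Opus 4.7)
The overall plan is to construct the complement abstractly in $\bH$ first, then use Lang--Steinberg type arguments to ensure $F$-stability for (a), and finally to arrange compatibility with the finite-order automorphisms in (b) and (c) by running a joint Lang--Steinberg argument.

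For (a), I would work with the concrete model $\bH \cong \mathrm{PSO}_{2l}(\FF)$ and lift $s$ to $\tilde s \in \mathrm{SO}_{2l}(\FF)$. The eigenspace decomposition of the natural $2l$-dimensional orthogonal space
\[
V = V_+ \oplus V_- \oplus \bigoplus_{\{\lambda,\lambda^{-1}\}}\bigl(V_\lambda \oplus V_{\lambda^{-1}}\bigr),
\]
with $V_\pm$ the $(\pm 1)$-eigenspaces and each pair $V_\lambda \oplus V_{\lambda^{-1}}$ non-degenerate, exhibits $\Cent_\bH^\circ(s)$ as a product of $\mathrm{GL}(V_\lambda)$- and $\mathrm{SO}(V_\pm)$-factors (after descending from $\mathrm{SO}_{2l}$ to $\bH$). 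Consequently $\AHs$ is elementary abelian of order at most $4$, generated by the cosets of $\mathrm{O}(V_\pm)$-reflections. Since $p$ is odd, non-isotropic vectors $v_\pm \in V_\pm$ exist, and the associated orthogonal reflections commute, act as the identity on the remaining eigenspaces, and give an elementary abelian subgroup $\wc A \leq \Cent_\bH(s)$ projecting isomorphically onto $\AHs$. To upgrade this to an $F$-stable complement, I would invoke that any two such abelian complements are $\Cent_\bH^\circ(s)$-conjugate — the relevant $H^1$-obstruction vanishes because $\AHs$ is a $2$-group acting on a subquotient of $\Cent_\bH^\circ(s)$ in odd characteristic. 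Hence $F(\wc A) = {}^c\wc A$ for some $c \in \Cent_\bH^\circ(s)$, and Lang--Steinberg applied to the connected group $\Cent_\bH^\circ(s)$ yields $g$ with $g^{-1}F(g) = c$, so that ${}^g\wc A$ is the desired $\wc A(s)$.

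For (b), given $\sigma' \in E_2(\bH)$ with $\sigma'(s) \in [s]_\HF$, I would pick $h \in \HF$ with $\sigma'(s) = {}^h s$ and set $\sigma := h^{-1}\sigma' \in \Cent_{\HF\sigma'}(s)$. Then $F$ and $\sigma$ both stabilize $\Cent_\bH(s)$ and $\Cent_\bH^\circ(s)$, and I would repeat the Lang--Steinberg step of (a) for the finite group $\spa{F,\sigma}$ acting on the torsor of abelian complements under the quotient $\Cent_\bH^\circ(s)/\NNN_{\Cent_\bH^\circ(s)}(\wc A(s))$. The freedom to replace $h$ by $ht$ for $t \in \CHFs$, combined with the Lang--Steinberg element $g$, supplies just enough parameters to kill the corresponding cocycle. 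Part (c) follows by the same procedure with $F$ replaced by $F_0$: since $F \in \spa{F_0}^+$ and $F_0(s) = s$, the $F$-stability of $\wc A(s)$ is automatic once $F_0$-stability is achieved, so the joint argument only has to be run for $\spa{F_0,\sigma}$.

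The main obstacle will be the joint cohomological adjustment in (b) and (c): one must verify that the freedom in choosing $h$ modulo $\CHFs$ and the subsequent Lang--Steinberg element $g \in \Cent_\bH^\circ(s)$ suffices to trivialize both cocycles simultaneously without undoing either. This is precisely where the specific small structure of $\AHs$ in type $\mathrm D_l$ — elementary abelian of order at most $4$ — becomes essential, as it keeps the relevant cohomology tractable and the freedom of choice adequate.
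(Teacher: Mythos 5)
First, note that this paper does not prove the statement at all: it is imported verbatim as \cite[Thm~A]{CS22}, so there is no internal proof to compare against, and your proposal has to stand on its own. It does not. The first genuine gap is your description of $\AHs$ and of the complement. In the adjoint group $\bH\cong\SO_{2l}(\FF)/\{\pm I\}$ the centralizer of $s$ consists of images of elements $g\in\SO_{2l}(\FF)$ with $g\tilde s g^{-1}\in\{\tilde s,-\tilde s\}$; the components coming from elements that conjugate $\tilde s$ to $-\tilde s$ (hence swap $V_+\leftrightarrow V_-$ and $V_\lambda\leftrightarrow V_{-\lambda}$) are invisible to your eigenspace-preserving reflections, and when $l$ is odd $\AHs$ embeds into $\Z(\bH_0)\cong\ZZ/4$ and can be cyclic of order $4$, so it is not always elementary abelian and cannot then be lifted by commuting involutions. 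Thus your construction does not even produce a complement to $\Cent_\bH^\circ(s)$ in general; whether the extension splits at all is part of what \cite[Thm~A]{CS22} asserts.

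The second gap is the stability argument. Your claim that any two abelian complements are $\Cent_\bH^\circ(s)$-conjugate because ``the relevant $H^1$-obstruction vanishes'' is false: already when $\Cent_\bH(s)=\Cent_\bH^\circ(s)\times A$ with a central torus $T\leq\Cent_\bH^\circ(s)$ on which $A$ acts trivially, complements are graphs of homomorphisms $A\to T$ and distinct ones are not conjugate, so you cannot conclude $F(\wc A)={}^{c}\wc A$ with $c\in\Cent_\bH^\circ(s)$, and the Lang--Steinberg step in (a) collapses (its direction is also off: you need $g^{-1}F(g)=c^{-1}$ modulo the normalizer, not $c$). For (b) and (c) the situation is worse: $\spa{F,\si}$ (resp.\ $\spa{F_0,\si}$) is not generated by a single Steinberg endomorphism of a connected group, so there is no Lang--Steinberg theorem to invoke, and your ``joint cohomological adjustment'' using the freedom $h\mapsto ht$, $t\in\CHFs$, is exactly the statement to be proved, not an argument --- you acknowledge this yourself in the last paragraph. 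Arranging one group $\wc A(s)$ that is simultaneously stable under $F$ (or $F_0$) and under a correctly normalized $\si$ in the prescribed coset is the substantive content of \cite[Thm~A]{CS22}, which is obtained there by explicit constructions adapted to the quadratic form, not by a cocycle-killing formality.
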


It is clear that automorphisms stabilizing $s$ also act on $\AHs$. If those automorphisms stabilize $\wc A(s)$ the actions on $\AHs$ and $\wc A(s)$ are closely linked. 

\begin{cor} \label{cor_CS22} In the situation of \Cref{cor_2Becht} let $\II{va}@{v'_a} \in \wc A(s)$ be the element corresponding to $a\in A_\bH(s):=\Cent_{\bH}(s)/\Cent_{\bH}^\circ(s)$. Then: 
	\begin{thmlist}
		\item $F(v'_a)=v'_{F(a)}$ for every $a\in \AHs$;
		\item $\si(v'_a)=v'_{\si(a)}$ for every $a\in \AHs$ with $\sigma$ as in \ref{cor_2Becht}(b);
		\item $F_0(v'_a)=v'_{F_0(a)}$ and $\si(v'_a)=v'_{\si(a)}$ for every $a\in \AHs$ in the situation of \ref{cor_2Becht}(c).
	\end{thmlist}
\end{cor}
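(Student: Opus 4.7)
The plan is to observe that each part is essentially a tautological unwinding of \Cref{cor_2Becht} once the lift $\wc A(s)$ is identified with $\AHs$ via the canonical isomorphism induced by the quotient map. Let $p\colon \Cent_\bH(s)\to\AHs$ denote the natural projection onto the component group. Since $\Cent_\bH(s)=\Cent_\bH^\circ(s)\rtimes \wc A(s)$ by \Cref{CS22_3A}, the restriction $\restr p|{\wc A(s)}\colon \wc A(s)\xrightarrow{\sim}\AHs$ is an isomorphism of groups, and $v'_a$ is by definition the unique preimage of $a$ under this isomorphism.

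The key general principle I will invoke is the following: any bijective algebraic endomorphism $\alpha$ of $\bH$ that fixes $s$ automatically stabilizes both $\Cent_\bH(s)$ and $\Cent_\bH^\circ(s)$, hence induces an automorphism of $\AHs$ with respect to which $p$ is equivariant. If in addition $\alpha$ stabilizes $\wc A(s)$, then $\restr p|{\wc A(s)}$ is itself $\alpha$-equivariant, and the desired formula $\alpha(v'_a)=v'_{\alpha(a)}$ follows immediately by applying $(\restr p|{\wc A(s)})\inv$ to both sides of the identity $p(\alpha(v'_a))=\alpha(p(v'_a))=\alpha(a)$.

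It then remains to verify that each of the three situations falls under this principle, which amounts to locating the appropriate piece of \Cref{cor_2Becht}. For part (a), the Frobenius $F$ fixes $s$ because $s\in\cC^F$, and \Cref{CS22_3A} asserts that $\wc A(s)$ is $F$-stable. For part (b), the chosen $\sigma\in\Cent_{\HF\sigma'}(s)$ fixes $s$ by construction, and \Cref{CS22_3B} states that $\wc A(s)$ can be taken $\sigma$-stable. For part (c), the analogous properties hold simultaneously for $\sigma$ and $F_0$ by \Cref{CS22_3C}. I do not expect any genuine obstacle: the corollary is little more than a reformulation of \Cref{cor_2Becht} in terms of the distinguished elements $v'_a$, and all the substantive work has already been carried out in establishing the existence and equivariance properties of $\wc A(s)$.
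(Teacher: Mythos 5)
Your proof is correct and is essentially the paper's own argument: the paper also deduces the statement in one line from the fact that the stability of $\wc A(s)$ under the relevant endomorphisms makes the natural map $\wc A(s)\to A_\bH(s)$ (an isomorphism by the semidirect product decomposition) equivariant. You merely spell out the equivariance of the projection and the case-by-case appeal to \Cref{cor_2Becht}(a)--(c), which is exactly what the paper leaves implicit.
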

\begin{proof}
Recall the natural epimorphism $\pi_\AHs:\Cent_\bH(s)\lra \AHs$ given by $\AHs=\Cent_\bH(s)/\Cent^\circ_\bH(s)$. The map $\pi_\AHs$ is accordingly equivariant with respect to all bijective endomorphisms of $\Cent_\bH(s)$ stabilizing $\Cent^\circ_\bH(s)$. 
Due to $\Cent_\bH(s)=\Cent_\bH^\circ(s)\rtimes \wc A(s)$, the restriction of $\pi_\AHs$ to $\wc A(s)$ is an isomorphism. We see that the endomorphisms $F$, $\si$ and $F_0$ from \Cref{cor_2Becht} stabilize $\wc A(s)$ and $\Cent^\circ_\bH(s)$. This implies that the isomorphism $\wc A(s)\lra \AHs$ given by $v'_a\mapsto a$ is equivariant for them.
\end{proof}

\subsection{Jordan decomposition and geometric series }\label{3C}
In the following we simultaneously study the rational series contained in a given geometric Lusztig series. 
We fix $\calC\in\Cl_{\textrm{ss}}(\bH)^F$ and $s\in\calC^F$ for the rest of the chapter.
\begin{notation}\label{noteps} Let $\wc A(s)$ be the $F$-stable abelian group with $\bGs=\bGos\rtimes \wc A(s)$ from Theorem~\ref{CS22_3A} and $\{\II{vb}@{v'_b\in \wc A(s)}\mid b\in \AHs\}$ be defined as in \Cref{cor_CS22}.
Recall $\Z(\bH_0)^{\spa{F_p,\gamma}} =\spa{h_0^{(\bH_0)}}$ from \Cref{not2_1}. Fix a section \[\II{sec}@{\omicron} : \Z(\bH_0)/\spannhHnull\lra \Z(\bH_0)\] with $\omicron(1)=1$.   

For  $a\in \II AsF@{\AsF:=\AHs/[\AHs,F]}$, we define below some $v_a\in \wc A(s)$ with $v_a[\AHs,F]=a$. 

When $[\AHs,F]=1$, then $a\in \AHs$ and $v'_a\in\wc A(s)$ from \Cref{cor_CS22} is defined, so we set $\II va@{v_a:=v'_{a}}$. 

When $[\AHs,F]\neq 1$, then $\omega_s([\AHs,F])=\spannhHnull$ and $\omicron(\omega_s(a))$ is well-defined. Then there exists a unique element $a_0\in \AHs$ with $a=a_0[\AHs,F]$ and 
$\omicron(\omega_s(a_0[\AHs,F]))=\omega_s(a_0)$. In this situation we
set $\II va@{v_a:=v'_{a_0}}\in\wc A(s)$. (This precise definition of $v_a$ using $\omicron$ is required only in \ref{sec4B}.)

Additionally, applying Lang's theorem \cite[Thm~21.7]{MT}, we fix some $\II ga@{g_a\in \bH}$ with ${g_a}^{-1}F(g_a)=v_a$. We set $\II sa@{s_a:=s^{g_a^{-1}}}$.
\end{notation}
Now (\ref{CFs_a}) gives at once the following.
\begin{lem}\label{lem38}
The various $[s_a]_\HF$ ($a\in \AHs_{F}$) are the distinct $\HF$-conjugacy classes in $\calC^F$. 
\end{lem}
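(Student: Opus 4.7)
The plan is to identify the statement as a direct application of formula~(\ref{CFs_a}) from~\ref{ssclasses}, whose only content is to verify that our specific $s_a = g_a s g_a^{-1}$ from \Cref{noteps} meets the hypotheses of that formula, parametrized by $a \in \AHs_F$.

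First I would observe that $s_a$ is indeed $\HF$-stable. Since $s \in \HF$ and $g_a^{-1}F(g_a) = v_a \in \wc A(s) \subseteq \Cent_\bH(s)$ (by \Cref{CS22_3A}), one computes
$$F(s_a) = F(g_a) s F(g_a)^{-1} = g_a v_a s v_a^{-1} g_a^{-1} = g_a s g_a^{-1} = s_a,$$
so $s_a \in \calC^F$. Next, one recalls that (\ref{CFs_a}) expresses
$$\calC^F = \bigsqcup_{a \in A_\bH(s)_F}[s'_a]_\HF$$
where $s'_a := {}^h s$ for any $h \in \bH$ satisfying $h^{-1}F(h) \in \Cent_\bH(s)$ and whose image in $\AHs/[\AHs,F] = \AHs_F$ equals $a$. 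So the task reduces to showing that the element $g_a$ chosen in \Cref{noteps} satisfies precisely this property with respect to the class $a \in \AHs_F$.

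The verification is immediate from the construction. Indeed, $g_a^{-1}F(g_a) = v_a = v'_{a_0}$, and the natural section $\wc A(s) \hookrightarrow \Cent_\bH(s) \twoheadrightarrow \AHs$ sends $v'_{a_0}$ to $a_0$ by the defining property of the family $(v'_b)_{b\in\AHs}$ from \Cref{cor_CS22}. Since $a_0$ was chosen so that $a_0[\AHs,F] = a$, the image of $v_a$ in $\AHs_F$ equals $a$. Hence our $s_a$ coincides (up to $\HF$-conjugacy) with a class representative $s'_a$ appearing in (\ref{CFs_a}), and the disjointness and exhaustiveness of the $[s_a]_\HF$ for $a$ ranging over $\AHs_F$ follow at once.

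Since all the work has been done in the earlier results cited above, I do not expect any real obstacle here; the lemma is essentially a rephrasing of (\ref{CFs_a}) once one commits to the concrete lifts $v_a \in \wc A(s)$ provided by \Cref{cor_2Becht}. The only subtle point is not to conflate the two slightly different conventions at play: the one implicit in (\ref{CFs_a}), where any lift $h$ with $h^{-1}F(h) \in \Cent_\bH(s)$ works, and the one in \Cref{noteps}, where we make a canonical choice via $\wc A(s)$. Spelling this out, as above, dispels any potential ambiguity.
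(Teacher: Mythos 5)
Your proposal is correct and follows exactly the route of the paper: the paper's proof consists of the single remark that (\ref{CFs_a}) gives the lemma at once, since by construction $g_a^{-1}F(g_a)=v_a=v'_{a_0}\in\wc A(s)\subseteq\Cent_\bH(s)$ has image $a_0$ in $\AHs$ representing $a\in\AHs_F$. Your spelled-out verification (including the check that $s_a\in\calC^F$) is just the explicit form of this observation, so there is nothing to add.
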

 If $s,s'\in \cC^F$, then some inner automorphism of $\bH$ maps $s'$ to $s$. In the following we study how such an automorphism interacts with automorphisms of $\HF$ fixing $s$ or $s'$ respectively. 
Recall $F^y= y^{-1}F(y) F$ in the semi-direct product $\bH\rtimes\spann<F>$ whenever $y\in\bH$.
\begin{prop}\label{propiota}
	Let $E'$ be an abelian group of abstract group automorphisms of $\bH$ with $F\in E'$. We work in the semi-direct product $\bH\rtimes  {E'}$. Let $g\in \bH$ and $v:=g\inv F(g)$ (so that 
	$vF= F^{g}$ in $\bH\rtimes E'$). Let 
	$\iota: \bH\rtimes E'\lra \bH\rtimes E'$ be defined by $x\mapsto x^g$. Then
		\begin{thmlist}
			\item  $\iota(\HF)=\bH^{vF}$. 
			\item If $\tau\in E'$ with $\tau(v)=v$, then $\iota( \tau)\in \bH^{vF}\tau$ and $\iota\inv(\tau)\in\bH^{F} \tau$. 
			\item
			Let $e\geq 1$ be an integer such that $g\in \bH^{F^e}$. Let $E'_e\leq \Aut(\bH^{F^e})$ be the subgroup obtained from $E'$ by restriction to $\bH^{F^e}$. Then $\iota$ induces an automorphism of $\bH^{F^e}\rtimes E'_e$.
\end{thmlist}
\end{prop}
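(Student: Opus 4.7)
The plan is to verify each part by explicit calculation in the semi-direct product $\bH \rtimes \underline{E}$, relying on the defining relation $\tau h = \tau(h)\tau$ (for $\tau\in\underline{E}$, $h\in\bH$) and the hypothesis $F(g) = gv$ equivalent to $v = g\inv F(g)$.

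For (a), I would identify $\HF$ with $\Cent_{\bH}(F)$, the centralizer of $F$ inside $\bH \rtimes \underline{E}$, using that $F(x)=x$ rewrites as $Fx = xF$. The identity $\iota(F) = g\inv F g = g\inv F(g)\,F = vF$ shows that the inner automorphism $\iota$ of $\bH \rtimes \underline{E}$ sends $F$ to $vF$. Since $\iota$ also preserves the normal subgroup $\bH$, it carries $\Cent_{\bH}(F) = \HF$ onto $\Cent_{\bH}(vF) = \bH^{vF}$.

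For (b), rewrite $\iota(\tau) = g\inv \tau g = \bigl(g\inv \tau(g)\bigr)\tau$. It then suffices to show $g\inv \tau(g) \in \bH^{vF}$, which follows directly from $F(g)=gv$, $F\tau = \tau F$ (since $\underline{E}$ is abelian), and $\tau(v)=v$. Symmetrically, $\iota\inv(\tau) = g\tau g\inv = \bigl(g\tau(g)\inv\bigr)\tau$, and applying $F$ to $g\tau(g)\inv$ with the same identities yields $g\tau(g)\inv \in \HF$.

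For (c), conjugation by $g \in \bH^{F^e}$ preserves $\bH^{F^e}$, so $\iota$ restricts to an automorphism of this subgroup. For any $\tau \in \underline{E}$, the element $g\inv \tau(g)$ belongs to $\bH^{F^e}$, since $F^e(g\inv \tau(g)) = g\inv \tau(F^e(g)) = g\inv \tau(g)$. Hence $\iota$ preserves the subgroup $\bH^{F^e} \rtimes \underline{E}$ of $\bH \rtimes \underline{E}$. To descend $\iota$ to the quotient $\bH^{F^e} \rtimes E'$, in which two elements of $\underline{E}$ are identified as soon as they restrict identically to $\bH^{F^e}$, one only needs the kernel of this identification to be $\iota$-stable. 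But if $\tau \in \underline{E}$ acts trivially on $\bH^{F^e}$, then $\tau(g)=g$, so $\iota(\tau)=\tau$, and stability is automatic.

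The computations are elementary once the semi-direct product identities are in hand. The one mildly delicate point is the well-definedness of the induced automorphism in (c), but it reduces to the triviality observation above. I expect the proposition to serve as a bookkeeping device for later arguments transferring statements between $\HF$ and its twists $\bH^{vF}$ under the action of outer automorphisms.
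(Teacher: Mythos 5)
Your proposal is correct and follows essentially the same route as the paper: part (a) is the identical computation $\iota(F)=g\inv F(g)F=vF$, part (b) is the same direct check that $g\tau(g)\inv$ (resp.\ $g\inv\tau(g)$) is fixed by $F$ (resp.\ $vF$) using $F(g)=gv$, $[F,\tau]=1$ and $\tau(v)=v$ — the paper computes only one direction and deduces the other from (a), while you do both symmetrically — and part (c) is the paper's observation that $\iota$ is just conjugation by $g\in\bH^{F^e}$, with your extra verification that the kernel of $\underline E\to E'$ is fixed pointwise being a slightly more careful (and welcome) justification of the descent.
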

A map similar to $\iota$ was constructed in the proof of Proposition 5.3 of \cite{CS17A}, where $v$ was a specific element coming from a Sylow $d$-twist of $(\bG,F)$.
	\begin{proof} For the proof of (a) note $\iota(F)=g\inv F g=vF$ in $\bH\rtimes E'$ and therefore $\iota(\HF)=\bH^{vF}$. 

		 For part (b) we consider the element $\iota\inv(\tau)=\tau^{g^{-1}}$. Recall that $F$ and $\tau$ commute since they are elements of the abelian group $E'$. Note that $\tau^{-1}\tau^{g^{-1}} \in \bH$ and 
		 $F(g)=gv$ by the assumption on $g$ and $v$. 
		 We compute $F(\tau^{-1}\tau^{g^{-1}} )$ and get: 
		\begin{align*}
			F(\tau^{-1}\tau^{g^{-1}})&= 
			\tau^{-1}\, F(g)\, \tau\, F(g^{-1})= 
			\tau^{-1}\, gv \, \tau\, v^{-1}g^{-1}\\
			&= 
			\tau\inv gv\tau(v)\inv \tau g\inv=\tau^{-1}\, g \, \tau\, g^{-1}
			= \tau^{-1}\tau^{g^{-1}},
		\end{align*}
	(the three subgroups lemma could also be used). We get $\iota\inv(\tau)\in\HF \tau$. Combined with (a) this also implies $ \tau\in \bH^{vF}\iota( \tau)$, so we get part (b). 

		Part (c) follows from the fact that since $g\in \bH^{F^e}$ then $\iota$ induces an inner automorphism of $\bH^{F^e}\rtimes E'_e$.
	\end{proof}

\begin{prop}\label{78}
Let $a\in \AsF$, $g_a$, $v_a$, $s_a$, as in \Cref{noteps}. Then there exists  $e\geq 1$ with $g_a\in \bH^{F^e}$ and the isomorphism 
\begin{align*}\label{defiotaa}
\II iotaa@{\iota_a}: \bH^{F^{e}}\rtimes \uE(\bH^{F^e}) \lra &\bH^{F^{e}}\rtimes \uE(\bH^{F^e}) \text{ given by }x\mapsto x^{g_a}
\end{align*}
has the following properties : 
\begin{thmlist}
	\item $s_a=\iota_a^{-1}(s)$, $\iota_a(\HF)=\bH^{v_a F}$, $\iota_a(\Cent_\bH^\circ(s_a)^F)= \Cent_\bH^\circ(s)^{v_a F}$ and $\iota_a(\Cent_\bH(s_a)^F)= \Cent_\bH(s)^{v_a F}$.
	\item $\iota_a^{-1}$ induces a bijection
	\begin{align*}
	\II iotaa*@{\iota_a^*}: \oUCh(\Cent_\bH^\circ(s)^{v_a F}) \lra \oUCh(\Cent_\bH^\circ(s_a)^{F}).
	\end{align*}
	\end{thmlist} 
\end{prop}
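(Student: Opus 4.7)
\medskip
\noindent\textit{Proof plan.}  The plan is essentially to apply \Cref{propiota} (the abstract statement about an arbitrary $v$, $g$) to the specific data $v=v_a$, $g=g_a$ and $\underline E = E_2(\bH)$, and then to observe that the induced bijection on the finite groups is actually the shadow of an isomorphism of connected reductive groups together with their Frobenius endomorphisms, which is enough to transport unipotent characters.

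First I would establish existence of $e$. Since $g_a$ lies in the algebraic group $\bH$ defined over $\overline{\FF_p}$, every coordinate of $g_a$ lies in some finite subfield, so there is $e\geq 1$ with $F^e(g_a)=g_a$, i.e.\ $g_a\in\bH^{F^e}$. The group $\underline E:=E_2(\bH)=\spa{F_p,\gamma}$ is abelian and contains $F=F_p^m\btau$. Applying \Cref{propiota} (a) with $v=v_a$ gives $\iota_a(\HF)=\bH^{v_aF}$, and (c) guarantees that after possibly enlarging $e$, $\iota_a$ restricts to an automorphism of $\bH^{F^e}\rtimes E_2(\bH^{F^e})$.

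To finish (a), the identity $s_a=\iota_a^{-1}(s)$ is the direct check $\iota_a(s_a)=g_a^{-1}(g_as g_a^{-1})g_a=s$, using $s_a=s^{g_a^{-1}}$. For the centralizers, conjugation by $g_a$ sends $\Cent_\bH(s_a)$ to $\Cent_\bH(g_a^{-1}s_ag_a)=\Cent_\bH(s)$, and similarly for the connected component; intersecting with the bijection $\iota_a(\HF)=\bH^{v_aF}$ from \Cref{propiota}(a) yields $\iota_a(\Cent_\bH(s_a)^F)=\Cent_\bH(s)^{v_aF}$ and $\iota_a(\Cent^\circ_\bH(s_a)^F)=\Cent^\circ_\bH(s)^{v_aF}$.

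For (b), the core observation is that $\iota_a$ is induced by conjugation by an \emph{algebraic} element $g_a\in\bH$. Hence $\psi:=\iota_a\rceil_{\Cent^\circ_\bH(s_a)}$ is an isomorphism of connected reductive algebraic groups $\Cent^\circ_\bH(s_a)\xrightarrow\sim\Cent^\circ_\bH(s)$, and a direct calculation shows that it intertwines the Frobenius endomorphism $F$ on the source with the twisted Frobenius $v_aF$ on the target: using $F(g_a)=g_av_a$, one has for $x\in\Cent^\circ_\bH(s_a)$ the identities $\psi(F(x))=g_a^{-1}F(x)g_a$ and $(v_aF)\psi(x)v_a^{-1}\cdot v_a=v_aF(g_a^{-1}xg_a)=v_a(g_av_a)^{-1}F(x)(g_av_a)=g_a^{-1}F(x)g_a$. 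Thus $\psi$ is an isomorphism of pairs $(\Cent^\circ_\bH(s_a),F)\xrightarrow\sim(\Cent^\circ_\bH(s),v_aF)$ and transports unipotent characters: $\phi\mapsto\phi^{\iota_a}=\phi\circ\iota_a$ gives a bijection $\UCh(\Cent^\circ_\bH(s)^{v_aF})\to\UCh(\Cent^\circ_\bH(s_a)^F)$. Because $\iota_a$ simultaneously carries $\Cent_\bH(s_a)^F$ onto $\Cent_\bH(s)^{v_aF}$ by (a), the transport is equivariant for the respective component group actions used to form the orbit sums $\Pi_{\wt C}$ of \Cref{def31}, hence descends to the stated bijection $\iota_a^*\colon\oUCh(\Cent^\circ_\bH(s)^{v_aF})\xrightarrow\sim\oUCh(\Cent^\circ_\bH(s_a)^F)$.

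The only potentially delicate point is the interpretation of the twisted Frobenius $v_aF$ in the intertwining identity, which is a bookkeeping exercise once $F(g_a)=g_av_a$ is used correctly; everything else is routine transport of structure along the algebraic isomorphism $\psi$.
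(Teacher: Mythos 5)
Your proposal is correct and follows essentially the same route as the paper, whose proof is just the terse remark that $e$ exists because $\bH=\bigcup_{n\geq 1}\bH^{F^n}$ and that everything else follows from the relation $F^{g_a}=v_aF$ (i.e.\ \Cref{propiota} applied with $g=g_a$, $v=v_a$) together with the definition $s_a=s^{g_a^{-1}}$; you simply spell out the transport-of-structure details, including the (correct, if slightly garbled in notation) intertwining identity $\iota_a\circ F=(v_aF)\circ\iota_a$ on $\Cent^\circ_\bH(s_a)$ and the compatibility with the $\Cent_\bH(\cdot)$-orbit sums defining $\oUCh$.
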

\begin{proof} The existence of $e$ with $g_a\in \bH^{F^e}$ follows from $\bH=\bigcup_{n\geq 1}\bH^{F^n}$. Conjugation with $g_a$ then defines an inner automorphism of the finite group $\bH^{F^e}\rtimes \uE(\bH^{F^e})$. The equality follows from the definition of $s_a$ and $g_a$, see \Cref{noteps}. The other two equalities in (a) are implied by $F^{g_a}=v_a F$ in $\bH^{F^{e}}  \uE(\bH^{F^e})$. 
Note that the set of unipotent characters of 
$\Cent_\bH^\circ(s)^{v_a F}$ is sent to those of $\Cent_\bH^\circ(s_a)^{F}$ by composing with $\iota_a$. Via $\iota_a$ the action of $\Cent_\bH(s)^{v_aF}$ corresponds to the action of $\Cent_\bH(s_a)^F$ implying the statement in (b). 
\end{proof}
We combine the bijections $\iota_a^*$ ($a\in \AsF$) of \Cref{78} with the character correspondences $\ov\Psi_{s_a}$ from \Cref{cor_Jordandec}. This variation of the Jordan decomposition takes into account the different rational series at the same time and parametrizes all $\wGF$-orbits in a geometric series simultaneously.
	\begin{prop}[Jordan decomposition of orbit sums in a geometric series]
		\label{newJdec}
		With the above notation there exists a bijection 
\[\II Psi'overlines @{\ov\Psi'_s}:\ocE(\GF,\cC) \lra \bigsqcup_{a\in \AHs_F}\oUCh(\Cent_\bH^\circ(s)^{v_a F}),\] 
such that $|\AshochF|=|\Irr(\ov \chi)|\, \cdot \, |\Irr(\ov\Psi'_s(\ov\chi))|$ for every $\ov \chi\in\ov\cE(\GF,\cC)$. 
	\end{prop}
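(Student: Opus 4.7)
The plan is to combine three ingredients already assembled in the text: the decomposition of a geometric series into rational series, the rational Jordan decomposition at the level of $\wGF$-orbit sums from \Cref{cor_Jordandec}, and the transport-of-structure isomorphism $\iota_a^*$ from \Cref{78}. First, by \Cref{lem38} together with (\ref{SplitSer}) one has the disjoint decomposition $\cE(\GF,\cC)=\bigsqcup_{a\in\AHs_F}\cE(\GF,[s_a])$, and applying $\Pi_{\wGF}$ preserves this disjointness (since distinct rational series lie in disjoint $\wGF$-orbits), yielding
\[
\ov\cE(\GF,\cC)\ =\ \bigsqcup_{a\in\AHs_F}\ov\cE(\GF,[s_a]).
\]

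Next, for each $a\in\AHs_F$ apply \Cref{cor_Jordandec} to the element $s_a\in\HFss$. This gives an $\HF$-equivariant bijection $\ov\Psi_{s_a}\colon\ov\cE(\GF,[s_a])\xrightarrow{\sim}\oUCh(\Cent_\bH^\circ(s_a)^F)$ satisfying $|\AHsa^F|=|\Irr(\ov\chi)|\cdot|\Irr(\ov\Psi_{s_a}(\ov\chi))|$ for every $\ov\chi$ in the source. Using \Cref{78}(b), the map $\iota_a^{-1}$ induces a bijection $\iota_a^*\colon\oUCh(\Cent_\bH^\circ(s)^{v_aF})\xrightarrow{\sim}\oUCh(\Cent_\bH^\circ(s_a)^F)$; it preserves the number of irreducible constituents of any orbit sum because it is induced by a group isomorphism $\Cent_\bH^\circ(s)^{v_aF}\xrightarrow{\sim}\Cent_\bH^\circ(s_a)^F$. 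Define $\ov\Psi'_s$ on $\ov\cE(\GF,[s_a])$ by $(\iota_a^*)^{-1}\circ\ov\Psi_{s_a}$, and take the disjoint union over $a\in\AHs_F$; this yields the desired bijection
\[
\ov\Psi'_s\colon\ov\cE(\GF,\cC)\ \lra\ \bigsqcup_{a\in\AHs_F}\oUCh(\Cent_\bH^\circ(s)^{v_aF}).
\]

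It remains to verify the multiplicative identity. For $\ov\chi\in\ov\cE(\GF,[s_a])$ one has $|\Irr(\ov\Psi'_s(\ov\chi))|=|\Irr(\ov\Psi_{s_a}(\ov\chi))|$ by the constituent-preserving property of $\iota_a^*$, together with $|\Irr(\ov\chi)|\cdot|\Irr(\ov\Psi_{s_a}(\ov\chi))|=|\AHsa^F|$ from \Cref{cor_Jordandec}. The one point requiring a small argument is the equality $|\AHsa^F|=|\AHs^F|$. This follows because $\iota_a$ conjugates $F$ to $v_aF$ (\Cref{propiota}(a)) and sends $\Cent_\bH(s_a)$ to $\Cent_\bH(s)$, hence transports the $F$-action on $A_\bH(s_a)$ to the $v_aF$-action on $\AHs$; since $\AHs$ is abelian (being isomorphic via $\omega_s$ to a subgroup of $\Z(\bH_0)$) and $v_a\in\Cent_\bH(s)$ acts trivially on the quotient $\AHs$, the action of $v_aF$ on $\AHs$ coincides with that of $F$, giving $|\AHs^{v_aF}|=|\AHs^F|$. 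The only step requiring any care is precisely this identification of cardinalities across rational series, and it is handled cleanly by exploiting the abelianness of $\AHs$ in type $\tD$.
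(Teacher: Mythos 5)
Your proof is correct and follows essentially the same route as the paper: decompose $\ov\cE(\GF,\cC)$ into the rational pieces $\ov\cE(\GF,[s_a])$ using \Cref{lem38} and (\ref{CFs_a}), define $\ov\Psi'_s$ as $(\iota_a^*)^{-1}\circ\ov\Psi_{s_a}$ on each piece via \Cref{cor_Jordandec} and \Cref{78}(b), and deduce the counting identity from \Cref{cor_Jordandec}(a). Your explicit check that $|A_\bH(s_a)^F|=|\AHs^F|$ (transporting the $F$-action to the $v_aF$-action via $\iota_a$ and using that $v_a$ acts trivially on the abelian group $\AHs$) addresses a point the paper leaves implicit, and it is argued correctly.
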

	\begin{proof} Using \Cref{lem38} and (\ref{CFs_a}) we get 
		$$\ov\cE(\GF, \cC)= \bigsqcup_{a \in \AHs_F} \ov\cE(\GF,[s_a]).$$
		For $a\in \AHs_F$ let 
		$\ov\Psi_{s_a}: \ov\cE(\GF,[s_a]) \ra \oUCh(\Cent^\circ_\bH(s_a)^{ F})$ and $\iota^*_a: \oUCh(\Cent^\circ_\bH(s)^{v_a F}) \ra
		\oUCh(\Cent^\circ_\bH(s_a)^{F})$ be the bijections from \Cref{cor_Jordandec} and  \Cref{78}(b), respectively. 
		We define $\ov\Psi'_s$ as $(\iota_a^*)\inv\circ \ov\Psi_{s_a}$ on $\ocE(\GF,[s_a])$. This gives a bijection between the given character sets.

		Let $\ov \chi\in\ov\cE(\GF,\cC)$ and $a\in \AsF$ such that $\ov \chi\in\ocE(\GF,[s_a])$. According to \Cref{cor_Jordandec}(a) the map $\ov\Psi_{s_a}$ satisfies 
		\[ |\AHs^F|= | \Irr(\ov \chi) | \ \cdot\ |\Irr(\ov\Psi_{s_a}(\ov \chi))|.\]
		This ensures $|\AHs^F|=|\Irr(\ov \chi)|\, \cdot \, |\Irr(\ov\Psi'_s(\ov\chi))|$.
	\end{proof}
	The above bijection induces a non-injective map between the irreducible unipotent characters of $\bGos^{v_aF}$ ($a\in \AHs_F$) and characters in $\ov\cE(\GF,\cC)$. 
\begin{rem}[\textbf{Action of $\AshochF$ on $\UCh(\Cent_\bH^\circ(s)^{v_a F})$}] \label{rem3_6}
	\Cref{cor_CS22} implies $[v_a F,v'_b] =1$  for $a\in \AHs_F$ and $b\in \AshochF$, see also the definitions of $v_a$ and $v'_b$ in \Cref{noteps}.
	We let  $b\in \AHs^F$ act on $ \UCh(\Cent_\bH^\circ(s)^{v_a F})$  by conjugating the character with $v'_b$. This defines an action of  $\AHs^F$  on $ \UCh(\Cent_\bH^\circ(s)^{v_a F})$ and it coincides with the action of $\Cent_\bH(s)^{v_aF}/\Cent^\circ_\bH(s)^{v_aF}$ on $\UCh(\Cent_\bH^\circ(s)^{v_a F})$ since $\Cent_\bH(s)^{v_aF}=\Cent_\bH(s)^{v_aF} \rtimes \wc A(s)^{v_aF}= \Cent_\bH(s)^{v_aF} \rtimes \wc A(s)^{F}$, remembering that $v_a$ belongs to the abelian group $\wc A(s)$.
 \end{rem}

\begin{cor}[Geometric Jordan decomposition of orbit sums] \label{cor3_13}
There exists a surjective map
$$\II GammasF@{\Gamma_{s,F}}:
\bigsqcup_{a\in \AHs_F}\UCh(\Cent_\bH^\circ(s)^{v_a F})
\lra \ocE(\GF,\cC),$$ 
such that 
		 \begin{itemize} 
			\item $\Gamma_{s,F}^{-1}(\ov \chi)$ is an $\AshochF$-orbit for every $\ov \chi\in\ocE(\GF,\cC)$, and 
			\item $|\Irr(\Gamma_{s,F}(\phi))|= |(\AshochF)_{\phi}|$ for every $\phi\in \UCh(\Cent_\bH^\circ(s)^{v_a F})$.
		 \end{itemize} 
	\end{cor}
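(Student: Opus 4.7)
The plan is to build $\Gamma_{s,F}$ as the composition of natural orbit-sum maps with the inverse of the bijection $\ov\Psi'_s$ from \Cref{newJdec}. Concretely, for each $a\in \AsF$ I would consider the orbit-sum map
\[\Pi_a := \Pi_{\Cent_\bH(s)^{v_aF}}\colon \UCh(\Cent_\bH^\circ(s)^{v_aF})\longrightarrow \oUCh(\Cent_\bH^\circ(s)^{v_aF})\]
from \Cref{def31}. Its fibers are by construction the $\Cent_\bH(s)^{v_aF}$-orbits on $\UCh(\Cent_\bH^\circ(s)^{v_aF})$. Combining these into a single map on $\bigsqcup_{a\in\AsF}\UCh(\Cent_\bH^\circ(s)^{v_aF})$ and composing with $(\ov\Psi'_s)^{-1}$ then yields a surjection
\[\Gamma_{s,F}(\phi) := (\ov\Psi'_s)^{-1}(\Pi_a(\phi))\ \ \text{for}\ \ \phi\in \UCh(\Cent_\bH^\circ(s)^{v_aF}).\]

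The key verification is that the $\Cent_\bH(s)^{v_aF}$-orbits on $\UCh(\Cent_\bH^\circ(s)^{v_aF})$ coincide with the $\AshochF$-orbits in the sense of \Cref{rem3_6}. Using the Levi-type decomposition $\Cent_\bH(s) = \Cent_\bH^\circ(s)\rtimes \wc A(s)$ from \Cref{CS22_3A}, we have $\Cent_\bH(s)^{v_aF}/\Cent_\bH^\circ(s)^{v_aF}\cong \wc A(s)^{v_aF}$. Since $\wc A(s)$ is abelian and $v_a\in \wc A(s)$, conjugation by $v_a$ is trivial on $\wc A(s)$, so $v_aF$ and $F$ act identically on $\wc A(s)$; hence $\wc A(s)^{v_aF}=\wc A(s)^F$. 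By \Cref{cor_CS22}(a) this coincides with $\{v'_b\mid b\in\AshochF\}$, so that the action of $\Cent_\bH(s)^{v_aF}/\Cent_\bH^\circ(s)^{v_aF}$ on $\UCh(\Cent_\bH^\circ(s)^{v_aF})$ is exactly the $\AshochF$-action of \Cref{rem3_6}. This gives the first bullet point, since the fibers of $\Gamma_{s,F}$ agree with those of the $\Pi_a$'s.

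For the cardinality assertion I would directly invoke \Cref{newJdec}: given $\phi\in \UCh(\Cent_\bH^\circ(s)^{v_aF})$ with image $\ov\chi=\Gamma_{s,F}(\phi)$, we have $\ov\Psi'_s(\ov\chi)=\Pi_a(\phi)$, so
\[|\AshochF| \ = \ |\Irr(\ov\chi)|\cdot|\Irr(\Pi_a(\phi))|.\]
On the other hand $\Irr(\Pi_a(\phi))$ is the $\AshochF$-orbit of $\phi$, which has size $|\AshochF|/|(\AshochF)_\phi|$, whence $|\Irr(\Gamma_{s,F}(\phi))|=|\Irr(\ov\chi)|=|(\AshochF)_\phi|$, as claimed.

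All the real work has already been done: the bijection $\ov\Psi'_s$ is established in \Cref{newJdec} and the structure of $\wc A(s)$ is supplied by \Cref{cor_2Becht} and \Cref{cor_CS22}. The only step that requires a moment of care is matching the two group actions on $\UCh(\Cent_\bH^\circ(s)^{v_aF})$, but this reduces to the observation that $v_a\in \wc A(s)$ centralizes the abelian group $\wc A(s)$. I do not anticipate a genuine obstacle.
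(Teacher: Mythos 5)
Your proposal is correct and follows essentially the same route as the paper: $\Gamma_{s,F}$ is defined summandwise as $(\ov\Psi'_s)^{-1}\circ\Pi_{\wt C_a}$ with $\wt C_a=\Cent_\bH(s)^{v_aF}$, the fibers are identified with $\AHs^F$-orbits, and the cardinality statement follows from \Cref{newJdec} together with the orbit--stabilizer count. Your explicit matching of the $\wt C_a$-action with the $v'_b$-conjugation action (via $\Cent_\bH(s)=\Cent_\bH^\circ(s)\rtimes\wc A(s)$ and \Cref{cor_CS22}) is exactly what the paper leaves implicit through \Cref{rem3_6}.
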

	\begin{proof}
		For each $a\in \AHs_F$ let $\II{Catilde}@{\protect{\wt C}_a}:=\bGs^{ v_aF}$ and $\II{Ca}@{\protect{ C}_a} :=\bGos^{ v_aF}$. The map $\Pi_{\wt C_a}$ from \Cref{def31} induces a surjective map 
		$$ \UCh(C_{a})\lra \oUCh(C_{a})$$ 
		by the definition of $\oUCh(C_a)$.
		Note that $\UCh(C_a)$ is $\wt C_a$-stable. Then $\Pi_{\wt C_a}$ has the property that for every $\ov \phi\in \oUCh(C_a)$ the set $\Pi_{\wt C}\inv (\ov \phi)$ is a $\wt C_a$-orbit in $\UCh(C_{a})$. Using the universal property of coproducts we can set $\Gamma_{s,F}:=\bigsqcup_{a\in \AHs_F}  
		(\ov \Psi'_s)^{-1} \circ \restr \Pi_{\wt C_a} |{\UCh(C_{a})}$.
		
By the properties of $\ov \Psi'_{s}$ from \Cref{newJdec} we see $|\AHs^F|= |\Irr(\ov \chi)|\ \cdot\ |\Irr(\Pi_{\wt C_a}(\phi))|$ for every $\ov \chi\in \ov\cE(\GF,\cC) $ and $\phi\in\Irr(\ov \Psi_s'(\ov \chi))$, where $a\in \AHs_F$ is chosen such that  $\phi\in \UCh(C_a)$.
Together with $|\Irr(\Pi_{\wt C_a}(\phi))|= |\AHs^F: (\AHs^F)_\phi|$ we obtain $|\Irr(\ov \chi)|=|(\AHs^F)_\phi|$.
\end{proof}
\subsection{Labelling of $\ocE(\GF,\cC)^{\spann<F_0>}$}\label{sec3D}
We keep $\calC\in \Cl_{\textrm{ss}}(\bH)^F$ and we introduce a labelling of $\si'_{\GF}$-stable $\wGF$-orbits in $\cE(\GF,\calC)$ for an automorphism $\si'_\GF \in \uE(\bG)$, see \Cref{labelEGFFnull}. In the applications this map will be a Frobenius endomorphism of $\bG$ denoted by $F_0$. By (\ref{sigma*}) in \ref{duals}, if $\cE(\GF,\calC)$ contains a $\si'_{\GF}$-stable $\wGF$-orbit, then $\cC$ contains a $\spa{\si'_\HF}$-stable rational $\HF$-class for some $\si'_\HF\in\HF.(\si'_\GF)^*$ in the notation of (\ref{sigma*}).

In the following we  parametrize $\ocE(\GF,\cC)^{\spa{\si'_\GF}}$ via a refinement of the map from \Cref{cor3_13}. We assume that $\cC$ contains a $(\si'_\GF)^*$-stable rational class, as otherwise $\ocE(\GF,\cC)^{\spa{\si'_\GF}}=\emptyset$.  
We adapt the choice of $s\in \cC^F$ and the elements $\{v'_b\mid b\in \AHs\}$ from \Cref{noteps} in connection with $(\si'_\GF)^*$  by  assuming additionally that $[s]_\HF$ is $(\si'_\GF)^*$-stable. Recall that the elements of $E(\bH)$ act on $\bH_0$.

\begin{notation} \label{not313}
Let $\si'_\HF \in \uE(\bH)$ acting on $\HF$ with even order, and 
let $\si'_\GF = (\si'_\HF)^*\in \uE(\bG)$, see (\ref{sigma*}). 
Moreover let $\cC\in \Cl_{\textrm{ss}}(\bH)^F$ and $s\in \cC^F$  fixed at the beginning of \ref{3C}. 
Assume:
\begin{asslist}
	\item  $\si'_\HF(s)\in [s]_\HF$; and
	\item  $\si'_\HF$ or $F$ acts trivially on $B(s)$. (Recall $B(s)=\omega_s(\AHs)$.)
\end{asslist}
Let $\wc A(s)$ and $\si\in\Cent_{\HF \si'_\HF}(s)$ associated to $s$ via \Cref{cor_2Becht}(b). In the following let  $\{v'_b\in \wc A(s) \mid b\in \AHs\}$ be defined as in \Cref{noteps}. 
\end{notation}
 Whenever $F$ or $\si'_\GF$ acts trivially on $\Z(\bG)$ or when $|\AHs|=2$, then $F$ or $\si'_\HF$ acts trivially on $\Z(\bH_0)$. Hence, Assumption (ii) above is satisfied in this case. 
  
\begin{lem} Keep the notation from \ref{not313}.
	 \begin{enumerate} 
	 \item  $\si$ or $F$ acts trivially on $\AHs$. 
	\item \label{lem3_15}  $[\si, F]=1$, $\si(v_a)=v_a$,  and  $\delta(v'_b)= v'_{\delta (b)}$ for every $a\in ({\AHs_F})^{\spa\si}$, $ b\in \AHs$ and $\delta\in\spa{\si,F}$. 
	\item $[\si(s_a)]_\HF=[s_{\si(a)}]_\HF$ for every $a\in \AHs_F$. 
	\item \label{lem3_15d} Let $\iota_a$ be the isomorphism from \Cref{78} and $\II sia@{\si_a:=\iota_a\inv(\si)}$. Then $\si_a\in\HF\si$. 
\end{enumerate} 
\end{lem}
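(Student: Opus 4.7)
The plan is to treat the four parts in sequence, combining the structural results of \Cref{cor_2Becht} and \Cref{cor_CS22} with \Cref{propiota}, while exploiting the injective, $\si$- and $F$-equivariant morphism $\omega_s\colon\AHs\hookrightarrow\Z(\bH_0)$ of (\ref{omegas}) together with the hypotheses of \Cref{not313}. For (a), I would first write $\si=h\,\si'_\HF$ with $h\in\Cent_\HF(s)$. Because $\AHs$ is abelian (being embedded into the elementary abelian $2$-group $\Z(\bH_0)$ via $\omega_s$) and $h\in\Cent_\bH(s)$, inner conjugation by $h$ acts trivially on the quotient $\AHs$, so $\si$ and $\si'_\HF$ induce the same action on $\AHs$. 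Using the $F$- and $\si$-equivariance of $\omega_s$ together with its injectivity, Assumption (ii) of \Cref{not313} transports directly to the conclusion: $\si$ or $F$ acts trivially on $\AHs$.

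For (b), $[\si,F]=1$ holds because $\si'_\HF$ commutes with $F$ in the abelian group $E_2(\bH)$ and $h\in\HF$ is $F$-fixed. The identity $\delta(v'_b)=v'_{\delta(b)}$ for $\delta\in\spa{\si,F}$ is a direct combination of \Cref{cor_CS22}(a) and (b). For $\si(v_a)=v_a$, recall $v_a=v'_{a_0}$ for a chosen representative $a_0\in\AHs$ of $a\in\AsF$, and note that $\si$ descends to $\AsF$ since $[\si,F]=1$. By (a) either $\si$ acts trivially on $\AHs$, in which case $\si(a_0)=a_0$ for every lift, or $F$ acts trivially on $\AHs$, so $\AsF=\AHs$ forces $a_0=a$ and the hypothesis $a\in(\AsF)^{\spa\si}$ gives $\si(a_0)=a$. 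Either way $\si(v_a)=v'_{\si(a_0)}=v'_{a_0}=v_a$.

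For (c), $\si(s)=s$ yields $\si(s_a)=s^{\si(g_a)^{-1}}$, and $[\si,F]=1$ gives $\si(g_a)^{-1}F(\si(g_a))=\si(v_a)$. By the labelling of $\HF$-classes in $\cC^F$ recalled in (\ref{CFs_a}), the class $[s^{\si(g_a)^{-1}}]_\HF$ is determined by the image of $\si(v_a)=v'_{\si(a_0)}$ in $\AsF$, which is $\si(a)$; by construction the same element $\si(a)$ labels $[s_{\si(a)}]_\HF$, whence the equality. Part (d) then follows at once from \Cref{propiota}(b) applied with $\tau=\si$ and $v=v_a$: the required hypothesis $\si(v_a)=v_a$ is supplied by (b) (or by (a) when $\si$ is trivial on $\AHs$), which yields $\si_a=\iota_a^{-1}(\si)\in\bH^F\si=\HF\si$.

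The main obstacle is the uniform bookkeeping needed in (a)--(b) to conclude $\si(v_a)=v_a$: the section $\omicron$ used in \Cref{noteps} to pick the lift $a_0$ is not intrinsically $\si$-equivariant, so the two alternatives provided by (a) must be separated carefully, noting that in the $F$-trivial case the restriction $a\in(\AsF)^{\spa\si}$ becomes essential while in the $\si$-trivial case the conclusion holds for all $a$. Once this is in hand, (c) and (d) reduce to routine manipulations through \Cref{propiota} and (\ref{CFs_a}).
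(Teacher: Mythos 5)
Your proof is correct and follows essentially the same route as the paper: part (a) via the $\spa{F,\si}$-equivariance and injectivity of $\omega_s$ together with Assumption (ii) of \Cref{not313}, part (b) via \Cref{cor_CS22} and the two-case analysis of the lift $a_0$, part (c) via the parametrization (\ref{CFs_a}), and part (d) via \Cref{propiota}(b). The only imprecision is in (a): since $\si'_\HF$ only satisfies $\si'_\HF(s)\in[s]_\HF$ it does not literally act on $\AHs$, so the comparison with $\si=h\si'_\HF$ should be made on $\Z(\bH_0)\supseteq B(s)$ (where conjugation by $h$ is trivial), exactly as the paper does — a cosmetic fix that does not affect the argument.
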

\begin{proof} 
	Because of $\si\in\Cent_{\HF \si'_\HF}(s)$ the endomorphisms of $\bH_0$ given by $\si'_\HF$ and $\si$ act in the same way on $\Z(\bH_0)$.  
	Because of $F(s)=\si(s)=s$, the map $\omega_s:\AHs\lra \Z(\bH_0)$ is $\spa{F,\si}$-equivariant and has $B(s)$ as image. Hence Assumption \ref{not313}(ii) implies that $\si$ or $F$ acts trivially on $\AHs$. This ensures part (a). In case $B(s)=\Z(\bH_0)$ this also implies that one of them acts trivially on $\Z(\bH_0)$.
	
	For part (b) we apply \Cref{cor_CS22} and obtain $\delta(v'_b)=v'_b$ for every $b\in \AHs$ and $\delta\in\spa{F,\si}$.
	By construction $\si\in \Cent_{\HF \si'_\HF}(s)$ and hence $[\si,F]=1$.  
	
	Let $a\in ({\AHs_F})^{\spa\si}$ and $a_0\in \AHs$ with $a_0 [\AHs, F]=a$ and $v_a=v'_{a_0}$. If $F$ acts trivially on $\AHs$, then $a_0=a$ and hence  $\si(a_0)=\si(a)=a=a_0$. Otherwise $\si$ acts trivially on $\AHs$ by part (a) and hence fixes $\wc A(s)$. In all cases $v_a$ is fixed by $\si$, since $a\in (\AHs_F)^{\spa\si}$.

	For part (c), let $a\in \AHs_F$ (not necessarily $\si$-fixed). Then we observe $\si(s_a)=\si(\tw{{g_a}}s)= \tw{{\si(g_a)}}s$. The equality $$\si(g_a)^{-1} F(\si(g_a)) \Cent_\bH^\circ(s)=\si((g_a)^{-1} F(g_a)) \Cent_\bH^\circ(s)= \si(v_a)\Cent_\bH^\circ(s)$$ shows that $[\si(s_a)]_\HF$, the $\HF$-conjugacy class containing $\si(s_a)=\tw{{\si(g_a)}}s$, corresponds to $\si(a)$ via the parametrization given in (\ref{CFs_a}). Recall that $v_{\si(a)}=\si(v_a)$  by \Cref{cor_CS22}. 

	Part (b) implies $\si(v_a)=v_a$ for every $a\in  (\AsF)^{\spa{\si}}$. Recall that $\iota_a$ from \Cref{78} is given by conjugation with $g_a$ and we have $g_a^{-1}F(g_a)=v_a$ by its definition in \Cref{noteps}. Hence $\iota_a(\HF \si)=\bH^{v_aF} \si$ and $\si_a=\iota_a\inv(\si)\in\HF\si$ according to \Cref{propiota}(b).  This ensures part (d).
\end{proof}

	\begin{lem} \label{newJdec_si}
Let $s\in\cC^F$ and $\si$ be as in Notations~\ref{noteps} and \ref{not313}. Let $C_a:=\Cent_\bH(s)^{v_aF}$ for $a\in \AHs_F$. Using the notation of (\ref{sigma*}), if $\si'_\GF =(\si'_\HF)^*\in \uE(\bG)$ and hence $\si\in \HF(\si'_\GF)^*$, then the map 
		$\ov \Psi'_s:\ov\cE(\GF,\cC)\lra \bigsqcup_{a\in \AsF} \oUCh(C_a) $
		from \Cref{newJdec} satisfies 
\[ \ov \Psi'_s(\chi^{\si'_\GF})=\ov \Psi'_s(\chi)^{\si} \text{ for every }\chi\in \bigsqcup_{a\in \Cent_{\AsF}(\si)} \ocE(\GF,[s_a]_{\HF}), \]
		in particular 
\[\ov\Psi'_s(\ocE(\GF,(s))^{\spa {\si'_\GF} } ) 
    = \bigsqcup_{a\in \Cent_{\AsF} (\si) } \oUCh( C_a )^{\spa{\si} }.\]
	\end{lem}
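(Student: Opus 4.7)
The plan is to reduce the equivariance of $\ov\Psi'_s$ to that of the individual Jordan decompositions $\ov\Psi_{s_a}$ from Corollary~\ref{cor_Jordandec}, transported along the conjugations $\iota_a$ of \Cref{78} via the relation $\sigma_a := \iota_a^{-1}(\sigma)\in \HF\sigma$ of \Cref{lem3_15d}.

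I would first check that the formula makes sense piecewise: for $a\in\Cent_{\AsF}(\sigma)$ and $\chi\in \ov\cE(\GF,[s_a])$, the twist $\chi^{\sigma'_\GF}$ lies in $\ov\cE(\GF,[\sigma'_\HF(s_a)])$ by (\ref{rem_ntt}), and $\sigma\in \HF\sigma'_\HF$ combined with Lemma~3.15(c) identifies $[\sigma'_\HF(s_a)]_\HF=[s_{\sigma(a)}]_\HF=[s_a]_\HF$. On the target side, $\sigma$ stabilizes $\ov\UCh(\Cent_\bH^\circ(s)^{v_aF})$ because $\sigma(s)=s$, $\sigma(v_a)=v_a$ and $[\sigma,F]=1$ by \Cref{lem3_15}; on the $s_a$-side, $\sigma_a$ fixes $s_a$ since $\iota_a(s_a)=s$ and $\sigma(s)=s$. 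The identity $\sigma_a=\iota_a^{-1}\sigma\iota_a$ (as automorphisms) then yields the character-level intertwining $(\iota_a^*)^{-1}(\eta)^\sigma=(\iota_a^*)^{-1}(\eta^{\sigma_a})$ for every $\eta\in\ov\UCh(\Cent_\bH^\circ(s_a)^F)$.

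Because $\ov\Psi'_s$ is by construction $(\iota_a^*)^{-1}\circ\ov\Psi_{s_a}$ on $\ov\cE(\GF,[s_a])$, the desired equality reduces to showing $\ov\Psi_{s_a}(\chi^{\sigma'_\GF})=\ov\Psi_{s_a}(\chi)^{\sigma_a}$. To this end I would write $\sigma_a=k\sigma'_\HF$ with $k\in\HF$; the equality $\sigma_a(s_a)=s_a$ is then precisely $\sigma'_\HF(s_a)=s_a^k$, so one may apply verbatim the chain of equalities from the proof of Corollary~\ref{cor_Jordandec}(b):
\[\ov\Psi_{s_a}(\chi^{\sigma'_\GF})=\ov\Psi_{\,^k\!\sigma'_\HF(s_a)}(\chi^{\sigma'_\GF})=\ov\Psi_{\sigma'_\HF(s_a)}(\chi^{\sigma'_\GF})^{k^{-1}}=\ov\Psi_{s_a}(\chi)^{(k\sigma'_\HF)^{-1}},\]
the last step being Corollary~\ref{cor_Jordandec}(a). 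Since $(k\sigma'_\HF)^{-1}$ is a generator of $\spa{\sigma_a}$, this yields the equivariance up to replacing $\sigma$ by a generator of $\spa\sigma$, which is immaterial for the stability statement. The second displayed equality of the lemma then follows formally: $\chi$ is $\sigma'_\GF$-stable iff $\ov\Psi'_s(\chi)$ is $\spa\sigma$-stable, and the set $\Cent_{\AsF}(\sigma)$ is exactly the set of indices $a$ for which $\sigma'_\GF$ preserves the summand $\ov\cE(\GF,[s_a])$, matched by $\ov\Psi'_s$ with the summand $\ov\UCh(C_a)$.

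The main obstacle is the careful alignment of the various inner-conjugating elements of $\HF$ appearing in the decompositions $\sigma=h\sigma'_\HF$, $\sigma_a=k\sigma'_\HF$ and in the Lang cocycle $g_a^{-1}F(g_a)=v_a$, so that $\sigma_a$ really plays the role of ``$h\sigma^*$'' in the equivariance formula of Corollary~\ref{cor_Jordandec}(b). The decisive ingredients are the containment $\sigma_a\in \HF\sigma$ from \Cref{lem3_15d} and the compatibility $\sigma(v_a)=v_a$ from \Cref{lem3_15}, both of which rely on the hypothesis $a\in\Cent_{\AsF}(\sigma)$.
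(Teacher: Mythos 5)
Your proposal is correct and takes essentially the same route as the paper: on each summand $\ocE(\GF,[s_a]_\HF)$ one writes $\ov\Psi'_s=(\iota_a^*)^{-1}\circ\ov\Psi_{s_a}$, transports $\si$ to $\si_a=\iota_a^{-1}(\si)\in\HF\si$ fixing $s_a$ (using $\si(v_a)=v_a$, $[\si,F]=1$ and \Cref{lem3_15d}), and invokes the equivariance of $\ov\Psi_{s_a}$ from \Cref{cor_Jordandec} together with the intertwining property of $\iota_a^*$. Your remark that the chain of equalities literally produces the exponent $(k\si'_\HF)^{-1}=\si_a^{-1}$ (hence $\si^{-1}$ after transport) is accurate; the paper's own proof only records the resulting set-level identities, for which the distinction between $\si$ and $\si^{-1}$ is immaterial.
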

	\begin{proof} Recall the definition of $\ov \Psi'_s(\chi)$ for $\chi\in\ocE(\GF,\cC)$: 
    There exists some $a\in\AHs_F$ such that $\chi\in\ocE(\GF,[s_a]_\HF) $. Then $\ov\Psi_s'(\chi)=(\iota_a^*)^{-1}(\ov\Psi_{s_a}(\chi))$, where  $\ov\Psi_{s_a}$ is the map from \Cref{cor_Jordandec} and $\iota_a^*$ is the bijection from \Cref{78}(b). 

	If $\II sigmaa@{\si_a}:=\iota_a^{-1}( \si)$ for $a\in \Cent_{\AsF}(\si)$, we observe that $\si_a(s_a)=s_a$ and hence $\si_a$ acts on $\Cent_\bH(s_a)$ and $\Cent^\circ_\bH(s_a)$. As $\si_a\in \HF\si$ we see that $\si_a$ commutes with $F$ and hence acts on the finite groups $\Cent_\bH(s_a)^F$ and $\Cent^\circ_\bH(s_a)^F$, as well as on the character sets $\UCh(\Cent^\circ_\bH(s_a)^F)$ and $\oUCh(\Cent^\circ_\bH(s_a)^F)$. We notice that $\si_a$ is a composition of $\si'_\HF$ and an inner automorphism of $\HF$ by its construction. Recall $\si'_\GF=(\si'_\HF)^*$.
    The map $\ov \Psi_{s_a}$ from \Cref{cor_Jordandec}
    satisfies $\ov \Psi_{s_a}(\chi^{\si'_\GF})=\ov\Psi_{s_a}(\chi)^{\si_a}$ and hence 
    \[\ov \Psi'_s(\chi^{\si'_\GF})=(\iota_a^*)^{-1}(\ov\Psi_{s_a}(\chi)^{\si_a})=(\iota_a^*)^{-1}(\ov\Psi_{s_a}(\chi))^{\si}=\ov\Psi'_s(\chi)^\si.\] 
    This leads to 
 \[ \ov \Psi_{s_a}(\ocE(\GF,[s_a]_{\HF})^{\spa{\si}})=\oUCh(\Cent_\bH^\circ(s_a)^F)^{\spa{\si_a}} .\]
From the definition of $\iota_a$ and $\si_a$ we observe $$\iota_a^*(\psi^\si)=(\iota_a^*(\psi))^{\si_a}$$ for every $\psi\in\ov\UCh(\Cent^\circ_\bH(s)^{v_aF})$. We now have  
\[\iota_a^*( \oUCh(\Cent_\bH^\circ(s)^{\epsFa F})^{\spa \si} ) = \oUCh(\Cent_\bH^\circ(s_a)^F)^{\spa{\si_a}}.\]
Note that by (\ref{rem_ntt}) any $\si'_\GF$-invariant character contained in $\ocE(\GF, \cC)$ lies in a rational Lusztig series $\ocE(\GF,[s_a]_\HF)$ with $a\in \Cent_{\AsF}(\si)$. We then get the stated equality 
\[\ov\Psi'_s(\ocE(\GF,(s))^{\spa {\si'_\GF} } ) 
		= \bigsqcup_{a\in \Cent_{\AsF} (\si) } 
		\oUCh( \Cent^\circ_\bH(s)^{v_aF})^{\spa{\si} }.
        \qedhere\]
\end{proof}
	By its definition for every $a\in \AsF$ the set $\oUCh(C_a)$ ($a\in \AsF$) is defined via the action of $\wt C_a:=\Cent_\bH(s)^{v_a F}$ on $C_a:=\Cent^\circ_\bH(s)^{v_a F}$ and hence $\UCh(C_a)$. 
	Let $\Pi_{\wt C_a}: \Irr(C_a) \lra \Char(C_a)$ be the map from \Cref{defPI_allg} and recall $\oUCh(C_a)= \Pi_{\wt C_a}(\UCh(C_a))$. By assumption either $F$ or $\si$ acts trivially on $\AHs$. 
If $a\in \Cent_{\AsF}(\si ) $ and $b\in \AHs^F$, then $\si(v_a)=v_a$ by \Cref{cor_CS22}. We see that $v_a F$ and $v'_b\si$ commute and then the set $\UCh( C_a)^{\spa{v'_b\si}}$ is well-defined. 

We now apply the considerations of \Cref{defPI_allg}(a) and (b).
As before we denote by $ \bigsqcup_{b\in \AHs^F}\UCh(C_a)^{\spa{v'_b \si}} $ a disjoint union, see Definition~\ref{multi}. A character $\phi\in \UCh(C_a)$ is contained in $\UCh(C_a)^{\spa{v'_b \si}}$ if it is $v'_b\si$-invariant. Then it is also contained in $\UCh(C_a)^{\spa{v'_{bb'} \si}}$ for every $b'\in \AHs^F_\phi$. 
	If $\phi\in \bigcup_{b\in \AHs^F}\UCh(C_a)^{\spa{v'_b \si}} $, then $\phi$ occurs $|\AHs^F_\phi|$-times in $ \bigsqcup_{b\in \AHs^F}\UCh(C_a)^{\spa{v'_b \si}} $. 
	The characters of $\Pi_{\wt C_a}\inv (\Pi_{\wt C_a}(\phi))$ form an $\AHs^F$-orbit and  $\Pi_{\wt C_a}\inv (\Pi_{\wt C_a}(\phi))$ defines  a subset of the disjoint union $\bigsqcup_{b\in \AHs^F}\UCh(C_a)^{\spa{v'_b \si}} $ with cardinality $|\AshochF |$. 
	\begin{lem}\label{defPI}
    Let $a\in \Cent_{\AsF}(\si)$ and set $\wt C_a:=\Cent_\bH(s)^{v_aF}$, $C_a:=\Cent^\circ_\bH(s)^{ v_aF}$. 
    \begin{thmlist}
	\item Let $\Pi_{\wt C_a}: \Irr(C_a) \lra \Char(C_a)$ be the map from \Cref{defPI_allg}. Then 
	\[\Pi_{\wt C_a}^{-1}(\oUCh(C_a)^{\spa{\si}})= \bigcup_{b\in \AHs^F }\UCh(C_a)^{\spa{v'_b \si}} \subseteq \UCh(C_a).\] 
	\item Let now $$\II PiUChCa@{\Pi^{\UCh}_{\wt C_a, \si}}: \bigsqcup_{b\in \AHs^F}\UCh(C_a)^{\spa{v'_b \si}} \lra \oUCh(C_a)^{\spa{\si}}$$ be associated to $\Pi_{\wt C_a}$ as in Lemma \ref{lem72b}. Then $|\Pi^{\UCh}_{\wt C_a,\si}{}^{-1}(\ov \chi)|=|\AHs^F|$ for every $\ov \chi\in \oUCh(C_a)^{\spa{\si}}$.
		\end{thmlist}
	\end{lem}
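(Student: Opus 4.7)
The statement is essentially the specialization of Lemma~\ref{defPI_allg}(a)--(b) to the inclusion $C_a\unlhd \wt C_a$ with the automorphism $\si$. The substantive work is to identify the quotient $\wt C_a/C_a$ in terms of the lifts $\{v'_b\mid b\in\AHs^F\}$ coming from \Cref{cor_2Becht}, and to verify that $\si$ and the hypotheses of Lemma~\ref{defPI_allg} are available.

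First I would establish the decomposition $\wt C_a = C_a\rtimes \wc A(s)^{v_aF}$ together with the description $\wc A(s)^{v_aF}=\{v'_b\mid b\in \AHs^F\}$. Indeed, by \Cref{CS22_3A} we have $\Cent_\bH(s)=\Cent_\bH^\circ(s)\rtimes \wc A(s)$ with $\wc A(s)$ abelian and $F$-stable, so passing to $v_aF$-fixed points (where $v_a\in\wc A(s)$ acts trivially on $\wc A(s)$ by abelianity) gives the claimed decomposition. By \Cref{cor_CS22}, $F(v'_b)=v'_{F(b)}$, so $v'_b$ is $v_aF$-fixed iff $b\in\AHs^F$. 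This yields a canonical identification $\wt C_a/C_a\cong \AHs^F$ with transversal $\{v'_b\mid b\in\AHs^F\}$.

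Next I would verify the compatibility with $\si$. By \Cref{lem3_15}, $[\si,F]=1$, $\si(s)=s$ and $\si(v_a)=v_a$, so $\si$ commutes with $v_aF$ and preserves the centralizers $\Cent_\bH^\circ(s)$ and $\Cent_\bH(s)$; hence $\si\in\Aut(\wt C_a)_{C_a}$. Also, by the same lemma, $\si(v'_b)=v'_{\si(b)}$, so $\si$ normalizes the subgroup $\wc A(s)^{v_aF}$ used as the transversal. Since $\UCh(C_a)$ is stable under automorphisms of $C_a$ induced by elements of $\wt C_a$ and under $\si$, the map $\Pi_{\wt C_a}$ restricts to a well-defined surjection $\UCh(C_a)\twoheadrightarrow\oUCh(C_a)$ that is $\si$-equivariant.

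For part (a), I would apply Lemma~\ref{defPI_allg}(a) to the inclusion $C_a\unlhd \wt C_a$. Its conclusion reads $\Pi_{\wt C_a}^{-1}(\ov\Irr(C_a)^{\spa\si})=\bigcup_{b\in \wt C_a/C_a}\Irr(C_a)^{\spa{b\si}}$. Using the identification $\wt C_a/C_a\cong \AHs^F$ and noting that for any lift $cv'_b\in \wt C_a$ of $b\in \AHs^F$ the element $c\in C_a$ acts trivially on class functions on $C_a$, the group $\Irr(C_a)^{\spa{b\si}}$ equals $\Irr(C_a)^{\spa{v'_b\si}}$. Intersecting both sides with the $\si$- and $\wt C_a$-stable subset $\UCh(C_a)$ yields the desired equality. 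Part (b) is then immediate from Lemma~\ref{defPI_allg}(b) applied to the same data, since $|\wt C_a/C_a|=|\AHs^F|$ and the disjoint-union structure as well as $\si$-equivariance were already set up.

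The only mild obstacle is the bookkeeping around the identification of $\wt C_a/C_a$ with $\AHs^F$ (as opposed to $\AHs_F$) and the verification that conjugation by a lift $cv'_b$ acts on $\Irr(C_a)$ in the same way as conjugation by $v'_b$ alone; both follow formally once \Cref{cor_2Becht}, \Cref{cor_CS22} and \Cref{lem3_15} are in hand.
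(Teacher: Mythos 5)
Your proposal is correct and follows essentially the same route as the paper: the paper's proof simply invokes Lemma~\ref{defPI_allg}, with the identification of $\wt C_a/C_a$ with $\AHs^F$ via $\wc A(s)^{v_aF}=\{v'_b\mid b\in\AHs^F\}$ and the $\si$-compatibility (via \Cref{cor_CS22} and the lemma following \Cref{not313}) handled in the discussion immediately preceding the statement, exactly the points you verify explicitly. Your spelled-out semidirect-product argument for $\wt C_a=C_a\rtimes\wc A(s)^{v_aF}$ is just a more detailed version of what the paper leaves implicit.
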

	\begin{proof}
		The two statements follow from \Cref{defPI_allg}.
	\end{proof}
	So we study in the following 
	$\bigsqcup_{a\in \AHs_F}\oUCh( C_a)^{\spa{\si}}$ using the maps $\Pi^{\UCh}_{\wt C_a,\si}$ from above. 
	\begin{notation} \label{lem717}	
Whenever $s\in \bH_{\textrm{ss}}$, $F'$ is a Frobenius endomorphisms of $\bH$ with $F'(s)=s$ and $\kappa$ is an endomorphism of $\bH$ inducing an automorphism of $\Cent_\bH^\circ(s)^{F'}$, we set 
\[\II{UsFkappa}@{\rmUU(s, F', \kappa)}:=\UCh(\Cent_\bH^\circ (s)^{F'})^{\spa \kappa}\]
For $\II A0@{A_0:=\Cent_{\AsF}(\si) }$ we define
\begin{align*}
\II UsFsi@{\UU(s,F,\si)}:=\bigsqcup _{\substack{a \in A_0 \\ b\in \AHs^F}} 
\UCh( \Cent_{ \bH}^\circ (s)^{v_aF})^{\spa{v'_b\si}}=
  \bigsqcup _{\substack{a \in A_0 \\ b\in \AHs^F}} 
			\rmUU(s,v_aF,v'_b\si ).
\end{align*}
\end{notation}
Note that $\UU(s,F,\si)$ is a disjoint union where a single character occurs up to $|\AHs^F|$-times among the various summands of $\UU(s,F,\si)$. \Cref{defPI_allg} and \Cref{defPI} allow us to introduce a map between the unipotent characters and the $\si$-invariant orbit sums. 
	\begin{lem}\label{Pi_sdef}
		There is a natural map 
		\begin{align*}
			\II Pis@{\Pi_s}: \UU (s,F,\si) \lra 
			\bigsqcup_{a\in A_0}\oUCh(C_a)^{\spa{\si}}, 
		\end{align*}
such that whenever $\phi \in \oUCh(C_a)^{\spa{\si}}$ the characters in $\Pi_{s}\inv (\phi)$ form a $\wt C_a$-orbit and 
\begin{align*}|\Pi_{s}\inv (\phi)| &=|\AHs^F| \text{ for every }
\phi\in \bigsqcup_{a\in A_0} \oUCh(C_a) ^{\spa{\si}},
\end{align*}
where $\Pi_{s}\inv (\phi)$ is seen as a subset of a disjoint union, see \Cref{multi}. 
\end{lem}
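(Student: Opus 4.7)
The plan is to build $\Pi_s$ by gluing together the maps $\Pi^{\UCh}_{\wt C_a,\si}$ of \Cref{defPI}(b) over $a\in A_0$, using the universal property of coproducts. Concretely, $\UU(s,F,\si)$ is a disjoint union indexed by pairs $(a,b)\in A_0\times\AHs^F$, so for each fixed $a\in A_0$ its $a$-slice is precisely the source $\bigsqcup_{b\in\AHs^F}\UCh(C_a)^{\spa{v'_b\si}}$ of $\Pi^{\UCh}_{\wt C_a,\si}$.

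First I would reduce the target to a disjoint union over $a\in A_0$. For the symbol $\oUCh(C_a)^{\spa{\si}}$ to be meaningful the automorphism $\si$ must preserve $C_a=\Cent_\bH^\circ(s)^{v_aF}$; since $\si$ centralizes $s$ and commutes with $F$, this amounts to $\si(v_a)=v_a$, and by \Cref{lem3_15} this holds exactly when $a\in A_0$. Hence the summands for $a\in\AsF\setminus A_0$ carry no $\si$-invariants and may be discarded, leaving the effective target $\bigsqcup_{a\in A_0}\oUCh(C_a)^{\spa{\si}}$.

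Next, for each $a\in A_0$ the group $\wt C_a=\Cent_\bH(s)^{v_aF}$ is $\si$-stable, so \Cref{defPI}(b) provides a surjection
\[
\Pi^{\UCh}_{\wt C_a,\si}\colon\bigsqcup_{b\in\AHs^F}\UCh(C_a)^{\spa{v'_b\si}}\longrightarrow\oUCh(C_a)^{\spa{\si}}
\]
whose fiber over each $\phi$ has cardinality $|\AHs^F|$ and coincides, after the forgetful map to $\UCh(C_a)$, with a single $\wt C_a$-orbit. I then define
\[
\Pi_s:=\bigsqcup_{a\in A_0}\Pi^{\UCh}_{\wt C_a,\si}\colon\UU(s,F,\si)\longrightarrow\bigsqcup_{a\in A_0}\oUCh(C_a)^{\spa{\si}}
\]
by the universal property of coproducts. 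Both bulleted assertions are then inherited slice by slice from the properties of the $\Pi^{\UCh}_{\wt C_a,\si}$.

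This makes the lemma essentially a routine assembly of already-established pieces; the only subtlety I anticipate is the multiplicity bookkeeping inside the disjoint-union notation. One must check that a character $\psi\in\UCh(C_a)$ fixed by several $v'_b\si$ contributes $|\AHs^F_\psi|$ copies to the source, so that combined with the $|\AHs^F:\AHs^F_\psi|$ characters in its $\wt C_a$-orbit one recovers the claimed fiber size $|\AHs^F|=|\AHs^F_\psi|\cdot|\AHs^F:\AHs^F_\psi|$. This is the content of \Cref{defPI_allg}(b), so no genuine obstacle remains.
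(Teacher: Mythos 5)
Your proposal is correct and matches the paper's own proof: the map $\Pi_s$ is defined slice by slice as $\Pi^{\UCh}_{\wt C_a,\si}$ from \Cref{defPI}, and the fiber size $|\AHs^F|$ together with the $\wt C_a$-orbit description is inherited directly from \Cref{defPI_allg}(b), with the multiplicity bookkeeping in the disjoint union handled exactly as you describe. Your remark that the summands indexed by $a\in\AsF\setminus A_0$ contribute no $\si$-invariants is a harmless clarification of a point the paper leaves implicit.
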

\begin{proof}
Let $a\in\AsF$ and $\wt C_a:=\Cent_\bH(s)^{v_aF}$. We define the map $\Pi_s$ on $\bigsqcup_{b\in \AHs^F} \UCh( C_a)^{\spa{v_b' \si}}$ to be the map 
\[ \Pi^{\UCh}_{\wt C_a,\si}: \bigsqcup_{b\in \AshochF} \UCh( C_a)^{\spa{v_b' \si}} \lra \oUCh(C_a)^{\spa \si} \] 
from Lemma~\ref{defPI}. Then $|\Pi_{s}\inv (\phi)|$ is part of a disjoint union and satisfies $|\Pi_{s}\inv (\phi)|=|\AHs^F|$ as $\Pi_{ \wt C_a,\si}$ has this property and $\UCh(C_a)$ is $\wt C_a$-stable.
	\end{proof}
	Combining the maps $\Pi_s$ and $\ov \Psi'_s$ we obtain a labelling of $\ocE(\GF,\cC)^{\spa{\si'}}$ for $\si '\in E(\GF)$. 
	\begin{thmbox}[Labelling of $\ov \cE(\GF,\cC)^{\spa{\si'_\GF}}$]\label{labelEGFFnull} Recall $\si'_\HF\in \uE(\bH)$, $\si'_\GF= (\si'_\HF)^*\in \uE(\bG)$ and $\si\in \Cent_{ \bH \uE(\bH)}(s)\cap \HF\si '_\HF$ as in \Cref{not313}. For the disjoint union $\UU(s,F,\si)$ from \ref{lem717} there exists a surjective map 
		\begin{equation*}
			\II GammasFsi@{\Gamma_{s,F,\si}}:\UU(s,F,\si)
			\lra \ocE(\GF,\cC)^{\spa{\si'_\GF}},
		\end{equation*} 
		such that for every $\ov \chi\in \ov \cE(\GF,(s))^{\spa{\si'_\GF}}$ and $\phi\in \UU(s,F,\si)$ we have: 
		\begin{thmlist}
	\item \label{eq_samepreimagesize} The elements of $\Gamma_{s,F,\si}^{-1}(\ov \chi)$ form an $\AHs^F$-orbit;
			\item $|\Gamma_{s,F,\si}^{-1}(\ov \chi)|=|\AHs^F| $ if $\Gamma_{s,F,\si}^{-1}(\ov \chi)$ is considered as a subset of a disjoint union, and
\item \label{eq_labelEGFFnull} $|\Irr(\Gamma_{s,F,\si}(\phi))|= |(\AHs^F)_\phi|$.
		\end{thmlist}
\end{thmbox}
\begin{proof} 
		Recall $C_a=\Cent_\bH^\circ(s)^{v_a F}$. Let $\ov\Psi'_s: 	\ocE(\GF,\cC) \lra \bigsqcup_{a\in \AHs_F}\oUCh(C_a)$ be the bijection from \Cref{newJdec} and $\Pi_s: \UU(s,F,\si)\lra \bigsqcup_{a\in A_0} \oUCh(C_a)^{\spa{\si}}$ the map from \Cref{Pi_sdef}. We define $\Gamma_{s,F,\si}$ by $\phi \mapsto {\ov\Psi_s' }\inv(\Pi_s(\phi)) $ for every $\phi\in\UU(s,F,\si)$. 
        Note that $\Pi_s(\UU(s,F,\si))=\bigsqcup_{a\in A_0} \oUCh(C_a)^{\spa{\si}}$ according to \Cref{Pi_sdef} and $\Psi'_s(\ocE(\GF,\cC)^{\spa{\si'_\GF}})=\bigsqcup_{a\in A_0} \oUCh(C_a)^{\spa{\si}}$ according to \Cref{newJdec_si}. Hence $\Gamma_{s,F,\si}$ is surjective. 

        Recall that $\ov\Psi'_s$ is a bijection, see \Cref{newJdec}. Let $\ov \chi \in \ocE(\GF,\cC)^{\spa{\si'_\GF}}$. Then $\Pi_s^{-1}(\ov\Psi'_s(\ov\chi))= \Gamma_{s,F,\si}^{-1}(\ov\chi)$ is a $\wt C_a$-orbit in $\UCh(C_a)$ for some $a\in A_0$, see also \Cref{Pi_sdef}. This ensures part (a) as the action of $\wt C_a$ coincides with the action of $\AHs^F$.

        According to \Cref{Pi_sdef}, we can consider $\Pi_s^{-1}(\ov\Psi'_s(\ov\chi))$ as a subset of a disjoint union with cardinality $|\AHs^F|$. This ensures part (b).
        
        Let $\phi\in \UU(s,F,\si)$ and $\ov \phi:=\Pi_s(\phi)$. Then 
	$ | \Irr(\ov\phi)|= \frac{|\AHs^F|}{|\AHs^F_\phi|}$, as $\Irr(\ov\phi)$ is the $\AHs^F$-orbit containing $\phi$. 
    Then
		\[|\Irr({\ov\Psi'_s}\inv (\ov \phi))| = \frac{|\AHs^F|}{|\Irr(\ov\phi)|} \] by Proposition \ref{Jdnuconst} and hence $|\Irr(\Gamma_{s,F,\si}(\phi))|=|\AHs^F_\phi|$. This proves part (c).
	\end{proof} 
	We later apply this map in order to determine the cardinality of 
	$\cE(\GF,\cC)^{\spa{\si'_\GF,\wGF}}$.
	\begin{cor}\label{lem_whU}
Whenever $F'$ is a Frobenius endomorphisms of $\bH$ with $F'(s)=s$ and $\kappa$ is an endomorphism of $\bH$ inducing an automorphism of $\Cent_\bH^\circ(s)^{F'}$, we set 
 \[\II{UhatsFkappa}@{\protect{\wh \rmUU}(s, F', \kappa)}:=\{ \phi \in \rmUU(s,F',\kappa)=\UCh(\Cent_\bH^\circ (s)^{F'})^{\spa \kappa}\mid  \AHs^{F'}_\phi=1\}\]
 and   
 \begin{align*}	\II UhatsFsi@{ \protect{\wh {\UU}(s,F,\si)}}:= 
        \bigsqcup_{\substack{a \in A_0 \\ b\in \AHs^F}} \wh \rmUU(s,v_a F, v_b' \si)=
        \{\phi\in \UU(s,F,\si) \mid \AHs^F_\phi =1 \} ,
		\end{align*} see \Cref{lem717} and
	\Cref{rem3_6} about the action of $\AHs^F$ on the various $\Cent_{ \bH}^\circ (s)^{v_aF}$ ($a\in \AHs_{F}$).
		Then the map $\Gamma_{s,F,\si}$ from \Cref{labelEGFFnull} satisfies 
		$$\Gamma_{s,F,\si}^{-1}\left (\ocE(\GF,\cC)^{\spa{\si'_\GF}}\cap \Irr(\GF)\right )=\wh {\UU}(s,F,\si)$$
		and $| \wh {\UU}(s,F,\si)|= |\AHs^F| \cdot |\ocE(\GF,\cC)^{\spa{\si'_\GF}}\cap \Irr(\GF)|$.
	\end{cor}
	\begin{proof}
		Let $\phi\in \UU(s,F,\si)$ and $\chi =\Gamma_{s,F,\si}(\phi)$. 
		The equality $|\Irr(\chi)|=|(\AHs^F)_\phi|$ from \Cref{labelEGFFnull}(b) implies that $\chi$ is irreducible if and only if $|(\AHs)^F_\phi|=1$. 

		On the other hand the characters from $\Gamma_{s,F,\si}^{-1}(\chi)$ form an $\AHs^F$-orbit. If $\AHs_\phi^F=1$ then $|\AHs^F|$ different characters are contained in that orbit. The map $\Gamma_{s,F,\si}$ is defined on a disjoint union using the map $\Pi_{s}$ and hence by \Cref{defPI_allg} the preimage has always $|\AHs^F|$ elements. This implies
		\begin{align*}
			|\wh \UU(s,F,\si)|=& |\AHs^F| \,\cdot \, |\ocE(\GF,(s))^{\spa{\si'_\GF}}\cap \Irr(\GF)^{\spa{\si'_\GF}}|.\qedhere
		\end{align*}
	\end{proof}

The above statement allows to determine the number of $\spa{\wGF,\si'_\GF}$-invariant characters in $\cE(\GF,\cC)$, as those are exactly the elements of $\ocE(\GF,\cC)^{\spa{\si'_\GF}}\cap \Irr(\GF)$, but this requires  that $F$ or $\si'_\HF$ acts trivially on the subgroup $B(s)$ of $\Z(\bH_0)$. In what follows we generalize this result by removing that assumption. 

Recall that for  $s\in \bH_{\textrm{ss}}$, a Frobenius endomorphism $F'$ of $\bH$ with $F'(s)=s$ and an endomorphism $\kappa$ of $\bH$ inducing an automorphism of $\Cent_\bH^\circ(s)^{F'}$, we have introduced 
\[{\rmUU(s, F', \kappa)}:=\UCh(\Cent_\bH^\circ (s)^{F'})^{\spa \kappa}\text{ and }\]
 $${{\wh \rmUU}(s, F', \kappa)}:=\{ \phi \in \rmUU (s,F',\kappa)\mid  \AHs^{F'}_\phi=1\}.$$
Note that with this notation we have 
\[\UU(s,F,\si)=\bigsqcup_{\stackrel{ a\in A_0 }{
	b\in  \AHs^F }} \rmUU(s,v_aF,v'_b\si) 
\ \ \und\ \  
\wh \UU(s,F,\si)=\bigsqcup_{\stackrel{ a\in A_0 }{
		b\in  \AHs^F } } \wh \rmUU(s,v_aF,v'_b\si),\]
        where $A_0:=\Cent_{\AHs_F}(\si)$.

We generalize this and determine the cardinality 
$|\ocE(\GF,(s))^{\spa{\tau'_{\GF}}}\cap \Irr(\GF)|$ in a case where $\tau'_\GF$ is an automorphism of $\GF$ and both $\tau'_\GF$ and $F$ induce non-trivial actions on $\AHs$.
\begin{cor}\label{cor3_19}
Let $\tau'_\HF\in \uE(\bH)$ acting on $\HF$ with even order, and let $\tau'_\GF=(\tau'_\HF)^*\in\uE(\bG)$, see \eqref{sigma*}. 
Moreover, let $\cC\in\Cl_{ss}(\HF)$ and $s\in \cC^F$ fixed at the beginning of \ref{3C}. Assume that $\tau'_\HF(s)\in[s]_\HF$ and that both $\tau'_\HF$ and $F$ act non-trivially on $B(s)$. 

Define $\si'_\HF:=\tau'_\HF\circ F$ and $\si'_\GF:=\tau'_\GF\circ F$. Then $\si'_\HF$ acts trivially on $B(s)$. Let $\wc A(s)$ and $\si\in\Cent_{\HF \si'_\HF}(s)$ be associated to $s$ via \Cref{cor_2Becht} and define $\tau$ as endomorphism of $\bH$ such that $\tau:=\si\circ F$. 
Then 
\[| \wh {\UU}(s,F,\tau)|= |\AHs^F| \cdot |\ocE(\GF,(s))^{\spa{\tau'_{\GF}}}\cap \Irr(\GF)|,\]
where 
\[\II{UsFtau}@{\wh \UU(s,F,\tau)}:=\bigsqcup_{\substack{a\in \Cent_{\AHs_F}(\si ) \\ b\in \AHs^F }} \wh \rmUU(s, v_a F,  v_a v'_b \tau)\]
for elements 
$v_a, v'_b\in \wc A(s)$ defined as in \Cref{noteps}. 
\end{cor}
\begin{proof}
As we assume that $F$ and $\tau'_\HF$ act non-trivially on $\AHs$, the automorphism $\si$ acts trivially on $\AHs$. 
Note that the set $\UU(s,F,\si)$ coincides with the set $\UU(s,F,\tau)$ defined here: the characters in  $\rmUU (s,v_aF, v'_b \si)$ are defined on $\Cent_\bH^\circ(s)^{v_a F}$ and hence are $v_aF$-invariant. Accordingly   \[\rmUU (s,v_aF, v'_b \si)=  \rmUU (s,v_aF, v'_b \si v_a  F)=  \rmUU (s,v_aF, v'_b  v_a \si F)= \rmUU (s,v_aF, v'_b  v_a \tau) .\]  
This leads to $\wh\UU(s,F,\si)=\wh\UU(s,F,\tau)$. 

By definition, the automorphisms of $\GF$ induced by $\si'_\GF$ and $\tau'_\GF$ coincide. Hence $\ocE(\GF,\cC)^{\spa{\tau'_\GF}}=\ocE(\GF,\cC)^{\spa{\si'_\GF}}$ and the statement follows directly from \Cref{lem_whU}.
%
\end{proof}

\subsection{Labelling of $\ocE(\GF,\cC)^{\spann<F_0,\gamma>}$} \label{ssec4C}
For later applications in the proof of $\Ai$ for untwisted $\tD$ (see \Cref{Ainfty_D}) we parametrize $\ocE(\GF,\cC)^{D}$,  the $D$-stable $\wGF$-orbits in $\cE(\GF,\cC)$ for any non-cyclic $2$-group $D\leq  \uE(\GF)$, see \Cref{not2_1}.
Such a group $D\leq \uE(\GF)$ is always of the form $\spa{ \restr F_0|{\GF} ,\gamma }$ with $F_0\in \uE^+(\bG)$ such that $F\in \spa{F_0^2}^+$, see \Cref{rem89}. But we also need a description of $\ocE(\GF,\cC)^{\spann<F_0,\gamma>}$ in the symmetric case where $F_0\in\spa{F}^+$, see \Cref{prop5_12}. In the following $F$ and $F_0$ play undifferentiated roles and we just apply the  considerations of the preceding section. 

\begin{defi}\label{not3_15}
	Let $F_0\in \uE^+(\bG)$ be a Frobenius endomorphism of $\bG$ such that 
    $F,F_0\in \spa{F_1}^+$ for some $F_1\in\{F,F_0\}$ with
    $[F_1, \Z(\bG)]=1$. Let $\cC\in\Cl_{\textrm{ss}}(\bH)^{F_1}$ such that there is some $s\in  \cC^{F_1}$ with $\gamma(s)\in [s]_{\bH^{F_1}}$.
	
	Let $\wc A(s)$ be the $F_1$-stable subgroup of $\Cent_\bH(s)$ with $\Cent_\bH(s)=\Cent^\circ _\bH(s)\rtimes \wc A(s)$ from  \Cref{cor_2Becht}. 
	Let $\gamma'\in \Cent_{\bH \gamma}(s)$ be  associated to $\gamma$ as in \cite[Cor.~3.1]{CS22},  such that $\wc A(s)$ is $\gamma'$-stable and $[F_1,\gamma']=1$.
	
The map $\omega_s: \AHs\lra \Z(\bH_0)$ from \ref{ssclasses} is $\spann<\gamma'>$-equivariant, as $s$ is $\spann<\gamma',F_1>$-fixed . Via $\omega_s$ we also see that $F$ and $F_0$ act trivially on $\AHs$, since 
$[F_1,\Z(\bH_0)]=1$. Hence $\AshochF=\AHs=\AsF$, as well as $\AHs^{F_0}=\AHs=\AHs_{F_0}$. 
	
	Let $\{ v_a \mid a \in \AHs_{F_1} \}\subseteq \{ v'_a \mid a \in \AHs \}= \wc A(s)$ be chosen as in \Cref{noteps} with respect to $F_1$. Note that $v'_a=v_a$ because of $\AHs=\AHs_{F_1}$.   Accordingly $\delta(v'_a)=v'_{\delta(a)}$ for every $a\in \AHs$ and $\delta\in \spa{F_1, \gamma'}$, see \Cref{cor_CS22}.
\end{defi}

\begin{prop}\label{prop616} We keep the assumptions of \Cref{not3_15} about $s$, $F$, $F_0$.
	Let  $A_0:=\Cent_{\AHs}(\gamma')$ and $\Gamma_{s,F,F_0}: \UU(s,F,F_0)\lra \ocE(\GF,\cC)^{\spa{F_0}}$ the map from \Cref{labelEGFFnull}, defined using the elements $v'_a=v_a$ from above. Then 
	\[\Gamma_{s,F,F_0 }^{-1}(\ocE(\GF,\cC)^{\spann<F_0 ,\gamma>})= \bigcup_{c\in \AHs} \UU(s,F,F_0 )^{ \spann<v_c\gamma'>}
    \]
	and 
	\[  \UU(s,F,F_0 )^{ \spann<v_c\gamma'>}
    =\bigsqcup_{ \substack{a\in A_0 \\ b\in \AHs}}  \rmUU(s,v_a F,v_b F_0)^{\spann< v_c \gamma' >}=
    \bigsqcup_{ \substack{a\in A_0 \\ b\in \AHs}}  \UCh(\Cent_\bH^\circ(s)^{v_a F})^{\spann<v_b F_0, v_c \gamma' >}.\]
\end{prop}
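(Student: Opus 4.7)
The strategy is to exploit the factorization $\Gamma_{s,F,F_0}=(\ov\Psi'_s)^{-1}\circ\Pi_s$ built into \Cref{labelEGFFnull}, combining the equivariance properties of $\ov\Psi'_s$ with the Clifford-theoretic description of orbit-sum stabilizers given by \Cref{defPI_allg}(a).

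I would first establish the second displayed equality. \Cref{newJdec_si} applied with $(\si'_\GF,\si)=(F_0,F_0)$ provides the $\spa{F_0}$-equivariance of $\ov\Psi'_s$ that is already built into $\Gamma_{s,F,F_0}$, and a second application with $(\si'_\GF,\si)=(\gamma,\gamma')$ yields the $\gamma$-equivariance. The latter is legitimate: in the setting of \Cref{not3_15}, $\gamma'\in\Cent_{\bH\gamma}(s)$ stabilises $\wc A(s)$, commutes with $F_0$, and assumption~(ii) of \Cref{not313} is automatic because $[F_0,\Z(\bG)]=1$ forces $F_0$ to act trivially on $B(s)\subseteq\Z(\bH_0)$. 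Since $\gamma'$ permutes the summands of $\bigsqcup_{a\in\AHs}\oUCh(C_a)$ via $a\mapsto\gamma'(a)$ (here $C_a:=\Cent_\bH^\circ(s)^{v_a F}$), and $F_0$ fixes every index $a\in\AHs$, intersecting the two invariant subsets gives
\begin{equation*}
\ov\Psi'_s\bigl(\ocE(\GF,\cC)^{\spa{F_0,\gamma}}\bigr)=\bigsqcup_{a\in A_0}\oUCh(C_a)^{\spa{F_0,\gamma'}}.
\end{equation*}

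Next I would compute the preimage of this set under $\Pi_s$. On each summand indexed by $a\in A_0$, the map $\Pi_s$ restricts to the map $\Pi^{\UCh}_{\wt C_a,F_0}\colon\bigsqcup_{b\in\AHs}\UCh(C_a)^{\spa{v_b F_0}}\to\oUCh(C_a)^{\spa{F_0}}$ of \Cref{defPI}, which is itself a restriction of the orbit-sum map $\Pi_{\wt C_a}$. Applying \Cref{defPI_allg}(a) to the inclusion $C_a\unlhd\wt C_a$ with $\si=\gamma'$ (an action well-defined precisely because $a\in A_0$), the preimage $\Pi_{\wt C_a}^{-1}(\oUCh(C_a)^{\spa{\gamma'}})$ equals $\bigcup_{c\in\AHs}\UCh(C_a)^{\spa{v_c\gamma'}}$. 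Intersecting with $\UCh(C_a)^{\spa{v_b F_0}}$ for each $b$ and summing over $(a,b)$ produces
\begin{equation*}
\Gamma_{s,F,F_0}^{-1}\bigl(\ocE(\GF,\cC)^{\spa{F_0,\gamma}}\bigr)=\bigsqcup_{\substack{a\in A_0\\ b\in\AHs}}\bigcup_{c\in\AHs}\UCh(C_a)^{\spa{v_b F_0,v_c\gamma'}},
\end{equation*}
which is the second asserted identity.

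Finally, for the first equality it remains to match the right-hand side with $\bigcup_{c\in\AHs}\UU(s,F,F_0)^{\spa{v_c\gamma'}}$. A direct computation, using that $\gamma'$ centralises $s$, stabilises $\wc A(s)$, commutes with $F$ and $F_0$, and that $F_0$ fixes every $v_c$ (by \Cref{cor_CS22}, since $F_0$ acts trivially on $\AHs$), shows that $v_c\gamma'$ conjugates $C_a$ onto $C_{\gamma'(a)}$. Hence $v_c\gamma'$ acts on $\UCh(C_a)$ exactly when $a\in A_0$, while $b$ imposes no further restriction; this yields $\UU(s,F,F_0)^{\spa{v_c\gamma'}}=\bigsqcup_{a\in A_0,\,b\in\AHs}\UCh(C_a)^{\spa{v_b F_0,v_c\gamma'}}$, and taking the union over $c$ reproduces the previous formula. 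The delicate point is the interpretation of $\UU(s,F,F_0)^{\spa{v_c\gamma'}}$: it must be read as the set of characters in the various summands that are $v_c\gamma'$-invariant \emph{as characters}, rather than as the fixed points of the permutation action of $v_c\gamma'$ on the formal disjoint union; otherwise one would be forced to impose $b\in A_0$, which would be incompatible with the preceding preimage calculation.
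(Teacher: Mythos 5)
Your proof is correct and follows essentially the same route as the paper: the $\spa{F_0,\gamma}$-equivariance of $\ov\Psi'_s$ via \Cref{newJdec_si} combined with the Clifford-theoretic description of the fibres of $\Pi_{\wt C_a}$ from \Cref{defPI_allg}(a), only with the two displayed identities established in the opposite order. Your closing remark that $\UU(s,F,F_0)^{\spa{v_c\gamma'}}$ must be read as the set of $v_c\gamma'$-invariant characters inside the summands (so that $b$ ranges over all of $\AHs$, not just $A_0$) agrees with the convention the paper's own proof uses.
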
\medskip
In Section \ref{sec5C} we will denote  $\II UsFsi@{\UU(s,F,F_0)^{[\gamma']}}:=\bigsqcup_{ \substack{a\in A_0 \\ b\in \AHs}} \bigcup_{c\in \AHs} \UCh(\Cent_\bH^\circ(s)^{v_a F})^{\spann<v_b F_0, v_c \gamma' >}$.
\begin{proof} 
We study first $\Gamma_{s,F,F_0 }^{-1}(\ocE(\GF,\cC)^{\spann<F_0 ,\gamma>})$.
We abbreviate $A:=\AHs$ and $C_a:= \Cent_\bH^\circ(s)^{v_a F}$, $\wt C_a:= \Cent_\bH(s)^{v_a F}$ for $a\in \AsF=A$.
According to (\ref{rem_ntt}), $\gamma$ acts as automorphism of $\GF$ on the rational series via applying $\gamma'$ to the semisimple element or the associated $\HF$-class. 
	The $\HF$-classes in $\cC$ are in bijection with $\AsF$, see (\ref{CFs_a}). Since $s$ is $\gamma'$-fixed, the correspondence between the $\HF$-classes in $\calC$ and $A$ is $\spa{\gamma'}$-equivariant. Consequently $\gamma'$-stable $\HF$-classes in $\cC$ are in bijection with the elements in $A_0=\Cent_{A}{(\gamma')}$. 
	This implies $$\ocE(\GF,\cC)^{\spann<\gamma>}=\bigsqcup_{a\in A_0} \ocE(\GF,[s_a]_\HF )^{\spann<\gamma>}.$$ 
	
By \Cref{newJdec_si} the map $\ov \Psi'_s$ from \ref{newJdec} satisfies 
\[\ov\Psi'_s(\ocE(\GF,\cC)^{\spann<\gamma>}) = 
	\bigsqcup_{a\in A_0}\oUCh(C_a )^{\spa{\gamma'}}
 \text{ and } \ov\Psi'_s(\ocE(\GF,\cC)^{\spann<F_0,\gamma>}) = 
	\bigsqcup_{a\in A_0}\oUCh(C_a)^{\spann<F_0,\gamma'>}.\]
Let $\Gamma_{s,F}$ be the map from \Cref{cor3_13}. For $a\in A$ the map is given on $\UCh(C_a)$ by 
\[ \restr \Gamma_{s,F}|{\UCh(C_{a}) } =
    \restr(\ov \Psi'_s)^{-1}\circ\Pi_{\wt C_a}| {{\UCh(C_{a})}}.\]
Fix $a\in A$ and $\phi\in \UCh(C_{a})$ such that $\chi:=\Gamma_{s,F,F_0}(\phi)\in \ocE(\GF,\cC)^{\spann<F_0,\gamma>}$. 

Recall from the proof of \Cref{labelEGFFnull} that $\Gamma_{s,F,F_0}(\phi)=\Gamma_{s,F}(\phi)$. 
According to \Cref{newJdec_si} the fact that $\chi$ is $\spa{\gamma,F_0}$-invariant implies that $a\in \Cent_A(\gamma',F_0)=A_0$. By the equivariance properties of the bijection $\ov \Psi'_s$ given in \Cref{newJdec_si}, the character $\Pi_{\wt C_a}(\phi)$ is even $\spann<F_0,\gamma'>$-invariant. By the construction of $\Pi_{\wt C_a}$,
this ensures that $\phi$, $\phi^{F_0}$ and $\phi^{\gamma'}$ are contained in the same $\wt C_a$-orbit. Now $\wt C_a/C_a$ is naturally isomorphic to $A^F=A$ and the $\wt C_a$-action on $C_a$ coincides with the one of $A$ given in \Cref{rem3_6}. 

If $\{\phi,\phi^{F_0},\phi^{\gamma'}\}$ are contained in the same $A$-orbit, then
\[\phi\in \bigcup_{\substack{b\in A \\ c\in A}} \UCh(C_a)^{\spann<v_b F_0,v_c \gamma'>}.\]
By the definition of $\UU(s,F,F_0)$ we see that $\phi\in\UU(s,F,F_0)$ and this character is $\spa{v_c\gamma'}$-invariant for some $c\in A$. This implies our first claim that
\[ \Gamma_{s,F,F_0 }^{-1}(\ocE(\GF,\cC)^{\spann<F_0 ,\gamma>})=\bigcup_{c\in A} \UU(s,F,F_0 )^{ \spann<v_c\gamma'>} .\]
The second stated equality follows from the definition of $\UU(s,F,F_0)$. 
\end{proof}

\begin{rem}  It would be clearly interesting to ask if similar labellings are possible whenever $\bG$ is a simply connected simple group of any type, at least when $E(\bG)$ is not cyclic.
\end{rem}

\section{Condition $\Ap$ for $\twDlsc(q)$} \label{sec4_2D}
The aim of this chapter is to prove Condition $\Ap$ (see \ref{Ainfty}) for $\GF=\twDlsc(q)$. Here $\bG=\tD_{l,\textrm{sc}}(\FF)$ ($l\geq 4$) and $F=F_p^{m}\gamma\in \uE^+(\bG)$ for $q=p^{m}$ and $p$ an odd prime.
	
 We assume as before \Cref{hyp_cuspD_ext} for all $4\leq l'<l$ and $1\leq m'$, so that we are in the situation where Condition $\Ap$ holds for all groups $\tDlsc(q')$ and prime powers $q'=p^{m'}$ by \Cref{thm_typeD1}. Note also that Condition $\Ai$ and Condition $\Ap$ are equivalent for $\GF=\twDlsc(q)$ since $E(\GF)$ is then cyclic.  In this chapter we prove the following.
	
\begin{theorem} \label{thm41}
Let $\GF=\twDlsc(q)$ with $l\geq 4$ and odd $q$, let $\wGF$ and $E(\GF)$ be associated to $\GF$ as in \ref{not}. Assume that \Cref{hyp_cuspD_ext} holds for all $4\leq l'<l$ and $m'\geq 1$. Then there exists some $E(\GF)$-stable $\wGF$-transversal in $\Irr(\GF)$. \end{theorem}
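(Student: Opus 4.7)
The plan is to prove Theorem~\ref{thm41} in two stages: a reduction via \Cref{prop82} to a numerical identity, followed by a verification of this identity through the geometric labelling machinery of Section~\ref{sec3} combined with the generic bijection of \Cref{propCSB}. Since $\GF = \twDlsc(q)$ with $F = F_p^m \gamma$, the group $E(\GF)$ is cyclic of order~$m$, generated by a field automorphism $\sigma_0$. Condition $\Ap$ then amounts to saying that every $\sigma_0$-stable $\wGF$-orbit in $\Irr(\GF)$ contains a $\sigma_0$-invariant character. By the reduction encoded in \Cref{prop82}, this follows from the numerical equality
\[
|\Irr(\GF)^{\spa{\wGF, F_0}}| = |\Irr(\GFnull)^{\spa{\wGFnull, F}}|
\]
holding for every $F_0$ in a well-chosen set $\cF$ of untwisted Frobenius endomorphisms of $\wbG$ commuting with $F$.

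To verify this equality I would decompose both sides along the geometric Lusztig series $\ocE(\GF, \cC)$ and $\ocE(\GFnull, \cC)$, indexed by $\spa{F, F_0}$-stable semisimple conjugacy classes $\cC \in \Cl_{\textrm{ss}}(\bH)$. For each such $\cC$, choose a common representative $s \in \cC$ and apply \Cref{cor_2Becht}(c) to obtain a $\spa{F, F_0}$-stable complement $\wc A(s) \leq \Cent_\bH(s)$ of $\Cent_\bH^\circ(s)$. The resulting consistent choice of lifts $\{v_a\}_{a\in \AHs}$ feeds into both the $F$-side labelling $\Gamma_{s,F,F_0}$ of \Cref{labelEGFFnull} and its counterpart on the $F_0$-side with the roles of $F$ and $F_0$ swapped. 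By \Cref{lem_whU}, $|\cE(\GF, \cC)^{\spa{\wGF, F_0}}|$ equals $|\wh\UU(s, F, F_0)|/|\AHs^F|$, a cardinality expressed via a disjoint union of suitably invariant unipotent characters of the twisted centralizers $\Cent_\bH^\circ(s)^{v_a F}$ with trivial $\AHs^F$-stabilizer. An analogous expression holds for the right-hand side.

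The bridge between the two sides is the generic Frobenius-equivariant bijection $f_{F', F''}$ of \Cref{propCSB}, applied with $\bC = \Cent_\bH^\circ(s)$ and the two commuting Frobenius endomorphisms $F' = v_a F$, $F'' = v_b F_0$. By the equivariance of $f_{F', F''}$ under automorphisms commuting with both $F'$ and $F''$, this bijection preserves the trivial-$\AHs$-stabilizer condition encoded through $\wc A(s)$, hence induces a cardinality-preserving correspondence between $\wh\UU(s, F, F_0)$ and $\wh\UU(s, F_0, F)$. Summing over all $\spa{F, F_0}$-stable classes $\cC$ yields the required counting identity, and Theorem~\ref{thm41} then follows from \Cref{prop82}.

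The main obstacle lies in the equivariance bookkeeping of the previous step, specifically in matching the $\AHs$-action via $\wc A(s)$ through the bijection $f_{F', F''}$, so that the trivial-stabilizer subsets on the two sides correspond bijectively. The structural results of \Cref{cor_2Becht} are tailored precisely for this compatibility, together with the equivariance properties of $f_{F', F''}$ recalled from \cite{CS18B}. Note that the assumption \Cref{hyp_cuspD_ext} for ranks $<l$ in Theorem~\ref{thm41} enters through \Cref{prop82}, whose internal analysis draws on \Cref{thm_typeD1} applied to the untwisted side $\GFnull = \tDlsc(q_0)$ of rank $l$, supplying the orbit-theoretic input needed to convert the numerical identity into the existence of an $E(\GF)$-stable $\wGF$-transversal.
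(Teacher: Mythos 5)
Your overall route is the same as the paper's: reduce via \Cref{prop82} to the counting identity, split it over geometric Lusztig series as in \Cref{cor_96}, and compare the two sides through the sets $\wh\UU$ of \Cref{lem_whU} and the bijection of \Cref{propCSB}. However, two steps as you state them would not go through. First, you cannot pick a single representative $s\in\cC$ and invoke \Cref{cor_2Becht}(c): its hypothesis $F\in\spa{F_0}^+$ fails here, since $F=F_p^{m}\gamma$ is twisted while $F_0\in\cF$ is untwisted. The paper instead works with $s\in\cC^F$, some $F_0'\in\Cent_{\HF F_0}(s)$ and a $\spa{F,F_0'}$-stable $\wc A(s)$ from \Cref{cor_2Becht}(b), and with a second element $s_0\in\cC^{F_0}$ whose data $(F',\wc A(s_0))$ are obtained by transporting everything with the conjugation isomorphism $\iota$ of \Cref{lem95}(b), used again in \Cref{lem4_B}(c). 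This transfer between the $(s,F_0',F)$-side and the $(s_0,F_0,F')$-side is a genuine step of the argument, not bookkeeping, and it is absent from your sketch (incidentally, $E(\GF)$ is cyclic of order $2m$, not $m$, though only cyclicity matters).

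Second, your bridging claim that $f$ induces a cardinality-preserving correspondence between $\wh\UU(s,F,F_0)$ and $\wh\UU(s,F_0,F)$ is false in general. When exactly one of $F$, $F_0$ acts non-trivially on $\B(s)$, the two disjoint unions are indexed over different sets and have different cardinalities; the correct relation, established in the proof of \Cref{cor96}, is $|[\AHs,F]|\cdot|\wh\UU(s,F,F_0')| = |[\AHs,F_0']|\cdot|\wh\UU(s,F_0',F)|$, and the counting identity only appears after dividing by the respective $|\AHs^F|$ and $|\AHs^{F_0'}|$. Proving this needs the re-indexing/conjugation trick of \Cref{lem4_B}(b) together with the check in \Cref{lem4_B}(a) that trivial stabilizers are preserved even when $\AHs^F\neq\AHs^{F_0'}$ (the argument with $Z,Z',Z''$ inside $\Z(\bH_0)$), since $f$ is only equivariant for $\Cent_{\AHs}(\spa{F,F_0'})$. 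Moreover, when both $F$ and $F_0$ act non-trivially on $\B(s)$, hypothesis \Cref{not313}(ii) fails and the sets $\wh\UU$ must first be redefined with the extra diagonal twist $v_av'_b$ as in \Cref{cor3_19} before any comparison; this is the content of \Cref{prop_nontrivFF0action}. The case analysis on the action of $F,F_0$ on $\B(s)$ and the correction factors are exactly where the work lies, and as written your argument covers only the case where both act trivially.
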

Note that whenever $p=2$ the group $\tw 2 \tDlsc(q)$ the statement holds since in this case $\wGF$ acts by inner automorphism on $\GF$. 
The fact that the groups $E(\GF)$ and $\Z(\GF)$ are then cyclic allows us to apply some ideas from \cite{CS17C} and \cite{CS18B}. According to \Cref{prop82}, \Cref{thm41} can be verified via a counting of characters. In \ref{sec4B}, the required equality of cardinalities of character sets is then proved using the parametrization of characters given in \Cref{labelEGFFnull} and \Cref{cor3_19}. 

\subsection{Proving \Cref{thm41} via counting} 	\label{not4_2}
We recall that $q$ is odd and $\Z(\bG)^{\spa{\gamma}}=\{ 1, h_0 \}$ is of order 2, see \Cref{not2_1}.

\begin{lem}\label{lemEcyclic}If 
$\GF=\twDlsc(q)$, then $E(\GF)$, $\cZ_F$ and $\Z(\GF)$ are cyclic. \end{lem}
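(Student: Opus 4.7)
The plan is to handle the three assertions separately, exploiting that $F=F_p^m\gamma$ with $\gamma$ the order-$2$ graph automorphism of \ref{not2_1}. All three points reduce to rather direct computations in the (essentially abelian) group $\spa{F_p}\times\spa{\gamma}$ and in $\Z(\bG)$.

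\textbf{Cyclicity of $\Z(\GF)$.} By definition $\Z(\GF)=\Z(\bG)^F$, and the decomposition $F=F_p^m\gamma$ shows that $\Z(\GF)\subseteq \Z(\bG)^{\spa\gamma}=\spa{h_0}$, a group of order $2$ by \Cref{not2_1}. Since the element $h_0$ has order $2$ and is fixed by $\gamma$ and by $F_p$ (as $h_0^{p^m}=h_0$), we get $\Z(\GF)=\spa{h_0}$, which is cyclic of order $2$.

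\textbf{Cyclicity of $\cZ_F$.} The isomorphism (\ref{cZF}) identifies $\cZ_F$ with $\Z(\bG)_F=\Z(\bG)/[\Z(\bG),F]$. The standard Lang-type argument for the abelian finite group $A:=\Z(\bG)$ with endomorphism $F$ gives that the map $a\mapsto a^{-1}F(a)$ has kernel $A^F$ and image $[A,F]$, whence $|A_F|=|A^F|$. Combined with the previous step this yields $|\cZ_F|=|\Z(\GF)|\leq 2$, so $\cZ_F$ is cyclic.

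\textbf{Cyclicity of $E(\GF)$.} Recall from \ref{not} that $E(\GF)$ is the image of $\Cent_{E(\bG)}(F)$ in $\Aut(\GF)$ via restriction, with kernel $\spa{F}$ by \cite[Lem.~2.5.7]{GLS3}. When $l>4$ the group $E(\bG)=\spa{F_p}\times\spa{\gamma}\cong\ZZ\times\ZZ/2$ is abelian, so $\Cent_{E(\bG)}(F)=E(\bG)$; when $l=4$ one has $E(\bG)=\spa{F_p}\times S_3$ with $\Cent_{S_3}(\gamma)=\spa\gamma$, so again $\Cent_{E(\bG)}(F)=\spa{F_p}\times\spa\gamma$. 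In either case
\[
E(\GF)\;\cong\;\bigl(\spa{F_p}\times\spa\gamma\bigr)/\spa{F_p^m\gamma}.
\]
In this quotient the relation $F_p^m\gamma=1$ forces $\gamma=F_p^{-m}$, so the quotient is generated by the image of $F_p$. Furthermore $F_p^k$ lies in $\spa{F_p^m\gamma}$ exactly when $k=mj$ with $j$ even, i.e.\ $k\in 2m\ZZ$, so $E(\GF)$ is cyclic of order $2m$.

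The only mildly delicate point is the $l=4$ case, where the presence of the triality symmetry makes $E(\bG)$ non-abelian; there one has to observe that $\gamma$ is a transposition in the $S_3$ of graph automorphisms, so its centralizer is $\spa\gamma$, which reduces that case to the abelian situation of $l>4$.
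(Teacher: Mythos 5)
Your treatment of $E(\GF)$ is correct and is essentially the paper's own argument in expanded form: since $F=F_p^{m}\gamma$ restricts to the identity on $\GF$, the automorphism $\gamma$ agrees with $F_p^{-m}$ on $\GF$, so $E(\GF)$ is generated by the restriction of $F_p$ and is cyclic (the $\tD_4$ triality point is handled correctly).

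The argument for $\Z(\GF)$ and $\cZ_F$, however, has a genuine gap. The containment $\Z(\GF)=\Z(\bG)^F\subseteq\Z(\bG)^{\spa{\gamma}}$ does not follow from the decomposition $F=F_p^{m}\gamma$: an element fixed by $F_p^{m}\gamma$ is fixed by $\gamma$ only if $F_p^{m}$ acts trivially on it, and by \Cref{not2_1} the map $F_p$ acts by inversion on the cyclic center of order $4$ when $l$ is odd and $p\equiv 3\pmod 4$. Concretely, if $l$ is odd and $q\equiv 3\pmod 4$, then both $F_p^{m}$ and $\gamma$ invert $\Z(\bG)\cong\Cy_4$, so $F$ acts trivially on $\Z(\bG)$, giving $\Z(\GF)=\Z(\bG)$ of order $4$ and $|\cZ_F|=|\Z(\bG)_F|=4$; this contradicts your claims $\Z(\GF)=\spa{h_0}$ and $|\cZ_F|\le 2$. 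The case is not vacuous: the paper's proof of \Cref{prop82} explicitly treats $|\Z(\GF)|=4$ and cites this very lemma for its cyclicity. The lemma's conclusion survives because for odd $l$ the group $\Z(\bG)$ is itself cyclic, so every $F$-fixed subgroup and every quotient is automatically cyclic; your computation (including the Lang-type identity $|\Z(\bG)^F|=|\Z(\bG)_F|$, which is fine) is only valid, and only needed, when $l$ is even, where $F_p$ does act trivially and $F$ acts as $\gamma$. This parity case distinction is exactly how the paper argues, and your proof needs it to be repaired.
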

\begin{proof} In $E(\GF)$ we have that $\gamma$ and some element of $\spann<\restr F_p|{\GF}>$ induce the same automorphism of $\GF$. This shows that $E(\GF)$ is cyclic. 
	If $l$ is odd, then $\Z(\bG)$ and both $ \Z(\bG)_F$ and $\Z(\GF)$ are cyclic. If $l$ is even, then $F_p$ acts trivially on $\Z(\bG)$ and $F(z)=\gamma(z)=h_0 z $ for every $z\in\Z(\bG)\setminus\spannh$. Then $|\Z(\bG)^F|=|\Z(\bG)_F|=2$.
	\end{proof}
Recall that ${m}$ denotes the integer with $q=p^{m}$ and $F=F_p^{m}\gamma$. We consider the decomposition $m=m_2m_{2'}$ where $m_2$ is the maximal $2$-power dividing $m$. Each $2$-subgroup of $\EGF$ has a generator of the form $F_p^{i }$ with $m_{2'}| i$ and $i| m$.
	\begin{prop}\label{prop82}
		Assume $\GF=\twDlsc(p^{m})$. Let 
		$$\II Fcal@{\cF}:= \{F_p^{i } \mid \text{ where } m_{2'}| i\text{ and } i| m \}\subseteq \uE^+(\bG).$$ The following are equivalent: 
		\begin{asslist}
			\item \Cref{thm41} holds for $\GF$;
			\item $|\Irr(\GF)^{\spann<\wGF, F_0>} | =|\Irr(\GFnull)^{\spann< \wGFnull,F>}|$ for every $F_0 \in \cF$.
		\end{asslist}
	\end{prop}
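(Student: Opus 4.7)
The plan is to prove the equivalence (i) $\Leftrightarrow$ (ii) by reformulating Condition $\Ap$ as a count equality and then invoking Shintani descent, along the lines of \cite[Thm~3.8]{CS18B} as used in the proof of \Cref{prop_ext_wG}.

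First I would reduce (i) to checking, for each $F_0\in\cF$, that every $\restr{F_0}|{\GF}$-stable $\wGF$-orbit in $\Irr(\GF)$ contains a $\restr{F_0}|{\GF}$-invariant character. Since $E(\GF)$ is cyclic by \Cref{lemEcyclic}, $\Ap$ amounts to verifying this condition for every cyclic subgroup $\spa\sigma \le E(\GF)$ of prime-power order; and by a coprime action argument using that $\cZ_F$ is cyclic of order dividing $4$, the odd-prime case is automatic, leaving only the $2$-subgroups, which are exactly those generated by $\restr{F_0}|{\GF}$ for some $F_0\in\cF$.

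Next, applying the disjoint-union surjection $\Pi_{\wGF,\sigma}$ of \Cref{defPI_allg}(b) with $\sigma = \restr{F_0}|{\GF}$, the above condition is equivalent, by a Clifford-theoretic count of $\sigma$-stable $\wGF$-orbits by size and $\sigma$-behavior, to a definite equality relating $|\Irr(\GF)^{\spa{\wGF,F_0}}|$ to the quantities $|\Irr(\GF)^{b\sigma}|$ for $b\in \cZ_F$: each $\sigma$-stable orbit that does not contain a $\sigma$-fixed character contributes only to $|\Irr(\GF)^{b\sigma}|$ for some $b\neq 1$, so the vanishing of such ``swap-type'' orbits is controlled by a single count identity. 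Then, via the Shintani isomorphism $\Sh_{F,F_0}: \CF_{\wGF\spa{F_0}}(\wGF F_0)\xrightarrow{\sim}\CF(\wGFnull)$ from \cite[Sect.~3.A]{CS18B}, which is equivariant for the natural $\wGF$- and $F$-actions, a trace argument on the bases given by $\Irr(\wGF)^{F_0}$ on the left and $\Irr(\wGFnull)$ on the right converts the $\GF$-side count into the $\GFnull$-side count appearing in (ii). Combined, these steps yield the equivalence of (i) and (ii).

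The main obstacle is the precise matching of the $\cZ_F$-action on the $\GF$-side with the $\cZ_{F_0}$-action combined with the $F$-action on the $\GFnull$-side through Shintani descent, since $|\cZ_F|$ and $|\cZ_{F_0}|$ may differ (both being cyclic of order dividing $4$, with different orders depending on the residue of $p$ modulo $4$ and the parity of $m$). This tracking is carried out by following the commuting Frobenius structure through the regular embedding $\bG\le\wbG$ and adapting the trace argument of \cite[Thm~3.8]{CS18B} to the $\cZ_F$-action, analogously to the final paragraphs of the proof of \Cref{prop_ext_wG}.
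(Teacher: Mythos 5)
Your overall strategy---reformulate (i) as a fixed-point count and transfer it to $\GFnull$ by Shintani descent---is the same as the paper's, but the descent step as you describe it has a genuine gap. The isomorphism $\Sh_{F,F_0}$ together with a trace argument on the basis $\Irr(\wGF)^{\spa{F_0}}$ only yields statements about characters of the connected-centre groups $\wGF$ and $\wGFnull$ (this is exactly what happens in the proof of \Cref{prop_ext_wG}); it cannot by itself produce the joint fixed-point count $|\Irr(\GFnull)^{\spann<\wGFnull,F>}|$ appearing in (ii). What descent actually gives at the level of $\GF$ is the equality $|\Irr(\GF)^{\spa{tF_0}}|=|\Irr(\GFnull)^{\spa{t_0F}}|$ of \cite[Equ.~(3.3.2)]{CS18B}, where $t$ generates the cyclic group $\cZ_F$ (acting by diagonal automorphisms) and $t_0\in\wGFnull$ is its image under the norm map $N_{F/F_0}$; so one only controls fixed points under the single twisted element $t_0F$, with a \emph{transported} diagonal part. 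The essential missing ingredient is the passage from $\Irr(\GFnull)^{\spa{t_0F}}$ to $\Irr(\GFnull)^{\spann<\wGFnull,F>}$: the paper does this by invoking Condition $\Ap$ for the untwisted group $\GFnull$ (available by \Cref{thm_typeD1} under \Cref{hyp_cuspD_ext} in smaller ranks) and then analysing the $F$-stable subgroups of $\calZ_{F_0}$ --- the subgroup corresponding to $\wGFnull_{\chi_0}$ is $F$-stable and meets the coset outside $\spa{h_0}$, hence equals $\calZ_{F_0}$, so $t_0F$-invariance upgrades to $\spann<\wGFnull,F>$-invariance. Your proposal never uses this inductive input, and the ``matching of the $\cZ_F$- and $\cZ_{F_0}$-actions'' that you defer to an adapted trace argument is precisely the point that cannot be settled that way.

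A second, smaller issue: your reduction of (i) to a ``single count identity'' is too loose when $|\Z(\GF)|=4$. There $\cZ_F$ is cyclic of order $4$, and the identity one can actually feed into descent is $|\Irr(\GF)^{\spann<\wGF,F_0>}|=|\Irr(\GF)^{\spa{tF_0}}|$ for a generator $t$ of $\cZ_F$; deducing the orbit statement from it requires the extra argument that this equality forces $\Irr(\GF)^{\spa{t\gamma}}=\Irr(\GF)^{\spa{t^3\gamma}}=\Irr(\GF)^{\spann<\wGF,\gamma>}$, whence $\chi^{t\gamma}=\chi$ or $\chi^{t^3\gamma}=\chi$ implies $\chi^\gamma=\chi=\chi^t$; in particular one must rule out characters fixed by $t^2$ but swapped by $t$. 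Your remark about ``swap-type orbits'' gestures at this but does not carry it out, whereas the $|\Z(\GF)|=2$ case does follow the route you sketch (as in \cite[Prop.~2.2]{CS17C}).
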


Note that the groups $ \GFnull$ and $\wGFnull$ are of untwisted type, which makes the analysis of the twisted and untwisted types impossible to separate with our approach.

	\begin{proof} 
		According to \Cref{lemEcyclic} the group $\cZ_F$ is cyclic. Hence we can choose $t\in \wGF$ such that $\GF\Z(\wGF)\spann<t>=\wGF$. Recall the isomorphism $\cZ_F=\wbG^F/(\GF\Z(\wbG^F))\cong \Z(\bG)_F$ allowing to relate diagonal outer automorphisms of $\GF$ with elements of $\Z(\bG)_F$, see (\ref{cZF}) in \ref{not}.

We prove the proposition in two steps. First we show that \Cref{thm41} holds for $\GF$ if and only if
		\begin{align} \label{eq4_0}
			|\Irr(\GF)^{\spann<\wGF, F_0>} |&
			= |\Irr(\GF)^{\spa{tF_0}}|
			\text{ for every }F_0\in \cF.\end{align} 
		This implies the statement when additionally
		\begin{align}\label{eq4_1}
			|\Irr(\GF)^{\spa{tF_0}}|&=|\Irr(\bG^{F_0})^{\spann< \wGFnull,F>}| \text{ for every }F_0\in \cF,\end{align} 
		which we ensure in the second step.

For completeness let us first show that \eqref{eq4_0} holds whenever \Cref{thm41} holds. In this case every $\wGF$-orbit in $\Irr(\GF)$ contains a character $\chi$ with $(\wGF E(\GF))_\chi=\wGF_\chi E(\GF)_\chi$. Let $\chi'\in \Irr(\GF)^{\spa{tF_0}}$ be $\wGF$-conjugate to $\chi$. If $(\wGF E(\GF))_{\chi'}=(\wGF E(\GF))_\chi$, then we get $\chi^t=\chi=\chi^{F_0}$ and hence $\chi'\in \Irr(\GF)^{\spa{\wGF,F_0}}$. If $(\wGF E(\GF))_{\chi'}\neq(\wGF E(\GF))_\chi$, then $(\wGF E(\GF))_{\chi'}/(\GF \Z(\wGF))$ is not normal in $(\wGF E(\GF))/(\GF \Z(\wGF))$, which is isomorphic to $\Z(\GF)\rtimes E(\GF)$. Computations in this group show that this is only possible if $|\Z(\GF)|=4$ and $E(\GF)_\chi\not \leq \Cent_{E(\GF)}(\Z(\GF))$. We notice that the $\Z(\GF)$-conjugates of $E(\GF)$ in $\Z(\GF)\rtimes E(\GF)$ never contain an element $zF_0$ where $z \in \Z(\GF)\setminus\spannh$. Correspondingly we observe that $\chi'$ cannot be $t F_0$-invariant. This clearly leads us to the conclusion that $\chi$ and $\chi'$ are $\spa{t,F_0}$-invariant.

Assume first that $|\Z(\GF)|=2$. Then according to the considerations in the proof of \cite[Prop.~2.2]{CS17C} 
it is sufficient for Condition $\Ap$ (see \ref{Ainfty}) to check the equation
\[ |\Irr(\GF)^{\spa{tF_0}} |=|\Irr(\GF)^{\spann<\wGF, F_0>} |
\forevery e \text{ with } e\mid |E(\GF)| \und F_0=F_p^e.\]
 As $|\Z(\GF)|=2$ and hence $|\wGF/(\GF \Z(\wGF)) |=2$, the  group $E(\GF)$ acts trivially on $\wGF/(\GF \Z(\wGF))$. 

The group $E(\GF)$ has a cyclic Sylow $2$-subgroup that is generated by the restriction to $\GF$ of $F_p^{m_{2'}}$. It is sufficient to prove $ |\Irr(\GF)^{\spa{tF_0}} |=|\Irr(\GF)^{\spann<\wGF, F_0>} |$, where $\spa{\restr F_0|{\GF}}$ runs over the $2$-subgroups of $E(\GF)$. By the definition of $\cF$ it is sufficient to check the equation for $F_0\in \cF$. Hence whenever $|\Z(\GF)|=2$, \Cref{thm41} holds for $\GF$ if and only if
		\begin{align*}
			|\Irr(\GF)^{\spa{tF_0}} |&=|\Irr(\GF)^{\spann<\wGF, F_0>} |\text{ for every }F_0\in \cF,\end{align*} 
		where $t\in \wGF$ is an element inducing a diagonal automorphism associated to some element in $\Z(\bG)\setminus\spannh$, see (\ref{cZF}). 

		It remains to consider the case where $|\Z(\GF)|=4$ and hence $\Z(\GF)$ is a cyclic group of order $4$, see \Cref{lemEcyclic}. Then $q\equiv 3 \mod 4$ and $\spann<(F_p)^{m_{2'}}>$ is the Sylow $2$-subgroup of $E(\GF)$. Note that $F_p^{m}$ and $\gamma$ coincide as automorphisms of $\GF$. Recall $\Out(\GF)=\wGF E(\GF)/(\GF\Z(\wGF))\cong \Z(\GF)\rtimes E(\GF)$. The Condition $\Ap$ holds for $\GF$ if and only if every $\wGF$-orbit in $\Irr(\GF)$ contains some $\chi\in \Irr(\GF)$ with $(\wGF E(\GF))_\chi=\wGF_\chi E(\GF)_\chi$. 
  As in \cite[Prop.~2.2]{CS17C} we see that this holds for a $\wGF$-orbit in $\Irr(\GF)$ containing a character $\chi$ with orbit sum $\ov\chi:=\Pi_{\wGF}(\chi)$ if and only if 
		\begin{align*}
	\ov \chi^\gamma=\ov \chi &\quad \Rightarrow\quad \chi^\gamma \in\{\chi, \chi^{[t,\gamma]}\}.\end{align*}
	It is sufficient to ensure that for a character $\chi$ with $\ov \chi^\gamma=\ov \chi $ we have $\chi^\gamma\in \{\chi,\chi^{t^2}\}$. Clearly $\chi^\gamma \in \{ \chi,\chi^t,\chi^{t^2}, \chi^{t^3}\}$, whenever $\ov \chi^\gamma=\ov \chi $.

We have seen that $\gamma$ acts on $\GF$ as an element of $ \cF$. 
The equalities from \eqref{eq4_0} imply 
$\Irr(\GF)^{\spann<\wGF,\gamma>} =\Irr(\GF)^{\spa{t\gamma}}= \Irr(\GF)^{\spa{t^3\gamma}}$ since 
$\Irr(\GF)^{\spann<\wGF,\gamma>} \subseteq \Irr(\GF)^{\spa{t\gamma}}$ and the sets 
$\Irr(\GF)^{\spa{t\gamma}}$ and $ \Irr(\GF)^{\spa{t^3\gamma}}$ are $\gamma$-conjugate. 

This implies that every   $\chi \in \Irr(\GF)^{\spa{t\gamma}}\cup \Irr(\GF)^{\spa{t^3\gamma}}$ is also $\gamma$-stable, whenever \eqref{eq4_0} holds. 
Then we obtain 
$$ \chi^{t\gamma}= \chi\text{ or }\chi^{t^3\gamma}= \chi \quad \Longleftrightarrow \quad \chi^\gamma=\chi=\chi^t. $$ 
\noindent Altogether we see that \Cref{thm41} holds if 	$|\Irr(\GF )^{\spa{t\gamma}}|= |\Irr(\GF)^ {\spann<\wGF,\gamma>}|$ and $|\Z(\GF)|=4$. This completes our first step.

Now we verify Equation (\ref{eq4_1}). In all cases, $F_0\in \cF$ is a Frobenius endomorphism of $\bG$ commuting with $F$. Hence we can apply the ideas of descent equalities, see the proof of \Cref{prop_ext_wG}. Recall that given a Frobenius endomorphism $\si$ there exists an equivalence relation $\sim_\si$ on $\wbG$ given by $g\sim_\si g'$ if and only $g=yg'\si(y^{-1})$ for some $y\in\wbG$. The equivalence classes are then called {\it $\si$-classes}, see \cite[Sect.~3.A]{CS18B}.
		Then the map $N_{F/F_0}$ given by $ y^{-1} F_0(y) \mapsto F(y) y^{-1}$ gives a well-defined bijection between the $F$-classes on $\wbG^{F_0}$ and the $F_0$-classes on $\wGF$, see \cite[Thm~3.2]{CS18B}.
		According to \cite[Equ.~(3.3.2)]{CS18B} we know 
		\begin{align}\label{eq_44}
			|\Irr(\GF)^{\spa{tF_0}}| &=|\Irr(\bG^{F_0})^{\spa{t_0F}}|,
		\end{align}
		where $t_0\in\wbG^{F_0}$ corresponds to $t\in\wGF$ via $N_{F/F_0}$. (Note that this part of the proof of \cite[Thm 3.3]{CS18B} applies in our situation of two commuting Frobenius endomorphisms, without assuming that one is a power of the other.)
Accordingly $t_0$ induces a diagonal automorphism of $\GFnull$ corresponding via (\ref{cZF}) to some $[\Z(\bG),F]$-coset contained in $\Z(\bG)\setminus \spa{h_0^{}}$.

Note first that $F$ induces $\gamma$ on $\GFnull$. 
By the definition of $\cF$, the group $\GFnull$ is of untwisted type and hence Condition $\Ap$ holds for $\GFnull$ thanks to \Cref{thm_typeD1}, since we assume \Cref{hyp_cuspD_ext} in lower ranks. We use the statement of $\Ap$ in terms of stabilizers (see \cite[Lem~2.4(ii)]{TypeD1}). Condition $\Ap$ then tells us that for every $\chi_0 \in \Irr(\bG^{F_0})^{\spa{t_0F}}$ there exists some $\wGFnull$-conjugate of $ \chi_0$ that is $t_0$-invariant or $t_0^\gamma$-invariant. 
Let $X$ be the subgroup of $\calZ_{F_0}$ corresponding to $\wGFnull_{\chi_0}$ via the isomorphism \eqref{cZF}.
By the above $t_0\in \wGFnull_{\chi_0}$ or 
$t_0^\gamma\in \wGFnull_{\chi_0}$, hence $X\cap \{ z [\Z(\bG),F_0] \mid z \notin\spannh \} \neq \emptyset$. Note that $(\wGFnull)_ {\chi_0}$ is $t_0F$-stable and the group $X$ is $F$-stable as well. The only subgroup of $\calZ_{F_0}$ with this property is $\calZ_{F_0}$ itself. This shows that $\chi_0$ is $\spann<t_0,F>$-invariant, hence $\chi_0\in \Irr(\GFnull)^{\spann<\wGFnull,F>}$. 
We obtain $\Irr(\bG^{F_0})^{\spa{t_0F}} =\Irr(\bG^{F_0})^{\spann< \wGFnull,F>}$. Together with (\ref{eq_44}) we obtain Equation (\ref{eq4_1}), namely
\begin{align*}
	|\Irr(\bG^F)^{\spa{tF_0}}|&=|\Irr(\bG^{F_0})^{\spann< \wGFnull,F>}| \text{ for every }F_0\in\cF. 
\end{align*}
This finishes the proof. 
\end{proof}
We now make use of the partitioning of $\Irr(\GF)$ and $\Irr(\GFnull)$ into geometric Lusztig series $\cE(\GF,\cC)$ and $\cE(\GFnull, \cC)$ for $\cC\in\Cl_{\textrm{ss}}(\bH)$, that are $F$-stable or $F_0$-stable, respectively, see \ref{series}. Instead of proving $|\Irr(\bG^{F_0}) ^{\spann< \wGFnull,F>}| = |\Irr(\GF) ^{\spann< \wGF,F_0>}|$ for $F_0 \in \cF$ we compare the intersection of those sets with geometric Lusztig series associated to the same semisimple $\bH$-conjugacy class $\cC$. 

	According to (\ref{rem_ntt}) every $\chi\in\cE(\GF,\cC)^{\spa{F_0}}$ is contained in a rational series $\cE(\GF,[s])$ with $s\in\cC^F$ and $F_0$-stable $[s]_{\HF}$. 
	Analogously every $\chi_0\in\cE(\GFnull ,\cC)^{\spa F}$ is contained in $\cE(\GFnull,[s_0])$ with some $F$-stable $[s_0]_\HFnull$ and $s_0\in\cC^{F_0}$. 
	
	We relate semisimple conjugacy classes of $\bH$ with those two properties in the following statement. 
	
\begin{lem}\label{lem95} Let $F_0\in \uE^+(\bG)$ (see Notation~\ref{not2_1}) and $F=F_0^i\circ \gamma$ for some $i\geq 1$. Let $\cC\in \Cl_{\textrm{ss}}(\bH)$. Then 
		\begin{thmlist}
			\item The following are equivalent: 
			\begin{asslist}
				\item $\cC$ contains an $F_0$-stable $\HF$-class $[s]_\HF$ for some $s\in \HF$; 
				\item $\cC$ contains an $F$-stable $\HFnull$-class $[s_0]_\HFnull$ for some $s_0\in \HFnull$. 
			\end{asslist}
\item Let $\calC$ and $s\in \cC^F$ such that $F_0(s)\in [s]_\HF$. Let $F_0'\in\HF F_0$ with $F_0'(s)=s$ and let $\wc A(s)$ be $\spa{F,F_0'}$-stable with $\Cent_{ \bH}(s)=\Cent_\bH^\circ(s)\rtimes \wc A(s)$ (given by  \Cref{cor_2Becht}(b) with $F_0$ as $\si'$). Then there exist some $g\in \bH$ and an isomorphism $\II iotass0@{\iota}: \bH \rtimes  \uE(\bH) \lra \bH\rtimes \uE(\bH)$ given by $x\mapsto x^g$, such that $\iota(F _0')=F_0$. If $s_0:=\iota(s)$ and $F':=\iota(F)$, then $s_0\in \cC^{F_0}$ and  $\wc A(s_0):=\iota(\wc A(s))$ is   $\spa{F_0,F'}$-stable with $\Cent_\bH(s_0)=\Cent_\bH^\circ(s_0)\rtimes \wc A(s_0)$.
\end{thmlist}
\end{lem}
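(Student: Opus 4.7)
My plan is to establish part (b) directly and deduce part (a) as an easy corollary by symmetry. Given $s\in\cC^F$ and $F_0'=h^{-1}F_0\in \HF F_0$ with $F_0'(s)=s$ (so $h\in \HF$ and $F_0(s)=hsh^{-1}$), the idea is to straighten the twisted Frobenius $F_0'$ into the untwisted $F_0$ by an inner automorphism. By Lang's theorem applied to the connected group $\bH$ and the Frobenius $F_0$, I would choose $g\in\bH$ satisfying $F_0(g)g^{-1}=h$. Working in the semi-direct product $\bH\rtimes E_2(\bH)$, the conjugation isomorphism $\iota\colon x\mapsto g^{-1}xg$ then satisfies
\[\iota(F_0')=g^{-1}h^{-1}F_0\, g=g^{-1}h^{-1}F_0(g)\,F_0=g^{-1}h^{-1}(hg)\,F_0=F_0,\]
by the defining equation $F_0(g)=hg$.

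Setting $s_0:=\iota(s)$ and $F':=\iota(F)$, I would verify the required properties. Clearly $s_0\in\cC$, and $F_0(s_0)=\iota(F_0')(s_0)=\iota(F_0'(s))=\iota(s)=s_0$, so $s_0\in\cC^{F_0}$. Since $\iota$ is an algebra automorphism, transporting the decomposition $\Cent_\bH(s)=\Cent_\bH^\circ(s)\rtimes \wc A(s)$ through $\iota$ yields $\Cent_\bH(s_0)=\Cent_\bH^\circ(s_0)\rtimes\wc A(s_0)$ with $\wc A(s_0):=\iota(\wc A(s))$, and the $\spa{F,F_0'}$-stability of $\wc A(s)$ transfers to the $\spa{F',F_0}$-stability of $\wc A(s_0)$ under $\iota$. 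The one point requiring genuine verification is that $F'\in\HFnull F$, i.e.\ that $v:=g^{-1}F(g)$ lies in $\HFnull$. Here I compute
\[F_0(v)=F_0(g)^{-1}F(F_0(g))=(hg)^{-1}F(hg)=g^{-1}h^{-1}F(h)F(g)=g^{-1}F(g)=v,\]
using the commutation $F\circ F_0=F_0\circ F$ (both lie in the abelian group $E_2(\bH)$) and $F(h)=h$ since $h\in\HF$.

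For part (a), the implication (i)$\,\Rightarrow\,$(ii) is then immediate from (b): given $s$ with $[s]_\HF$ being $F_0$-stable, extract $F_0'\in\HF F_0$ fixing $s$, and the element $s_0$ produced by (b) satisfies $F'(s_0)=s_0$ with $F'\in\HFnull F$, which means $[s_0]_\HFnull$ is $F$-stable. The reverse implication (ii)$\,\Rightarrow\,$(i) follows by the entirely symmetric argument obtained by interchanging $F$ and $F_0$ throughout, which is legitimate because both are commuting Frobenius endomorphisms and Lang's theorem applies to either. The main obstacle is modest: it is the careful bookkeeping inside $\bH\rtimes E_2(\bH)$ needed to confirm $v\in\HFnull$ (as opposed to merely $v\in\bH$), which crucially uses the commutation $[F,F_0]=1$. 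No machinery beyond Lang's theorem is required; the essential content of the lemma is transport of structure under the inner isomorphism $\iota$.
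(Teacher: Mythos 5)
Your proof is correct, but it is not the route the paper takes: the paper's own proof of \Cref{lem95} is essentially a citation of \cite[Cor.~2.6]{CS22}, which is stated there in a form where $F$ and $F_0$ play symmetric roles and which supplies both the equivalence in (a) and the isomorphism $\iota$ of (b) with all required properties. You instead reprove the content directly: writing $F_0'=h\inv F_0$ with $h\in\HF$, choosing $g$ by Lang--Steinberg so that $F_0(g)g\inv=h$, and transporting everything through the inner automorphism $x\mapsto x^g$ of $\bH\rtimes E_2(\bH)$. The computation $\iota(F_0')=F_0$, the transport of $\Cent_\bH(s)=\Cent_\bH^\circ(s)\rtimes\wc A(s)$ and of $\spa{F,F_0'}$-stability, and the statement $s_0\in\cC^{F_0}$ are all exactly right; moreover your verification that $v=g\inv F(g)$ is $F_0$-fixed (using $[F,F_0]=1$ in the abelian group $E_2(\bH)$ and $F(h)=h$) correctly yields $F'=\iota(F)\in\HFnull F$, which is precisely what makes (i)$\Rightarrow$(ii) work and is also the fact implicitly needed in the later applications (e.g.\ \Cref{lem4_B}(c) and \Cref{lemdesc}), even though the statement of (b) does not record it. Your appeal to symmetry for (ii)$\Rightarrow$(i) is legitimate, since the argument only uses that $F$ and $F_0$ are commuting Steinberg endomorphisms of the connected group $\bH$. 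What the paper's citation buys is brevity and a somewhat more general statement; what your direct argument buys is self-containedness, showing the lemma needs nothing beyond Lang's theorem and transport of structure --- the same kind of bookkeeping the paper does carry out explicitly in \Cref{propiota}.
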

\begin{proof}  The fact that (ii) implies (i) and the point (b) are covered by \cite[Cor. 2.6]{CS22}. It provides the existence of $s$ from the one of $s_0$ and then the isomorphism $\iota$ with all properties required in (b). In \cite[Cor. 2.6]{CS22}, which is a bit more general than the above, $F$ and $F_0$ play symmetric roles and (ii) is clearly equivalent to (i).
\end{proof}

By the above statement it makes sense to denote by $\II ClssHF@{\Cl_{\textrm{ss}}(\bH)^{(F_0, F) }}$ the $\bH$-classes in $\Cl_{\textrm{ss}}(\bH)$ that contain an $F_0$-stable $\HF$-class, or equivalently an $F$-stable $\HFnull$-class whenever $F=F_0^i\circ \gamma$. As mentioned above the counting required by \Cref{prop82} is done by comparing the intersections with geometric Lusztig series.
\begin{cor}\label{cor_96}
Let $(\bG,F)$ be as in \ref{not} with $\GF=\twDlsc(q)$ and $\cF$ from \Cref{prop82}. \Cref{thm41} holds for $(\bG,F)$, if 
\begin{align}\label{eqcor95}
|\cE(\GF,\calC)^{\spann<\wGF,F_0>}|&= |\cE(\GFnull,\calC) ^{\spann<\wGFnull,F>} |
\end{align}
for every $F_0\in\cF$ and $\calC\in \Cl_{\textrm{ss}}(\bH)^{(F_0, F)}$.
\end{cor}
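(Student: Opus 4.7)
The plan is to apply \Cref{prop82} to reduce, for each $F_0\in\cF$, to verifying the equality
$$|\Irr(\GF)^{\spann<\wGF, F_0>}| = |\Irr(\GFnull)^{\spann<\wGFnull, F>}|,$$
and to establish this global equality by partitioning both sides according to geometric Lusztig series indexed by $\cC\in\Cl_{\textrm{ss}}(\bH)$ and applying the hypothesis (\ref{eqcor95}) termwise.

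First I would recall that each geometric series $\cE(\GF,\cC)$ is stable under the action of $\wGF$ (which permutes only the rational series within a given geometric one via (\ref{SplitSer})). Hence the partition $\Irr(\GF)=\bigsqcup_{\cC}\cE(\GF,\cC)$ (over $F$-stable $\cC$) survives after taking $\spann<\wGF,F_0>$-fixed points. Next I would show that only those $\cC\in\Cl_{\textrm{ss}}(\bH)^{(F_0,F)}$ can contribute to the $F_0$-fixed side: if $\chi\in\cE(\GF,[s])$ is $F_0$-invariant, then by (\ref{rem_ntt}) we have $\chi^{F_0}\in\cE(\GF,[F_0^*(s)])$, forcing $[s]_\HF=[F_0^*(s)]_\HF$. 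Since $F_p$ and $\gamma$ are self-dual via our convention in \Cref{not2_1}, this translates to $\cC$ containing an $F_0$-stable $\HF$-class, which is precisely condition (i) of \Cref{lem95}(a).

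Symmetrically, a character $\chi_0\in\cE(\GFnull,[s_0])$ can be $F$-invariant only when $\cC$ contains an $F$-stable $\HFnull$-class, which is condition (ii) of \Cref{lem95}(a). Using the equivalence (i)$\Leftrightarrow$(ii) of \Cref{lem95}(a), both index sets coincide with $\Cl_{\textrm{ss}}(\bH)^{(F_0,F)}$, giving
$$|\Irr(\GF)^{\spann<\wGF,F_0>}|=\sum_{\cC\in\Cl_{\textrm{ss}}(\bH)^{(F_0,F)}}|\cE(\GF,\cC)^{\spann<\wGF,F_0>}|,$$
and a symmetric identity for $|\Irr(\GFnull)^{\spann<\wGFnull,F>}|$. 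The termwise equality (\ref{eqcor95}) produces equality of the sums, and \Cref{prop82} concludes.

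The argument is purely organisational: the deep inputs are Bonnaf\'e's compatibility (\ref{rem_ntt}) of rational series with automorphisms, the reduction in \Cref{prop82}, and the equivalence in \Cref{lem95}(a). The only care required is matching $F_0$ with $F_0^*$ via the self-duality of the generators of $E_2(\bG)$ recalled in \Cref{not2_1}, so no genuine obstacle is expected. The substantive work — actually proving (\ref{eqcor95}) — is deferred to \ref{sec4B}.
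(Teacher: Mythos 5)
Your proposal is correct and follows essentially the same route as the paper: reduce via \Cref{prop82} to the global counting identity, decompose both sides into geometric Lusztig series, invoke (\ref{rem_ntt}) and \Cref{lem95} to see that only classes in $\Cl_{\textrm{ss}}(\bH)^{(F_0,F)}$ contribute, and then sum the hypothesis (\ref{eqcor95}) termwise. The extra remarks on self-duality of the generators of $E_2(\bG)$ are consistent with the convention of \Cref{not2_1} and cause no issue.
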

\begin{proof}
According to \Cref{prop82}, \Cref{thm41} can be proved by showing 
\[|\Irr(\GFnull)^{\spann<\wGFnull,F>} |=|\Irr(\GF)^{\spann<\wGF, F_0>} | \forevery F_0\in \cF .\]
		The partitions $\Irr(\GF)=\bigsqcup_{\calC\in\Cl_{\textrm{ss}}(\bH)^F}\cE(\GF,\cC)$ and 
		$\Irr(\GFnull)=\bigsqcup_{\calC_0\in\Cl_{\textrm{ss}}(\bH)^{F_0}}\cE(\GFnull,\cC_0)$ into geometric Lusztig series from \ref{series} lead to 
		$$|\Irr(\GFnull)^{\spann<\wGFnull,F>} |=\sum_{\calC\in \Cl_{\textrm{ss}}(\bH)^{F_0}} |\cE(\GFnull,\calC)^{\spann<\wGFnull,F>}|$$
	and $$|\Irr(\GF)^{\spann<\wGF, F_0>} | =\sum_{\calC\in \Cl_{\textrm{ss}}(\bH)^F} |\cE(\GF,\calC)^{\spann<\wGF,F_0>}|.$$ Only the summands corresponding to some $\cC\in \Cl_{\textrm{ss}}(\bH)^{(F_0,F)}$ can be non-zero, see \Cref{lem95} and (\ref{rem_ntt}). The assumption now shows that the summands associated to the same conjugacy class $\calC$ are equal. 
	\end{proof}

\subsection{Labellings of $\cE(\GF,\calC)^{\spann<\wGF,F_0>}$ and $\cE(\GFnull,\calC)^{\spann<\wGFnull,F>}$ via unipotent characters}\label{sec4B}
Our aim is now to verify Equation \eqref{eqcor95} of \Cref{cor_96}. On the one hand we apply the labelling from \ref{sec3D}, on the other we use the following bijection from \cite{CS18B}. For a given $\cC\in \Cl_{\textrm{ss}}(\bH)$ the arguments used depend on the action of $F_0\in\cF$  (see \Cref{prop82}) and $F$  on $B(s)=\omega_s(\AHs)$ for $s\in \cC$: In a first step we assume that $F$ or $F_0$ acts trivially on $B(s)$ and then study the case where $F$ and $F_0$ act non-trivially on $B(s)$.

\begin{prop}[{\cite[Cor.~3.6]{CS18B}}]\label{propCSB}
Let $s\in \bH_{\textrm{ss}}$ and $\bC:=\Cent_\bH^\circ(s)$. If $F'$ and $F''$ are commuting Frobenius endomorphisms of $\bH$ fixing $s$, then there exists a bijection 
\begin{equation*}
			\II fsF'F''@{f_{s,F',F''}}:
			\UCh(\bC ^{F'})^{\spa{F''}}\lra \UCh(\bC ^{F''})^{\spa{F'}}.
\end{equation*}
 which is equivariant for algebraic automorphisms of $\bC$ commuting with $F'$ and $F''$. In particular $f_{s,F',F''}$ is $\Cent_{\bC^{F'}}(F'')$-equivariant. 
\end{prop}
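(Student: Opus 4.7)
The approach rests on the ``generic'' nature of unipotent characters, in the spirit of the Broué--Malle--Michel theory invoked throughout the paper. My plan proceeds in three steps.

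First, I would recall that for any Frobenius endomorphism $F_1$ of $\bC$, Lusztig's theory yields a canonical bijection between $\UCh(\bC^{F_1})$ and a combinatorial set $\UCh(W_\bC, \sigma_1)$ depending only on the pair $(W_\bC, \sigma_1)$, where $W_\bC$ is the Weyl group of $\bC$ and $\sigma_1$ is the finite-order automorphism it induces. The essential property for us is that this bijection is equivariant for all algebraic automorphisms of $\bC$ commuting with $F_1$ (through their induced action on $W_\bC$). Note that since $F'$ and $F''$ commute, each of them preserves the finite fixed points of the other and restricts to a group automorphism of $\bC^{F'}$, resp.\ $\bC^{F''}$.

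Second, I would apply this equivariance both to $F_1 = F'$ (with $F''$ viewed as a commuting algebraic automorphism) and to $F_1 = F''$ (with $F'$ playing the analogous role). This shows that the action of $F''$ on $\UCh(\bC^{F'})$ corresponds under the canonical bijection to the action of $\sigma''$ on $\UCh(W_\bC, \sigma')$, and symmetrically for $F'$ acting on $\UCh(\bC^{F''})$ matching $\sigma'$ on $\UCh(W_\bC, \sigma'')$.

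Third, I would produce a natural bijection
$$\UCh(W_\bC, \sigma')^{\spa{\sigma''}} \xrightarrow{\ \sim\ } \UCh(W_\bC, \sigma'')^{\spa{\sigma'}}$$
directly at the generic level. Both sides are parametrized (via Lusztig's families and almost-character decomposition) by combinatorial data depending symmetrically on the commuting pair $(\sigma', \sigma'')$ once one restricts to the simultaneously invariant part. Composing with the generic bijections from Step~1 on each side then yields the desired $f_{s,F',F''}$, with equivariance under algebraic automorphisms of $\bC$ commuting with both $F'$ and $F''$ inherited by naturality.

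The main obstacle I expect is Step~2: justifying that the purely finite-group theoretic action of $F''$ on the character set $\UCh(\bC^{F'})$ really matches the combinatorial action of $\sigma''$ on $\UCh(W_\bC, \sigma')$. This demands Lusztig's parametrization in a sufficiently strong equivariant form — one that applies not to a single fixed automorphism but to the full group of algebraic automorphisms commuting with $F'$. In the framework of \cite{CS18B} this is achieved by refining the treatment of unipotent characters through $d$-Harish-Chandra theory and the cuspidal pair decomposition, after which the compatibility asserted here becomes a formal consequence.
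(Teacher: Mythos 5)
This statement is not proved in the paper at all: it is quoted from \cite[Cor.~3.6]{CS18B}, and the introduction explicitly describes it as reflecting the generic nature of unipotent characters in the sense of Brou\'e--Malle--Michel, which is precisely the route you sketch (parametrize $\UCh(\bC^{F_1})$ by data attached to the Weyl-group coset, equivariantly for algebraic automorphisms commuting with $F_1$, then identify the two fixed-point sets at the generic level), so your plan is essentially the same as the cited source's argument. The only inaccuracy is your closing attribution of the needed equivariant parametrization to $d$-Harish-Chandra theory; in fact it rests on Lusztig's classification of unipotent characters together with the known determination of how algebraic automorphisms act on them (including the type $\tD$ subtleties with degenerate symbols), as in \cite{DM90} and Malle's work, rather than on cuspidal-pair decompositions.
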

This map is applied to prove the equation
$|\cE(\GF,\calC)^{\spann<\wGF,F_0>}|=|\cE(\GFnull,\calC)^{\spann<\wGFnull,F>} | $
 required in \Cref{cor_96}.
   We make use of the considerations from \ref{sec3D} where $\ocE(\GF,\cC)^{\spa{F_0}}$, the $F_0$-stable $\wGF$-orbit sums in a geometric series were studied. 
 
 \begin{notation}\label{not47}
We fix for the following $F_0\in\cF$ and  $\calC\in \Cl_{\textrm{ss}}(\bH)^{(F_0, F)}$ (defined after \Cref{lem95}).

According to \Cref{cor_2Becht}(b),  there exists $s\in \cC^F$ with $F_0$-stable $[s]_\HF$, a  $\spa{F,F_0'}$-stable group $\wc A(s)$  for some $F_0'\in\Cent_{\HF F_0}(s)$, such that $\Cent_\bH(s)=\Cent_\bH^\circ(s)\rtimes \wc A(s)$. 
Recall $B(s)=\omega_s(\AHs)$ and let  $v'_b\in \wc A(s)^F$ ($b\in \AHs$) be defined as in \Cref{cor_CS22}. Then there exists a natural isomorphism 
\[ \II{omegachecks}@{\protect{\wc \omega} _s}: \B(s)\lra \wc A(s)\] 
inverse to $\restr\omega_s|{\wc A(s)}$. By  \Cref{cor_CS22} this map is  $\spa{F,F_0'}$-equivariant.

Let now $s_0\in \cC^{F_0}$, $F'$, $\wc A(s_0)$ and the isomorphism $\iota$ be given as in \Cref{lem95}. 
Note that then $\wc A(s_0)=\iota(\wc A(s))$. As $\B(s_0)=\omega_{s_0}(A_\bH(s_0))$ we observe $\B(s)=\B(s_0)$. There exists a natural $\spa{F_0,F'}$-equivariant isomorphism 
\[\wc \omega_{s_0}: \B(s_0)\lra \wc A(s_0)\] 
defined as above. Then $\iota \circ \wc \omega_{s}=\wc\omega_{s_0}$. Let us denote the element of $\wc A(s_0)$ representing $a\in A_\bH(s_0)$ by $\II{v'0a}@{v'_{0,a}}$, see \Cref{cor_CS22} and 
the element of $\wc A(s_0)_{F_0}$ representing $a\in A_\bH(s_0)_{F_0}$ by $\II{v0a}@{v_{0,a}}$

For $a\in \AHs_F$ we have chosen a representing element $v_a\in \wc A(s)$ in \Cref{noteps}. 
In case $[\AHs,F]\neq 1$ and hence $[\Z(\bH_0),F]=\spannh$, the elements of $B(s)_F$ can be identified with elements of $\Z(\bH_0)/\spannh$ and then $v_a$ ($a\in \AHs_F$) can be chosen such that $\omicron(\B(s)_F)=\{ \omega_{s}(v_{a}) \mid a\in \AHs_{F}\}$, where $\omicron: \Z(\bH_0)/\spannh \lra \Z(\bH_0)$ is the section from \Cref{noteps}. Analogously when $[\B(s_0),F_0]\neq 1$, we choose the elements $\{v_{0,a}\mid a \in A_\bH(s_0)_{F_0}\}\subseteq \wc A(s_0)$ such that $\omicron(b)= \omega_{s_0}(v_{0,b})$ for every $a\in A_\bH(s_0)_{F_0} $ and  $b\in \B(s_0)_{F_0}$ with $\omega_{s_0}(a)=b$.
\end{notation}

Whenever $t\in \bH_{\textrm{ss}}$, $F_1$ is  a Frobenius endomorphism of $\bH$ with $F_1(t)=t$, and $\kappa$ induces an automorphism of $\Cent_\bH^\circ(t)^{F_1}$, we set
$$\II{UhattFkappa}@{\protect{\wh \rmUU}(t,F_1, \kappa)}:=\{ \phi \in \rmUU(t,F_1,\kappa) \mid (A_\bH(t)^{F_1})_\phi=1\},$$ where $\protect{\rmUU}(t,F_1, \kappa):=\UCh(\Cent_\bH^\circ(t)^{F_1})^{\spa{\kappa}}$, as before \ref{cor3_19}.
\begin{lem} \label{lem4_B} Keep $s$, $F$, $F_0'$, $F'$ as in \Cref{not47}.
Let $z,z',z''\in \B(s)$. Then 
	\begin{thmlist}
		\item 	$|\wh \rmUU(s,\wc\omega_s(z)F,\wc\omega_s(z')F_0')|= 
		|\wh \rmUU(s,\wc\omega_s(z')F_0', \wc\omega_s(z)F)| $;
		\item $|\wh \rmUU(s,\wc\omega_s(z)F,\wc\omega_s(z')F_0')| =
		|\wh \rmUU(s,\wc\omega_s(z[z'',F^{-1}])F, \wc\omega_s(z'[z'',{F_0'}^{-1}])F_0')| $;
		\item $|\wh \rmUU(s,\wc\omega_s(z)F_0',\wc\omega_s(z')F)| =
		|\wh \rmUU(s_0,\wc\omega_{s_0}(z)F_0, \wc\omega_{s_0}(z')F')| $.
	\end{thmlist}
\end{lem}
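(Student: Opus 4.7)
The plan is to handle the three parts by exhibiting explicit structural identifications between the two sides: for (a), the comparison bijection supplied by \Cref{propCSB}; for (b), inner conjugation by a well-chosen element of $\wc A(s)$; and for (c), the global isomorphism $\iota$ from \Cref{lem95}(b). In each case the identification must be checked to preserve not only the underlying unipotent character set but also the ``hat'' condition that the $A_\bH(s)^{F_1}$-stabilizer be trivial.

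For part (a), I would set $F' := \wc\omega_s(z) F$ and $F'' := \wc\omega_s(z') F_0'$. Both fix $s$ since $\wc\omega_s(z), \wc\omega_s(z') \in \wc A(s) \subseteq \Cent_\bH(s)$, and their commutation reduces to an identity in the abelian group $\wc A(s)$ using the $\spa{F, F_0'}$-equivariance of $\wc\omega_s$ recalled in \Cref{not47}. \Cref{propCSB} then supplies a bijection
\[
f_{s, F', F''}\colon \UCh(\bC^{F'})^{\spa{F''}} \xrightarrow{\sim} \UCh(\bC^{F''})^{\spa{F'}},
\]
where $\bC := \Cent_\bH^\circ(s)$, that is equivariant for algebraic automorphisms of $\bC$ commuting with both $F'$ and $F''$. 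The remaining task is to transfer the trivial-stabilizer condition across this bijection. I expect this to be the main obstacle: the plan is to use $\spa{F''}$-invariance of a character $\phi\in \UCh(\bC^{F'})^{\spa{F''}}$ to force its full $A_\bH(s)^{F'}$-stabilizer to lie inside the common subgroup $A_\bH(s)^{F'} \cap A_\bH(s)^{F''}$, on which the equivariance of $f_{s, F', F''}$ does apply, and symmetrically on the image side.

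For part (b), I would set $y := \wc\omega_s(z'') \in \Cent_\bH(s)$ and conjugate by $y$. Inner conjugation by an element of $\Cent_\bH(s)$ stabilizes both $\Cent_\bH(s)$ and $\Cent_\bH^\circ(s)$ and acts trivially on the abelian component group $A_\bH(s)$, so it preserves the unipotent character sets and all stabilizer data verbatim. The content is then a direct computation in $\bH \rtimes \spa{F, F_0'}$: using $F \cdot \wc\omega_s(z'') = \wc\omega_s(F(z'')) \cdot F$ together with the abelian structure of $\wc A(s)$, one obtains
\[
y^{-1} (\wc\omega_s(z) F) y \;=\; \wc\omega_s\bigl(z + F(z'') - z''\bigr)\, F \;=\; \wc\omega_s\bigl(z \cdot [z'', F^{-1}]\bigr)\, F,
\]
and the analogous formula for $\wc\omega_s(z') F_0'$, so that inner conjugation by $y$ identifies the two situations on the nose.

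For part (c), I would invoke the isomorphism $\iota\colon \bH \rtimes E_2(\bH) \to \bH \rtimes E_2(\bH)$ supplied by \Cref{lem95}(b), which satisfies $\iota(s) = s_0$, $\iota(F_0') = F_0$, $\iota(F) = F'$, and $\iota(\wc A(s)) = \wc A(s_0)$. As recorded in \Cref{not47}, the identification $B(s) = B(s_0)$ gives $\iota \circ \wc\omega_s = \wc\omega_{s_0}$, whence $\iota(\wc\omega_s(z) F_0') = \wc\omega_{s_0}(z) F_0$ and $\iota(\wc\omega_s(z') F) = \wc\omega_{s_0}(z') F'$. The induced isomorphism transports $\Cent_\bH^\circ(s)^{\wc\omega_s(z) F_0'}$ to $\Cent_\bH^\circ(s_0)^{\wc\omega_{s_0}(z) F_0}$ compatibly with both the second Frobenius and the component group action, yielding the required bijection of $\wh\rmUU$ sets.
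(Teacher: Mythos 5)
Your parts (b) and (c) coincide with the paper's proof: for (b) the paper performs exactly the conjugation by $\wc\omega_s(z'')\in\wc A(s)$ (with the convention $F^y=y^{-1}F(y)F$), and for (c) it transports the data through the isomorphism $\iota$ of \Cref{lem95}(b) using $\iota\circ\wc\omega_s=\wc\omega_{s_0}$, just as you propose. Part (a) also follows the paper's route through \Cref{propCSB}, and you correctly identify the real difficulty, namely transferring the condition that the stabilizer in $A_\bH(s)^{F}$, resp. $A_\bH(s)^{F_0'}$, is trivial.

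However, the mechanism you propose for that transfer has a gap: it is not true in general that $F''$-invariance of $\phi$ forces $(A_\bH(s)^{F})_\phi\subseteq A_\bH(s)^{F}\cap A_\bH(s)^{F_0'}$. Invariance only makes this stabilizer $\spa{F,F_0'}$-stable; a stable subgroup of order at most $2$ is indeed pointwise fixed, but when $\phi$ is fixed by the whole component group of order $4$ (for instance the trivial unipotent character when $|A_\bH(s)|=4$) and $F_0'$ acts nontrivially on $A_\bH(s)$ — which does occur, e.g. for $l$ odd and $p\equiv 3\pmod 4$ — the claimed containment fails. The paper argues differently: the bijection of \Cref{propCSB} is equivariant only for $\Cent_{A_\bH(s)}(\spa{F,F_0'})$ (these are the only elements of $\wc A(s)$ inducing automorphisms of $\Cent_\bH^\circ(s)$ commuting with both twisted Frobenius endomorphisms), which gives $\omega_s\big((A_\bH(s)^{F})_\phi\big)\cap Z''=\omega_s\big((A_\bH(s)^{F_0'})_{\phi'}\big)\cap Z''$ for $Z''=\Cent_{\omega_s(A_\bH(s))}(\spa{F,F_0'})$; one then checks inside $\Z(\bH_0)$ that both stabilizer images are $\spa{F,F_0'}$-stable and that any nontrivial stable subgroup meets $Z''$ nontrivially (an order-$2$ stable subgroup is pointwise fixed, and the full group contains $h_0^{(\bH_0)}$), so triviality of one stabilizer is equivalent to triviality of the other. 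Your conclusion is thus salvageable, but it needs this extra step rather than the containment you assert. Incidentally, your claim that the commutation of $\wc\omega_s(z)F$ and $\wc\omega_s(z')F_0'$ "reduces to an identity in the abelian group $\wc A(s)$" is also too quick: it amounts to $z^{-1}F_0'(z)=z'^{-1}F(z')$ in $\B(s)$, which fails for general $z,z'$; the paper does not comment on this either, and it is harmless in the configurations where the lemma is actually applied, but it should not be presented as automatic.
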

\begin{proof} Recall the notation ${\rmUU(s, F', \kappa)}:=\UCh(\Cent_\bH^\circ (s)^{F'})^{\spa \kappa}$ from \ref{sec3D}.
	\Cref{propCSB} gives us a bijection  $$f_{s,\wc\omega_s(z)F,\wc\omega_s(z')F_0'}\colon \rmUU(s,\wc\omega_s(z)F,\wc\omega_s(z')F_0')\lra\rmUU(s, \wc\omega_s(z')F_0',\wc\omega_s(z)F).$$ Let  $A:=\AHs$. Via $\Cent_{\wc A(s)}(\spa{F,F_0'} )$ the group $\Cent_{ A}(\spa{F,F_0'})$ acts on both character sets and the bijection is $\Cent_{ A}(\spa{F,F_0'})$-equivariant. 
	
	Part (a) follows from \[f_{s, \wc\omega_s(z)F,\wc\omega_s(z')F_0'} (\wh \rmUU(s,\wc\omega_s(z)F,\wc\omega_s(z')F_0'))=\wh 
	\rmUU(s, \wc\omega_s(z')F_0',\wc\omega_s(z)F).\]
	This equality is clear, if $ A^{F}= A^{F_0}$.
	Otherwise consider  $\phi\in \rmUU(s,\wc\omega_s(z)F,\wc\omega_s(z')F_0')$ and $\phi':= f_{s,\wc\omega_s(z)F,\wc\omega_s(z')F_0'}(\phi)$:
	Let $Z:=\omega_s({A^{F}}_\phi)$, 
	$Z':=\omega_s((A^{F_0'})_{\phi'})$ and
	 $Z''=\Cent_{\omega_s(A)} ( 
	 \spa{F,F_0' } )$. 
	As $f_{s,\wc\omega_s(z)F,\wc\omega_s(z')F_0'}$ is $\Cent_{ A}(\spa{F,F_0'})$-equivariant, $Z\cap Z''=Z'\cap Z''$. Since $Z$ and $Z'$ are by this construction $\spa{F,F_0'}$-stable, an easy check in $\Z(\bH_0)$ shows that $Z=Z'=1$, whenever $Z$ or $Z'$ are trivial. 
	This implies that $(A^{F})_\phi=1$ and $(A^{F_0'})_{\phi'}=1$ are equivalent. 
	This leads to \[f_{s, \wc\omega_s(z)F,\wc\omega_s(z')F_0'} (\wh \rmUU(s,\wc\omega_s(z)F,\wc\omega_s(z')F_0'))=
	\wh \rmUU(s, \wc\omega_s(z')F_0',\wc\omega_s(z)F),\]
	hence part (a).
	
	Part (b) follows from conjugating the set $\wh \rmUU(s,\wc\omega_s(z)F, \wc \omega_s(z')F_0')$ with $\wc \omega_s(z'')$: 
	\begin{align*}
	\wh \rmUU(s,\wc\omega_s(z)F, \wc \omega_s(z')F_0')^{\wc \omega_s(z'')}&=
\wh \rmUU(s,\wc\omega_s(z)F^{{\wc \omega_s(z'')}}, \wc \omega_s(z')(F_0')^{\wc \omega_s(z'')})=\\
&=\wh \rmUU(s, 
\wc\omega_s(z [z'',F\inv]) F, 
\wc\omega_s(z' [z'',{F_0'}^{-1}]) F_0' ).
\end{align*}
Recall that $\wc A(s)$ is abelian, $F(\wc \omega_s(z))=\wc \omega_s(F(z))$ and $F_0'(\wc \omega_s(z'))=\wc \omega_s(F_0'(z'))$. This leads to the equality in (b)

Part (c) follows from an application of  the isomorphism $\iota$ from \Cref{lem95}. We have $\iota(s)=s_0$, $\iota(F)=F'$, $\iota(F_0')=F_0$ and $\iota(\wc A(s)^{F_0'})=\wc A(s_0)^{F_0}$: The map $\iota$ satisfies $\iota\circ \wc \omega_s=\wc \omega_{s_0}$ and induces accordingly a bijection between  $\wh \rmUU(s,\wc \omega_s(z) F_0', \wc \omega_s(z')F)$ and $\wh \rmUU(s,\wc \omega_{s_0}(z) F_0, \wc \omega_{s_0}(z')F')$. 
This proves the equality in part (c).
\end{proof}
If $F$ or $F_0'$ acts trivially on $\B(s)$, the set $\protect{\wh\UU}(s,F,F_0')$ from \Cref{lem_whU} rewrites as 
\[\II{UsFtau}@{\protect{\wh\UU}(s,F,F_0')}=
\bigsqcup_{\substack{a\in \Cent_{\AHs_F}(F_0') \\ b\in \AHs^F }} \wh \rmUU(s, v_a F, v'_b F_0')
=\bigsqcup_{\substack{z\in Z \\ z'\in \B(s)^F }} \wh \rmUU(s, \wc \omega_s(z) F, \wc \omega_s(z') F_0'),
\] 
where $Z:=\omega_s(\{v_a\mid a\in (\AHs_F)^{F_0'}\})$.
If $F$ and $F_0'$ act non-trivially on $\B(s)$, then the assumptions of \ref{cor3_19}  are satisfied for $\tau'_\HF=F_0$ and $\tau=F_0'$, which allows us to define the set 
\[ \wh\UU(s,F,F_0'):= \bigsqcup_{\substack{a\in \Cent_{\AHs_F}(F_0 ) \\ b\in \AHs^F }} \wh \rmUU(s, v_a F, v_a v'_b F_0')
=\bigsqcup_{\substack{z\in Z \\ z'\in \B(s)^F }} \wh \rmUU(s, \wc \omega_s(z) F, \wc \omega_s(z z') F_0'),\] 
where $Z:=\omega_s(\{v_a\mid a\in (\AHs_F)^{F_0'}\})$. 
Let $\wc \UU(s_0,F_0,F')$ be defined analogously using $\wc A(s_0)$ and $\{v_{0,a}\mid a \in A_\bH(s_0)_{F_0}\}$. According to \Cref{lem_whU} and \ref{cor3_19} the set $\ocE(\GF,\calC)^{\spa{F_0}}\cap \Irr(\GF)$ is parametrized by the set $\wh {\UU}(s,F,F_0')$. It leads to the following.
\begin{lem}\label{lemdesc}
Let $\calC\in \Cl_{\textrm{ss}}(\bH)^{(F_0,F)}$, $s_0\in \calC^{F_0}$ and $s\in \cC^F$ as above.
Then:
\begin{thmlist}
    \item $|\wh \UU (s,F,F'_0)|= |\AHs^F| \,\cdot \, |\ocE(\GF,(s))^{\spa{F_0}}\cap \Irr(\GF)^{\spa{F_0}}|$, and 
    \item $ |\wh \UU (s_0,F_0,F')|= |A_\bH(s_0)^{F_0}| \,\cdot \, |\ocE(\GFnull,(s_0))^{F}\cap \Irr(\GFnull)|$.
\end{thmlist}
\end{lem}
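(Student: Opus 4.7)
The plan is to deduce both equalities directly from the two labelling results already established: \Cref{lem_whU} applies when one of the two commuting automorphisms acts trivially on $B(s)=\omega_s(\AHs)$, and \Cref{cor3_19} covers the complementary case.

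First, for part (a), I would note that $F_0'\in\HF F_0$ induces on $\GF$ the same automorphism as $F_0$, so $\ocE(\GF,(s))^{\spa{F_0'}}=\ocE(\GF,(s))^{\spa{F_0}}$. Then I would split into two cases. If $F$ or $F_0'$ acts trivially on $B(s)$, then Assumption~(ii) of \Cref{not313} holds with $\sigma'_\HF=F_0'$ and $\sigma'_\GF=F_0$; by inspection the set $\wh\UU(s,F,F_0')$ written above the lemma coincides with the set $\wh\UU(s,F,\sigma)$ of \Cref{lem717} for $\sigma=F_0'$, and \Cref{lem_whU} yields the asserted equality. If on the other hand both $F$ and $F_0'$ act non-trivially on $B(s)$, equivalently on $\AHs$ by injectivity of $\omega_s$, we are in the hypothesis of \Cref{cor3_19} with $\tau=F_0'$; once again the definition of $\wh\UU(s,F,F_0')$ above matches the one given there, and the equality follows.

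For part (b), I would apply the same case analysis to the triple $(s_0,F_0,F')$ with $F_0$ playing the role of $F$ and $F'$ playing the role of $F_0'$. The stability conditions needed, namely $F_0(s_0)=s_0$, $F'(s_0)=s_0$, $[F_0,F']=1$, and $\spa{F_0,F'}$-stability of $\wc A(s_0)$ with $\Cent_\bH(s_0)=\Cent_\bH^\circ(s_0)\rtimes\wc A(s_0)$, are all provided by \Cref{lem95}(b). Since $B(s_0)=B(s)$ and the isomorphism $\iota$ of \Cref{lem95} conjugates $F_0'$ to $F_0$ and $F$ to $F'$, the same dichotomy determines which of \Cref{lem_whU} and \Cref{cor3_19} applies, and in either case the asserted equation of (b) follows.

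I do not expect any genuine obstacle here; the only content beyond citing \Cref{lem_whU} and \Cref{cor3_19} is to verify that the two presentations of $\wh\UU(s,F,F_0')$ and $\wh\UU(s_0,F_0,F')$ given in the paragraph preceding the lemma actually coincide with the sets defined in \Cref{lem717} and \Cref{cor3_19}. This verification is immediate from the formulas defining $v_a$, $v'_b$, $v_{0,a}$, $v'_{0,b}$ in \Cref{noteps} and \Cref{not47}, together with the identification $\wc A(s_0)=\iota(\wc A(s))$.
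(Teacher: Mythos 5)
Your proposal is correct and matches the paper's own justification: the lemma is given there without a separate proof, being exactly the output of \Cref{lem_whU} (when $F$ or $F_0'$ acts trivially on $B(s)$) and \Cref{cor3_19} (when both act non-trivially), with part (b) obtained by transporting the data through the isomorphism $\iota$ of \Cref{lem95}(b), as you describe. One small remark: your opening observation is imprecise—$F_0'$ acts on the dual side, not on $\GF$—but it is also superfluous, since the conclusions of \Cref{lem_whU} and \Cref{cor3_19} are already stated in terms of the $\spa{F_0}$-fixed points.
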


\begin{prop}\label{cor96}
Let $F_0\in\cF$, $\calC\in \Cl_{\textrm{ss}}(\bH)^{(F_0, F)}$ and $s\in \cC^F$. If  $F$ or $F_0$ acts trivially on $\B(s)=\omega_s(\AHs)$, then 
\begin{align}\label{eqcor96}|\cE(\GF,\calC)^{\spann<\wGF,F_0>}|&=|\cE(\GFnull,\calC)^{\spann<\wGFnull,F>} | .
\end{align}
\end{prop}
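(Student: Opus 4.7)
The plan is to reduce the equality to a computation in the finite abelian group $B(s) = \omega_s(\AHs)$ using the labellings of both sides by unipotent characters from \Cref{lemdesc} and the transport provided by \Cref{lem4_B}. First I would fix $s_0 \in \cC^{F_0}$, $F' := \iota(F)$ and the isomorphism $\iota\colon\bH\rtimes E_2(\bH)\to\bH\rtimes E_2(\bH)$ afforded by \Cref{lem95}(b), so that $\iota(F_0') = F_0$, $\iota(s) = s_0$ and $\iota(\wc A(s)) = \wc A(s_0)$. Via $\iota$ one identifies $B(s) = B(s_0)$ as subgroups of $\Z(\bH_0)$, and the $F$- and $F_0'$-actions on $B(s)$ correspond respectively to the $F'$- and $F_0$-actions on $B(s_0)$. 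In particular ``$F$ or $F_0$ trivial on $B(s)$'' is equivalent to ``$F'$ or $F_0$ trivial on $B(s_0)$''. Applying \Cref{lemdesc}(a) and (b), it is enough to show
\[
\frac{|\wh\UU(s,F,F_0')|}{|\AshochF|} = \frac{|\wh\UU(s_0,F_0,F')|}{|A_\bH(s_0)^{F_0}|}.
\]

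The second step is to expand both $\wh\UU$'s as disjoint unions over pairs $(z, z') \in B(s)\times B(s)$, as displayed just after \Cref{lemdesc}, and to use the key term-by-term identity
\[
\bigl|\wh\rmUU(s, \wc\omega_s(z)F, \wc\omega_s(z')F_0')\bigr| = \bigl|\wh\rmUU(s_0, \wc\omega_{s_0}(z')F_0, \wc\omega_{s_0}(z)F')\bigr|
\]
obtained by combining \Cref{lem4_B}(a) and (c). This identity simultaneously exchanges the roles of the two arguments and transports the $s$-side to the $s_0$-side. \Cref{lem4_B}(b) further tells us that either cardinality, as a function of $(z,z')\in B(s)\times B(s)$, depends only on the classes of $z$ and $z'$ modulo $[B(s),F]$ and $[B(s),F_0']$ respectively; the same holds on the $s_0$-side with $[B(s_0),F_0]$ and $[B(s_0),F']$.

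Say $[B(s),F]=1$, the other sub-case being symmetric with $F$ and $F_0'$ exchanged. Then $\AshochF = B(s)$, the $s$-side sum ranges over $(z,z')\in B(s)^{F_0'}\times B(s)$, while $[B(s_0),F']=1$ and the $s_0$-side sum ranges over $(z,z')\in \omicron(B(s_0)_{F_0}) \times B(s_0)^{F_0}$. After the exchange $(z,z')\leftrightarrow (z',z)$, the transported $s$-side becomes a sum over $(z,z')\in B(s_0)\times B(s_0)^{F_0}$ whose summand factors through $B(s_0)/[B(s_0),F_0]$ in the $z$-variable. Collapsing the inner $z$-sum via the section $\omicron$ produces a factor $|[B(s_0),F_0]|$, and the elementary identity $|B_\sigma|=|B^\sigma|$ for a finite abelian group $B$ under an endomorphism $\sigma$ of finite order finishes the computation.

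The main technical obstacle is the careful bookkeeping of the disjoint-union multiplicities: both $\wh\UU$'s are disjoint unions in which the same cardinality $|\wh\rmUU(\dots)|$ can occur several times, and the swap coming from \Cref{lem4_B}(a) is precisely what matches the ``modded out'' direction on one side with the ``fixed'' direction on the other — a matching reconciled by the identity $|B_\sigma|=|B^\sigma|$.
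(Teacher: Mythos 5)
Your proposal is correct and follows essentially the same route as the paper: reduce via \Cref{lemdesc} to comparing $|\wh\UU(s,F,F_0')|/|\AHs^F|$ with $|\wh\UU(s_0,F_0,F')|/|A_\bH(s_0)^{F_0}|$, then manipulate the disjoint unions with \Cref{lem4_B}(a)--(c) and finish with $|B^\sigma|=|B_\sigma|$; the paper merely orders the steps slightly differently (first passing to an intermediate $\wh\UU(s,F_0',F)$ on the $s$-side, then transporting via $\iota$). Only note that \Cref{lem4_B}(b) gives invariance under the \emph{simultaneous} shift of both variables, which yields the ``depends only on the class mod $[\B(s),F_0']$'' statement precisely because the other action is trivial in the case at hand — exactly as in the paper's argument.
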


\begin{proof}
In a first step we prove for $A:=\AHs$,
$B:=\begin{cases} 
	\omicron(\B(s)_{F_0'})& \text{, if }[\B(s), F_0']\neq 1,\\
	\B(s)^{F}& \otw, \end{cases}$
and 
\[\II{hat Us,F_0',F}@{\protect{\wh \UU(s,F_0',F)}}:= \bigsqcup_{ \substack{z\in B \\ z'\in \B(s)^{F}} } \wh\rmUU(s, \wc \omega_s(z) F_0', \wc \omega_s(z')F) \]
the equality 
\begin{align} \label{eq_UU}
|[A,F]|\ \cdot \ |\wh \UU (s,F,F_0')|= |[A,F_0']|\, \cdot \, |\wh \UU (s,F_0',F)|.
\end{align}
We give a detailed argument in the case where $[A,F]=1$. The other case, where $\omega_s([A,F])=\spa{h_0}$, follows from similar considerations and uses $[A,F_0']=1$.  

Assuming $[A,F]=1$, we have $A/[A,F]=A=A^F$ and  hence \Cref{lem_whU} applies with the set
\begin{align}\label{eq4_10}
{\wh \UU(s,F,F_0')}&=\bigsqcup _{\substack{ z\in \B(s)^{F_0'} \\ z'\in \B(s)^F}} \wh \rmUU(s, \wc\omega_s(z) F, \wc\omega_s(z') F_0')
\end{align} 
by its definition in \ref{lem_whU}. 

\Cref{lem4_B}(b) implies in combination with $[A,F]=1$  the equality
\[| \wh \rmUU(s,\wc \omega_s(z)F, \wc \omega_s(z')F_0')|= |\wh \rmUU(s,\wc \omega_s(z)F, \wc \omega_s(z'[z'',F_0'])F_0')|,\] 
for every  $z,z',z''\in \B(s)$. Note that by definition $B$ is a set of representatives of the $F$-stable $[\B(s),F_0']$-cosets in $\B(s)$ and hence  
\[ |\wh \UU(s,F,F_0')|= 
 \left |    \bigsqcup_{\substack{ z\in \B(s)^{F_0'} \\ z'\in  \B(s)}}  
 \wh \rmUU(s, \wc \omega_s(z) F, \wc \omega_s(z') F_0')
\right |
=|[\B(s),F_0']| \ \cdot \  \left |    \bigsqcup_
{\substack{ z\in\B(s)^{F_0'} \\ z'\in  B}}  \wh \rmUU(s, \wc \omega_s(z) F, \wc \omega_s(z') F_0')
 \right |.\]

Applying \Cref{lem4_B}(a) leads to 
\[ |\wh \UU(s,F,F_0')|= 
 |[\B(s),F_0']| \ \cdot \  \left |    \bigsqcup_
{\substack{ z\in B\\ z'\in\B(s)^{F_0'} }}  \wh \rmUU(s, \wc \omega_s(z) F_0', \wc \omega_s(z') F) \right |. \]
This ensures Equality \eqref{eq_UU} in the case where $[\B(s),F]=1$ or equivalently ${[A,F]}=1$. Otherwise, $[A,F]\neq 1$ and hence $[\B(s),F_0]=1$. Then analogous considerations prove the equality since $F$ and $F'_0$ play relatively similar roles.

Next we prove \begin{align}
| \wh \UU(s,F_0',F) |
=| \wh \UU(s_0,F_0,F') |.\label{UU_410}
\end{align}
Let now $s_0$ and $\iota$ be as in \Cref{lem95}. According to \Cref{lem4_B}(c) the isomorphism $\iota$ shows the equality 
\[\left |\wh \rmUU(s, \wc \omega_s(z') F_0', \wc \omega_s(z) F) \right |=\left | 
\wh \rmUU(s_0, \wc \omega_{s_0}(z') F_0, \wc \omega_{s_0}(z) F') \right |\]
for every $z, z'\in \B(s)$. 
We see 
\[   \left |    \bigsqcup_
{\substack{ z\in B\\ z'\in\B(s)^{F_0'} }}  \wh \rmUU(s, \wc \omega_s(z) F_0', \wc \omega_s(z') F) \right |=
 \left |    \bigsqcup_{\substack{ z\in B\\ z'\in\B(s)^{F_0'} }}  \wh \rmUU(s_0, \wc \omega_{s_0}(z) F_0, \wc \omega_{s_0}(z') F') \right |. \]
We observe that $B=\omega_{s_0}(\{v_{0,a}\mid a \in (A_\bH(s_0)_{F_0'})^{F} \})$ and hence
\[   \left |    \bigsqcup_
{\substack{ z\in B\\ z'\in\B(s)^{F_0'} }}  \wh \rmUU(s_0, \wc \omega_{s_0}(z) F_0, \wc \omega_{s_0}(z') F') \right |= \left | \wh \UU(s_0,F_0,F') \right| . \] 
Altogether, this ensures the equality \eqref{UU_410}.

A character $\chi\in\Irr(\GF)$ is $\wGF$-invariant if and only if $\chi = \Pi_{\wGF}(\chi)$. This leads to 
\begin{align}
\cE(\GF,\cC)^{\spann<\wGF,F_0>}&= \ocE(\GF,\calC)^{\spa{F_0}}\cap \Irr(\GF).\label{ocEcapIrr}\end{align}
Analogously $\cE(\GFnull,\calC)^{\spann<\wGFnull,F>}= \ocE(\GFnull,\calC)^{\spann<F>}\cap \Irr(\GFnull)$. 

We prove our claim by combining the above results successively. Recall that $\cC=(s)=(s_0)$.
We obtain the following equalities:
\begin{eqnarray*}
	|\cE(\GF,\cC)^{\spann<\wGF,F_0>}|&=& |\ocE(\GF,\calC)^{\spa{F_0}}\cap \Irr(\GF)|= \frac{|\wh \UU(s,F,F_0')|}{|\AHs^F|}\\
    &=& \frac{ | [\AHs, F_0']| \  \cdot  | \wh \UU(s,F_0',F)| }
	{ | [\AHs, F]|\ \cdot \ |\AHs^F|   }
    \\
	&=&  \frac{ | [\B(s), F_0']| \  \cdot  | \wh \UU(s,F_0',F)| }
	{ | [\B(s), F]|\ \cdot \ |\B(s)^F|}\\
			&=& \frac{|\wh \UU(s,F_0',F)| } {|\B(s)^{F_0'}| }=
			\frac{|\wh \UU(s_0,F_0,F')| }{|B(s_0)^{F_0} | }\\
			&=& |\ocE(\GFnull,\cC)^{\spa F}\cap \Irr(\GFnull)|
			=|\cE(\GFnull,\cC)^{\spann<\wGFnull,F>}|.
		\end{eqnarray*} 
The first equality and the last follow from \eqref{ocEcapIrr} and its analogue. The second equality is a consequence of \Cref{lemdesc}(a). Afterwards, we apply \eqref{eq_UU}. Then we can replace $\AHs$ by $B(s)$. 
Next we use that  $|\B(s)^F| \, \cdot \, |[\B(s),F]|=|\B(s)|=|\AHs|$ leads to 
 $\frac{ | [\B(s), F_0']| }
	{ | [\B(s), F]|\ \cdot \ |\B(s)^F|}= \frac {1}{|B(s)^{F_0'}|}$. 
Another ingredient is the equality $| \wh \UU(s,F_0',F) |
=| \wh \UU(s_0,F_0,F') |$ from \eqref{UU_410}. This implies the required equality. 
\end{proof}
\noindent It remains to consider the case where $F$ and $F_0$ both act non-trivially on $\B(s)$.
\begin{prop}\label{prop_nontrivFF0action} Let $F_0\in\cF$, $\calC\in \Cl_{\textrm{ss}}(\bH)^{(F_0, F)}$ and $s\in \cC^F$.
Assume that $F$ and $F_0$ both act non-trivially on $\B(s)$. Then 
\[ |\cE(\GF,\cC)^{\spa{\wGF,F_0}}| = |\cE(\GFnull,\cC)^{\spa{\wGFnull,F}}|.\]
\end{prop}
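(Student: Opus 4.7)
The plan is to reduce Proposition~\ref{prop_nontrivFF0action} to the setup of Proposition~\ref{cor96} by replacing $F_0'$ with $F_0' F$ throughout. The hypothesis that both $F$ and $F_0$ act non-trivially on $B(s)=\omega_s(\AHs)$ forces $l$ to be odd and $\B(s)=\Z(\bH_0)\cong \ZZ/4$; moreover, both $F$ and $F_0$ then act as the (unique non-trivial) involution of $\ZZ/4$, so that their composition $F\circ F_0=F_0\circ F$ acts trivially on $B(s)$. Note also that $F_0 F\in E_2^+(\bG)$ is itself a Frobenius endomorphism of $\bG$ commuting with $F$.

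The element $\sigma'_\HF:=F_0 F$ satisfies both assumptions of Notation~\ref{not313}: (i) $F_0 F(s)=F_0(s)\in[s]_\HF$, and (ii) $F_0 F$ acts trivially on $B(s)$. Since $F$ is the identity on $\GF$, $F_0 F$ induces on $\GF$ the same automorphism as $F_0$. The element $\sigma$ of~\ref{not313} associated with this choice is $\sigma=F_0' F$, and $\wc A(s)$ is still $\spa{F,F_0' F}$-stable since it is $\spa{F,F_0'}$-stable by the choice in \Cref{not47}. Applying Corollary~\ref{lem_whU} with these data gives
\[
|\ocE(\GF,\cC)^{\spa{F_0}}\cap \Irr(\GF)|=\frac{|\wh\UU(s,F,F_0' F)|}{|\AHs^F|}.
\]
The symmetric argument on the $\GFnull$ side uses $\sigma'_\HF:=F_0 F$ (which now induces the automorphism $F$ on $\GFnull$ since $F_0$ is the identity on $\GFnull$) and the corresponding $\sigma=F' F_0$, yielding
\[
|\ocE(\GFnull,\cC)^{\spa F}\cap\Irr(\GFnull)|=\frac{|\wh\UU(s_0,F_0,F' F_0)|}{|A_\bH(s_0)^{F_0}|}.
\]
Since $|\AHs^F|=|A_\bH(s_0)^{F_0}|$, it remains to verify the equality of the numerators.

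The equality $|\wh\UU(s,F,F_0' F)|=|\wh\UU(s_0,F_0,F' F_0)|$ is established by mimicking the chain of equalities from the proof of Proposition~\ref{cor96}. The crucial point is that the three statements of Lemma~\ref{lem4_B} extend to the pair of commuting Frobenius endomorphisms $(F,F_0' F)$ fixing $s$: Proposition~\ref{propCSB} provides the bijection $f_{s,F,F_0' F}$ needed for the analogue of part~(a), while the isomorphism $\iota$ of Lemma~\ref{lem95}(b) (which sends $F\mapsto F'$ and $F_0'\mapsto F_0$, hence $F_0' F\mapsto F_0 F'$) yields the analogue of part~(c). The triviality of $F_0' F$ on $B(s)$ collapses the second shift in the analogue of part~(b). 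Substituting $F_0' \rightsquigarrow F_0' F$ and $F' \rightsquigarrow F' F_0$ throughout the final displayed computation in the proof of Proposition~\ref{cor96} then gives the desired equality. The main obstacle is to verify that the hat condition $(\AHs^{v_a F})_\phi=1$ is preserved under the bijections and that the indices of the disjoint unions match correctly after applying $\iota$ and the generalized swaps; this is handled exactly as in the proof of Proposition~\ref{cor96}, using the equivariance of $f_{s,\cdot,\cdot}$ and $\iota$ with respect to the action of $\wc A(s)$.
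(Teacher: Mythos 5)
Your opening reduction is correct, but note that it is not really a different route: since every element of $\UCh(\Cent^\circ_\bH(s)^{v_aF})$ is automatically $v_aF$-invariant, invariance under $v'_bF_0'F$ coincides with invariance under $v_av'_bF_0'$, so the set $\wh\UU(s,F,F_0'F)$ you obtain from \Cref{not313} and \Cref{lem_whU} with $\si'_\HF=F_0F$ agrees summand by summand with the set $\wh\UU(s,F,F_0')$ that the paper takes from \Cref{cor3_19}, and likewise on the $\GFnull$ side; up to this point you have reproduced the reduction of \Cref{lemdesc} in a different packaging.

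The gap is in the second half. Replacing $F_0$ by $F_0F$ only changes the induced automorphism of $\GF$, where it is harmless; but in the statement $F_0$ also occurs as the defining Frobenius of the second finite group, and $\bG^{F_0F}\neq\bG^{F_0}$. Concretely, after your swap for the pair $(F,F_0'F)$ the summands consist of unipotent characters of the groups $\Cent^\circ_\bH(s)^{\wc\omega_s(z')F_0'F}$, and the transport by $\iota$ (which sends $F_0'F\mapsto F_0F'$) produces unipotent characters of groups of the form $\Cent^\circ_\bH(s_0)^{\,\cdot\,F_0F'}$, i.e.\ fixed points of twists of the \emph{composite} Frobenius, whereas the target $\wh\UU(s_0,F_0,F'F_0)$ is built from the groups $\Cent^\circ_\bH(s_0)^{v_{0,a}F_0}$. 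So ``substituting $F_0'\rightsquigarrow F_0'F$ and $F'\rightsquigarrow F'F_0$ in the final display of \Cref{cor96}'' does not terminate at the required object; at best it gives $|\wh\UU(s,F,F_0'F)|/|\AHs^F|=|\wh\UU(s_0,F_0F',F')|/|A_\bH(s_0)|$, and an additional comparison (a further application of \Cref{propCSB} on the $s_0$-side, with its own commutation and hat-condition checks) is still needed to reach $\wh\UU(s_0,F_0,F'F_0)$ --- this cannot be absorbed by rewriting the invariance condition, because the two sets differ in the underlying finite groups, not in the acting automorphism. The paper avoids this by never padding with $F$: it keeps the pair $(F,F_0')$, writes $\wh\UU(s,F,F_0')$ as $\bigsqcup_{z\in Z_\diamond,\,z'\in Z^\diamond}\wh\rmUU(s,\wc\omega_s(z)F,\wc\omega_s(zz')F_0')$ and proves the equality with $\wh\UU(s_0,F_0,F')$ summand by summand using \Cref{lem4_B}(a), the conjugation shift \Cref{lem4_B}(b) when $z'\neq1$, and \Cref{lem4_B}(c), where now $\iota(F_0')=F_0$ lands exactly on the groups occurring on the other side. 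Your argument is repairable (add the missing swap, or run the termwise scheme with $(F,F_0')$ after your opening observation), but as written the decisive step is asserted rather than proved.
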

\begin{proof}
By definition $\B(s)\leq \Z(\bH_0)$ and $\B(s)=\B(s_0)$. The assumption that $F$ and $F_0$ act non-trivially on $\B(s)$ implies $|\B(s)|=|\B(s_0)|=|A_\bH(s_0)|=|\AHs|=4$. 
In this case $\B(s)^F=\spannhHnull=\B(s_0)^{F_0}$ and hence $\AHs^F=\AHs^{F_0'}$.
The groups $\B(s)/[\B(s),F]$ and $\B(s_0)/[\B(s_0),F_0]$ coincide with $\Z(\bH_0)/\spannhHnull$. Hence $F_0$ and $F$ act trivially on $\B(s)/[\B(s),F]$. Via $\omega_s$ and $\omega_{s_0}$ this implies $(\AHs_F)^{F_0'}=\AHs_F$, analogously $A_\bH(s_0)^F=A_\bH(s_0)^{F_0'}$ and $(A_\bH(s_0)_{F_0})^{F'}=A_\bH(s_0)_{F_0}$.
Let  $\III{Z^\diamond}=\spannhHnull$ and $\III{Z_\diamond}=\omicron(\Z(\bH_0)/Z^\diamond)$ where $\omicron$ is the map from \Cref{noteps}. 

According to \Cref{lemdesc} we have 
\[ |\cE(\GF,\cC)^{\spa{\wGF,F_0}}| = \frac{| \wh \UU(s,F,F_0')|}{|\AHs^F| }  
\und |\cE(\GFnull,\cC)^{\spa{\wGFnull,F}}|=\frac{|\wh \UU(s_0,F_0,F')|}{|A_\bH(s_0)^{F_0}| }, \]
where $\wh \UU(s,F,F_0')$ and $\wh \UU(s_0,F_0,F')$ are the sets defined there. 
Hence we have to show $| \wh \UU(s,F,F_0')| =|\wh \UU(s_0,F_0,F')|$.

Recall that $\omicron: \Z(\bH_0)/Z^\diamond \ra \Z(\bH_0)$ is a section with $\omicron(1)=1$ and we assume that the elements $v_a\in \wc A(s)$ are chosen such that   
$\wc \omega_s {(\omicron(z))}= v_a$, whenever $\omega_s(a)=z$ for $a\in \AHs_F$. This leads to 
$\omega_s(v_a)=\omicron(a)$ for every $a\in \AHs_F$, see before \Cref{lemdesc}. 
We then get $\{v_a\mid a\in \AHs_F\}= \{\wc \omega_s(\omicron(z))\mid z\in Z_\diamond\}$ and 
$\{v'_a\mid a\in \AHs^F\}= \{\wc \omega_s(z')\mid z'\in Z^\diamond\}$.
The disjoint union $\wh \UU(s,F,F_0')$ can be rewritten as 
\[ 
\wh \UU(s,F,F_0')= \bigsqcup_{\substack{ a\in (\AHs_F) ^{F_0'} \\ b\in \AHs^F}} \wh \rmUU(s,v_a F, v_av'_b F_0')=
 \bigsqcup_{\substack{ z\in Z_\diamond \\ z'\in Z^\diamond}} \wh \rmUU(s,\wc \omega_s(z) F, \wc \omega_s(zz') F_0').
\]
Analogously we see 
\[ 
\wh \UU(s_0,F_0,F')= \bigsqcup_{\substack{ a\in (A_\bH(s_0)_{F_0}) ^{F'} \\ b\in A_\bH(s_0)^{F_0}}} \wh \rmUU(s_0,v_a F_0, v_a v'_b F')=
\bigsqcup_{\substack{ z\in Z_\diamond \\ z'\in Z^\diamond}} \wh \rmUU(s_0,\wc \omega_{s_0}(z) F_0, \wc \omega_{s_0}(zz') F').
\]

In order to prove $|\wh \UU(s,F,F_0')|=|\wh \UU(s_0,F_0,F')|$ it is therefore sufficient to prove for every $z\in Z_\diamond$ and $ z'\in Z^\diamond$ the equality 
\[|\wh \rmUU(s,\wc \omega_s(z) F, \wc \omega_s(zz') F_0')|
=|\wh \rmUU(s_0,\wc \omega_{s_0}(z) F_0, \wc \omega_{s_0}(z z') F')|.\]

\Cref{lem4_B}(a) leads to 
\[
|\wh \rmUU(s, 
\wc \omega_s(z) F, \wc \omega_s(zz') F_0' )|=|\wh \rmUU(s, 
\wc \omega_s(zz') F_0',
\wc \omega_s(z) F)|.
\] 
If $z'=1$ then $\wh \rmUU(s, 
\wc \omega_s(zz') F_0',
\wc \omega_s(z) F)=\wh \rmUU(s, 
\wc \omega_s(z) F_0',
\wc \omega_s(zz') F)$. 
If $z'\neq 1$, then $z'=h_0^{(\bH_0)}$  and $z'=[z'',F_0]$ for some $z''\in \Z(\bH_0)\setminus Z^\diamond$. In combination with \Cref{lem4_B}(b) we obtain 
\begin{align*}
|\wh \rmUU(s, 
\wc \omega_s(zz') F_0',
\wc \omega_s(z) F)^{v''}|&=
|\wh \rmUU(s, \wc \omega_s(zz' z') F_0',
\wc \omega_s(zz') F)|\\
&= |\wh \rmUU(s, 
\wc \omega_s(z) F_0',
\wc \omega_s(zz') F)|.
\end{align*}
In all cases we see that 
\[|\wh \rmUU(s, 
\wc \omega_s(zz') F_0',
\wc \omega_s(z) F)|= 
|\wh \rmUU(s, 
\wc \omega_s(z) F_0',
\wc \omega_s(zz') F)|.
\]
The sets $\wh \rmUU (s,\wc \omega_s(z) F_0', \wc \omega_s(zz') F)$ and
$\wh  \rmUU(s_0,
\wc \omega_{s_0}(z) F_0,
\wc \omega_{s_0}(zz') F')$ have the same cardinality according to \Cref{lem4_B}(c).

Taking all those equalities together we now have 
\[|\wh \rmUU(s,\wc \omega_s(z) F, \wc \omega_s(zz') F_0')|
=|\wh \rmUU(s_0,\wc \omega_{s_0}(z) F_0, \wc \omega_{s_0}(z z') F')|.
\]
As explained in the beginning this implies the stated equality.
\end{proof}
 
\begin{proof}[Proof of \Cref{thm41}]
Equation \eqref{eqcor95} from \Cref{cor_96} holds according to \Cref{cor96} and \Cref{prop_nontrivFF0action}. This implies \Cref{thm41}.
\end{proof} 

	\begin{rem}\label{rem4x}
		In combination with \Cref{thm_typeD1}, \Cref{thm41} proves Condition $\Ap$ for $^2\tD_{l,\mathrm{sc}}(q)$ assuming \Cref{hyp_cuspD_ext} for groups of type $\tD_{l'}$ ($l'<l$). This suggests that it is not possible with our method to separate the proof of condition $\Ap$ for type $^2\tD$ from the one for type $\tD$.
	\end{rem}
	\section{Counting invariant characters with no extensions} \label{sec_ext_D}
	The aim of this chapter is to prepare the proof of Condition $\Ai$ from \ref{Ainfty} for groups of simply connected type $\tD_l$ by induction on $l$. 
	As before we assume \Cref{hyp_cuspD_ext} for groups of smaller rank, i.e., Condition $\Ai$ for $\cusp(\tDlprimesc(p^{m'}))$ for all $4\leq l'<l$ and $1\leq m'$.

	Let us point out the surprising fact that for the proof of \Cref{thm1}, which leads to a description of the action of $\Aut(\GF)$ on $\Irr(\GF)$, it is so far indispensable to determine the Clifford theory \wrt $\GF \unlhd \GF E(\GF)$ by proving the full Condition $\Ai$ for smaller ranks and cuspidal characters. 
	\begin{theorem}\label{Ainfty_D}
		Let $l\geq 4$, $q$ a prime power, $(\bG,F)$ with $\GF=\tDlsc(q)$, and $\wbG$ and $E(\GF)$ be associated to $\GF$ as in \ref{not}. 
		Assume \Cref{hyp_cuspD_ext} for all $l'$ with $4\leq l'< l$ and $1\leq m'$.
		Then there exists some $E(\GF)$-stable $\wGF$-transversal $\TT$ in $\Irr(\GF)$ such that every $\chi\in\TT$ extends to $\GF E(\GF)_\chi$. 
	\end{theorem}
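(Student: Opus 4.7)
The plan is to combine the transversal statement of $\Ap$ with a refined count of extendible characters. By \Cref{thm_typeD1}, our induction hypothesis \Cref{hyp_cuspD_ext} (for ranks $l' < l$) guarantees the existence of an $E(\GF)$-stable $\wGF$-transversal $\TT_0 \subseteq \Irr(\GF)$. It then suffices to prove that for every non-cyclic subgroup $D \leq E(\GF)$, each $D$-stable $\wGF$-orbit in $\Irr(\GF)$ contains a character extending to $\GF D$; since $E(\GF) = \spa{\restr F_p|{\GF}} \times \spa\gamma$ (or has $\Gamma$ of order $3$ only in the small-rank case $l=4$), we can then modify $\TT_0$ orbit-by-orbit so that all relevant stabilizers, whether cyclic or not, carry an extension, using \Cref{prop_ext_wG} for the case where the stabilizer sits in $\wGF \rtimes D$ with $D$ trivial or cyclic.

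The heart of the argument is thus a counting equality for each non-cyclic 2-subgroup $D = \spa{F_0, \gamma} \leq E(\GF)$ with $F_0$ induced by a Frobenius endomorphism of $\bG$ satisfying $[\Z(\bG), F_0] = 1$ and $F \in \spa{F_0^2}^+$. The strategy is to compute the number of characters in $\Irr(\GF)^D$ that extend to $\GF D$ in two different ways. First, a Shintani-descent argument in the style of the proof of \Cref{prop_ext_wG} (applied now to $\GF$ rather than $\wGF$) will yield \Cref{thm77}, namely that exactly $\tfrac{1}{2}(|\Irr(\GF)^D| + |\Irr(\GFnull)^{\spa\gamma}|)$ characters of $\Irr(\GF)$ extend to $\GF D$. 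Second, following \Cref{cor822}, we exhibit exactly $\tfrac{1}{2}(|\Irr(\GF)^D| - |\Irr(\GFnull)^{\spa\gamma}|)$ non-extendible $D$-invariant characters by comparing the labellings of $\Irr(\GF)^D$ and $\Irr(\GFnull)^{\spa\gamma}$ via the equivariant Jordan decomposition of \Cref{cor_Jordandec2} and the unipotent correspondence $f_{s,F',F''}$ of \Cref{propCSB}. Summing the two counts recovers $|\Irr(\GF)^D|$ exactly, which forces each $D$-invariant character to be either extendible to $\GF D$ or one of the precisely identified non-extendible ones.

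The parametrization from Section \ref{ssec4C}, and specifically \Cref{prop616} describing $\ocE(\GF,\cC)^{\spa{F_0, \gamma}}$ via the disjoint union $\UU(s,F,F_0)^{[\gamma']}$, is then used to show that the non-extendible $D$-invariant characters all lie in $\wGF$-orbits of length $2$, with each such orbit containing exactly one non-extendible character, so that its partner in the same orbit must itself be $D$-invariant and extendible to $\GF D$. This is where the close examination of the associated unipotent characters, together with the control afforded by the lift $\wc A(s)$ of \Cref{cor_2Becht} and the map $\iota_a$ of \Cref{78}, becomes essential: it identifies the orbit sum pairing on each $D$-stable $\wGF$-orbit of size $2$, ensuring that extendibility is always available in at least one representative.

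The main obstacle is the second count, namely producing the non-extendible $D$-invariant characters and verifying that they lie in pairwise distinct $\wGF$-orbits of length $2$. This is delicate because it requires matching the $\gamma'$-action on $\bigsqcup_{a,b} \UCh(\Cent_\bH^\circ(s)^{v_a F})^{\spa{v_b F_0, v_c \gamma'}}$ with the corresponding action on $\UCh(\Cent_\bH^\circ(s_0)^{F_0})$ under the Shintani/descent correspondence, and to do so consistently with the $\AshochF$-action of \Cref{rem3_6}. Once the two count formulas of \Cref{thm77} and \Cref{cor822} are in place, the theorem follows by a clean bookkeeping argument, but the $3$-subgroup case in rank $l=4$ (triality) requires the separate treatment already handled in \Cref{prop_ext_wG}, restricted back to $\GF$ using the bijection between $\Irr(\wGF)$ and $\wGF$-orbits in $\Irr(\GF)$ from \Cref{Jordandec}.
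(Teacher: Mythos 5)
Your proposal follows essentially the same route as the paper's proof: the $E(\GF)$-stable $\wGF$-transversal from \Cref{thm_typeD1}, reduction to non-cyclic $2$-subgroups $D=\spa{\restr F_0|{\GF},\gamma}$ with the odd-order/triality part absorbed via \Cref{prop_ext_wG} (the paper's \Cref{lem73} and \Cref{propE4}), the Shintani count of \Cref{thm77}, and the series-by-series construction of the non-extendible $D$-invariant characters through the Jordan-decomposition labellings, \Cref{propCSB} and \Cref{prop616}, ending with the length-$2$ $\wGF$-orbit pairing of \Cref{cor822}. The two points you gloss over --- that replacing a character by its extendible partner in a length-$2$ orbit preserves the stabilizer factorization $(\wGF E(\GF))_{\chi}=\wGF_{\chi}E(\GF)_{\chi}$, and that the odd case needs the $2$-power fixed-point argument of \Cref{lem73} rather than a direct restriction from $\wGF$ --- are exactly what \Cref{propE4} and \Cref{lem73} supply, so your argument is the paper's argument.
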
 
   \Cref{hyp_cuspD_ext} implies that Condition A$(\infty)$ holds for the set of cuspidal characters of $\tD_{l',sc}(p^{m'})$ with $4\leq l'< l$.  This in turn  ensures that an $E(\GF)$-stable $\wGF$-transversal $\TT$ in $\Irr(\GF)$ exists according to  \Cref{thm_typeD1}. The purpose of this chapter and the next is to show that $\TT$ can be chosen such that every $\chi\in\TT$ extends to $\GF E(\GF)_\chi$, that is to prove the above \Cref{Ainfty_D}. The final proof is given in section \ref{ssec5D}. 

	In \Cref{7A} we make some simplifying remarks. First we show that even in the case of type $\tD_4$ only 2-subgroups of $E(\GF)$ need to be  considered. With \Cref{cor79} we obtain a criterion that implies \Cref{Ainfty_D} and allows us to prove it via a counting argument. 
	
	Whenever $D$ is a non-cyclic $2$-group $D\leq E(\GF)$ and $F_0\in D$ denotes a certain Frobenius endomorphism restricted to $\GF$ with $D=\spann<F_0,\gamma>$, the number of $D$-invariant characters with an extension to $\GF D$ can be determined using Shintani descent and is $\frac 1 2 |\Irr(\GF)^D|+\frac 1 2 |\Irr(\GFnull)^{\spann<\gamma>}|$, see \Cref{thm77}. (Thereby $F_0$ is a well-chosen  Frobenius endomorphism.) On the other hand $\frac 1 2 |\Irr(\GF)^D|-\frac 1 2 |\Irr(\GFnull)^{\spann<\gamma>}|$ characters of $\GF$ are $D$-invariant and do not extend to $\GF\rtimes D$. \Cref{cor79}
    suggests how these characters are distributed over the geometric Lusztig series, in particular Lusztig series associated to $D$-stable geometric conjugacy classes $\cC\in \Cl_{\textrm{ss}}(\bH)$. 
	
	If the $D$-stable geometric conjugacy class $\cC\in \Cl_{\textrm{ss}}(\bH)$ contains no $\gamma$-stable $\HFnull$-conjugacy class we introduce in \ref{sec8B} the set $\EE(\cC)$ as a $\wGF$-transversal of $\cE(\GF,\cC)^D$.

	In \ref{sec5C} we assume $\cC\in\Cl_{\textrm{ss}}(\bH)$ to contain some $\gamma$-stable $\HFnull$-conjugacy class. 
    In this situation we construct a set $\wt\EE(\cC)$ and  a bijection between $\cE(\GF,\cC)^D\setminus\wt\EE(\cC)$
    and $\cE(\GFnull,\cC)^{\spa{\gamma}}$ via relating both with unipotent characters, see \Cref{thm_desc}(b).
    The study of $\wt\EE(\cC)$ is then continued in Chapter \ref{SecEC}.

\subsection{More basic considerations and assumptions}\label{7A}
As said above, thanks to \Cref{hyp_cuspD_ext} and  \Cref{thm_typeD1} we know that there exists some $E(\GF)$-stable $\wGF$-transversal $\TT$ in $\Irr(\GF)$. It remains to prove that $\TT$ can be chosen such that every $\chi\in\TT$ extends to $\GF \rtimes E(\GF)_\chi$. 
Note that a Sylow $\ell$-subgroup of $E(\GF)$ is non-cyclic if and only if $\ell=2$ or $\ell=3$ and $\GF$ is of type $\tD_4$. 
We see at the end of this section that it is sufficient to find enough $D$-invariant characters of $\GF$ that extend to $\GF D$, where $D$ is a non-cyclic $2$-subgroup of $E(\GF)$ contained in  $\spa{\gamma,F_p}$. 
The next lemma essentially solves the problem specific to type $\tD_4$ with regard to the non-cyclic Sylow $3$-subgroup of $E(\GF)$. 

\begin{lem}\label{lem73} Keep $\GF$ as in \Cref{Ainfty_D}. Let $\ell$ be an odd prime and $E'$ a Sylow $\ell$-subgroup of $E(\GF)$, then every $\chi\in \Irr(\GF)$ extends to its stabilizer in $\GF E'$. \end{lem}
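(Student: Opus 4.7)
The plan is to reduce the statement to the only genuinely non-cyclic situation and then treat that case via \Cref{prop_ext_wG}.

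First I would determine when the Sylow $\ell$-subgroup $E'$ of $E(\GF)$ can fail to be cyclic for odd $\ell$. For $l>4$ the graph-automorphism group of the Dynkin diagram has order $2$, so the odd-prime part of $E(\GF)$ lies in the cyclic subgroup generated by $\restr{F_p}|{\GF}$ and is cyclic. For $\GF={}^2\tDlsc(q)$, \Cref{lemEcyclic} says that $E(\GF)$ itself is cyclic. The only remaining possibility is $\GF=\tDlsc(q)$ untwisted with $l=4$, where $E(\GF)\cong C_m\times S_3$ and its Sylow $3$-subgroup is $C_{m_3}\times C_3$, non-cyclic exactly when $3\mid m$. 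Hence I may assume $l=4$, $\ell=3$, $3\mid m$, with $E'=\spa{F_0,\rho}$ for $F_0:=\restr{F_p^{m_{3'}}}|{\GF}$ and $\rho$ the triality. In every other situation the standard fact that invariant characters extend over cyclic quotients, applied to $\GF\unlhd\GF E'_\chi$, settles the claim; so I restrict to $\chi\in\Irr(\GF)^{E'}$ for this exceptional $E'$.

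Next I would lift the extension problem to $\wGF$ using \Cref{prop_ext_wG}. A key preliminary is that the $\wGF$-action on $\Irr(\GF)$ factors through $\calZ_F$, a $2$-group of order dividing $4$; hence the full $\wGF$-orbit of $\chi$, of $2$-power size, is fixed pointwise by the $3$-group $E'$. I then need to produce an $E'$-invariant $\wt\chi\in\Irr(\wGF\mid\chi)$. For this I pass first through the subgroup $H\leq\wGF$ containing $\GF$ with $H/\GF$ equal to the Hall $3'$-subgroup of the abelian group $\wGF/\GF$: by coprime action, every extension of $\chi$ to $H_\chi=H\cap\wGF_\chi$ is automatically $E'$-invariant, and maximal extendibility for $\GF\leq H$ (inherited from Lusztig's theorem for $\GF\leq\wGF$) supplies such an extension $\mu$. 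Then I use the $3$-power index $|\wGF:H|$ together with a controlled analysis of the $E'$-action on the torsor of further extensions of $\mu$ to $\wGF_\chi$ to extract an $E'$-invariant $\wt\chi\in\Irr(\wGF\mid\chi)$. \Cref{prop_ext_wG} now provides an extension $\wh\chi\in\Irr(\wGF E')$ of $\wt\chi$.

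Finally I would descend from $\wh\chi$ to an extension of $\chi$ by a two-step restriction. Restricting first to $\GF\spa{F_0}$, each $\wGF$-conjugate of $\chi$ is $F_0$-fixed by the same coprime-order reasoning, and since $\GF\spa{F_0}/\GF$ is cyclic the decomposition $\wh\chi|_{\GF\spa{F_0}}=\sum_i\psi_i$ consists of characters $\psi_i\in\Irr(\GF\spa{F_0})$ each extending one of the $\wGF$-conjugates of $\chi$, with $\chi$ itself being extended by exactly one $\psi$ (the multiplicity of $\chi$ in $\wh\chi|_{\GF}=\wt\chi|_{\GF}$ being $1$). Since $\wh\chi$ is $\rho$-invariant and $\chi$ is $\rho$-fixed, the uniqueness of $\psi$ forces $\psi^\rho=\psi$; applying cyclic Clifford theory once more to the cyclic quotient $\GF E'/\GF\spa{F_0}=\spa{\rho}$ then extends $\psi$ to an irreducible character of $\GF E'$ restricting to $\chi$. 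The main obstacle lies in the middle step: the set $\Irr(\wGF\mid\chi)$ has cardinality $|\wGF_\chi/\GF|$, whose $3$-part comes from $\Z(\wGF)/\Z(\GF)$ and is generically non-trivial, so $E'$-fixed points are not guaranteed by naive order considerations — disposing of the coprime part via the Hall $3'$-subgroup $H$ is clean, but lifting the $E'$-invariance through the $3$-power torsor from $H_\chi$ up to $\wGF_\chi$ will require either an explicit cohomological computation using the triality action on the central torus or a Shintani-descent argument in the spirit of the proof of \Cref{prop_ext_wG}.
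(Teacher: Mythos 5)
Your reduction to the single non-cyclic case ($l=4$, $\ell=3$, $3\mid m$) is correct, though unnecessary, and your overall strategy — produce an $E'_\chi$-invariant $\wt\chi\in\Irr(\wGF\mid\chi)$, extend it via \Cref{prop_ext_wG}, then descend to $\GF E'_\chi$ by multiplicity-one restriction — is the paper's strategy. But the decisive middle step is left open, and you say so yourself: after splitting off the Hall $3'$-part via $H$, the remaining extensions from $H_\chi$ to $\wGF_\chi$ form a torsor under a $3$-group on which the $3$-group $E'$ has no a priori fixed point, and the "cohomological computation" or "Shintani-descent argument" you invoke is not carried out. Since this fixed-point problem is exactly the content of the lemma, the proposal has a genuine gap.

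The paper closes it with a much simpler device: it controls the central character instead of the torsor. Because $\Z(\GF)$ is a $2$-group, $\GF\times\Z(\wGF)_{2'}$ is a subgroup of $\wGF$ with $2$-power index; restrictions from $\wGF$ to any intermediate subgroup of $\GF\le G_1\le\wGF$ are multiplicity-free (\cite[Thm~15.11]{CE04}), so the set $\Irr(\wGF\mid\chi\times 1_{\Z(\wGF)_{2'}})$ has cardinality a power of $2$. This set is $E'_\chi$-stable (the action preserves $\GF$, $\Z(\wGF)_{2'}$ and the trivial character), so the odd-order group $E'_\chi$ fixes some $\wt\chi$ in it; \Cref{prop_ext_wG} and the multiplicity-one descent then finish as you describe. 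In other words, pinning the $2'$-part of the central character to be trivial removes precisely the $3$-part of $|\Irr(\wGF\mid\chi)|$ that worried you — no cohomology or Shintani descent is needed, and the argument works uniformly for every odd $\ell$ without the case division. Two further inaccuracies in your write-up, both secondary: it is not true that every extension of $\chi$ to $H_\chi$ is $E'$-invariant (coprimality only yields the existence of one invariant extension), and the $\wGF$-orbit of $\chi$ need not be fixed pointwise by $E'$ — when the orbit has length $4$, triality can permute the three conjugates different from $\chi$ in a $3$-cycle — but your final descent only needs that $\chi$ itself is $E'$-invariant and occurs with multiplicity one in the restriction of $\wt\chi$ to $\GF$, so that part is repairable.
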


\begin{proof} 
Recall that any element of $\Irr(\wGF)$ restricts to $\GF$ without multiplicity according to \cite[Thm~15.11]{CE04}. The same property is true for restriction to every $G_1$ with $\GF\leq G_1\leq \wGF$ and this means that maximal extendibility then holds \wrt $G_1\unlhd \wGF$, see \ref{not11}.	Since $\Z(\GF)$ is a $2$-group, we can consider $\GF\times \Z(\wGF)_{2'}$ as a subgroup of $\wGF$ and the group $\wGF/(\GF\times \Z(\wGF)_{2'})$ is a $2$-group. Maximal extendibility then holds with respect to $\GF\times \Z(\wGF)_{2'}\unlhd \wGF$ and as a consequence the cardinality of $\Irr(\wGF |\chi \times 1_{\Z(\wGF)_{2'}})$ is a power of $2$. As $E'_{\chi}$ is an $\ell$-group acting on this set with $\ell\neq 2$, there exists $\wt \chi\in \Irr(\wGF |\chi \times 1_{\Z(\wGF)_{2'}})^{E'_{\chi}}$. The character $\wt \chi$ extends to $ \wGF E'_{\chi}$ by \Cref{prop_ext_wG}. Hence $\chi$ extends to $\GF E'_{\chi}$. \end{proof}

The above justifies  that we consider extendibility only with regard to 2-subgroups of $\uE(\GF)$ from  \Cref{not2_1}. Here are some useful properties of non-cyclic 2-subgroups of $\uE(\GF)$.

\begin{lem}\label{rem89}
Let $D\leq \uE(\GF)$ be a non-cyclic $2$-subgroup. Then there exists some $F_0\in \uE(\bG)^+$ with $F\in \spa{F_0^2}^+$ (see \ref{not2_1}), $[\Z(\bG),F_0]=1$ and $D=\spann<\restr F_0|{\GF},\gamma>$. 
\end{lem}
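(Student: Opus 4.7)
The plan is to exploit the explicit structure of $E_2(\GF)$ in the untwisted situation of this section. Since $F=F_p^m$ and $E_2(\bG)=\spa{F_p}\times\spa{\gamma}$ is abelian with kernel $\spa{F}$ for the restriction map to $E_2(\GF)$, we have $E_2(\GF)\cong(\ZZ/m)\times(\ZZ/2)$. Writing $m=m_2m_{2'}$ as in \ref{not4_2}, the Sylow $2$-subgroup of $E_2(\GF)$ is $\spa{\restr{F_p^{m_{2'}}}|{\GF},\gamma}\cong(\ZZ/m_2)\times(\ZZ/2)$, and contains $D$.

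The first key step is to show $\gamma\in D$ and pin down the shape of $D$. Since $D$ is a non-cyclic abelian $2$-group, its $2$-torsion $D[2]$ has order at least $4$; but the ambient $(\ZZ/m_2)\times(\ZZ/2)$ (necessarily with $m_2\geq 2$) has only four elements of order at most $2$, namely $1$, $\restr{F_p^{m/2}}|{\GF}$, $\gamma$ and their product. Hence $D[2]$ equals the full $2$-torsion of the ambient group; in particular $\gamma\in D$. Letting $\restr{F_p^{m/2^b}}|{\GF}$ generate the cyclic subgroup $D\cap\spa{\restr{F_p}|{\GF}}$, one reads off $D=\spa{\restr{F_p^{m/2^b}}|{\GF},\gamma}$ for some integer $b\geq 1$.

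The natural candidate will then be $F_0:=F_p^{m/2^b}\in E_2^+(\bG)$: the identification $D=\spa{\restr{F_0}|{\GF},\gamma}$ is tautological, and $F=F_0^{2^b}=(F_0^2)^{2^{b-1}}\in\spa{F_0^2}^+$ since $b\geq 1$. The only remaining condition is $[\Z(\bG),F_0]=1$, which I would dispatch by a short case analysis. Recalling from \Cref{not2_1} that $F_p$ acts on $\Z(\bG)$ as the $p$-th power, the condition is automatic when $l$ is even (because then $\Z(\bG)$ has exponent $2$ and $p$ is odd) and when $l$ is odd with $p\equiv 1\pmod 4$ (because then $F_p$ is trivial on $\Z(\bG)\cong\ZZ/4$).

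The only real obstacle is the remaining subcase: $l$ odd, $p\equiv 3\pmod 4$ and $m/2^b$ odd (equivalently $m_2=2^b$, so $D$ is the full Sylow $2$-subgroup of $E_2(\GF)$); there $F_0$ acts on $\Z(\bG)\cong\ZZ/4$ as the $p$-th power, hence as inversion. The remedy will be to replace $F_0$ by $F_0':=F_p^{m/2^b}\gamma\in E_2^+(\bG)$: since $\Z(\bG)^{\spa{\gamma}}=\spa{h_0}$ has order $2$ inside the cyclic group $\Z(\bG)\cong\ZZ/4$, $\gamma$ itself must act on $\Z(\bG)$ as inversion, so that $F_0'$ acts as a composition of two inversions, i.e.\ trivially. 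A direct check gives $(F_0')^2=F_0^2$ and $\restr{F_0'\gamma}|{\GF}=\restr{F_0}|{\GF}$, so conditions (1) and (3) of the statement are preserved, finishing the argument.
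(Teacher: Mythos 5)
Your argument is correct and follows essentially the same route as the paper: identify $D=\spa{\restr{F_p^{i}}|{\GF},\gamma}$ inside the Sylow $2$-subgroup of $E_2(\GF)$, take $F_0=F_p^{i}$ when it centralizes $\Z(\bG)$, and otherwise (only possible for $l$ odd, $p\equiv 3\pmod 4$, $i$ odd) replace it by $F_p^{i}\gamma$, which fixes $\Z(\bG)$ since $\gamma$ inverts the cyclic centre. The only difference is cosmetic: you deduce $\gamma\in D$ by counting $2$-torsion in the Sylow $2$-subgroup, whereas the paper notes that otherwise $D$ would embed into the cyclic quotient $E_2(\GF)/\spa{\gamma}$.
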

\begin{proof} A non-cyclic subgroup of $\uE(\GF)$ has to contain $\gamma$, since otherwise it would be isomorphic to its image in the cyclic group $\uE(\GF)/\spann<\gamma>$. Recall the integer $m\geq 1$ with $q=p^{m}$, so that $\uE(\GF)\cong \Cy_{m}\times \Cy_2$ which is non-cyclic only if $m$ is even. Let $m_{2'}$ be the maximal odd divisor of $m$. The group $\spann<(\restr F_p|{\GF})^{m_{2'}},\gamma>$ is the Sylow $2$-subgroup of $\uE(\GF)$ and hence $D=\spann<\restr F_p^i|\GF ,\gamma>$ for some divisor $i$ of $m$ with $m_{2'}\mid i$ and $i<m$, so that $2i\mid m$. If $F_p^i$ acts trivially on $\Z(\bG)$, we can take $F_0=F_p^i$. Otherwise it means that $l$ and $i$ are odd, see \Cref{not2_1}. Then $F_p^i\gamma$ acts trivially on $\Z(\bG)$ and we can take $F_0=F_p^i\gamma$ since $m$ is even. In all cases we also have $F\in \spa{F_0^2}^+$. \end{proof}

\begin{prop}\label{propE4} Assume: 
\begin{asslist}
    \item there exists some $\EGF$-stable $\wGF$-transversal $\TT$ in $\Irr(\GF)$; and
    \item for every non-cyclic 2-subgroup $D\leq \uE(\GF)$ and $\ov \chi\in \Pi_{\wGF}(\Irr(\GF))^D$ one of the following holds
    \begin{enumerate}
	\item[(ii.1)] every $\chi \in\Irr(\ov \chi)^D$ extends to $\GF D$; 
	\item[(ii.2)] $|\Irr(\ov \chi)^D|=2$ and some  $\chi \in\Irr(\ov \chi)^D$ extends to $\GF D$.
    \end{enumerate}
\end{asslist}
Then \Cref{Ainfty_D} holds.
\end{prop}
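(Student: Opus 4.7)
The plan is to take the $E(\GF)$-stable $\wGF$-transversal $\TT$ provided by (i) and to show that, possibly after replacing each element by a suitable $\wGF$-conjugate lying in the same orbit, we obtain a new transversal meeting the conclusion of \Cref{Ainfty_D}. Using the equivalent reformulation of Condition $\Ap$ given in \Cref{recallAinfty}, each $\chi\in\TT$ satisfies $E(\GF)_\chi=E(\GF)_{\ov\chi}$ where $\ov\chi:=\Pi_\wGF(\chi)$, so the task is to extend $\chi$ to $\GF E(\GF)_\chi$.

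By \cite[Cor.~11.31]{Isa} this reduces to extending $\chi$ to $\GF Q$ for every Sylow $\ell$-subgroup $Q$ of $E(\GF)_\chi$ and every prime $\ell$. The odd-prime case is covered directly by \Cref{lem73}. For $\ell=2$ with $Q$ cyclic the extension is automatic, since $\chi$ is $Q$-invariant and $\GF Q/\GF\cong Q$ is cyclic. The substantive case is $\ell=2$ with $Q$ non-cyclic; by \Cref{rem89}, and after conjugating $Q$ within $E(\GF)_\chi$ if necessary, we may arrange $Q\leq E_2(\GF)$, and then assumption (ii) applies with $D:=Q$. In case (ii.1) every element of $\Irr(\ov\chi)^D$ extends to $\GF D$, in particular $\chi$ does. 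In case (ii.2) one of the two elements of $\Irr(\ov\chi)^D$ extends, and if this element is not $\chi$ itself we substitute $\chi$ by the other element $\chi'\in\Irr(\ov\chi)^D$.

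The key check is that this replacement preserves the $E(\GF)$-stability, that is, $E(\GF)_{\chi'}=E(\GF)_{\ov\chi}$. Since $D\leq E_2(\GF)$ and $E_2(\GF)$ is abelian, $E(\GF)_{\ov\chi}$ normalizes $D$ (after a reduction via \Cref{lem73} isolating the odd-order part of $E(\GF)_{\ov\chi}$), and hence acts on the two-element set $\Irr(\ov\chi)^D$. Because $\chi\in\TT$ is fixed by all of $E(\GF)_{\ov\chi}$, the action on this two-point set is trivial, so the companion $\chi'$ is also $E(\GF)_{\ov\chi}$-fixed, giving the desired equality. Once this is in place, extending to each Sylow 2-subgroup of $E(\GF)_\chi$ implies extending to the product of all Sylow subgroups, and Isaacs' criterion assembles these local extensions into a single extension to $\GF E(\GF)_\chi$.

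The main obstacle in implementing this plan is the coordination of the replacements $\chi\leadsto\chi'$ across different non-cyclic 2-subgroups $D\leq E(\GF)_\chi$: a choice forced by one such $D$ should be compatible with choices forced by every other. This is handled by observing that every non-cyclic 2-subgroup of $E(\GF)_\chi$ in $E_2(\GF)$ is contained in the (necessarily unique modulo triality conjugation) Sylow 2-subgroup of $E(\GF)_\chi$, so it suffices to run the argument once, with $D$ equal to that Sylow subgroup. The triality case $l=4$ introduces an extra subtlety, since $E(\GF)$ is then non-abelian, but the odd-order triality automorphisms are already absorbed by \Cref{lem73}, so the relevant $2$-subgroups may be assumed to lie in the abelian group $E_2(\GF)$, and the argument above applies uniformly.
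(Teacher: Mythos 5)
Your overall strategy is the paper's: reduce via Isaacs' Sylow criterion together with \Cref{lem73} to a single Sylow $2$-subgroup, apply hypothesis (ii) with that subgroup as $D$, and in case (ii.2) swap $\chi$ for the other $D$-invariant constituent of $\ov\chi$ while controlling stabilizers. For $l>4$, where $E(\GF)=E_2(\GF)$ is abelian, your argument goes through and is essentially the argument in the paper (your two-point-set trick replaces the paper's explicit stabilizer computation, and the stabilizer equality $E(\GF)_{\chi'}=E(\GF)_{\ov\chi}$ is indeed what is needed to rebuild an $E(\GF)$-stable transversal, via the proof of \cite[Lem.~1.4]{TypeD1}).

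The genuine gap is the triality case $l=4$, at two places. First, the step ``after conjugating $Q$ within $E(\GF)_\chi$ if necessary, we may arrange $Q\leq E_2(\GF)$'' fails: \Cref{rem89} only describes non-cyclic $2$-subgroups that already lie in $E_2(\GF)$, and if $E(\GF)_\chi=\spa{\restr F_0|{\GF}}\times\spa{\tau}$ for a graph involution $\tau\neq\gamma$ (such stabilizers do occur in an $E(\GF)$-stable transversal: apply a triality element to an element of $\TT$ whose stabilizer contains $\gamma$), then no conjugation inside $E(\GF)_\chi$ moves $Q$ into $E_2(\GF)$, so hypothesis (ii) is simply not applicable to that $Q$. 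Your closing remark that the odd-order triality automorphisms are ``absorbed by \Cref{lem73}'' is a non sequitur: that lemma gives extendibility over odd-order Sylow subgroups of $E(\GF)$ and says nothing about where the $2$-subgroups of $E(\GF)_\chi$ sit. Because you insist on keeping the transversal $\TT$ fixed and only moving within $\wGF$-orbits, you cannot escape this; the paper instead only produces, in each $E(\GF)$-orbit of $\wGF$-orbit sums, one representative $\ov\chi$ whose stabilizer has a Sylow $2$-subgroup inside $E_2(\GF)$ together with a constituent $\chi_0$ satisfying $(\wGF\EGF)_{\chi_0}=\wGF_{\chi_0}\EGF_{\chi_0}$ and extending to $\GF\EGF_{\chi_0}$, which suffices by equivariance. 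Second, your ``key check'' that the companion $\chi'$ is fixed by all of $E(\GF)_{\ov\chi}$ rests on $E(\GF)_{\ov\chi}$ normalizing $D$, which is automatic only when $E(\GF)_{\ov\chi}\leq E_2(\GF)$; the parenthetical appeal to \Cref{lem73} again does not provide this. The paper closes exactly this point in case (ii.2) by a structural argument you are missing: since $\gamma\in D$ and the $\wGF$-orbit of $\chi$ has length $2$, the group $\wGF_\chi$ corresponds via (\ref{cZF}) to $Z_0=\Z(\bG)^{\spa{\gamma}}$, which forces $E(\GF)_\chi\leq E_2(\GF)$ (a triality element does not stabilize $Z_0$ inside $\Z(\bG)$) and makes $(\wGF\EGF)_\chi$ normal in $\wGF\EGF$ under $\wGF$-conjugation, so the factorization of stabilizers transfers automatically to the $\wGF$-conjugate $\chi'$. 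You need this, or some substitute for it, for your replacement step to be legitimate when $l=4$.
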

	Assumption (i) is ensured by \Cref{hyp_cuspD_ext} in lower ranks and \Cref{thm_typeD1} but stated here separately for clarification. 
\begin{proof}
We prove that for each $\ov \chi \in\Pi_{\wGF}(\Irr(\GF))$ there exists some $\chi_0\in\Irr(\ov \chi)$ such that $(\wGF \EGF)_{\chi_0}=\wGF_{\chi_0} \EGF_{\chi_0}$ and $\chi_0$ extends to $\GF \EGF_{\chi_0}$. By the proof of \cite[Lem.~2.4]{TypeD1} this implies the existence of a $\wGF$-transversal with the additional properties. 
On the other hand by conjugacy we see that it is sufficient to verify this statement for some $\ov \chi$ in each $E(\GF)$-orbit in $\Pi_{\wGF}(\Irr(\GF))$. 

We choose $\ov \chi \in \Pi_{\wGF}(\Irr(\GF))$ such that some Sylow $2$-subgroup of $E(\GF)_{\ov \chi}$ is contained in $\uE(\GF)$. Again by \cite[Lem.~2.4]{TypeD1}, Assumption (i) implies that there exists some $\chi\in\Irr(\ov \chi)$ with $(\wGF \EGF)_\chi=\wGF_\chi \EGF_\chi$ and therefore $\EGF_{\ov\chi} =\EGF_\chi$. Let $E_2$ be a Sylow $2$-subgroup of $\EGF_\chi$ such that $E_2\leq \uE(\GF)$, which exists by our choice of $\ov \chi$ and $\chi$. By \cite[Cor.~11.22]{Isa} and \Cref{lem73}, it suffices to show that there is $\chi_0\in\Irr(\ov\chi)$ with $E(\GF)_\chi =E(\GF)_{\chi_0}$ and that  extends to $\GF E_2$. If $E_2$ is cyclic the claim is clear with $\chi_0:=\chi$. 

We now assume that $E_2$ is non-cyclic and we apply Assumption (ii) to $D=E_2$ and $\ov\chi$. If (ii.1) holds for $\chi$, then  $\chi$ extends to $\GF D$ and again $\chi$ can be chosen as $\chi_0$. 

Next we assume that the statement in (ii.2)  holds and $|\Irr(\ov \chi)|= 2$. Recall that by \Cref{rem89} $\gamma\in D$. Then  $\wGF_\chi $ being normalized by $E(\GF)_\chi$ is a $\gamma$-stable subgroup of $\wGF$. By the action of $E(\bG)$ on $\Z(\bG)$ and hence on $\wGF/(\GF \Z(\wGF))\cong \Z(\bG)_F$ we see that $\wGF_\chi$  corresponds to  the cyclic group $Z_0={\Z(\bG)}^{\spa{\gamma}}$ of order $2$ and $E(\GF)_\chi\leq \uE(\GF)$, see \Cref{not2_1}. Assumption (ii.2) implies that some $D$-invariant $\wGF$-conjugate $\chi''$ of $\chi$ extends to $\GF D$. By the information on $\wGF_\chi$ and $E(\GF)_\chi$ from above we see that $(\wGF E(\GF))_\chi$ is normalized by $\wGF$ and hence 
\[(\wGF \EGF)_{\chi''}=(\wGF \EGF)_{\chi}=\wGF_{\chi} \EGF_{\chi}= \wGF_{\chi''} \EGF_{\chi''}.\] 
As above we can argue that $\chi''$ extends to $\GF E(\GF)_{\chi''}$. Hence $\chi''$ can be chosen as $\chi_0$ and has all required properties. This concludes the proof. 
\end{proof}

Recall that $\III{m}$ is the integer with $\III{q=p^{m}}$. We have $\uE(\GF)\cong \Cy_{m}\times \Cy_2$ so  \Cref{propE4} shows that \Cref{Ainfty_D} holds for odd $m$. We therefore assume the following from now on.

\begin{hyp}\label{hyp76}
    We assume $\GF =\tDlsc(p^{m})$ with $p\neq 2$, $2\mid m$ and $l\geq 4$.
    In the following, $D$ denotes a non-cyclic 2-subgroup of  $\uE(\GF)$. Let $F_0\in \uE^+(\bG)$ be the element from \Cref{rem89} with $D=\spa{\restr F_0|{\GF},\gamma}$,  $F\in\spa{F_0^2}^+$ and  $[\Z(\bG),F_0]=[\Z(\bG),F]=1$.
\end{hyp}

In the rest of the paper we assume the above and study $\cE(\GF,\calC)$ for some $\calC\in\Cl_{\textrm{ss}}(\bH)^{\spa{ F_0,\gamma}}$.

\subsection{Proving \Cref{Ainfty_D} via counting}\label{5Bnew}

	 In order to establish the assumptions of \Cref{propE4} we use again a method seen in the proof of \Cref{prop_ext_wG}. 
	\begin{prop} \label{thm77}
	Assume \Cref{hyp76} on $\bG$, $F$, $F_0$, and $D$.
		\begin{thmlist}
			\item There exist exactly $\frac 1 2 \big(|\Irr(\GF)^{D}|+|\Irr(\bG^{F_0})^{\spann<\gamma>}|\big)$ characters in $\Irr(\GF)^D$ that have an extension to $\GF D$.
			\item If some subset $\II E0@{\EEnull}\subseteq \Irr(\GF)^{D}$ satisfies
			\begin{asslist}
				\item $ | \EEnull| =\frac 1 2 \big(|\Irr(\GF)^{D}| -  |\Irr(\bG^{F_0})^{\spann<\gamma>}|\big)$, and 
				\item no $\chi\in \EEnull$ extends to $\GF D$,
			\end{asslist}
			then every $\chi\in \Irr(\GF)^{D}\setminus \EEnull$ extends to $\GF D$. 
		\end{thmlist} 
	\end{prop}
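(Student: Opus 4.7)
I plan to adapt the Shintani descent trace argument from the proof of \Cref{prop_ext_wG}, replacing its bound-theoretic conclusion by an exact equality that distinguishes extendible from non-extendible $D$-invariant characters.

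First, since $\GF\spa{F_0}/\GF$ is cyclic, maximal extendibility holds with respect to $\GF\unlhd\GF\spa{F_0}$, and I fix an extension map $\Lambda\colon\Irr(\GF)^{\spa{F_0}}\to\Irr(\GF\spa{F_0})$. For each $\chi\in\Irr(\GF)^D$, the $\gamma$-invariance of $\chi$ yields $\gamma(\Lambda(\chi))=\Lambda(\chi)\,\eta_\chi$ for a unique $\eta_\chi\in\Lin(\GF\spa{F_0}/\GF)$, where $\GF\spa{F_0}/\GF\cong\spa{F_0|_{\GF}}$. Because $\gamma$ and $F_0$ commute as elements of $E_2(\bG)$, the conjugation action of $\gamma$ on $\GF\spa{F_0}/\GF$ is trivial, so $\eta_\chi$ does not depend on the particular extension $\Lambda(\chi)$ of $\chi$; and $\gamma^2=1$ then forces $\eta_\chi^2=1$, hence $\eta_\chi(F_0)\in\{\pm 1\}$. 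Applying Gallagher's theorem to the cyclic extension $\GF\spa{F_0}\unlhd\GF D$ shows that $\chi$ extends to $\GF D$ if and only if some extension $\Lambda(\chi)$ of $\chi$ is $\gamma$-invariant, i.e. $\eta_\chi=1$.

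Second, I apply the Shintani descent commutative diagram used in the proof of \Cref{prop_ext_wG}, now for the connected reductive group $\bG$ (instead of $\wbG$) and the two commuting Frobenius endomorphisms $F$ and $F_0$. The $\gamma$-induced linear maps $\gamma_1$ on $\CF_{\GF\spa{F_0}}(\GF F_0)$ and $\gamma_0$ on $\CF_{\GFnull}(\GFnull)$ are intertwined by $\Sh_{F,F_0}$, giving $\Tr(\gamma_1)=\Tr(\gamma_0)=|\Irr(\GFnull)^{\spa{\gamma}}|$ (computed in the $\gamma$-stable basis $\Irr(\GFnull)$). On the other hand, $\Tr(\gamma_1)$ may be computed in the basis $\{\Lambda(\chi)\rceil_{\GF F_0}:\chi\in\Irr(\GF)^{\spa{F_0}}\}$ of Proposition~3.1 of \cite{CS18B}, after choosing $\Lambda$ to pair $\chi$ with $\chi^\gamma$ on $\Irr(\GF)^{\spa{F_0}}\setminus\Irr(\GF)^D$ and to be $\gamma$-invariant on those $\chi\in\Irr(\GF)^D$ that extend to $\GF D$. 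The diagonal contributions are then $0$ outside $\Irr(\GF)^D$, $+1$ on $D$-invariant extendible characters, and $\eta_\chi(F_0)=-1$ on $D$-invariant non-extendible ones, so that
\[
\Tr(\gamma_1)=|\cE_e|-|\cE_n|,
\]
where $\cE_e$ and $\cE_n$ denote the subsets of $\Irr(\GF)^D$ that do, respectively do not, extend to $\GF D$.

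Combining the two expressions for $\Tr(\gamma_1)$ with $|\cE_e|+|\cE_n|=|\Irr(\GF)^D|$ gives $|\cE_e|=\frac12(|\Irr(\GF)^D|+|\Irr(\GFnull)^{\spa{\gamma}}|)$, which is (a). Part (b) is then immediate: the hypotheses on $\EEnull$ force $\EEnull\subseteq\cE_n$ with $|\EEnull|=|\cE_n|$, hence $\EEnull=\cE_n$ and the complement $\Irr(\GF)^D\setminus\EEnull=\cE_e$ consists of extendible characters. The delicate point is the twofold use of $[\gamma,F_0]=1$ in $E_2(\bG)$: it simultaneously makes $\eta_\chi$ independent of the chosen extension and forces $\eta_\chi(F_0)\in\{\pm 1\}$, so that the trace reduces to the integer $|\cE_e|-|\cE_n|$; without these compatibilities the trace would only involve arbitrary roots of unity (as in the order-$3$ case of \Cref{prop_ext_wG}) and one could not isolate the exact count asked for in (a).
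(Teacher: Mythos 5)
Your argument is correct and is essentially the paper's own proof: the same Shintani-descent intertwining of $\gamma_1$ and $\gamma_0$ giving $\Tr(\gamma_1)=|\Irr(\GFnull)^{\spa{\gamma}}|$, combined with the same computation of $\Tr(\gamma_1)$ in the basis $\restr\Lambda(\chi)|{\GF F_0}$ with eigenvalues $+1$ on extendible and $-1$ on non-extendible $D$-invariant characters and $0$ off the $\gamma$-fixed part. Your $\eta_\chi$/Gallagher bookkeeping is just a slightly more explicit packaging of the paper's observation that for $\chi$ without an extension to $\GF D$ the extension $\Lambda(\chi)$ is twisted by the order-$2$ linear character of $\GF\spa{F_0}/\GF$, and part (b) is handled identically.
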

 
\begin{proof} 
Let $\wh \EE\subseteq \Irr(\GF)^D$ be the subset of all irreducible characters that have no extension to $\GF D$. 

Define $F'_0=\restr F_0|{\GF}$.
Recall from the last three paragraphs of the proof of \Cref{prop_ext_wG}, the notions of central functions on cosets in the semi-direct product $\GF\rtimes\spa{F'_0}$, Shintani maps and their application to our situation. The automorphism $\gamma$ induces linear maps $$\gamma_1: \CF_{\GF\spann<F'_0>}(\GF F'_0) \lra \CF_{\GF\spann<F'_0>}(\GF F'_0)\text{ and }\gamma_0: \CF_{\GFnull}(\GFnull)\lra \CF_{\GFnull}(\GFnull)$$ while the Shintani norm map gives the commutative diagram \eqref{comdiag_Shint} seen in the proof of \Cref{prop_ext_wG} implying that \begin{equation}\label{Trga0}
\Tr(\gamma_1)=\Tr(\gamma_0)=|\Irr(\GFnull) ^{\spann<\gamma>}|.
\end{equation}

In order to compute $\Tr(\gamma_1)$ we define an extension map $\Lambda$ \wrt $\GF\unlhd \GF\spann<F'_0>$ for $\Irr(\GF)^{\spann<F'_0>}$, see end of \ref{not11}. 
We set  $\Lambda_{}(\chi)$ to be any extension of $\chi$ to $\GF\spa{F'_0}$, whenever  $\chi\in \Irr(\GF)^D\setminus\wh \EE$. Since such $\chi$ extends to $\GF D$ and $D$ is abelian, we see that $\Lambda(\chi)$ is $\gamma$-invariant. For $\chi\in \Irr(\GF)^{\spann<F'_0>} \setminus \Irr(\GF)^D$, we define  $\Lambda(\chi)$ and $\Lambda(\chi^\gamma)$ to be $\gamma$-conjugate extensions of $\chi$ and $\chi^\gamma$, respectively. 
Now $\Lambda$ so defined is $\gamma$-equivariant on $\Irr(\GF)^{\spa{F'_0}}\setminus\wh \EE$. 
If $\chi\in \wh \EE$, then $\chi$ has no extension to $\GF D$ and the character $\Lambda (\chi)$ is not $\gamma $-invariant. More precisely, $\Lambda_{}(\chi)^\gamma$ is obtained from $\Lambda(\chi)$ by multiplying with the linear character of $\GF\spann<F'_0>/\GF $ of order $2$.

Concerning $\CF_{\GF\spann<F'_0>}(\GF F'_0)$ and $\gamma_1$, the restrictions $\restr{\Lambda(\chi)}|{\GF F'_0}$ ($\chi\in \Irr(\GF)^{\spa{F'_0}}$) form a basis of the vector space $\CF_{\GF\spann<F'_0>}(\GF F'_0)$, see \cite[Prop. 3.1]{CS18B}. If $\chi \in \Irr(\GF)^D\setminus \wh \EE$, the class function $\restr\Lambda(\chi)|{\GF F'_0}$ is an eigenvector of $\gamma_1$ for the eigenvalue $1$. 
		If $\chi \in \wh \EE$, the function $\restr\Lambda(\chi)|{\GF F'_0}$ is an eigenvector of $\gamma_1$ for the eigenvalue $-1$. Otherwise the functions $\restr{\Lambda(\chi)}|{\GF F'_0}$ and 
		$\restr{\Lambda(\chi^\gamma)}|{\GF F'_0}$ 
		are permuted by $\gamma_1$ and $\gamma_1$ has accordingly the trace $0$ on the subspace they generate. We get $$\Tr(\gamma_1)=|\Irr(\GF)^D\setminus \wh \EE|- |\wh \EE|= |\Irr(\GF)^D|- 2|\wh \EE|,$$ see also the proof of \cite[Thm 3.8]{CS18B}. Combined with (\ref{Trga0}) this implies $|\Irr(\GF)^D|- 2|\wh \EE| =|\Irr(\GFnull)^{\spann<\gamma>}|$.

Because of $|\Irr(\GF)^D\setminus \wh \EE|= \frac 1 2 |\Irr(\GF)^D | + \frac 1 2 |\Irr(\GFnull)^{\spann<\gamma>}|$, we see that $|\wh \EE|= \frac1 2 |\Irr(\GF)^D |-\frac 1 2 |\Irr(\GFnull)^{\spann<\gamma>}|$ and that (exactly) $ \frac 1 2 {|\Irr(\GF)^D |} + \frac 1 2 |\Irr(\GFnull)^{\spann<\gamma>}|$ characters extend to $\GF D$. This proves the statement in (a).

Part (b) follows from part (a) if $\EE=\wh \EE$. The set $\EEnull$ satisfies $\EEnull=\wh \EE$ as $|\EEnull|=|\wh \EE|$ and $\EEnull\subseteq \wh \EE$ by the assumptions. This implies that every character in $\Irr(\GF)^{D}\setminus \EEnull$ has a $\gamma$-invariant extension to $\GF\spann<F'_0>$ and hence extends to $\GF\spann<F'_0,\gamma>=\GF D$.
\end{proof}
The assumptions of \Cref{thm77} are verified in the following by showing a reformulation in terms of geometric Lusztig series. Recall that $\Cl_{\textrm{ss}}(\bH)$ denotes the set of semisimple conjugacy classes of $\bH$. 

\begin{defi} \label{frakC}  We keep 
	 \Cref{hyp76}.
Let $\II{Cfrak}@{\frakC} :=\Cl_{\textrm{ss}}(\bH)^{\spann<  F_0{},\gamma>}$ be the set of $\calC \in \Cl_{\textrm{ss}}(\bH)$ with $F_0(\cC)=\gamma(\cC)=\cC$.

Let $\II{Cfrak0}@{\frakC_0}:=\{ \cC\in \frakC \mid \gamma(s)\in [s]_{\HFnull} \text{ for some } s\in \cC^{F_0} \}$
and  $\II{Cfrak1}@{\frakC_1}:=\frakC\setminus\frakC_0$. Note that then $\frakC_1=\{ \cC\in \frakC \mid \gamma(s)\notin [s]_{\HFnull} \text{ for every } s\in \cC^{F_0} \}$.
\end{defi}

\begin{cor}\label{cor79} Assume that for every $\calC\in \frakC$ there exists a set $\EEnull (\calC)\subseteq \cE(\GF,\calC)^{D}$ such that
	\begin{asslist}
		\item $|\EEnull (\calC)| =\frac{1} 2 |\cE(\GF,\calC)^{D}| - \frac 1 2 |\cE(\bG^{F_0},\calC)^{\spann<\gamma>}|$, and 
		\item no $\chi \in \EEnull (\calC)$ extends to $\GF D$.
	\end{asslist} 
Then every $\chi \in \Irr(\GF)^D\setminus \EEnull$ extends to $\GF D$, where $\EEnull:=\bigcup_{\calC\in\frakC} \EEnull(\calC)$
\end{cor}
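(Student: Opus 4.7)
The plan is to derive the statement as a direct application of \Cref{thm77}(b) to the set $\EEnull=\bigcup_{\calC\in\frakC}\EEnull(\calC)$. The main bookkeeping concerns the partitioning of the character sets involved along geometric Lusztig series.

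First I would note that the union defining $\EEnull$ is automatically disjoint, since the geometric Lusztig series $\cE(\GF,\calC)$ associated to distinct semisimple classes $\calC\in\Cl_{\textrm{ss}}(\bH)^F$ are disjoint (see \ref{series}), and $\EEnull(\calC)\subseteq \cE(\GF,\calC)$ by assumption. Hence $|\EEnull|=\sum_{\calC\in\frakC}|\EEnull(\calC)|$, and moreover no $\chi\in\EEnull$ extends to $\GF D$ by assumption~(ii).

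Next I would rewrite the two ``reference'' sets of \Cref{thm77}(b) as disjoint unions over $\frakC$. Using the partition $\Irr(\GF)=\bigsqcup_{\calC'\in\Cl_{\textrm{ss}}(\bH)^F}\cE(\GF,\calC')$ from (\ref{RatSer}) applied to geometric series, together with (\ref{rem_ntt}) which implies that any $D$-stable character in $\cE(\GF,\calC')$ forces $\calC'$ to be $F_0$- and $\gamma$-stable (so $\calC'\in\frakC$), one obtains
\[
|\Irr(\GF)^{D}|=\sum_{\calC\in\frakC}|\cE(\GF,\calC)^{D}|.
\]
The analogous argument applied to $\GFnull$ (with $F_0$ in place of $F$ and $\spa{\gamma}$ in place of $D$) gives
\[
|\Irr(\GFnull)^{\spa{\gamma}}|=\sum_{\calC\in\frakC}|\cE(\GFnull,\calC)^{\spa{\gamma}}|.
\]

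Summing condition~(i) over $\calC\in\frakC$ yields
\[
|\EEnull|=\tfrac{1}{2}|\Irr(\GF)^{D}|-\tfrac{1}{2}|\Irr(\GFnull)^{\spa{\gamma}}|,
\]
which is precisely hypothesis~(i) of \Cref{thm77}(b); hypothesis~(ii) is immediate from assumption~(ii) above. The conclusion of \Cref{thm77}(b) then states that every $\chi\in\Irr(\GF)^{D}\setminus\EEnull$ extends to $\GF D$, which is the assertion. No real obstacle arises here: the corollary is essentially a bookkeeping reformulation of \Cref{thm77} distributing the counting condition across the geometric Lusztig series. The genuine work of the paper lies in producing the sets $\EEnull(\calC)$ satisfying (i) and (ii), which is the subject of the subsequent sections.
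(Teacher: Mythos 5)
Your proof is correct and follows essentially the same route as the paper: partition $\Irr(\GF)^D$ and $\Irr(\GFnull)^{\spa{\gamma}}$ into geometric series, use (\ref{rem_ntt}) to see that only classes in $\frakC$ contribute, sum assumption (i), and invoke \Cref{thm77}(b). The only detail the paper spells out that you leave implicit is that $F\in\spa{F_0}^+$ guarantees every $F_0$-stable class is $F$-stable, so the series $\cE(\GF,\calC)$ for $\calC\in\frakC$ do appear in the partition of $\Irr(\GF)$; this is immediate and not a gap.
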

\begin{proof}
The sets $\Irr(\GF)$ and $\Irr(\bG^{F_0})$ are disjoint unions of the geometric Lusztig series $\cE(\GF,\calC)$ and $\cE(\bG^{F_0},\calC_0)$, respectively, where $\calC\in\Cl_{\textrm{ss}}(\bH)$ is $F$-stable and $\calC_0\in\Cl_{\textrm{ss}}(\bH)$ is $F_0$-stable, see \ref{series}. 

If $\cE(\GF,\calC)^{D}\neq \emptyset$ then $\calC\in\frak C$ and even more $\calC$ has to contain some $\spa{F_0,\gamma}$-stable $\HF$-conjugacy class according to (\ref{rem_ntt}). 
Since we assume that $F\in \spann<F_0> ^+$ in $E(\bH)$, every $F_0$-stable class is $F$-stable. We have the following partitions of $\Irr(\GF)^{D}$ and $\Irr(\GFnull)^{\spann<\gamma>}$ via $\frakC$:
\begin{align*}
\Irr(\GF)^D=\bigsqcup_{\cC\in\mathfrak C} \cE(\GF,\ \cC)^{D} \und 
\Irr(\GFnull)^{\spann<\gamma>}=\bigsqcup_{\cC\in\frakC} \cE(\GFnull,\cC)^{\spann<\gamma>}.
\end{align*}
This implies 
$$|\Irr(\GF)^D|=\sum_{\cC\in \frakC} |\cE(\GF,\ \cC)^{D}| \und
|\Irr(\GFnull)^{\spa\gamma}|=\sum_{\cC\in \frakC} |\cE(\GFnull,\ \cC)^{\spa\gamma}|.$$
Note $\EEnull$ is the disjoint union of sets $\EEnull(\cC)$ ($\cC\in\mathfrak C $) and hence $|\EEnull|=\sum_{\cC\in\frakC} |\EEnull(\cC)|$. Then Assumption (i) implies 
\begin{align*}
			|\EEnull|&= \sum_{\cC\in\frakC} |\EEnull(\cC)|=
			\sum_{\cC\in \mathfrak C} \left( \frac 1 2 |\cE(\GF,\ \cC)^{D}|- \frac 1 2 |\cE(\GFnull,\ \cC)^{\spann<\gamma>}|\right)\\
			&= \frac 1 2 |\Irr(\GF)^{D}| -  \frac 1 2 |\Irr(\GFnull)^{\spann<\gamma>}|.
\end{align*}
This is Assumption \ref{thm77}(i).
By construction no character of $\EEnull$ has an extension to $\GF D$ and hence Assumption \ref{thm77}(ii) holds.  \Cref{thm77} then gives our claim.
\end{proof}

The remainder of the chapter is dedicated to finding the sets $\EE(\cC)\subseteq \cE(\GF,\cC)^D$ of \Cref{cor79}. .
In our investigations of $\cE(\GF,\calC)^D$ we distinguish the cases of $\cC\in \frak C_0$ or $\cC\in\frak C_1$, see \Cref{frakC}. In both cases $\EE(\cC)$ will be defined as a transversal of a set $\wt \EE(\cC)$ constructed in \Cref{lemfrakC1} and \Cref{thm_desc} for $\cC\in \frak C_1$ and $\cC\in \frak C_0$, respectively. We study the sets $\wt \EE(\cC)$ separately for the two cases but obtain a common list of properties that will allow us to construct some $\wGF$-transversal $\EE(\cC)$ of $\wt \EE(\cC)$, whose characters all have no extension to $\GF D$. 
    
\subsection{Comparing $\cE(\GF,\calC)^ {\spa{F_0,\gamma}}$ with $\cE(\bG^{F_0},\calC) ^{\spa{\gamma}}$ for $\calC\in \frakC_1$} \label{sec8B}
In this section we assume $\cC\in\frakC_1$, see \Cref{frakC}. We study $|\cE(\GF,\calC)^D|$ and $|\cE(\GFnull,\calC)^{\spann<\gamma>}|$ appearing in Assumption \ref{cor79}(i) and find a character set with the cardinality $\frac1 2 |\cE(\GF,\calC)^D| - \frac 1 2 |\cE(\GFnull,\calC) ^{\spann<\gamma>}|$.

\begin{lem}\label{lem5_11}
If $\cC\in\frakC_1$, then $\cE(\GFnull,\cC)^{\spa \gamma}=\emptyset$.
\end{lem}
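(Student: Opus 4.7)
The plan is to proceed by contradiction using the disjointness of rational Lusztig series together with the equivariance formula (\ref{rem_ntt}) which describes how graph/field automorphisms permute these series through their duals on $\bH$. Recall from \Cref{not2_1} that the duality $\sigma\mapsto\sigma^*$ sends $\gamma$ to (an element we again denote) $\gamma\in E_2(\bH)$, so no notational burden is added.

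Suppose for contradiction that there exists $\chi\in\cE(\GFnull,\cC)^{\spa{\gamma}}$. By the partitioning of the geometric series into rational series recalled in (\ref{SplitSer}) of \ref{series}, namely
\[
\cE(\GFnull,\cC)=\bigsqcup_{a\in A_\bH(s)_{F_0}}\cE(\GFnull,[s_a]_{\HFnull})
\]
for any fixed $s\in\cC^{F_0}$, the character $\chi$ lies in a single rational series $\cE(\GFnull,[t])$ with $t\in\cC^{F_0}$. Applying (\ref{rem_ntt}) with $\sigma=\gamma$ and its dual $\gamma\in E_2(\bH)$, we obtain
\[
\chi^{\gamma}\in\cE(\GFnull,[\gamma(t)]_{\HFnull}).
\]

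Since $\chi$ is $\gamma$-invariant and the rational series appearing in (\ref{RatSer}) are mutually disjoint, this forces the equality of labels $[\gamma(t)]_{\HFnull}=[t]_{\HFnull}$, i.e.\ $\gamma(t)\in[t]_{\HFnull}$ with $t\in\cC^{F_0}$. This contradicts the very definition of $\frakC_1$ in \Cref{frakC}, according to which no $s\in\cC^{F_0}$ satisfies $\gamma(s)\in[s]_{\HFnull}$. Hence $\cE(\GFnull,\cC)^{\spa{\gamma}}=\emptyset$.

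No real obstacle is expected: the argument is purely a matter of combining the disjointness of Lusztig series with Bonnaf\'e's equivariance statement recalled in (\ref{rem_ntt}). The only thing to double-check is that $\gamma$ and its dual really do act compatibly on the set of rational labels, which is precisely the content of (\ref{rem_ntt}) and \Cref{not2_1}.
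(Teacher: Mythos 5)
Your argument is correct and is essentially the paper's own proof: both combine the splitting \eqref{SplitSer} of the geometric series into rational series, Bonnaf\'e's equivariance statement \eqref{rem_ntt} for $\gamma$ and its dual, and the definition of $\frakC_1$ to conclude that no rational series in $\cE(\GFnull,\cC)$ is $\gamma$-stable, hence the fixed-point set is empty. Your phrasing by contradiction versus the paper's direct phrasing is an immaterial difference.
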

\begin{proof}
Since no $\HFnull$-class contained in $\cC$ is $\gamma$-stable by the definition $\frakC_1$, (\ref{rem_ntt}) implies that no rational Lusztig series contained in $\cE(\GFnull,\cC)$ is $\gamma$-stable. This implies $\cE(\GFnull,\cC)^{\spa \gamma}=\emptyset$ by \eqref{SplitSer}. \end{proof}
So for later application of \Cref{cor79} we need a set $\EE(\calC)\subseteq \cE(\GF,\cC)^D$ with $\frac 1 2 |\cE(\GF,\cC)^D|$ elements. 

For the necessary study of the characters in $\cE(\GF,\cC)^D$ via Jordan decomposition the following statement about $\cC$ is key. Note that $\cC\in\frakC_1$ does not satisfy the assumptions made in \ref{ssec4C}, since by definition of $\frakC_1$ no $s\in\cC^{F_0}$ satisfies $\gamma(s)\in [s]_{\bH^{F_0}}$ as required in \Cref{not3_15}. Recall the simply connected covering $\pi\colon\bH_0\to\bH$ and the map $\omega_{s}\colon A_\bH(s)\to \Z(\bH_0)$ from \ref{ssclasses}.

\begin{prop}[{\cite[Cor.~3.2]{CS22}}]\label{lem511} Let $\cC\in\frakC_1$, $s\in\cC^{F}$ and $s_0\in\pi\inv(s) \subseteq \bH_0$ where $\pi$ is from (\ref{piH0}). Then there exist $F_0'\in\Cent_{\HF F_0}(s)$, $\gamma'\in\Cent_{\HF \gamma}(s)$ and an $\spa{F,F_0',\gamma'}$-stable subgroup $\wc A(s)$ of $\Cent_\bH(s)$ such that 
\begin{asslist}
\item $\Cent_\bH(s)= \Cent^\circ_\bH(s)\rtimes \wc A(s) $;
 \item $\omega_s(\wc A(s))=\spannhHnull$ and $|\AHs|=|\wc A(s)|=2$;
\item $[s_0,F'_0]\in \Z(\bH_0)\setminus \spannhHnull$, $[s_0,\gamma ']\in \spannhHnull$, and
\item $[F_0', \gamma']=\wc a$, where $\wc a$ is the generator of $\wc A(s)$.
\end{asslist}
\end{prop}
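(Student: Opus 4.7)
The plan is to derive the statement as a consequence of Theorem~\ref{cor_2Becht}, by translating the condition $\cC\in\frakC_1$ into the specific commutator identities in (iii) and (iv). Since Theorem~\ref{cor_2Becht}(c) already delivers an $F$-stable $\wc A(s)$ with $\Cent_\bH(s)=\Cent^\circ_\bH(s)\rtimes \wc A(s)$ together with $F_0'\in \Cent_{\HF F_0}(s)$ and $\gamma'\in \Cent_{\HF \gamma}(s)$ normalizing $\wc A(s)$ and commuting with $F$, essentially (i) and the existence part are free. The content of the statement lies in pinning down the sizes and the commutator relations in (ii)--(iv).

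First I would handle (ii). The group $\AHs$ for $\bH=\tDlad(\FF)$ is elementary abelian of order $1$, $2$ or $4$, with $\omega_s:\AHs\hookrightarrow\Z(\bH_0)$. So the claim $|\AHs|=2$ amounts to ruling out $|\AHs|\in\{1,4\}$ for $\cC\in\frakC_1$. If $|\AHs|=1$ then $\bGs=\bGos$ and by (\ref{CFs_a}) every $F_0$-stable rational class in $\cC$ is a single $\HFnull$-class, hence $\gamma$-fixed (since $\gamma(\cC)=\cC$), contradicting $\cC\notin\frakC_0$. The case $|\AHs|=4$ would force $\omega_s(\wc A(s))=\Z(\bH_0)$ which, combined with the $\gamma$-stability of the rational class, would again allow us to produce an $\HFnull$-conjugate of $s$ fixed by $\gamma$ via Lang's theorem inside the abelian group $\wc A(s)$.

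Next I would establish (iii). Pick any $s_0\in\pi^{-1}(s)$ in the simply connected cover $\bH_0$. The commutator $[s_0,F_0']$ lies in $\ker\pi=\Z(\bH_0)$ by construction, and its image in $\Z(\bH_0)/\spannhHnull$ encodes exactly the obstruction for $s$ to be $\HFnull$-conjugate to an element whose class is $\gamma$-stable. More precisely, combining the formula $\omega_s(a)=[s_0,a]$ from (\ref{omegas}) with the description in (\ref{CFs_a}) of $\HFnull$-classes inside $\cC^{F_0}$ indexed by $\AHs_{F_0}$, one checks that $[s_0,F_0']\in\spannhHnull$ would mean that the $F_0'$-twist can be absorbed into $\wc A(s)$ modulo $\Z(\bHnull)^{\spann<\gamma>}$, producing a $\gamma$-fixed $\HFnull$-class in $\cC$ --- again contradicting $\cC\in\frakC_1$.

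Finally (iv) is the key relation. The commutator $[F_0',\gamma']$ lies in $\Cent_\bH(s)$ (since both factors fix $s$) and in $\HF$ (as a quick check using $[F,F_0']=[F,\gamma']=1$ shows), hence defines an element of $\AHs^F$. Using the equivariance of $\omega_s$ and evaluating $\omega_s([F_0',\gamma'])$ via the expression $[s_0,F_0']$ from (iii) together with the $\gamma'$-action, one shows this commutator is precisely the non-trivial generator $\wc a$ of $\wc A(s)$. The main obstacle will be keeping track of the bookkeeping between $\bH$ and $\bH_0$, and between $\wc A(s)$ and its image in $\Z(\bH_0)$, since the $\frakC_1$ condition is naturally phrased on $\HFnull$-classes while the conclusions (iii)--(iv) live on the level of the simply connected cover. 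Once the dictionary is set up, (iii) and (iv) follow from essentially the same Lang-theorem computation performed inside $\Cent_{\bH_0}(s_0)$, read through $\pi$ and $\omega_s$.
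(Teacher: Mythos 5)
First, a point of comparison: the paper itself gives no proof of this statement --- it is imported verbatim as Corollary~3.2 of \cite{CS22}, where it is established by an explicit construction of $\wc A(s)$, $F_0'$ and $\gamma'$ for semisimple classes of $\tDlad(\FF)$. Your plan to re-derive it from \Cref{cor_2Becht} breaks down at the very first step: \Cref{cor_2Becht}(c) assumes $F_0(s)=s$ and $\si'(s)\in[s]_{\HFnull}$, and for $\si'=\gamma$ the latter is exactly the condition that is \emph{negated} by $\cC\in\frakC_1$, so it cannot be invoked here. Applying \Cref{cor_2Becht}(b) separately to $F_0$ and to $\gamma$ does not repair this: it presupposes that some $\HF$-class in $\cC^F$ is simultaneously $F_0$- and $\gamma$-stable (itself part of the content of the proposition), it produces a priori two different complements rather than a single $\spa{F,F_0',\gamma'}$-stable $\wc A(s)$, and it gives no control whatsoever over $[F_0',\gamma']$, which is the heart of (iv). So neither ``(i) and the existence part'' nor the stated stability comes for free.

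Second, the quantitative claims do not follow from the soft arguments you sketch. For (ii), the $\HFnull$-classes in $\cC^{F_0}$ form a set on which $\AHs$ acts simply transitively and $\gamma$ acts affinely, $a\mapsto\gamma(a)c$ for a fixed $c$; this action is fixed-point free precisely when $c\notin\mathrm{Im}(1-\gamma)=\spannhHnull$ (computed via $\omega_s$ in $\Z(\bH_0)$), which is abstractly perfectly possible when $|\AHs|=4$. Hence ``$|\AHs|=4$ and $\cC\in\frakC_1$'' cannot be excluded by Lang's theorem and bookkeeping alone; ruling it out requires the concrete description of semisimple elements and their component groups in type $\tD$ (compare Theorem~B of \cite{CS22}, which is invoked in \Cref{not5_10} precisely in the case $|\AHs|=4$). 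Likewise (iii) and (iv) are existence statements about particular choices of $F_0'$, $\gamma'$ and $\wc A(s)$ realizing $[s_0,F_0']\notin\spannhHnull$ and $[F_0',\gamma']=\wc a$; your argument assumes such choices are already in hand (from a theorem that does not apply) and then only argues that these commutator values are plausible obstructions, not that compatible choices with exactly these values exist. That is the part \cite{CS22} proves by explicit computation, and nothing in the present paper's toolkit substitutes for it.
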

We describe properties of $\chi\in\cE(\GF,\cC)^D$ for example via applying the Jordan decompositions $\JJ$ and $\wt \JJ$ from \Cref{thm66} and \Cref{JorTilde} to $\chi$ and $\wt \chi\in \Irr(\wGF\mid \chi)$.
\begin{lem}\label{lemfrakC1}
Assume that Condition $\Ap$ holds for $\GF $. Let $\cC\in\frakC_1$ and $\chi\in\cE(\GF,\cC)^D$. Then:
\begin{thmlist}
\item $h_0\in\ker(\chi)$, and 
\item $\chi$ is not $\wGF$-invariant but stable under the diagonal automorphisms associated to $\spannh$ via (\ref{cZF}); in particular $\chi$ lies in a $\wGF$-orbit of length $2$ consisting of two $D$-invariant characters. 
\item Let $\wt \EE(\cC):=\cE(\GF,\cC)^D$ and $\II EC@{ \EE(\cC)}$ be a set of distinct representatives in $\wt \EE(\cC)$ for $\wGF$-conjugacy. Then 
\[ |\EE(\cC)|= \frac 1 2 | \cE(\GF,\cC)^D| -\frac 1 2 |\cE(\GFnull,\cC )^{\spa{\gamma}}|.\]
\end{thmlist}
\end{lem}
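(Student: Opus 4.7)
The plan is to handle the three parts in sequence, with part (b) carrying the bulk of the argument.

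Part (a) should follow quickly from $\gamma$-stability alone. Since $|\Z(\bG)| = 4$ with $\gamma$-fixed subgroup $\spannh$ (\Cref{not2_1}), any $a \in \Z(\bG) \setminus \spannh$ satisfies $\gamma(a) = a h_0$, as one verifies case-by-case according to whether $\Z(\bG) \cong \Cy_4$ (with $\gamma$ acting by inversion) or $\Cy_2 \times \Cy_2$ (with $\gamma$ swapping the two non-fixed elements). Evaluating $\chi = \chi^\gamma$ at $a \in \Z(\GF)$ then yields $\omega_\chi(a) = \omega_\chi(a h_0)$ for the central character $\omega_\chi$ of $\chi$, and hence $\omega_\chi(h_0) = 1$, i.e.\ $h_0 \in \ker\chi$.

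For part (b), my strategy is to apply the equivariant Jordan decomposition $\JJ$ of \Cref{thm66} (available because Condition ${A}'(\infty)$ is assumed) and exploit the commutator relation furnished by \Cref{lem511}(iv). After replacing $\chi$ by an $\HF$-conjugate I may take $\chi \in \cE(\GF, [s])$ with $s$, $F_0'$, $\gamma'$, and $\wc A(s) = \spa{\wc a}$ as in \Cref{lem511}. Set $C = \Cent_\bH^\circ(s)^F$ and $\wt C = \Cent_\bH(s)^F$, write $(s,\phi) \in \JJ(\chi)$, and pick $\phi_0 \in \Irr(\restr\phi|{C})$. Because $\omega_s(\AHs) = \spannhHnull$ sits inside $\Z(\bH_0)^{\spa{F_p, \gamma}}$ and is hence pointwise $F$-fixed, $F$ acts trivially on $\AHs$, so $|\AHs^F| = 2$. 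The identity $|\Irr(\ov\chi)| \cdot |\Irr(\ov\phi)| = |\AHs^F|$ of \Cref{Jdnuconst} leaves exactly two configurations: (A) $\chi$ is $\wGF$-invariant and $\restr\phi|{C} = \phi_0 + \phi_0^{\wc a}$, or (B) $\chi$ lies in a $\wGF$-orbit of length $2$ and $\restr\phi|{C} = \phi_0$ is irreducible.

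The crux is to rule out (A). The $D$-invariance of $\chi$, transported through \Cref{cor_Jordandec2}, forces $\phi$ to be both $F_0'$- and $\gamma'$-invariant; each of $F_0'$, $\gamma'$ therefore permutes $\Irr(\restr\phi|{C}) = \{\phi_0, \phi_0^{\wc a}\}$. Their images in the permutation group $\Sym_2$ of this two-element set commute, so the image of $[F_0', \gamma']$ is trivial. But \Cref{lem511}(iv) asserts $[F_0', \gamma'] = \wc a$, and by the very description of (A) the element $\wc a$ swaps the two constituents -- a contradiction. Consequently (B) must hold. Identifying $\cZ_F$ with $\Z(\bG)$ via \eqref{cZF} (valid since $[\Z(\bG), F] = 1$ by \Cref{hyp76}) and tracing $\wh\omega_s$, the stabilizer $(\cZ_F)_\chi$ equals $\spannh$, yielding the claim about $\spannh$-stability. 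Both characters of the length-$2$ orbit are then $D$-invariant, since $D$ preserves the orbit and fixes $\chi$, hence fixes both points of the two-element orbit.

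Part (c) will be immediate from (b) and \Cref{lem5_11}: by (b), $\wt\EE(\cC) = \cE(\GF,\cC)^D$ partitions into $\wGF$-orbits of length $2$, so $|\EE(\cC)| = \tfrac12|\cE(\GF,\cC)^D|$, while $|\cE(\GFnull,\cC)^{\spa\gamma}| = 0$ by \Cref{lem5_11} since $\cC \in \frakC_1$. The main difficulty lies in the commutator argument for (A): one must verify carefully that the action of $\bH \rtimes \spa{F_0, \gamma}$ on $\Irr(C)$ -- specifically on the two-point orbit $\{\phi_0, \phi_0^{\wc a}\}$ -- is a well-defined group homomorphism into $\Sym_2$, so that the algebraic relation $[F_0', \gamma'] = \wc a$ in $\bH$ collides with the triviality of commutators in the abelian group $\Sym_2$ and forces $\restr\phi|{C}$ to remain irreducible.
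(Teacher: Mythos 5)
Your proposal is correct and takes essentially the same route as the paper: part (a) via central characters and $[\Z(\GF),\gamma]=\spannh$; part (b) via the equivariant Jordan decomposition (available under $\Ap$), the relation $[F_0',\gamma']=\wc a$ from \Cref{lem511} and the counting identity of \Cref{Jdnuconst} — your $\Sym_2$-commutator contradiction is just the contrapositive of the paper's direct argument that the commutator $\wc a$ must fix the constituent $\phi_0$; and part (c) via \Cref{lem5_11}. One phrasing slip: you cannot replace $\chi$ by an ``$\HF$-conjugate''; instead choose $s\in\cC^F$ in the rational class indexing the series containing $\chi$, which is legitimate because \Cref{lem511} applies to every $s\in\cC^F$, and then your argument goes through unchanged.
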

By part (b) the set $\cE(\GF,\cC)^D$ is a union of $\wGF$-orbits of length $2$ and is $\wGF$-stable as well.
\begin{proof} Let $\{\nu\}=\Irr(\restr \chi|{\Z(\GF)})$. Since $\chi$ is $\gamma$-stable, $\nu$ is $\gamma$-stable as well. Because of $[\Z(\GF),\gamma]= \spannh$, this implies $h_0\in\ker(\nu)\leq \ker(\chi)$ as claimed in (a). Let $s\in \cC^F$, $\wc A(s)$, $F_0'$ and $\gamma'$ be associated to $s$ as in \Cref{lem511}, i.e., $F_0'\in \HF F_0$, $\gamma'\in \HF \gamma$ with $F_0'(s)=\gamma'(s)=s$ and $\wc A(s)$ is $\spa{F_0',\gamma'}$-stable. Additionally $\wc a:=[F_0', \gamma']\in \wc A(s)\setminus \{1\}$ with $|\AHs|=|\wc A(s)|=2$ by \Cref{lem511}.

Let $\JJ:\Irr(\GF)\ra \Jor(\GF)$ be the Jordan decomposition from \Cref{thm66} and \Cref{cor_Jordandec2}. By our assumption Condition $\Ap$ holds and the Jordan decomposition $\JJ$ is $\calZ_F\rtimes \EGF$-equivariant, where $\calZ_F:=\wGF/(\GF\Z(\wGF))$, see (\ref{cZF}). Recall $\chi\in\cE(\GF,[s]_\HF)$ and hence $(s,\wt\phi)\in \JJ(\chi)$ for some  $\wt\phi\in\UCh(\Cent_\HF(s))$. Let $\phi\in \Irr(\restr \wt \phi|{\Cent^\circ _\bH(s)^F})\subseteq \UCh({\Cent^\circ _\bH(s)^F})$. 

The equivariance of $\JJ$ implies that the $\HF$-orbit of $(s,\wt \phi)$ is $\spa{F_0',\gamma'}$-stable. By construction $F_0'(s)=\gamma'(s)=s$ and hence $\wt \phi$ is $\spa{F_0', \gamma'}$-invariant. Recall that $\Cent_\bH(s)^F=\Cent_\bH^\circ(s)^F\rtimes \wc A(s)$. By Clifford theory, $\phi$ is therefore $aF_0'$- and $a'\gamma'$-invariant for some $a,a'\in \wc A(s)$. Now $[a F_0', a'\gamma']=[ F_0', \gamma']=\wc a$ since $F'_0$ and $\gamma'$ act trivially on $\wc A(s)$. Since $\wc A(s)=\spa{\wc a}$, this implies that $\phi$ is $\Cent_\HF(s)$-invariant and $ \phi=\restr \wt \phi|{\Cent^\circ_\bH(s)^F}$. As $\wt\phi$ is an extension of $\phi$, there exists some $g\in \wGF$ such that $\chi^g\neq \chi$, as $\JJ$ is $\calZ_F$-equivariant and $\calZ_F$ acts by multiplying with non-trivial characters of $\AHs^F$, see \Cref{lem3_7}. Since $\phi$ is $\wc A(s)$-invariant and $|\AHs|=2$, we have $|\wGF:\wGF_\chi|=|\AHs_\phi|=2$ and hence every $\wt \chi\in\Irr(\wG\mid \chi)$ satisfies $ \wt \chi(1)=2 \chi(1)$. As $\chi$ is $\gamma$-invariant the group $\wGF_\chi$ is $\gamma$-stable. This implies that $\chi$ is invariant under the diagonal automorphism associated to $h_0$ and hence $\chi$ is $\wh G$-invariant for $\wh G= (\bG/\spannh)^F$. This implies part (b).

For the equation in part (c) observe that  $|\cE(\GFnull,\cC)^{\spa{\gamma}} |=0$ by \Cref{lem5_11} and part (b) leads to  $|\EE(\cC)|=\frac 1 2 |\cE(\GF,\cC)^D |$. Taking this together we obtain the stated equality.  \end{proof}

Recall that ${\wt\pi: \wt \bH\ra \bH}$ denotes the natural epimorphism extending the epimorphism $\pi$ from (\ref{piH0}).

\begin{prop}\label{prop5_13}
In the situation of \Cref{lemfrakC1} let $\wt \chi\in\Irr(\wGF\mid \chi)$, $s_0\in\bH_0\cap \wt\pi\inv(s)$ and $\wt s\in \wt \pi\inv(s)\cap \wHF$ such that $(\wt s, \wc \phi)\in \wt \JJ(\wt \chi)$ for some $\wc\phi\in \UCh({\Cent _{\wt\bH}(\wt s)^F})$. Then there exists some $F_0'\in \Cent_{\HF F_0}(s)$ such that 
$F_0'.(\wt s,\wc\phi)=( F_0'( \wt s), \wc\phi)$ and $[s_0,F_0']\in\Z(\bH_0)\setminus \spa{ h_0^{(\bH_0)}}$. 
\end{prop}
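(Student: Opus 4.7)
Here is a proof plan.

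\textbf{Step 1 --- The centrality condition via \Cref{lem511}.} The second required property $[s_0,F_0']\in\Z(\bH_0)\setminus\spannhHnull$ is already provided by \Cref{lem511}, which produces an element $F_0'\in \Cent_{\HF F_0}(s)$ together with a compatible lift $\gamma'\in\Cent_{\HF\gamma}(s)$ and an $\spa{F,F_0',\gamma'}$-stable complement $\wc A(s)$. The entire task is therefore to show that, after possibly adjusting $F_0'$ within its coset in $\Cent_{\HF F_0}(s)$, one may arrange $F_0'.(\wt s,\wc\phi)=(F_0'(\wt s),\wc\phi)$ in $\Jor(\wGF)$ --- that is, $\wc\phi$ is essentially $F_0'$-invariant (viewing the equality in $\wHF$-orbits of pairs as in \Cref{lem3_7}).

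\textbf{Step 2 --- Reduction to a $\pi^*$-fiber via the commutative diagram.} Apply the $E(\GF)$-equivariance of $\wt\JJ$ from \Cref{JorTilde}: the pair $\wt\JJ(\wt\chi^{F_0'})$ equals $F_0'.(\wt s,\wc\phi)$. Push this forward through the surjection $\pi^*:\Jor(\wGF)\to\ov\Jor(\GF)$ from the commutative diagram of \Cref{Jordandec}. Let $\phi\in\Irr(\restr{\wc\phi}|{\Cent_\bH^\circ(s)^F})$ and $\ov\phi:=\Pi_{\Cent_\bH(s)^F}(\phi)$, so that $\pi^*(\wt s,\wc\phi)=(s,\ov\phi)\in\ov\JJ(\ov\chi)$ by \Cref{thm66_c}. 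The proof of \Cref{lemfrakC1} showed that $\phi$ is $\wc A(s)$-invariant and $\spa{F_0',\gamma'}$-invariant, so $\wt\phi=\phi^{\Cent_\bH(s)^F}$ is $F_0'$-fixed, giving $F_0'.(s,\ov\phi)=(s,\ov\phi)$. Combining these with the $F_0'$-equivariance of $\pi^*$ yields
\[
\pi^*\bigl(F_0'.(\wt s,\wc\phi)\bigr)=F_0'.(s,\ov\phi)=(s,\ov\phi)=\pi^*(\wt s,\wc\phi),
\]
so $F_0'.(\wt s,\wc\phi)$ and $(\wt s,\wc\phi)$ lie in the same $\pi^*$-fibre.

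\textbf{Step 3 --- Absorbing the central ambiguity.} As observed at the start of the proof of \Cref{thm66}, the $\pi^*$-fibres are exactly the $\cZ_F$-orbits in $\Jor(\wGF)$: they correspond to replacing $\wt s$ by $\wt s\wt z$ for a central element $\wt z\in\Z(\wbH)^F$, with $\wc\phi$ unchanged. Hence $F_0'.(\wt s,\wc\phi)=(\wt s\wt z,\wc\phi)$ in $\Jor(\wGF)$ for some $\wt z\in\Z(\wbH)^F$. The last step is to kill $\wt z$ by replacing $F_0'$ with $hF_0'$ for a suitable $h\in\Cent_\HF(s)$, using \Cref{lem511}: since $\omega_s(\wc A(s))=\spannhHnull$ and $|\AHs|=2$, the image of $\wt z$ in $\bH_0$ lands in $\spannhHnull$ and is thus attained by conjugation by the generator $\wc a\in\wc A(s)$. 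One then checks that the modification $F_0'\mapsto hF_0'$ preserves the centrality condition $[s_0,F_0']\in\Z(\bH_0)\setminus\spannhHnull$: a commutator computation in $\bH_0$ shows that $[s_0,hF_0']$ differs from $[s_0,F_0']$ only by an element of $\spannhHnull$ (since $h$ projects to a $\spannhHnull$-valued element under $\omega_s$), so the resulting element still avoids $\spannhHnull$.

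\textbf{Main obstacle.} The technical heart is Step~3: reconciling the two constraints on $F_0'$ --- $F_0'$-invariance of $\wc\phi$ on one hand, and $[s_0,F_0']\notin\spannhHnull$ on the other --- by simultaneously exploiting the rigid structure of $\wc A(s)$ provided by \Cref{lem511} and the Clifford-theoretic description of $\pi^*$-fibres.
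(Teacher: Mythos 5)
Your Steps 1--2 are essentially sound and overlap with the paper's own argument, but Step 3 rests on a misreading of the statement, and the adjustment you propose there cannot be carried out. The conclusion of \Cref{prop5_13} is $F_0'.(\wt s,\wc\phi)=(F_0'(\wt s),\wc\phi)$: only the unipotent coordinate is required to be unchanged, while the semisimple coordinate is allowed to move to $F_0'(\wt s)$, which differs from $\wt s$ by a central element that need not be trivial. There is therefore no central discrepancy $\wt z$ to ``kill''. The paper's proof is correspondingly short: by the proof of \Cref{lemfrakC1}, $\phi\in\UCh(\Cent^\circ_\bH(s)^F)$ is $\spa{F_0',\gamma'}$-invariant (indeed $\Cent_{\HF}(s)$-invariant), and $\wc\phi$ is the pullback of $\phi$ along the $F_0'$-equivariant epimorphism $\Cent_{\wbH}(\wt s)^F\to\Cent^\circ_\bH(s)^F$ induced by $\wt\pi$; hence $\wc\phi$ is $F_0'$-invariant, so $F_0'.(\wt s,\wc\phi)=(F_0'(\wt s),\wc\phi)$ already holds for the element $F_0'$ furnished by \Cref{lem511}, whose item (iii) gives $[s_0,F_0']\in\Z(\bH_0)\setminus\spannhHnull$ with no further adjustment. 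Your detour through the $\pi^*$-fibres only yields an orbit-level statement (and your description of the fibres as ``$\wt s$ replaced by $\wt s\wt z$ with $\wc\phi$ unchanged'' silently uses that $\phi$ is $\AHs^F$-invariant; upgrading orbit equality to the pointwise statement would in addition require the connectedness of $\Cent_{\wbH}(\wt s)$, which you do not invoke).

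Concretely, Step 3 fails. Writing $\wt s=s_0z'$ with $z'\in\Z(\wbH)$, the actual discrepancy is $\wt s^{-1}F_0'(\wt s)=[s_0,F_0']\cdot z'^{-1}F_0'(z')$; it contains the factor $[s_0,F_0']\in\Z(\bH_0)\setminus\spannhHnull$ and a factor in $\Z(\wbH)$ that need not lie in $\Z(\bH_0)$ at all, so your assertion that ``the image of $\wt z$ in $\bH_0$ lands in $\spannhHnull$'' is unjustified and in general false. Worse, by \Cref{lem511}(ii) one has $\omega_s(\Cent_\bH(s))=B(s)=\spannhHnull$, so replacing $F_0'$ by $hF_0'$ with $h\in\Cent_{\HF}(s)$ can only translate $\wt s$ by elements of $\spannhHnull$; the factor $[s_0,F_0']\notin\spannhHnull$ can therefore never be cancelled this way --- the very condition you must preserve is exactly what makes your ``killing'' step impossible. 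Fortunately nothing needs to be killed: once the $F_0'$-invariance of $\wc\phi$ is observed as above, the proposition is proved.
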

\begin{proof} 
Let $\ov \chi:=\Pi_\wGF(\chi)$ and let $\ov \JJ$ be the map from \Cref{Jordandec}. Then $(s,\phi)\in \ov \JJ(\ov \chi)$ according to Theorem~\ref{thm66_c} with the notation from the proof of 
\Cref{lemfrakC1}. Recall $\wt \JJ$ the Jordan decomposition from \Cref{JorTilde}. By the definition of $\ov \JJ$ using $\wt \JJ$ we see that $(\wt s, \wc \phi)\in \wt \JJ(\wt \chi)$, where we obtain $\wc \phi$ from $\phi$ via the epimorphism $\Cent_\wbH(\wt s)^F\ra \Cent_\bH^\circ (s)^F$ induced by $\wt \pi$. We have seen in the proof of
\Cref{lemfrakC1} that $\phi$ and hence $\wc \phi$ are $\spa{F_0', \gamma'}$-invariant and $F_0'(s)=\gamma'(s)=s$. This implies that $F_0'.(\wt s, \wc \phi)=(F_0'(\wt s),\wc\phi)$. By construction there is some $z'\in \Z(\wbH)$ with $\wt s=s_0 z'$. Now according to \Cref{lem511} we have $[s_0,F_0']\in \Z(\bH_0)\setminus\spannhHnull$. This completes the proof.
\end{proof}
 \subsection{Comparing $\cE(\GF,\cC)^{\spa{F_0,\gamma}}$ with $\cE(\GFnull,\cC)^{\spa{\gamma}}$ for $\cC\in\frakC_0$ }
 \label{sec5C}
In the following we assume $\cC\in\frakC_0$ (see \Cref{frakC}). 
For a later application of \Cref{cor79} we investigate the character sets $\cE(\GF,\calC)^D$ and $\cE(\GFnull,\cC)^{\spa{\gamma}}$, their cardinalities and define a first candidate of $\EE(\cC)$ in \Cref{prop818} that satisfies Assumption \ref{cor79}(i) on the cardinality, namely  $|\EE(\cC)|=\frac 1 2 |\cE(\GF,\calC)^D| -\frac 1 2 |\cE(\GFnull,\cC)^{\spa{\gamma}}|$. 

The aim of this chapter is to define a set $\wt \EE(\cC)\subseteq \cE(\GF,\cC)^D$ together with a bijective map 
\[ \desc': \cE(\GF,\cC)^D\setminus \wt \EE(\cC) \lra 
\cE(\GFnull, \cC)^{\spa{\gamma}},\]
see \Cref{thm_desc}.
It turns out that $\wt \EE(\cC)$ is a union of $\wGF$-orbits of length $2$ and hence any $\wGF$-transversal $\EE(\cC)$ in $\wt \EE(\cC)$ has the required cardinality, see \Cref{prop818}. 

For the construction of $\desc'$ and $\wt \EE(\cC)$ various tools introduced earlier are combined: Like before characters of $\cE(\GF,\calC)^D$ are studied via $\ocE(\GF,\calC)^D$ using the map $\Pi_{\wGF}$ from \Cref{def31}, analogously we also apply $\Pi_{\wGFnull}$ giving a surjective map between $\cE(\GFnull,\calC)^{\spa{\gamma}}$ and  $\ocE(\GFnull,\calC)^{\spa{\gamma}}$. For a fixed element $s\in \cC^{F_0}$ with $\gamma$-stable $\HFnull$-conjugacy class we make use of the maps $\Gamma_{s,F,F_0}$ and $\Gamma_{s,F_0,F}$ from \Cref{labelEGFFnull}, which associate to a unipotent character an orbit sums. A third ingredient in the proof is the descent map from \Cref{propCSB}, which leads to the construction in \Cref{cor814}.

Many of our maps between character sets are not bijective and we investigate the cardinalities of their fibres.
\begin{notation}\label{not5_10_0}
We keep $\bG$, $F$, $F_0$, and $D$ satisfying \Cref{hyp76}.
\end{notation}
In the following we assume \Cref{hyp_cuspD_ext} for any $4\leq l'<l$,  $1\leq m'$, and consequently Condition $\Ap$ holds for $\GF =\tD_{l,\mathrm{sc}}(q)$, see \Cref{thm_typeD1}. Therefore 
a $\wGF$-orbit sum $\ov \chi$ is $D$-invariant if and only if $\ov \chi$ has some $D$-invariant constituent.
Analogously, Condition $\Ap$ holds for $\GFnull$ if $\GFnull$ is untwisted and according to \Cref{rem4x} and \Cref{thm41}, in case $\GFnull$ is of twisted type. Hence every $\gamma$-invariant $\wGFnull$-orbit sum has a $\gamma$-invariant constituent.
\begin{lem}\label{lem817}
Let $\chi\in\Irr(\GF)$ and $\ov \chi:=\Pi_{\wGF}(\chi)$. Assume \Cref{hyp76} for $\tD_{l,\mathrm{sc}}(q)$ and that \Cref{hyp_cuspD_ext} holds for all $l'$, $4\leq l'<l$. Then:
\begin{thmlist}
	\item If $\ov \chi$ is $D$-invariant, then 
	\begin{align*}
	|\Irr(\ov \chi)^D|= \begin{cases} 2 & \text{ if } |\Irr(\ov\chi)|=4 \und [\Z(\GF),D]\neq 1 ,\\
	|\Irr(\ov \chi)| &\text{ } \otw .
			\end{cases}\end{align*}
	\item Let $\chi_0\in\Irr(\GFnull)$ with a  $\gamma$-invariant $\wGFnull$-orbit sum $\ov \chi_0$. Then \begin{align*}
				|\Irr(\ov \chi_0)^{\spann<\gamma>}|= \begin{cases}
					2 & \text{ if }\ \  |\Irr(\ov\chi_0)|=4 \und [\Z(\GFnull),\gamma]\neq 1 ,\\
					|\Irr(\ov \chi_0)| &\text{ } \otw .
			\end{cases}
		\end{align*}
	\end{thmlist} 
\end{lem}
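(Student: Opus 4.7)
My strategy is to identify $\Irr(\ov\chi)$ with a transitive $\cZ_F$-set and thereby reduce the count to an elementary orbit calculation inside the abelian group $\cZ_F\cong\Z(\bG)_F$. Under \Cref{hyp76} we have $[\Z(\bG),F]=1$, so by \eqref{cZF} the quotient $\cZ_F$ is isomorphic to $\Z(\bG)$ and has order $4$. Part (a) will then be immediate: since $D\leq E(\GF)$ normalizes $\wGF$, it permutes the $\wGF$-orbits on $\Irr(\GF)$, and any $d\in D$ fixing $\chi$ fixes the orbit and hence its sum $\ov\chi=\Pi_{\wGF}(\chi)$.

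For part (b) I plan to proceed as follows. Because the central subgroup $\Z(\wGF)$ acts trivially on $\Irr(\GF)$ by conjugation, the $\wGF$-orbit $\Irr(\ov\chi)$ is a transitive $\cZ_F$-set of size $|\cZ_F:\cZ_{F,\chi}|\in\{1,2,4\}$. The crucial reduction is that $\chi$ may be replaced by a $\wGF$-conjugate that is itself $D$-invariant (not just has $D$-invariant orbit sum); this is precisely Condition $\Ap$, which is available for $\GF=\tDlsc(q)$ by \Cref{thm_typeD1} under our inductive hypothesis \Cref{hyp_cuspD_ext}. Once $\chi$ is $D$-stable, the orbit map $\cZ_F/\cZ_{F,\chi}\xrightarrow{\sim}\Irr(\ov\chi)$, $z\mapsto z\cdot\chi$, becomes $D$-equivariant, where $D$ acts on $\cZ_F$ through its natural action on $\Z(\bG)$. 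Consequently $|\Irr(\ov\chi)^D|=|(\cZ_F/\cZ_{F,\chi})^D|$, and a three-line case analysis on $|\Irr(\ov\chi)|$ finishes: for $|\Irr(\ov\chi)|=1$ the quotient is trivial; for $|\Irr(\ov\chi)|=2$ the quotient has order $2$ and $D$ must act trivially on it since $\Aut(\ZZ/2)$ is trivial, so both elements are fixed; for $|\Irr(\ov\chi)|=4$ one has $\cZ_{F,\chi}=1$ and the count equals $|\cZ_F^\gamma|=|\Z(\bG)^\gamma|$ (because $F_0$ acts trivially on $\Z(\bG)$ by \Cref{hyp76}), which is $4$ when $[\Z(\GF),D]=[\Z(\bG),\gamma]=1$ and $2$ otherwise. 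This matches the piecewise formula exactly.

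Part (c) will follow by the same argument applied to $\GFnull$, using that $\cZ_{F_0}\cong\Z(\bG)_{F_0}=\Z(\bG)$ (from $[\Z(\bG),F_0]=1$) together with Condition $\Ap$ for $\GFnull$, supplied by \Cref{thm_typeD1} when $F_0$ is untwisted and by \Cref{thm41} when $F_0$ is twisted; both inputs rest only on \Cref{hyp_cuspD_ext} in strictly smaller ranks. The only genuinely non-routine ingredient throughout is thus Condition $\Ap$ at the current rank, everything else reducing to orbit counting in an abelian group of order four.
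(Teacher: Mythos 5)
Your argument is correct and follows essentially the same route as the paper: the key input in both is Condition $\Ap$ at the current rank (from \Cref{thm_typeD1}, resp. \Cref{thm41}, under \Cref{hyp_cuspD_ext}), which lets one replace $\chi$ by a $D$-invariant $\wGF$-conjugate, after which everything reduces to an order-$4$ computation with $\cZ_F\cong\Z(\bG)$ and its $D$-action. Your bookkeeping via fixed points of $D$ on $\cZ_F/\cZ_{F,\chi}$ is just a mild repackaging of the paper's count of $\wGF$-conjugates of the stabilizer $(\wGF D)_{\chi'}$ modulo $\GF\Z(\wGF)$, so there is nothing to add.
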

\begin{proof}
Part (a) follows from the fact that according to \Cref{thm_typeD1}, Condition $\Ap$ holds for $\GF$. Accordingly there exists some $E( \GF)$-stable $\wGF$-transversal in $\Irr(\GF)$. By Lemma 1.4 of \cite{TypeD1} there exists some $\wGF$-conjugate $\chi'$ of $\chi$ that satisfies $(\wGF \EGF)_{\chi'}=\wGF_{\chi'} E(\GF)_{\chi'}$. As $\ov \chi$ is $D$-invariant, this implies $E(\GF)_{\chi'}\geq D$.
		
		Recall $D=\spann<\restr F_0|{\GF},\gamma>\leq \uE(\GF)$ with $[F_0,\Z(\bG)]=1$. If $\chi '$ is $D$-invariant, the group $\wGF_{\chi '}$ is also $D$-stable. Via the isomorphism between $\wGF/(\GF \Z(\wGF))$ and $\Z(\bG)_F=\Z(\bG)$ from (\ref{cZF}), the $D$-stable group $\wGF_{\chi'}/(\GF \Z(\wGF))$ corresponds to $1$, $\Z(\bG)$ or $\spannh$. If we identify the quotient $(\wGF D)/(\GF\Z(\wGF))$ with $\Z(\GF)\rtimes D$, the group $(\wGF D)_{\chi'}/(\GF\Z(\wGF))$ corresponds to the group $D$,  $\Z(\GF)\rtimes D$ or $\spannh\rtimes D$. Using this we compute the number of $D$-invariant characters in $\Irr(\ov \chi)=\Irr(\ov {\chi '})$.
		If $|\Irr(\ov \chi)|=4$, then $(\wGF D)_{\chi'}= \GF\Z(\wGF) D$. 
		The $\wGF$-conjugates of $\GF\Z(\wGF) D$ are $\GF\Z(\wGF)D$ and $\GF\Z(\wGF)\spann<F_0,\gamma t_0>$, where $t_0\in\wGF$ induces on $\GF$ a diagonal automorphism associated to $h_0$ via (\ref{cZF}). This shows $|\Irr(\ov \chi)^D|=2$. In all the other cases the $\wGF$-conjugates of $(\wGF D)_{\chi'}/(\GF\Z(\wGF))$ still contain $D$. This implies part (a).

By the definition of $F_0$, the group $\GFnull$ can be of twisted or untwisted type. In all cases, \Cref{hyp_cuspD_ext} and \Cref{thm_typeD1} imply that Condition $\Ap$ holds for $\GFnull$. Hence considerations from above lead to the statement in part (b) for the untwisted case, while \Cref{thm41} can be used in the twisted case.  
\end{proof}

\begin{notation}\label{not5_10_1}
Let $\calC \in\frakC_0$, i.e. there exists some $\gamma$-stable $\HFnull$-class contained in $\cC$.
Let $s\in \calC^{F_0}$ with $\gamma(s)\in [s]_\HFnull$. If $|\AHs|=4$ then we can choose $s\in \pi(\bH_0^{F_0})$, see \cite[Thm B]{CS22}. According to \Cref{cor_2Becht} there exist $\gamma'\in \Cent_{\bH^{F_0} \gamma}(s)$ and some $\spann<F_0,\gamma'>$-stable $\wc A(s)\leq \bH$ with $\Cent_\bH(s)=\Cent_\bH^\circ (s)\rtimes \wc A(s)$. 
Note that $A:=\AHs$ satisfies $[A,F]=[A,F_0]=1$ since $[\Z(\bH_0),F]=1$, $[\Z(\bH_0),F_0]=1$, and the map $\omega_s: A\ra \Z(\bH_0)$ from \ref{ssclasses} is $\spann<\gamma',F_0>$-equivariant, see also \Cref{cor_CS22}. Let $\II A0@{A_0:=\Cent_{A}(\gamma')}$. 

As in \Cref{cor_CS22} and \Cref{noteps}, we denote by $v'_{a'}\in \wc A(s)$ an element representing $a'\in A$ and use an element $v_a\in\wc A(s)$ representing $a\in \AHs_F$. 
Since $[A,F]=1$, we have $v_a=v'_a$, see again \Cref{noteps}. 
\end{notation}

We construct here a candidate for $\EEnull(\calC)$ of \Cref{cor79} by investigating the sets $\cE(\GF,\calC)^{D}$ and $\cE(\bG^{F_0},\calC)^{\spa{\gamma}}$ with the given fixed $s\in \cC^{F_0}$. We compare the cardinality of those two sets of characters using \Cref{labelEGFFnull}. Recall that  $[F_0,\Z(\bG)]=[F,\Z(\bG)]=1$ by \Cref{hyp76}. With the abbreviation $A=\AHs$, we use $\{ v'_a=v_a \mid a\in A\}$, $\gamma '$ and $A_0:=\Cent_A(\gamma')$ from \Cref{not5_10_1}. Note that $A$ acts via the elements $v'_b$ on the group $\Cent_\bG^\circ (s)$ and $\UCh(\Cent^\circ_\bH(s)^{v_a F})$ and $\UCh(\Cent^\circ_\bH(s)^{v_a F_0})$, see also \Cref{rem3_6}. 

The sets $\rmUU(s,v_aF, v_b F_0)$ and $\rmUU(s,v_aF_0, v_b F)$ are defined as in \Cref{lem717} with $F_0$ as $\sigma$.   
\begin{prop} \label{prop5_12}
Let $\Gamma_{s,F,F_0}$ and $\Gamma_{s,F_0,F}$ be the surjective maps from \Cref{labelEGFFnull}. 
For $a\in A_0$ and $b\in A$ let $$\II MsFF0 @{M_{s,F,F_0}(a,b)^{[\gamma']}}:= \bigcup_{c\in A}\UCh(\Cent_\bH^\circ(s)^{v_aF})^{\spann<v_b F_0,v_c\gamma'>}\subseteq  \rmUU(s,v_aF, v_b F_0)\ \ \ \und\ \ $$
		
\[	\II MsF0F @{M_{s,F_0,F}(a,b)^{[\gamma']}}:= \bigcup_{c\in A}\UCh(\Cent_\bH^\circ(s)^{v_aF_0})^{\spann<v_b F,v_c\gamma'>}\subseteq \rmUU(s, v_aF_0, v_bF),\]
    see after \Cref{lem_whU} for the definition of the sets $\rmUU(s,F',\kappa)$.  

	As in \Cref{prop616} we set 
$$ \UU(s,F,F_0)^{[\gamma']} := \bigsqcup _{\substack{a\in A_0 \\ b\in A} } 
		M_{s,F,F_0}(a,b)^{[\gamma']}
\ \ 		\text{ and }\ \ 
		\UU(s,F_0,F)^{[\gamma']}
		:= \bigsqcup _{\substack{a\in A_0 \\ b\in A} } 
		M_{s,F_0,F}(a,b)^{[\gamma']}.$$
		
		Then
		\begin{thmlist}
			\item 
			 $\Gamma_{s,F,F_0}^{-1}(\ocE(\GF,(s))^{\spann<F_0,\gamma>})=
			\UU(s,F,F_0)^{[\gamma']}$ \ \ and
			\item 
			 $\Gamma_{s,F_0,F}^{-1} (\ocE(\GFnull,(s))^{\spa{\gamma}})= \UU(s,F_0,F)^{[\gamma']}$.
		\end{thmlist}
	\end{prop}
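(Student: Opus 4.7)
The plan is to deduce both equalities from \Cref{prop616} applied in two symmetric configurations of the data fixed in \Cref{not5_10}.

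For part (a) I would apply \Cref{prop616} directly with $\cC$ and the data $(s, \gamma', \wc A(s), \{v_a : a \in A\})$ of \Cref{not5_10}: the hypotheses of \Cref{not3_15} follow from \Cref{hyp76} (giving $F_0 \in E_2^+(\bG)$, $[F_0, \Z(\bG)] = 1$ and $F \in \spa{F_0^2}^+$) together with the assumption $\cC \in \frakC_0$ (which provides some $s \in \cC^{F_0}$ with $\gamma(s) \in [s]_{\HFnull}$) and the choices recorded in \Cref{not5_10}. The output of \Cref{prop616} is then, by inspection, the stated equality, since $\UU(s, F, F_0)^{[\gamma']}$ is defined here using exactly the same disjoint-union expression as the one appearing in the conclusion of \Cref{prop616}.

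For part (b) I would apply the same result after interchanging the roles of $F$ and $F_0$, so that $F_0$ plays the part of the base Frobenius endomorphism defining $\bG^{F_0}$ while $F$ is one of the two commuting automorphisms alongside $\gamma$. All of the symmetric hypotheses still hold: $F$ is a Frobenius endomorphism commuting with $F_0$ and $\gamma$, $[F, \Z(\bG)] = 1$ by \Cref{hyp76}, and the same group $\wc A(s)$, the same element $\gamma'$ and the same representatives $\{v_a\}$ can be reused, since \Cref{not5_10} forces $\AHs_F = A_\bH(s)_{F_0} = \AHs = A$ with common $\gamma'$-centralizer $A_0 = \Cent_A(\gamma')$. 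The only asymmetric ingredient of \Cref{not3_15} is the inclusion $F \in \spa{F_0^2}^+$, whose role in the proof of \Cref{prop616} is merely to guarantee $F$-stability of $\cC$ from its $F_0$-stability; in the swapped setup this is not needed because $\cC \in \Cl_{\textrm{ss}}(\bH)^{F_0}$ is part of $\cC \in \frakC_0$, and conversely $F$, being a positive power of $F_0$, acts trivially on $\bG^{F_0}$, whence $\ocE(\GFnull, \cC)^{\spa{F, \gamma}} = \ocE(\GFnull, \cC)^{\spa{\gamma}}$. Hence the argument of \Cref{prop616} applies verbatim after swapping $F \leftrightarrow F_0$ and yields part (b).

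The main obstacle, essentially bookkeeping, is to verify that every step of the proof of \Cref{prop616} survives the swap. The one substantial check is the $\spa{F, F_0, \gamma'}$-equivariance of $\omega_s : \AHs \to \Z(\bH_0)$, which holds since $F$, $F_0$ and $\gamma'$ pairwise commute and all fix $s$, and since both $F$ and $F_0$ act trivially on $\AHs$ (because they act trivially on $\Z(\bH_0) \supseteq B(s)$). Everything else in the proof of \Cref{prop616} -- the bijection $\ov\Psi'_s$ of \Cref{newJdec}, its $\gamma$-equivariance from \Cref{newJdec_si}, and the identification of $v_c\gamma'$-invariant unipotent characters with $\gamma$-stable $\wGFnull$-orbits -- is formally symmetric in the two commuting Frobenius endomorphisms.
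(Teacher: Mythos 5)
Your proposal is correct and takes essentially the same route as the paper, whose entire proof of this statement is the single sentence that it follows from \Cref{prop616}: part (a) is a direct application, and part (b) is the same construction with the roles of $F$ and $F_0$ interchanged. The extra verification you supply — that the only asymmetric hypothesis $F\in\spa{F_0^2}^+$ merely transfers stability of $\cC$ and fixedness of $s$ from one Frobenius to the other, which in the swapped orientation is already given by $\cC\in\frakC_0$ and by $F$ being a power of $F_0$ (so that $F$ acts trivially on $\GFnull$ and $\ocE(\GFnull,\cC)^{\spa{F,\gamma}}=\ocE(\GFnull,\cC)^{\spa{\gamma}}$) — is exactly the bookkeeping the paper leaves implicit.
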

	\begin{proof}
		(a) follows from \Cref{prop616} by applying it with $F$ and $F_0$ as such and using $\gamma$ as there. \Cref{prop616} allows to exchange the roles of $F$ and $F_0$, so we also get part (b). 
	\end{proof}
	We apply the bijection from \Cref{propCSB} to those character sets, more precisely we relate the subset $M_{s,F,F_0}(a,b)^{[\gamma']}$ with $M_{s,F_0,F}(b,a)^{[\gamma']}$, where $(a,b)\in A_0\times A_0$. 
	The first set is only well-defined when $(a,b)\in A_0\times A$ and the second if $(b,a)\in A_0\times A$. Hence a bijection mapping $M_{s,F,F_0}(a,b)^{[\gamma']}$ to $M_{s,F_0,F}(b,a)^{[\gamma']}$ can only exist if $(a,b)\in A_0\times A_0$.
	This leads to the definition of the following disjoint unions $\UU'(s,F,F_0)^{[\gamma']}$ and ${{\UU'(s,F_0,F)}^{[\gamma']}}$.

	\begin{cor}\label{cor814} In the notation of \Cref{prop5_12} let 
		$$\II U'sFF0@{\UU'(s,F,F_0)^{[\gamma']}}
		:= \bigsqcup_{\substack{a\in A_0 \\ b\in A_0}} M_{s,F,F_0} (a,b)^{[\gamma']} \und 
		\II U'sF0F @{{\UU'(s,F_0,F)}^{[\gamma']}}
		:= \bigsqcup_{\substack{a\in A_0 \\ b\in A_0}} M_{s,F_0,F}(a,b) ^{[\gamma']}.$$ There exists a bijection \[ f:\UU'(s,F,F_0)^{[\gamma']} \lra {\UU'(s,F_0,F)}^{[\gamma']}\]
		such that $A_\phi=A_{f(\phi)}$ for every $\phi\in{\UU'(s,F,F_0)}^{[\gamma']}$.
	\end{cor}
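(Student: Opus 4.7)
The plan is to define $f$ piecewise over the index set $A_0\times A_0$ by constructing, for each $(a,b)\in A_0\times A_0$, a bijection
\[
f_{a,b}\colon M_{s,F,F_0}(a,b)^{[\gamma']}\xrightarrow{\sim} M_{s,F_0,F}(b,a)^{[\gamma']}
\]
that preserves $A$-stabilizers. Since $(a,b)\mapsto(b,a)$ is a bijection of $A_0\times A_0$ and since $\UU'(s,F,F_0)^{[\gamma']}$ and $\UU'(s,F_0,F)^{[\gamma']}$ are disjoint unions over $A_0\times A_0$, the $f_{a,b}$'s assemble into the required $f$.

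To build $f_{a,b}$, I apply \Cref{propCSB} with $F':=v_aF$ and $F'':=v_bF_0$, viewed as commuting bijective algebraic endomorphisms of $\bH$ fixing $s$. The commutation $(v_aF)(v_bF_0)=(v_bF_0)(v_aF)$ reduces via $[F,F_0]=1$ (since $F\in\spa{F_0^2}^+$) and the fact that $\wc A(s)$ is abelian to the identities $F(v_b)=v_b$, $F_0(v_a)=v_a$, which hold by \Cref{cor_CS22} because both $F$ and $F_0$ act trivially on $A=\AHs$ (\Cref{not5_10}). This yields the bijection
\[
f_{s,v_aF,v_bF_0}\colon\UCh(\bC^{v_aF})^{\spa{v_bF_0}}\xrightarrow{\sim}\UCh(\bC^{v_bF_0})^{\spa{v_aF}},
\]
with $\bC=\Cent^\circ_\bH(s)$, equivariant for all algebraic automorphisms of $\bC$ that commute with $v_aF$ and $v_bF_0$.

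The transport of the $\gamma'$-condition is the crucial point: I need $v_c\gamma'$ (for each $c\in A$) to commute with both $v_aF$ and $v_bF_0$ so that equivariance carries $v_c\gamma'$-invariants to $v_c\gamma'$-invariants. This follows from $[\gamma',F]=[\gamma',F_0]=1$, from $\wc A(s)$ being abelian, and—essentially—from $\gamma'(v_a)=v_a$ and $\gamma'(v_b)=v_b$, which is exactly where the hypothesis $a,b\in A_0=\Cent_A(\gamma')$ is used (via \Cref{cor_CS22}). Taking the union over $c\in A$ then delivers $f_{a,b}$. The stabilizer statement $A_\phi=A_{f(\phi)}$ is a parallel application of equivariance: for each $d\in A$, conjugation by $v_d$ (the action of $d$ on unipotent characters, per \Cref{rem3_6}) commutes with $v_aF$ and with $v_bF_0$ by the same abelian/trivial-action arguments, hence intertwines $f_{s,v_aF,v_bF_0}$ with itself.

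The main obstacle is formal rather than structural: $v_aF$ and $v_bF_0$ are inner twists of the Frobenius endomorphisms $F$ and $F_0$ of $\bH$, whereas \Cref{propCSB} is stated for Frobenius endomorphisms. This is harmless because some power of each is a genuine Frobenius ($v_a,v_b$ have finite order and are $F$- and $F_0$-fixed, so $(v_aF)^N=F^N$ for $N\gg 0$), so both are Steinberg endomorphisms, and the construction of \cite[Cor.~3.6]{CS18B} behind \Cref{propCSB} applies verbatim. Alternatively one may reduce via the isomorphism $\iota_a$ of \Cref{78} to pairs of honest commuting Frobenius endomorphisms acting on a conjugate centralizer; either route is a routine check rather than a genuine difficulty.
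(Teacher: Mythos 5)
Your proposal is correct and follows essentially the same route as the paper: for each $(a,b)\in A_0\times A_0$ the paper applies \Cref{propCSB} to the commuting pair $(v_aF,v_bF_0)$ (commutation via $F,F_0$ acting trivially on $A$ and $\wc A(s)$ abelian), uses equivariance under $\gamma'$ and the $\wc A(s)$-action — which is exactly where $a,b\in A_0$ enters — to carry the $v_c\gamma'$-invariance and the equality $A_\phi=A_{f(\phi)}$, and assembles the pieces over $(a,b)\mapsto(b,a)$. Your extra remark about $v_aF$, $v_bF_0$ being honest Frobenius (Steinberg) endomorphisms is a harmless formal point the paper passes over silently (they are $\bH$-conjugates of $F$, $F_0$ by Lang's theorem), so there is no substantive difference.
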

Note that $\UU'(s,F,F_0)^{[\gamma']} \neq \UU(s,F,F_0)^{[\gamma']}$ in general whenever $A\neq A_0$ by definition. 
\begin{proof} Let $(a,b)\in A_0\times A_0$. According to \Cref{hyp76} that $F_0$ and $F$ act trivially on $\Z(\bG)$ and $A$. Hence the elements $v_a, v_b$ from \Cref{not5_10_1} are fixed by $F_0$ and $F$, see \Cref{cor_CS22}. 

	As $[v_aF,v_bF_0]=1$ in $\End(\Cent_\bH^\circ(s))$, there exists the bijection 
		\begin{equation*}
			f_{s,v_aF,v_bF_0}: M_{s,F,F_0}(a,b) \lra M_{s,F_0,F}(b,a) 
		\end{equation*} 
		from \Cref{propCSB}. Then $A_{\phi}= A_{\phi'}$ for every $\phi\in M_{s,F,F_0}(a,b)$ and $\phi':=f_{s,v_aF,v_bF_0}(\phi)$. 

		If $(a,b)\in A_0\times A_0$, then $v_a$ and $v_b$ are $\gamma'$-fixed by \Cref{cor_CS22}. 
		 Hence $f_{s,v_aF,v_bF_0}$ is $\spa{\gamma'}$-equivariant and induces a bijection 
		\[ f:{\UU'(s,F,F_0)}^{[\gamma']} \lra {\UU'(s,F_0,F)}^{[\gamma']}\] 
		with $ A_{\phi}= A_{f(\phi)}$ for every $\phi\in {\UU'(s,F,F_0)}^{[\gamma']}$. 
	\end{proof}
Note that by construction any character $\phi\in\UU(s,F,F_0)^{[\gamma']}$ lies in some $M_{s,F,F_0}(a,b)$ with $a\in A_0$ and $b\in A$. We determine the size of the fibres of $\Gamma_{s,F,F_0}$.  Recall that $A$ acts on elements of $\UCh(\Cent_\bH^\circ(s)^{v_a F})$ for every $a\in A$ according to \Cref{rem3_6}. We also use the map 
$\omega_s: A=\AHs \ra \Z(\bH_0)$ for $s\in \bH_{\textrm{ss}}$ from \ref{ssclasses}.
\begin{lem}\label{lem815}
 \begin{thmlist}
 \item Let $\ov\chi\in \ocE(\GF,(s))^{D}$, $a\in A_0$, $b\in A$ and $\phi\in\UCh(\Cent_\bH^\circ(s)^{v_a F})^{\spa{v_b F_0}}$, such that $\ov\chi= \Gamma_{s,F,F_0}^{}(\phi)$. 
	Then
	$$ |\Gamma_{s,F,F_0}^{-1}(\ov\chi)\cap {\UU'(s,F,F_0)}^{[\gamma']} |= \begin{cases}
				2&\text{ if } |A_\phi|=4,\\
				0&\text{ if } \omega_s(A_\phi)=\spann<h_0^{(\bH_0)}>\und b\in A\setminus A_0 ,\\
				|A|&\text{ } \otw .
			\end{cases}$$
	In particular if  $\ocE(\GF,(s))^{D}\neq \Gamma_{s,F,F_0}(  {\UU'(s,F,F_0)}^{[\gamma']} )$, then $|A|=4$.
 \item 
 Let $\phi\in \UU(s,F_0,F)^{[\gamma']}$ and $\ov \chi:=\Gamma_{s,F_0,F}(\phi)$. Then 
\[ |\Gamma_{s,F_0,F}^{-1}(\ov \chi)\cap {\UU'(s,F_0,F)}^{[\gamma']} |= \begin{cases}	2&\text{ if } |A_\phi|=4,\\
	|A|&\text{ } \otw .   \end{cases}\]
Moreover, $\ocE(\GFnull,(s))^{\spa{\gamma}}= \Gamma_{s,F_0,F}(\UU'(s,F_0,F)^{[\gamma']})$. 
\end{thmlist}
\end{lem}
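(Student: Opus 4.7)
By \Cref{labelEGFFnull}(a)(b), the preimage $\Gamma_{s,F,F_0}^{-1}(\ov\chi)$ is an $A$-orbit (where $A:=\AHs$) of cardinality $|A|$ in the disjoint union $\UU(s,F,F_0)$, consisting of $|A/A_\phi|$ distinct characters each appearing in $|A_\phi|$ slots indexed by some $b'$. The goal is to count how many of these $|A|$ pairs $(b',\phi')$ lie in the smaller disjoint union $\UU'(s,F,F_0)^{[\gamma']}$, where $b'$ is restricted to $A_0:=\Cent_A(\gamma')$.

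The key computational input is that $F$ and $F_0$ act trivially on $A$ by \Cref{not5_10}, so by \Cref{cor_CS22} the elements $v_c\in\wc A(s)$ are $F_0$- and $F$-fixed and $\wc A(s)$ is abelian. A direct calculation then gives $v_c \cdot v_{b'}F_0 \cdot v_c^{-1}=v_{b'}F_0$, showing that the set
$S(\phi):=\{b'\in A\mid \phi\text{ is }v_{b'}F_0\text{-invariant}\}$
is a coset of $A_\phi$ in $A$ that is constant along the $A$-orbit of $\phi$. Next, as $\gamma'$ commutes with $F_0$ (\Cref{not5_10}) and $\phi^{\gamma'}$ lies in the $A$-orbit of $\phi$ (by the $\gamma$-stability of $\ov\chi$ combined with \Cref{newJdec_si}), both $A_\phi$ and $S(\phi)$ are $\gamma'$-stable subsets of $A$. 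Since $\omega_s$ embeds $A$ into $\Z(\bH_0)$ and $\Z(\bH_0)^{\spa{\gamma'}}=\spa{h_0^{(\bH_0)}}$ has index at most $2$ in $\Z(\bH_0)$ (\Cref{not2_1}), the only $\gamma'$-stable subgroups of $A$ are $\{1\}$, $A_0$, and $A$. In particular, if $|A_\phi|=2$ then $A_\phi=A_0$, equivalently $\omega_s(A_\phi)=\spa{h_0^{(\bH_0)}}$; and if $|A_\phi|=1$ then $S(\phi)=\{b\}$ is a $\gamma'$-fixed singleton, forcing $b\in A_0$.

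The count then equals $|A_0\cap S|\cdot |A/A_\phi|$, which I treat case by case: $|A_\phi|=4$ gives $A_\phi=A=S$, $|A|=4$ and $|A_0|=2$, hence count $2$; $|A_\phi|=2$ with $b\in A_0$ gives $S=A_0$, hence count $|A|$; $|A_\phi|=2$ with $b\notin A_0$ gives $S=A\setminus A_0$, hence count $0$; $|A_\phi|=1$ gives $S\subseteq A_0$, hence count $|A|$. The final assertion of (a) is immediate since $0$ only arises when $|A_\phi|=2$ and $|A|=4$. For part (b), the argument is parallel using $\Gamma_{s,F_0,F}$; the simplification is that $F\in\spa{F_0^2}^+$ restricts trivially to $\bG^{F_0}$ and hence acts trivially on $\Cent_\bH^\circ(s)^{v_a F_0}$, so $v_{b'}F$-invariance reduces to $v_{b'}$-invariance and $S=A_\phi$. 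Since $A_\phi\in\{1,A_0,A\}$ always satisfies $A_0\cap A_\phi\neq\emptyset$ (equal to $A_\phi$ if $A_\phi\subseteq A_0$ and to $A_0$ if $A_\phi=A$), the $0$-case disappears, and direct substitution yields $2$ if $|A_\phi|=4$ and $|A|$ otherwise. The main obstacle is the bookkeeping of how $S$ and $A_\phi$ transform under the $A$-action and under $\gamma'$, but everything reduces to the abelianness of $\wc A(s)$ and the explicit description of the $\gamma'$-action on $\Z(\bH_0)$ recalled in \Cref{not2_1}.
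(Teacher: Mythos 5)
Your part (a) is correct and follows essentially the same route as the paper: identify $\Gamma_{s,F,F_0}^{-1}(\ov\chi)$ as an $A$-orbit of size $|A|$ inside the disjoint union, observe that the set $S=bA_\phi$ of admissible second indices is constant along the orbit because $F_0(v_c)=v_c$ and $\wc A(s)$ is abelian, and use $\gamma'$-stability to rule out the bad configurations; your ``$\gamma'$-stability of $S$'' is just a repackaging of the paper's commutator computation $v_{[b,\gamma']}=[v_bF_0,v_c\gamma']$ from \Cref{cor_CS22}, and the case analysis matches the statement.

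In part (b), however, the key justification is a non sequitur as written. You argue that since $F\in\spa{F_0^2}^+$ restricts trivially to $\bG^{F_0}$, it ``hence acts trivially on $\Cent_\bH^\circ(s)^{v_a F_0}$''. But $\Cent_\bH^\circ(s)^{v_aF_0}$ is a \emph{twisted} fixed-point group: it sits inside $\bH^{v_aF_0}$, not inside $\bH^{F_0}$, so triviality of $F$ on $\bG^{F_0}$ (or $\bH^{F_0}$) says nothing about it directly. On $\Cent_\bH^\circ(s)^{v_aF_0}$ one has $F_0(x)=x^{v_a}$, so $F=F_0^{2k}$ acts as conjugation by $v_a^{2k}=v_{a^{2k}}$; a priori this is nontrivial and would replace your ``$S=A_\phi$'' by the coset $S=a'A_\phi$ with $a'\in\spa{a^2}$. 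The paper's argument at exactly this point is that $(v_aF_0)^{2k}=v_{a'}F$ with $a'\in\spa{a^2}\leq A_0$, so the tautological $v_aF_0$-invariance of $\phi$ already yields an element of $S\cap A_0$, which is all the count needs. Your stronger claim $S=A_\phi$ is in fact true, but only because the slot index satisfies $a\in A_0$ and $\omega_s(A_0)\leq\spa{h_0^{(\bH_0)}}$ forces $a^2=1$, hence $v_a^{2k}=1$; this one-line observation (or the paper's weaker coset argument) must be supplied, since the reasoning you give does not establish it. With that repaired, the rest of your part (b) (including the disappearance of the $0$-case and the final counts) goes through as in the paper.
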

\begin{proof}
For part (a) let $\chi\in \Irr(\ov \chi)^D$. Note that $|\Gamma_{s,F,F_0}^{-1}(\ov \chi)|=|A|$ according to \Cref{labelEGFFnull}(i) and $\Gamma_{s,F,F_0}^{-1}(\ov \chi)$ forms an $A$-orbit. By definition $b\in A$ satisfies $\phi^{v_b F_0}=\phi$. Additionally $\phi^{v_{b'} F_0}=\phi$ if and only if $b'\in bA_\phi$. As the $A$-orbit of $\phi$ is $\gamma'$-stable, $A_\phi$ is $\gamma'$-stable and hence $\omega_s(A_\phi)\in \{\{1\},\spannhHnull, \Z(\bH_0) \}$. 

The character $\phi$ appears $|(A_0)_{\phi}|$-times in $\UU'(s,F,F_0)^{[\gamma']}$ if $|bA_\phi \cap A_0|\neq 0$. In that case every $A$-conjugate of $\phi$ appears $|(A_0)_\phi|$-times in $\UU'(s,F,F_0)^{[\gamma']}$, as well. This implies the statement if $|A_\phi|=4$ or $|bA_\phi \cap A_0|\neq 0$.

If $bA_\phi\cap A_0=\emptyset$, then $\phi\notin \UU'(s,F,F_0)^{[\gamma']}$. This is only possible if $b\in A\setminus A_0$ and hence $|A|=4$. Now $\ov \chi^\gamma=\ov \chi$ implies $\phi^{v_c\gamma'} =\phi$ for some $c\in A$ and hence $\phi$ is also $[v_b F_0,v_c\gamma']$-invariant. Note that $v_{[b,\gamma']} =[v_b F_0,v_c\gamma']$ by \Cref{cor_CS22}. Because of $b\in A\setminus A_0$, we see $\omega_s([b,\gamma'])=h_0^{(\bH_0)}$. Hence $\omega_s(A_\phi)\geq \spa{h_0^{(\bH_0)}}$ or equivalently $A_\phi\geq A_0$. Together with $|bA_\phi \cap A_0|= 0$ this shows that $\Gamma_{s,F,F_0}^{-1}(\ov \chi) \cap \UU'(s,F,F_0)^{[\gamma']}=\emptyset$ is equivalent to $A_\phi=A_0$ and $b\in A\setminus A_0$.

For part (b) observe that any $\phi\in\Gamma^{-1}_{s_0,F_0,F}(\ov \chi ) \cap \UU(s,F_0,F) ^{[\gamma']}$ is a character of $\Cent_\bH^\circ(s)^{v_aF_0}$ and hence $v_aF_0$-invariant. Since $F\in \spa{F_0^2}$ by \Cref{hyp76}, we see $v_{a'}F\in \spa{(v_aF_0)^2}^+$ in $\End(\Cent_\bH^\circ(s))$ for some $a'\in\spann<a^2>\leq A_0$. This leads to $\phi\in \UU'(s,F_0,F)^{[\gamma']}$. 
This shows that $\Gamma_{s,F_0,F}^{-1}(\ov \chi)\cap {\UU'(s,F_0,F)}^{[\gamma']}\neq \emptyset$. 
Now we can compute $|\Gamma_{s,F_0,F}^{-1}(\ov \chi)\cap {\UU'(s,F_0,F)}^{[\gamma']}|$ with considerations similar to the above.  
\end{proof}
We summarize the above results and can finally compare $|\ocE(\GF,\cC)^D|$ with $|\ocE(\GFnull,\cC)^{\spann<\gamma>}|$. 
\begin{prop}\label{cor816}
Let $\II Eoverlines@{{\ovEE} (\cC)} :=\ocE(\GF,\cC)^D\setminus \Gamma_{s,F,F_0}({\UU'(s,F,F_0)}^{[\gamma']})$
and $\II Ugamma'exc@{\UU^{[\gamma']}_{exc}}:=\Gamma_{s,F,F_0}^{-1}(\ovEE(\cC))$.
 Then 
		\begin{align*}
			\UU^{[\gamma']}_{exc}&= \{\phi \in \UU(s,F,F_0)^{[\gamma']} \setminus \UU'(s,F,F_0)^{[\gamma']} \, \,
			\left | \, \, |A_\phi|=2 \right \}.
		\end{align*}
	In particular, each $\phi\in\UU^{[\gamma']}_{exc}$ satisfies $\phi\in M_{s,F,F_0}(a,b)$ for some $a\in A_0$, $b\in A\setminus A_0$.
	
Moreover, $\UU^{[\gamma']}_{exc}\neq\emptyset$ only if $|A| = 4$.
\end{prop}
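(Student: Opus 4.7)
The plan is to unwind the definitions and apply the case analysis of \Cref{lem815}(a). By construction, $\ov\chi\in\ovEE(\cC)$ if and only if $\Gamma_{s,F,F_0}^{-1}(\ov\chi)\cap\UU'(s,F,F_0)^{[\gamma']}=\emptyset$, i.e.\ the cardinality computed in \Cref{lem815}(a) vanishes. Inspecting the three cases of that lemma, vanishing occurs exactly in the middle one: $\omega_s(A_\phi)=\spannhHnull$ (whence $|A_\phi|=2$ by injectivity of $\omega_s$) combined with $\phi\in M_{s,F,F_0}(a,b)^{[\gamma']}$ satisfying $b\in A\setminus A_0$.

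The forward inclusion is then immediate: any $\phi\in\UU^{[\gamma']}_{exc}$ sits above some $\ov\chi\in\ovEE(\cC)$, hence $\phi\notin \UU'(s,F,F_0)^{[\gamma']}$ and $|A_\phi|=2$ by the observation above. For the reverse inclusion, pick $\phi\in\UU(s,F,F_0)^{[\gamma']}\setminus\UU'(s,F,F_0)^{[\gamma']}$ with $|A_\phi|=2$ and (in the chosen copy inside the disjoint union) write $\phi\in M_{s,F,F_0}(a,b)^{[\gamma']}$ with $a\in A_0$. Then $\phi\notin\UU'(s,F,F_0)^{[\gamma']}$ forces $b\notin A_0$. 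Moreover $\ov\chi:=\Gamma_{s,F,F_0}(\phi)$ is $\gamma$-invariant, so the $A$-orbit of $\phi$ is $\gamma'$-stable, and since $A$ is abelian $A_\phi$ itself is $\gamma'$-stable. In the situation at hand (forced to be $|A|=4$, see below), the unique $\gamma'$-stable subgroup of $A$ of order $2$ is $A_0$ (an elementary check in either $\Z/4$ or $(\Z/2)^2$), hence $A_\phi=A_0$ and $\omega_s(A_\phi)=\omega_s(A_0)=\spannhHnull$ via the description of $\Z(\bH_0)^\gamma$ in \Cref{not2_1}. So we sit in the middle case of \Cref{lem815}(a), giving $\ov\chi\in\ovEE(\cC)$ and $\phi\in\UU^{[\gamma']}_{exc}$.

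The "in particular" clause records the condition $b\in A\setminus A_0$ extracted along the way. The final assertion that $\UU^{[\gamma']}_{exc}\neq\emptyset$ forces $|A|=4$ is already the last sentence of \Cref{lem815}(a); it is also visible directly here, since $A\hookrightarrow\Z(\bH_0)$ gives $|A|\in\{1,2,4\}$, and $|A|\leq 2$ makes every automorphism $\gamma'$ of $A$ trivial, so $A_0=A$ and no $b\in A\setminus A_0$ exists. The main bookkeeping care throughout is that the $M_{s,F,F_0}(a,b)^{[\gamma']}$ are formal summands of a disjoint union, so the statement $\phi\notin\UU'(s,F,F_0)^{[\gamma']}$ must be read relative to the specific index $(a,b)$ under which $\phi$ is being considered.
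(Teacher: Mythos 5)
Your proof is correct and follows essentially the same route as the paper: both inclusions are read off from the case analysis of \Cref{lem815}(a), with the converse coming down to the identification $A_\phi=A_0$ (so that the fibre over $\Gamma_{s,F,F_0}(\phi)$ misses $\UU'(s,F,F_0)^{[\gamma']}$). The only difference is cosmetic: the paper obtains $A_\phi=A_0$ via $[b,\gamma']\in A_\phi$ as in the proof of \Cref{lem815}, while you use $\gamma'$-stability of $A_\phi$ and uniqueness of the $\gamma'$-stable order-$2$ subgroup of $A$; both justifications are valid.
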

Note that from the definition it is not clear that $\ov\EE(\calC)$ depends only on $\calC$, since it is defined using $s$ and other particular choices. The proof of \Cref{Ainfty_D} in \ref{ssec5D} will show that $\ov \EE(\cC)$ is the set of orbit sums in $\ov\cE(\GF,\cC)$, which have a $D$-invariant constituent that does not extend to $\GF D$. As such, it is uniquely defined by $\cC$. 
\begin{proof}[Proof of \Cref{cor816}]
\Cref{lem815} implies the inclusion
\begin{align*}
	\Gamma_{s,F,F_0}^{-1}(\ovEE(\calC))&\subseteq
	\left\{ \left. \phi \in \UU(s,F,F_0)^{[\gamma']} \setminus \UU'(s,F,F_0)^{[\gamma']}  \ 
	\right| \, |A_\phi|=2 \right\}.
\end{align*}
Conversely, let $\phi \in \UU(s,F,F_0)^{[\gamma']} \setminus \UU'(s,F,F_0)^{[\gamma']} $ with $|A_\phi|=2$. Then $\phi\in M_{s,F,F_0}(a,b)$ for some $(a,b)\in A_0 \times (A\setminus A_0)$. From \Cref{prop5_12} we see that $\ov \chi:= \Gamma_{s,F,F_0}(\phi)\in \ocE(\GF,\calC)$. The other elements of $\Gamma_{s,F,F_0}^{-1}(\ov \chi)$ are $A$-conjugates and are contained in $M_{s,F,F_0}(a,b')$ with $b'\in b A_\phi=b A_0$, see \Cref{cor3_13} and hence none is contained in $\UU'(s,F,F_0)^{[\gamma']}$. This proves $\Gamma_{s,F,F_0}(\phi)\in\ovEE(\cC)$.
	\end{proof}
In the following we deduce from the map $f$ defined in \Cref{cor814} a bijection between $ \ocE(\GFnull,\cC)^{\spann<\gamma>}$ and a subset of $ \ocE(\GF,\cC)^D$. The existence of this bijection is  remarkable. Clearly the maps $\Gamma_{s,F,F_0}$ and $\Gamma_{s,F_0,F}$ relate the unipotent $\wGF$-orbit sums with unipotent $\wGFnull$-orbit sums. The following diagram with non-injective maps can't be made commutative since there exist characters $ \phi,\phi' \in \UU'(s,F,F_0)$ with $\Gamma_{s,F,F_0}(\phi)=\Gamma_{s,F,F_0}(\phi')$ and $\Gamma_{s,F_0,F}(f(\phi))\neq \Gamma_{s,F_0,F}(f(\phi'))$. 
	\begin{align}
		\xymatrix{
			\UU'(s,F,F_0)^{[\gamma']} \ar[rr]^f \ar[d]_{\Gamma_{s,F,F_0}} 
			&& \UU'(s,F_0,F)^{[\gamma']} \ar[d]^{\Gamma_{s,F_0,F}}
			\\
			\ocE(\GF,\cC)^{D}&&\ocE(\GFnull,\cC)^{\spann<\gamma>}
		}
	\end{align}
	Recall that $\UU'(s,F,F_0)$ is a disjoint union and hence $\phi$ and $\phi'$ can be chosen to be the same character but seen as two different elements of $\UU'(s,F,F_0)$. As $\UU'(s,F,F_0)$ is a disjoint union we find some character $ \phi'$ that can be seen as an element of $M_{s,F,F_0}(a,b)$ and one of $M_{s,F,F_0}(a,b')$ for some $b,b'\in A$ with $b\neq b'$. 
	If we write $\phi_b \in M_{s,F,F_0}(a,b)$ and $\phi_{b'}\in M_{s,F,F_0}(a,b')$ for the character $\phi'$ in the specific subsets, then $f(\phi_b)$ and $f(\phi_{b'})$ are characters of two different groups $\bGos^{v_bF_0}$ and $\bGos^{v_{b'}F_0}$, and hence they are not contained in the same $A$-orbit. Accordingly, $\Gamma_{s,F_0,F}$ sends them to orbit sums in two different rational series.

Nevertheless, the maps $f$, $\Gamma_{s,F,F_0}$ and $\Gamma_{s,F_0,F}$ allow for the construction of two bijections, which we call ${\operatorname {desc}}$ and $\desc'$ because of their similarity with Shintani descent in relating representations of $\GF$ and $\bG^{F_0}$.

The following diagram relates the maps $f$, $\Gamma_{s,F,F_0}$ and $\Gamma_{s,F_0,F}$, as well as the maps $\operatorname{desc}$, $\operatorname{desc}'$ and the set $\wt\EE(\calC) $ introduced in the next theorem.

\begin{align}\label{diadesc}	
\xymatrix{
\UU(s,F,F_0)^{[\gamma']}\ar[d]_{\Gamma_{s,F,F_0}}& \supseteq&
\UU'(s,F,F_0)^{[\gamma']} \ar[d]_{\Gamma_{s,F,F_0}} \ar[r]^f_\sim&
		\UU'(s,F_0,F)^{[\gamma']}\ar[d]^{\Gamma_{s,F_0,F}}	\\
			\ocE(\GF,\calC)^{\spa{F_0,\gamma}} &\supseteq&
			\ocE(\GF,\calC)^{\spa{F_0,\gamma}} \setminus \ovEE(\calC) \ar@/^-2pc/@{.>}[r]^{\desc}_\sim		&
			\ocE(\GFnull,\calC)^{\spann<\gamma>}\\
			\cE(\GF,\calC)^{\spa{F_0,\gamma}} \ar[u]^{\Pi_{\wGF}}&\supseteq
            &\cE(\GF,\calC)^{\spa{F_0,\gamma}} \setminus \wt\EE(\calC) \ar@/^-2pc/@{.>}[r]^{\desc'}_\sim \ar[u]^{\Pi_{\wGF}}
            &\cE(\GFnull,\calC)^{\spann<\gamma>}\ar[u]_{\Pi_{\wGFnull}}
}
\end{align}

\begin{theorem}\label{thm_desc} 
    We keep $\bG$, $F$, $F_0$, and $D$ satisfying  \Cref{hyp76}.
\begin{thmlist}
\item For the set $\ovEE(\cC)$ from \Cref{cor816} there exists a bijection  
\[ \desc:\ocE(\GF,\cC)^D\setminus \ovEE(\cC)\stackrel\sim\lra \ocE(\GFnull,\cC)^{\spann<\gamma>}, \]
    such that 
    \[|\Irr(\ov \chi)|= |\Irr({\operatorname {desc}}(\ov \chi))|\forevery \ov\chi\in \ocE(\GF,\cC)^D\, \setminus \, \ovEE(\cC).\]
    \item Let $\wt \EE(\cC)=\bigcup_{\ov \chi\in\ov \EE(\cC)}\Irr(\ov \chi)^D$. Then there exists a bijection  
\[ \desc': \cE(\GF,\cC)^D\setminus \wt \EE(\cC) \stackrel{\sim}{\lra} \cE(\GFnull,\cC)^{\spa \gamma}.\] 
\end{thmlist}    
\end{theorem}

\begin{proof}
Before giving the proof of part (a) by constructing $\desc$ we first show how $\desc'$ can be obtained. Clearly,  $\desc$ yields a correspondence between $D$-stable  $\wGF$-orbits in $\cE(\GF,\cC) \setminus \wt \EE(\cC)$ and the $\gamma$-stable  $\wGFnull$-orbits in $\cE(\GFnull,\cC)$ such that corresponding orbits have the same length.  
Since we know that $\GF$ and $\GFnull$ satisfy Condition $\Ap$ through \Cref{hyp_cuspD_ext} for lower ranks by \Cref{thm_typeD1} and \Cref{thm41}, this leads to a bijection $\desc'$ as claimed: 
 If $\ov\chi\in \ocE(\GF,\cC)^D\setminus\ov\EE(\cC)$ and $\ov \chi_0=\desc(\ov\chi)$, the equality $|\Irr(\ov\chi)|=|\Irr(\ov\chi_0)|$ implies  \[ |\Irr(\ov \chi)^D|=|\Irr(\ov \chi_0)^{\spa{\gamma'}}|,\]
see \Cref{lem815}. Hence we can define $\desc'$ as a bijection mapping $\Irr(\ov \chi)^D$ to $\Irr(\ov\chi_0)^{\spa{\gamma}}$ where $\desc(\ov\chi)=\ov\chi_0$. 

Now we construct the bijection $\desc$.  If we set 
\[ \UU_{A'}:=\{\phi\in {\UU'(s,F,F_0)}^{[\gamma']} \mid A_\phi=A' \} \und \UU_{0,A'}:=\{\phi\in {\UU'(s,F_0,F)}^{[\gamma']} \mid A_\phi=A' \} \]
for every $\gamma$-stable $A'\leq A$, we see that 
\[ 
\UU'(s,F,F_0)^{[\gamma']}=\bigsqcup_{A'} \UU_{A'} \und 
\UU'(s,F_0,F)^{[\gamma']}= \bigsqcup_{A'}\UU_{0,A'},\]
where $A'$ runs over the $\gamma$-stable subgroups of $A$. By the properties of $\Gamma_{s,F,F_0}$ and $\Gamma_{s,F_0,F}$ we also have 
\[ \ocE(\GF,\calC)^D\setminus \ovEE(\calC)=\bigsqcup_{A'} \Gamma_{s,F,F_0}(\UU_{A'})   \und 
\ocE(\GFnull,\calC)^{\spa{\gamma'}}=\bigsqcup_{A'} \Gamma_{s,F_0,F}(\UU_{0,A'}), \]
where the sets $\Gamma_{s,F,F_0}(\UU_{A'})$ are disjoint for different groups $A'$ and analogously the sets $\Gamma_{s,F_0,F}(\UU_{0,A'})$ are disjoint.
The required bijection $\desc$ can be obtained by choosing bijections between $\Gamma_{s,F,F_0}(\UU_{A'})$ and $\Gamma_{s,F_0,F}(\UU_{0,A'})$ for each $A'$ provided  $|\Gamma_{s,F,F_0}(\UU_{A'})|=|\Gamma_{s,F_0,F}(\UU_{0,A'})|$ for all $\gamma'$-stable subgroups $A'\leq A$. 

Let us fix a $\gamma'$-stable $A'\leq A$ and let $f: {\UU'(s,F,F_0)}^{[\gamma']} \lra {\UU'(s,F_0,F)}^{[\gamma']}$ be the bijection from \Cref{cor814}. By its construction $f$ is $A$-equivariant on each set $M_{s,F,F_0}(a,b)$ for every $a\in A_0$ and $b\in A$. Every $\phi \in \UU_{A'}$ satisfies accordingly $A_{f(\phi)}=A'$. This implies 
\[f(\UU_{A'})=\UU_{0,A'}\] 
and hence $|\UU_{A'}|=|\UU_{0,A'}|$, as $f$ is bijective. The maps can be seen in the following diagram.
\begin{align*}
			\xymatrix{
				&\UU_{A'}\cap M_{s,F,F_0}(a,b) \ar@{^{(}->}[d]\ar[rr]^{f_{s,v_aF,v_bF_0}} && \UU_{0,A'}\cap M_{s,F_0,F}(b,a) \ar@{^{(}->}[d]\\
				\ocE(\GF,\calC)^D\setminus \ovEE(\calC)
				\ar@/_3pc/[rrrr]^{\desc}
				&
				\ar[l]_{\quad \Gamma_{s,F,F_0}}\UU_{A'} \ar[rr]^{\restr f|{\UU_{A'}}}&& \UU_{0,A'}\ar[r]^{\Gamma_{s,F_0,F}}&\ov\cE(\GFnull,\calC)^{\spann<\gamma'>}
			}
\end{align*}
Note that the lower part is in general not commutative, with possibly $\Gamma_{s,F_0,F}\circ \restr f|{\UU_{A'}}\neq \desc\circ \Gamma_{s,F,F_0}$.
Let $\mathfrak{a}:=|A'|$ if $A'\neq A$ and $2$ otherwise. By \Cref{cor816} 
$$|\Gamma_{s,F,F_0}^{-1}(\ov \chi )\cap \UU_{A'}|=\mathfrak{a} \forevery \ov \chi\in \Gamma_{s,F,F_0}(\UU_{A'})$$
and
\[|\Gamma_{s,F_0,F}^{-1}(\ov \chi_0)\cap \UU_{0,A'}|=\mathfrak{a} \forevery \ov \chi_0\in \Gamma_{s,F_0,F}(\UU_{0,A'}).\]
Hence
\[|\Gamma_{s,F,F_0}(\UU_{A'})|= \frac{|\UU_{A'}|}{\mathfrak{a}}= \frac{|\UU_{0,A'}|}{\mathfrak{a}}= |\Gamma_{s,F_0,F}(\UU_{0,A'})|.\]
We define $\desc$ on $\Gamma_{s,F,F_0}(\UU_{A'})$ as an arbitrary bijection between $\Gamma_{s,F,F_0}(\UU_{A'})$ and $\Gamma_{s,F_0,F}(\UU_{0,A'})$. That way we obtain the bijection $\desc$. 

It remains to verify $|\Irr(\ov \chi)|=|\Irr(\desc(\ov\chi)|$ for every $\ov \chi\in\ocE(\GF,\cC)^D\setminus\ov\EE(\cC)$. Given $\ov\chi$ there exists  some $\gamma'$-stable subgroup $A'\leq A$ such that $\ov\chi\in \Gamma_{s,F,F_0}(\UU_{A'})$. Then $\ov\chi_0:=\desc(\ov \chi)$ is contained in $\Gamma_{s,F_0,F}(\UU_{0,A'})$. Hence the characters $\ov \chi$ and $\ov\chi_0$ satisfy
\[ |\Irr(\ov \chi)|=\frac{|A|}{|A'|} \und |\Irr(\ov \chi_0)|=\frac{|A|}{|A'|},\]
see \Cref{labelEGFFnull}(ii).
\end{proof}
For a later application of \Cref{cor79} we construct from $\ovEE(\calC)$ a set having the cardinality required in \Cref{cor79}(i). 
\begin{prop} \label{prop818}
Assume \Cref{hyp_cuspD_ext} for all $4\leq l'<l$ and $1\leq m'$. Let $\II EEcalC@{\wt\EE(\calC)}:= \bigcup _{ \ov \chi\in \ovEE(\calC)} \Irr(\ov \chi)^D$ 
and $\EEnull(\calC)$ be a $\wGF$-transversal in $\wt\EE(\calC)$. Then
$$|\EEnull(\calC)|= \frac 1 2 |\cE(\GF,\calC)^D| -\frac 1 2 |\cE(\GFnull,\calC)^{\spann<\gamma>}|.$$
Note that $|\AHs|=4$ for every $s\in \cC$, whenever $\wt\EE(\calC)\neq \emptyset$.
\end{prop}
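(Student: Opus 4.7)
The plan is to count $|\EEnull(\calC)|$ by first establishing $|\EEnull(\calC)|=|\ov\EE(\calC)|$, and then using the bijection $\desc$ of \Cref{thm_desc} to pair up the remaining contributions in $\cE(\GF,\calC)^D$ and $\cE(\GFnull,\calC)^{\spa{\gamma}}$. The key technical input will be the precise fiber description of $\Gamma_{s,F,F_0}$ above $\ov\EE(\calC)$ supplied by \Cref{cor816}.

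First, we would verify the ``note'' at the end and show that each $\ov\chi\in\ov\EE(\calC)$ has exactly two irreducible constituents, all of them $D$-invariant. By \Cref{cor816}, whenever $\ov\EE(\calC)\neq\emptyset$ one has $|\AHs|=4$, and each element $\phi\in\UU^{[\gamma']}_{exc}$ satisfies $|\AHs_\phi|=2$. Applying \Cref{eq_labelEGFFnull} of \Cref{labelEGFFnull} then forces $|\Irr(\ov\chi)|=|\AHs|/|\AHs_\phi|=2$ for $\ov\chi=\Gamma_{s,F,F_0}(\phi)$. Since $|\Irr(\ov\chi)|\neq 4$, \Cref{lem817}(b) yields $|\Irr(\ov\chi)^D|=|\Irr(\ov\chi)|=2$, so every irreducible constituent of $\ov\chi$ is $D$-invariant. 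Consequently $\wt\EE(\calC)$ decomposes as a disjoint union of the two-element $\wGF$-orbits $\Irr(\ov\chi)$ indexed by $\ov\chi\in\ov\EE(\calC)$, and a $\wGF$-transversal therefore has cardinality $|\ov\EE(\calC)|$.

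Next, we would carry out the counting. The splitting
\[
|\cE(\GF,\calC)^D|=\sum_{\ov\chi\in\ov\cE(\GF,\calC)^D\setminus\ov\EE(\calC)}|\Irr(\ov\chi)^D|\ +\ 2|\ov\EE(\calC)|,
\]
where the second summand comes from the previous step, reduces matters to identifying the first sum with $|\cE(\GFnull,\calC)^{\spa{\gamma}}|$. For this we would invoke $\desc\colon\ov\cE(\GF,\calC)^D\setminus\ov\EE(\calC)\xrightarrow{\sim}\ov\cE(\GFnull,\calC)^{\spa{\gamma}}$ from \Cref{thm_desc} and compare term by term. Since $\desc$ preserves $|\Irr(\cdot)|$, and since the formulas in \Cref{lem817}(b) and \Cref{lem817}(c) depend only on $|\Irr(\cdot)|$ together with the central conditions $[\Z(\GF),D]\neq 1$ and $[\Z(\GFnull),\gamma]\neq 1$, it remains to check that these two central conditions are equivalent. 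Under \Cref{hyp76} we have $\Z(\GF)=\Z(\GFnull)=\Z(\bG)$ and $F_0$ acts trivially on $\Z(\bG)$, so both conditions reduce to whether $\gamma$ acts non-trivially on $\Z(\bG)$. The required equality $|\Irr(\ov\chi)^D|=|\Irr(\desc(\ov\chi))^{\spa{\gamma}}|$ then holds for each $\ov\chi$ in the domain of $\desc$, yielding $\sum_{\ov\chi\notin\ov\EE(\calC)}|\Irr(\ov\chi)^D|=|\cE(\GFnull,\calC)^{\spa{\gamma}}|$ and hence the claimed formula after rearrangement.

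The main subtlety will be the opening step: combining the fiber description of \Cref{cor816} with the stabilizer count \Cref{eq_labelEGFFnull} to guarantee that every orbit sum in $\ov\EE(\calC)$ has length exactly two and that every one of its constituents is $D$-invariant. Without this, passing to the $\wGF$-transversal would demand a more intricate orbit-theoretic reduction rather than the simple halving observed here. Once this is in hand, the comparison with $\cE(\GFnull,\calC)^{\spa{\gamma}}$ is essentially a term-by-term bookkeeping driven by $\desc$ and the hypothesis on the action of $F_0$ on $\Z(\bG)$.
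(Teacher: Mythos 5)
Your proposal is correct and takes essentially the same route as the paper: the bijection $\desc$ of \Cref{thm_desc} combined with \Cref{lem817} matches $D$-invariant constituents outside $\ovEE(\calC)$ with the $\gamma$-invariant characters of $\cE(\GFnull,\calC)$, while the fibre description of $\Gamma_{s,F,F_0}$ over $\ovEE(\calC)$ shows each such orbit sum has exactly two constituents, both $D$-invariant, giving the halving when passing to the $\wGF$-transversal. One small correction: \Cref{eq_labelEGFFnull} states $|\Irr(\ov \chi)|=|\AHs^F_\phi|$, not $|\AHs|/|\AHs_\phi|$; since here $\AHs^F=\AHs$ has order $4$ and $|\AHs_\phi|=2$, both expressions equal $2$, so your conclusion is unaffected.
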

\begin{proof}
The bijection $$\desc': \cE(\GF,\calC)^D\setminus\wt\EE(\calC) \lra \cE(\GFnull,\calC)^{\spann<\gamma>}$$ 
from \Cref{thm_desc}(b) implies the equality  \begin{align*}
|\cE(\GF,\calC)^D\setminus\wt \EE(\calC)|&= 
|\cE(\GFnull,\calC)^{\spann<\gamma>}|
\end{align*} 
and hence 
\begin{align}\label{eq517}
|\cE(\GF,\calC)^D|-|\wt \EE(\calC)|&= |\cE(\GFnull,\calC)^{\spann<\gamma>}|.
\end{align}
Let $\ov \chi\in \ovEE(\calC)$ and $\phi \in\Gamma_{s,F,F_0}\inv(\ov \chi)$. Then $\omega_s(A_\phi)=\spann<h_0^{(\bH_0)}>$ according to \Cref{lem815}(a) and therefore  $|\Irr(\ov \chi)|=2$ by \Cref{labelEGFFnull}(i). Then both characters in $\Irr(\ov \chi)$ are $D$-stable. As $\EEnull(\calC)$ is a $\wGF$-transversal in $\wt\EE(\calC)$ we see $2|\EEnull(\calC)|=|\wt \EE(\calC)|$. In combination with \eqref{eq517} this proves the stated equality.

Whenever $\wt\EE(\calC)\neq \emptyset$ the set $\UU_{exc}^{[\gamma']}$ is also non-empty. This in turn implies $|A|=4$, see \Cref{cor816}.
\end{proof}

\subsection{Properties of $\wt \EE(\cC)$ for $\cC\in \frakC_0$}\label{5E}
Let us  keep $\bG$, $F$, $F_0$, and $D$ satisfying  \Cref{hyp76}.
We conclude the chapter with more properties of $\chi\in\wt\EE(\cC)$ that will lead to non-extendibility properties in the next chapter.
Recall that $\wt \JJ$ denotes the Jordan decomposition of $\Irr(\wGF)$ from \Cref{JorTilde}. The following is an analogue of \Cref{prop5_13} for $\frakC_0$.
\begin{prop}\label{prop522}
Let $\cC\in\frakC_0$ and $\chi\in\wt \EE(\cC)$, see \Cref{prop818}. Then there exists some $\si'\in \HF F_0\subseteq \HF \uE(\bH)$ and some $\wt \chi\in\Irr(\wGF\mid \chi)$ such that 
	\begin{thmlist}
		\item $h_0\in\ker(\chi)$;
		\item $\restr \wt\chi|{\GF}\neq \chi$ and $\chi$ is invariant under diagonal automorphisms associated to $\spannh$ via (\ref{cZF});
		\item some $(\wt s,\varphi)\in \wt \JJ(\wt \chi)$, $\wt s \in \wt \bH^F$, $s_0\in \bH_0\cap \wt s\Z(\wt \bH)$ and $z'\in \Z(\wbH)$ satisfy $\si '.(\wt s, \varphi)=( \wt s z', \varphi)$ and $[s_0,\si ']\in\Z(\bH_0)\setminus\spa{ h_0^{(\bH_0)}}$. 
	\end{thmlist}
\end{prop}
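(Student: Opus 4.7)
Parts (a) and (b) will follow by the same general arguments as in \Cref{lemfrakC1}(a,b), once the cardinality $|\Irr(\ov\chi)|=2$ is extracted from \Cref{cor816}. For (a), since $\chi\in\cE(\GF,\cC)^D$ is $\gamma$-invariant, its central character $\restr\chi|_{\Z(\GF)}$ is $\gamma$-fixed and hence trivial on $[\Z(\GF),\gamma]=\spannh$, so $h_0\in\ker(\chi)$. For (b), \Cref{cor816} furnishes $\phi\in\Gamma_{s,F,F_0}^{-1}(\ov\chi)\cap\UU^{[\gamma']}_{exc}$ with $|A_\phi|=2$, whence $|\Irr(\ov\chi)|=|\AHs|/|A_\phi|=2$ by \Cref{eq_labelEGFFnull}. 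Thus the $\wGF$-orbit of $\chi$ has length $2$ and $\wGF_\chi$ is a $\gamma$-stable index-$2$ subgroup of $\wGF$; through (\ref{cZF}) and $[\Z(\bG),F]=1$ (\Cref{hyp76}), $\wGF_\chi/(\GF\Z(\wGF))$ corresponds to the unique $\gamma$-stable order-$2$ subgroup $\spannh$ of $\Z(\bG)=\calZ_F$. Hence $\chi$ is fixed under the diagonal automorphism attached to $h_0$ but $\restr\wt\chi|_\GF\neq\chi$.

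The substance of the proposition is (c), which will parallel \Cref{prop5_13} but with a key twist: in $\frakC_1$ the non-triviality of $[s_0,F_0']$ modulo $\spannhHnull$ was supplied by the structural input \Cref{lem511}(iii), whereas here it must come from the combinatorial datum $b\in A\setminus A_0$ provided by \Cref{cor816}. I will pick $\phi\in\UU^{[\gamma']}_{exc}\cap\Gamma_{s,F,F_0}^{-1}(\ov\chi)$ lying in some $M_{s,F,F_0}(a,b)$ with $a\in A_0$, $b\in A\setminus A_0$, so that $\phi$ is $v_bF_0$-invariant, and set $\si':=\iota_a^{-1}(v_bF_0)$ using the conjugation isomorphism $\iota_a$ from \Cref{78}. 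Since the elements of $\wc A(s)$ are pairwise commuting and fixed by $F$ and $F_0$, the endomorphism $v_bF_0$ commutes with $v_aF$, so $\si'$ commutes with $\iota_a^{-1}(v_aF)=F$; explicitly $\si'=h_{a,b}F_0$ with $h_{a,b}:=g_a^{-1}v_bF_0(g_a)\in\HF$, in particular $\si'\in\HF F_0$. By construction $\si'(s_a)=s_a$ and $\phi_a:=\iota_a^*(\phi)\in\UCh(\Cent_\bH^\circ(s_a)^F)$ is $\si'$-invariant.

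To pass to $\Jor(\wGF)$, I would then pick any $\wt\chi\in\Irr(\wGF\mid\chi)$ and $(\wt s,\varphi)\in\wt\JJ(\wt\chi)$ with $\wt\pi(\wt s)=s_a$, as provided by the commutative diagram of \Cref{Jordandec}. The bijection $\UCh(\Cent_\wbH(\wt s)^F)\longleftrightarrow\UCh(\Cent_\bH^\circ(s_a)^F)$ induced by $\wt\pi$ (see (\ref{UchKK})) matches $\varphi$ with $\phi_a$, so $\varphi$ is $\si'$-invariant; meanwhile $\wt\pi(\si'(\wt s))=\si'(s_a)=s_a=\wt\pi(\wt s)$ yields $\si'(\wt s)=\wt s z'$ for some $z'\in\ker(\wt\pi)\leq\Z(\wbH)$, hence $\si'.(\wt s,\varphi)=(\wt s z',\varphi)$.

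Finally, \Cref{cor816} forces $|\AHs|=4$, so by \Cref{not5_10} I may assume $s\in\pi(\bH_0^{F_0})$ and choose an $F_0$-fixed lift $\wt s_0\in\bH_0^{F_0}$ of $s$. Then $s_0:=\wt g_a^{-1}\wt s_0\wt g_a\in\bH_0\cap\wt s\Z(\wbH)$ is a lift of $s_a$ for any $\wt g_a\in\bH_0$ lifting $g_a$. A short calculation using $\wt v_b\wt s_0\wt v_b^{-1}=\omega_s(v_b)\wt s_0$ (the defining property of $\omega_s$ from \ref{ssclasses}) together with $F_0(\wt s_0)=\wt s_0$ yields $\si'(s_0)=\omega_s(v_b)s_0$, whence $[s_0,\si']=\omega_s(v_b)^{\pm 1}$. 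Since $\omega_s$ is $\gamma'$-equivariant and $\omega_s(A_0)=\Z(\bH_0)^{\gamma'}=\spannhHnull$, the condition $b\in A\setminus A_0$ delivers $\omega_s(v_b)\in\Z(\bH_0)\setminus\spannhHnull$ as required. The main technical obstacle will be carefully transporting all invariance and commutator data through the isomorphism $\iota_a$ and the isogeny $\wt\pi$; once this bookkeeping is done, the concluding step is a purely central commutator computation in $\bH_0$.
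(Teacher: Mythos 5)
Your proposal is correct and follows essentially the same route as the paper's proof: (a) and (b) come from the $\gamma$-invariant central character and the orbit length $2$ forced by $\phi\in\UU^{[\gamma']}_{exc}$, and (c) is obtained, exactly as in the paper, by transporting the $v_bF_0$-invariance of $\phi\in M_{s,F,F_0}(a,b)$ with $b\in A\setminus A_0$ through $\iota_a$ and $\wt\pi$ and computing the central commutator $[s_0,\si']=\omega_s(v_b)^{\pm1}\in\Z(\bH_0)\setminus\spannhHnull$ from the $\gamma'$-equivariance (and injectivity) of $\omega_s$ — the paper merely makes your ``canonical lifts'' explicit via $v_{b,0},g_{a,0}\in\bH_0$. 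Two harmless imprecisions: the orbit length is $|\AHs^F_\phi|$ by \Cref{labelEGFFnull}(c), not $|\AHs|/|A_\phi|$ (both equal $2$ here since $|\AHs|=4$), and matching $\varphi$ with $\phi_a$ may require one further adjustment of the representative $(\wt s,\varphi)$ by an element of $\wbH^F$ lying over $\Cent_\bH(s_a)^F$, which is covered by the bookkeeping you acknowledge.
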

For what follows we need to fix some more notation. 
\begin{notation}\label{not5_24}
Let $\cC\in \frakC_0$, $\chi\in\wt \EE(\calC)$ and $\ov\chi:=\Pi_\wGF(\chi)\in\ovEE(\calC)$. According to the definition of $\ovEE(\calC)$ in \Cref{cor816}, we see $\ov \chi\in \ov \cE(\GF,\calC)^D$, hence $\ov \chi$ lies in the image of the map $\Gamma_{s,F,F_0}$ from \Cref{labelEGFFnull}, see also \Cref{prop5_12}. 
Recall that $\wt\EE(\calC)\neq \emptyset $ implies $|\AHs|=4$ for every $s\in \cC$ according to \Cref{prop818}.

The set $\wt \EE(\cC)$ is defined after choosing an element $s\in \cC^{F_0}$ in \Cref{not5_10_1}. According to \cite[Thm~B]{CS22} there exists some $s'\in \cC\cap \pi(\bH_0^{F_0})$ with $\gamma(s')\in [s']_\HFnull$, where $\pi: \bH_0 \ra \bH$ is from (\ref{piH0}). We assume $s$ to have those properties.
\end{notation}

The first two properties of $\chi$ given in \Cref{prop522} imply that every $\wGF$-conjugate $\chi'$ of $\chi\in\wt\EE(\calC)$ is $D$-invariant by the structure of $\Out(\GF)$ and hence $\chi'\in\wt\EE(\calC)$.
\begin{proof}[Proof of \Cref{prop522}]
	Let $\ov \chi=\Pi_{\wGF}(\chi)$ and $\phi\in \Gamma^{-1}_{s,F,F_0}(\ov \chi)$. Notice that $\ov \chi\in \ovEE(\calC)$. By the definition of $\wt\EE(\calC)$ we observe that $\phi\in \UU^{[\gamma']}_{exc}$ (see \Cref{cor816}) and hence by \Cref{lem815} the character $\phi$ satisfies $\omega_s(A_\phi)=\spannhHnull$. Hence $|\wGF:\wGF_\chi|=2$. We can assume that Condition $\Ap$ holds and then some character $\chi'\in \Irr(\ov \chi)$ is $D$-invariant. As $\chi'$ is $\gamma$-invariant, the group $\wGF_{\chi'}$ is $\gamma$-stable and hence the diagonal automorphisms stabilizing $\chi'$ correspond to $\spannh$. This is part (b). 
	
	Since $\Out(\GF)_\chi \unlhd \Out(\GF)$ (unless $\bG$ is of type $\tD_4$), we see that every character in $\Irr(\ov \chi)$ is also $D$-invariant. When $\bG$ is of type $\tD_4$, an easy calculation in ${\mathcal{Z}}_F\rtimes E(\bG^F)$ shows that any $\wGF$-conjugate of $\chi $, for instance $\chi '$, is $D$-invariant as well.

Recall that $|\Z(\GF)|=4$ and note that by the assumption on $D$ we have $[\Z(\GF),D]=\spannh$. Let $\{\nu \}=\Irr(\restr \chi|{\Z(\GF)})$. As $\chi$ and hence $\nu$ is $D$-invariant, we have $\nu ([\Z(\GF),D] )=1$ and therefore part (a).

We now focus on the verification of part (c). 
Recall that as pointed in \Cref{not5_10_1} some $s_0'\in \bH_0^F$ exists with $\pi(s_0')=s$. For further considerations we again make use of the group  $\wc A(s)\leq \bGs$ and $\gamma'\in \Cent_\HFnull(s)\gamma$ associated to $s$ from \Cref{cor_2Becht} fixed already in \Cref{not5_10_1}. This group is by construction $\spa{F_0,\gamma'}$-stable and hence it satisfies 
$\Cent_\bH(s)=\Cent^\circ_{\bH}(s) \rtimes \wc A(s)$.  

Recall $A=\AHs$ and $A_0:=\Cent_A(\gamma')$. 
In the following we denote by $v_a$ the element of $\wc A(s)$ corresponding to $a\in A$.  Then there exist $a\in A_0$ and $b\in A\setminus A_0$ with $\phi\in M_{s,F,F_0}(a,b)$ and $|A_\phi|=2$ by \Cref{cor816}. Hence the $\wc A(s)$-orbit of $\phi$ contains two characters and 
$$\phi\in\UCh(\Cent^\circ_\bH(s)^{v_a F})^{\spa{v_b F_0}}.$$

Let $\phi'\in \oUCh(\Cent^\circ_\bH(s)^{v_a F})$ be the $\wc A(s)$-orbit sum, see \Cref{not3_4}. 
This leads to $(s, \phi') \in \ov \JJ(\ov \chi)$ for the Jordan decomposition $\ov \JJ$ from \Cref{Jordandec} if $a=1$.

Let $s_a\in \cC$ and $\iota_a$ be defined as in \Cref{cor_CS22} and \Cref{78} using the element $g_a\in \bH$ with $F^{g_a}=v_a F$ from \Cref{noteps}.
Via the morphism $\iota_a$ the character $\phi$ defines $\phi_a\in \UCh(\Cent^\circ_\bH(s_a)^F)$. Then we write $\phi'_a$ for its $\Cent_\bH( s_a)^F$-orbit sum. By this construction the $\HF$-orbit of $(s_a,\phi'_a)$ belongs to $\ov\Jor(\GF)$ and $(s_a,\phi'_a)\in \ov \JJ(\ov\chi)$. 

Let $\III{v_{a,0}\in \pi\inv(v_a)}$, $\III{v_{b,0}\in \pi\inv(v_b)}$ and $g_{a,0}\in \pi\inv (g_a)$ such that $F^{g_{a,0}}=v_{a,0} F$. 
Further let $e$ be some integer with $(v_{a,0} F)^e= F^e$ and consider $\wh E:=\spa{\restr F_p|{\bH^{F^e}},\gamma}$ from \Cref{78} as subgroup of $\Aut(\bH^{F^e})$. Let 
$$\II{iotaa0}@{\iota_{a,0}: \wbH \rtimes \wh E \lra \wbH \rtimes \wh E 
	\text{ given by }x\mapsto x^{g_{a,0}}},$$ 
$\III{F'_{0,a}:=\iota_a\inv(F_0)}$ and 
$\II{gammaa}@{\gamma_{a}:=\iota_a\inv(\gamma')}$. We set $\III{s_{a,0}:=\iota_{a,0}\inv(s_0')}$ and 
$\II {stildea}@{\wt s_a}\in s_{a,0}\Z(\wbH)\cap \wbH^F$. Then $\wt \pi(\wt s_{a})=s_a$ and hence 
$\Cent^\circ_\bH( s_{a})=\wt\pi (\Cent_\wbH(\wt s_{a}))$.
Therefore $\phi_a$ defines a character $\wc \phi_a$ of $\UCh(\Cent_\wbH(\wt s_{a})^F)$. The $\wt\bH^F$-orbit of $(\wt s_a, \wc\phi_a)$ is contained in $\Jor(\wGF)$ and $\pi^*(\ov{(\wt s_a, \wc\phi_a)})= \ov{(s_a,\phi'_a)}$ for the map $\pi^*$ defined before \Cref{Jordandec}. Let $\wt \chi\in\cE(\wbG^F,[\wt s_a])$ be the character corresponding to $ (\wt s_a,\wc \phi_a)$ via $\wt \JJ$. Then $\wt \chi\in \Irr(\wGF\mid \chi)$, see \Cref{Jordandec}.

Let $\si':= \iota_a\inv ( v_{b,0} F_0 )$. Note that $F_0(v_b)=v_b$ as $\wc A(s)$ is $F_0$-stable and $F_0$ acts trivially on $\Z(\bH_0)$ and hence on $\AHs$. According to Lemma~\ref{lem3_15d} we see that $\si'\in \HF F_0$.  Additionally we see that $\si'(\wt s_a)\in \wt s_a\Z(\wbH_0)$. As $\phi$ is $v_b F_0$-invariant, the character $\phi_a$ is $\si'$-stable. Note that $\si'\in \HF F_0$. This leads to $\si'.(\wt s_a,\wc\phi_a)=(\si'(\wt s_a),\wc\phi_a)$. For $\wt s:=\wt s_a$, $\varphi:=\phi_a$ and $s_0:=s_{a,0}$  we have $(\wt s,\varphi)\in \wt \JJ(\wt\chi)$ such that $\si'.(\wt s,\varphi)=(\si'(\wt s),\varphi)$. Since by the construction $s$ is $\si'$-fixed, $\wt s$ satisfies $\si'(\wt s)\in \Z(\wt \bH^F) \wt s$.

Finally we have to compute $[\si', s_{a,0}]$, namely
\begin{align*}
 \si'(s_{a,0})= 
 \iota_a\inv \big( ^{v_{b,0}}(F_0 (s_{0})) \big)= \iota_a\inv ( ^{v_{b,0}}(s_{0}) )=\iota_a\inv ( \omega_s(v_{b}) s_0)= \omega_s(v_{b} )
 \iota_a\inv ( s_0)=\omega_s(v_{b} ) s_{a,0}.
\end{align*}
We see that $[\si',s_{a,0}]=\omega_s(v_b ) $. Recall $b\in A\setminus A_0$ and $\omega_s $ is $\gamma'$-equivariant. Hence $[\si',s_{a,0}]\in \Z(\bH_0)\setminus\spannhHnull$. This proves our last claim since $[ s_{0},\si']=[ s_{a,0},\si']=[\si', s_{a,0}]\inv$. 
\end{proof}
\section{The extendibility part of Condition $\Ai$}\label{SecEC}
We continue our considerations towards a proof of \Cref{Ainfty_D}. Hence, the next two sections are in the same framework as the preceding chapter, in particular $\GF=\tD_{l,\textrm{sc}}(p^{m})$ for some odd prime $p$, $m\geq 1$ and $l\geq 4$. We also keep $F_0$, and $D$ satisfying  \Cref{hyp76}.

In the last step of the proof of \Cref{Ainfty_D} we now ensure that the subsets $\EE(\cC)$ of $\cE(\GF,\cC)^D$ introduced in \ref{sec8B} and \ref{5E} (see \Cref{lemfrakC1} and \Cref{prop818}) can be chosen to contain no character extending to $\GF D$. This is derived from the properties established in \Cref{prop5_13} and \Cref{prop522}.  
Illustrating an elementary alternative discussed in \Cref{prop820}, we can establish in \Cref{cor822} that the sets $\EE(\cC)$ satisfy the requirements introduced in \Cref{cor79} and fulfil the desired condition of not extending to $\GF D$.  The intermediate quotient $\bG/\spa{h_0}=\SO_{2l}(\FF)$ and how it inserts in the duality between $\bG$ and $\bH$ plays an important part. 

This allows us in \ref{ssec5D} to conclude our proof of \Cref{Ainfty_D} and the main theorems of the paper.

\subsection{Characters extending to $\SO_{2l}(q) $}\label{sec8C}

Our goal here is  \Cref{prop821} showing a non-extendibility in the situation of \Cref{prop522}. \Cref{lem520} is a slight generalization of  \cite[Prop. 2.18]{TypeD1} that leads naturally to a discussion around the duality between $\bG$ and $\bH$. 

The following general result on the extension of characters will be key and motivates the considerations afterwards. 
\begin{lem}\label{prop820}
Let $X\unlhd \wh X$ be finite groups with natural epimorphism $\eps: \wh X\lra \wh X/X$ and $c , f, d \in \wh X$ such that 
		\begin{itemize} 
			\item $ \eps(c)$ and $\eps(d)$ are involutions of $\wh X/X$,
			\item $\wh X/X$ is an abelian $2$-group, and 
			\item $\wh X/X=\spa{\eps(d)}\times \spa{\eps(f)}\times \spa{\eps(c)}$.
		 \end{itemize} 
		Let $\chi\in \Irr(X)^{\wh X}$ such that for some (hence all) $\delta\in\Irr(\chi^{\spann<X,d>} )$ one has $\delta^f\neq \delta$. 
		Then one of the following holds: 
		\begin{asslist} 
			\item $\chi$ extends to $\spann<X,f,dc>$ and not to $\spann<X,f,c>$, or 
			\item $\chi$ extends to $\spann<X,f,c>$ and not to $\spann<X,f,dc>$.
		\end{asslist}
	\end{lem}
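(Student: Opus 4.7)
My plan is to reduce the statement to a short calculation with the Schur multiplier of the abelian $2$-group $\wh X/X$. Since $\chi$ is $\wh X$-invariant, for every intermediate subgroup $X\leq Y\leq \wh X$, the character $\chi$ extends to $Y$ if and only if the cohomology class $[\omega_{\chi,Y}]\in H^2(Y/X,\CC^\times)$ associated to $\chi$ vanishes (see \cite[Chap.~11]{Isa}). The hypothesis is one piece of information about this class; the conclusion amounts to two more pieces that are forced by it.

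First, I would translate the hypothesis on $\delta$. Since $\spa{X,d}/X$ is cyclic of order $2$, $\chi$ has exactly two extensions $\delta,\delta\varepsilon$ to $\spa{X,d}$, where $\varepsilon$ is the non-trivial linear character of $\spa{X,d}/X$. The assumption $\delta^f\neq \delta$ forces $\delta^f=\delta\varepsilon$, i.e.\ $f$ swaps the two extensions. By Gallagher's theorem this is equivalent to saying that $\chi$ does \emph{not} extend to $\spa{X,d,f}$, i.e.\ the class $[\omega_{\chi,\spa{X,d,f}}]\in H^2(\spa{X,d,f}/X,\CC^\times)$ is non-trivial.

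Next, I would invoke the standard fact that for a finite abelian group $D$ the cohomology group $H^2(D,\CC^\times)$ is canonically isomorphic to the group of alternating bicharacters $\lambda:D\times D\to \CC^\times$ via the commutator pairing of the associated central extension, and that restriction of classes to a subgroup corresponds to restriction of bicharacters. Let $\lambda$ be the alternating form on $\wh X/X$ corresponding to $[\omega_\chi]$, and write $\bar d,\bar f,\bar g$ for the images of $d,f,g$ in $\wh X/X$. The hypothesis of the previous paragraph translates into $\lambda(\bar d,\bar f)\neq 1$; since $\bar d^2=1$, bilinearity gives $\lambda(\bar d,\bar f)^2=1$, so $\lambda(\bar d,\bar f)=-1$.

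Finally, extendibility of $\chi$ to $\spa{X,f,g}$ and to $\spa{X,f,dg}$ is detected respectively by the restriction of $\lambda$ to $\spa{\bar f,\bar g}$ and to $\spa{\bar f,\bar d\bar g}$, and since $\lambda$ is alternating and trivial on cyclic subgroups, these restrictions are trivial exactly when $\lambda(\bar f,\bar g)=1$ and $\lambda(\bar f,\bar d\bar g)=1$, respectively. Using bilinearity,
\[
\lambda(\bar f,\bar d\bar g)=\lambda(\bar f,\bar d)\,\lambda(\bar f,\bar g)=-\lambda(\bar f,\bar g),
\]
the last equality by the previous paragraph. Thus exactly one of $\lambda(\bar f,\bar g)$, $\lambda(\bar f,\bar d\bar g)$ equals $1$, yielding exactly one of alternatives (i), (ii). The only point requiring care is the identification of $H^2(D,\CC^\times)$ with alternating bicharacters and its naturality in restriction, which can be found in Karpilovsky's book on projective representations or deduced more elementarily by iterating Gallagher's theorem through the cyclic factors of $\wh X/X$.
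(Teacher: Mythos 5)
Your argument is correct, and it takes a genuinely different route from the paper. The paper works entirely by hand: it fixes one extension $\phi$ of $\chi$ to $\spa{X,f}$ (which exists since $\spa{X,f}/X$ is cyclic), lets $\nu$ be the order-$2$ linear character of $\spa{X,f}$ trivial on $X$, deduces $\phi^d=\phi\nu$ from the hypothesis on $\delta$, and then splits into the two cases $\phi^g=\phi$ or $\phi^g=\phi\nu$, reading off in each case which of $g$, $dg$ fixes $\phi$ and hence over which of the two subgroups $\chi$ extends. You instead package everything into the obstruction class $[\omega_\chi]\in H^2(\wh X/X,\CC^\times)$ and its image as an alternating bicharacter $\lambda$, so that the hypothesis becomes $\lambda(\bar d,\bar f)=-1$ and the dichotomy is the one-line sign computation $\lambda(\bar f,\bar d\bar g)=-\lambda(\bar f,\bar g)$ with both values in $\{\pm1\}$. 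The translation steps you rely on are all sound: the reformulation of $\delta^f\neq\delta$ as non-extendibility to $\spa{X,d,f}$ (cyclic quotient over $\spa{X,d}$, and $f$ commutes with twisting by the order-$2$ character since the quotient is abelian), the standard fact that extendibility to an intermediate $Y$ is detected by restricting the class to $Y/X$, and the identification of $H^2$ of a finite abelian group with alternating bicharacters, natural under restriction. What each approach buys: the paper's proof is elementary and self-contained (Gallagher plus extensions over cyclic quotients), whereas yours imports the Schur-multiplier formalism but makes the "exactly one of (i), (ii)" structure conceptually transparent and shows in passing that the full direct-product hypothesis $\wh X/X=\spa{\eps(d)}\times\spa{\eps(f)}\times\spa{\eps(g)}$ is not needed — only that the quotient is abelian and $\eps(d)$, $\eps(g)$ are involutions.
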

	\begin{proof}
		We consider an extension $\phi$ of $\chi$ to $\spann<X,f>$. Let $\nu\in \Irr(\spann<X,f>)$ be the linear character of order $2$ with $\ker(\nu)\geq X$. 
		Since $\delta$ is not $f$-invariant and $\Irr(\chi^{\spann<X,d>} )$ has two elements, $\Irr(\chi^{\spann<X,d>} )^{\spa{f}}=\emptyset$ and $\chi$ does not extend to $\spann<X,d,f>$. 
		Hence $\phi\neq \phi^d=\phi\nu$.

		If $\phi$ is not $c$-invariant, $\phi^c=\phi\nu$ as $\eps(c)$ is an involution. Then $\phi$ and $\chi$ have no extension to $\spa{X,f,c}$. This shows $$\phi^{dc}=(\phi^d)^c=(\phi\nu)^c=\phi^c\nu=\phi\nu \nu=\phi.$$ 
		Accordingly $\phi$ extends to $\spann<X,f,dc>$ as $\spann<X,f,dc>/\spann<X,f>$ is cyclic. In this case $\chi$ satisfies the statement in (i). 

		If $\phi$ is $c$-invariant, then $\chi$ extends to $\spann<X,f,c>$. On the other hand $\phi$ is not $dc$-invariant since $\phi^{dc}=(\phi^d)^c=(\phi \nu)^c=\phi \nu$. Accordingly $\chi$ satisfies the statement in (ii). 
	\end{proof}
	Recall $ {{Z_0}}:=\langle{h_0^{(\bG)}}\rangle =\Z(\bG)^{\spa{\gamma}  }    $ (see \Cref{not2_1}), let $\II Ghat@{\protect{\widehat \bG}}:=\bG/Z_0$, $\II Ghat@{\protect{\wh G}}:=(\bG/Z_0)^F$ and note that $D$ acts on $\wh G$ by definition. Then $X :=\GF/Z_0$ and $\wh X:= \wh G \rtimes D$ satisfy the group-theoretic assumptions of the above statement with $d\in \wh G\setminus X$, $f:=\restr F_0|{\wh G}$ and $c:=\gamma$ (all seen as automorphisms of $\wh G$). 
	\Cref{prop820} can be applied if some and hence every extension $\wh \chi$ to $\wh G$ of the considered $\wh G$-invariant character $\chi$  satisfies $\wh \chi^{F_0}\neq \wh\chi$. (Then $\wh\chi$ can serve as $\delta$ in the application of \Cref{prop820}.) The following proposition gives a way to prove this property. 
	
Recall that $\wt \JJ$ denotes the Jordan decomposition for $\Irr(\wGF)$ from \Cref{JorTilde}, with $\wt \bH:=\wbG^*\geq \bH_0=[\wt\bH,\wt\bH]$. Let us recall ${\Z(\bH_0)}^{\spa{\gamma}}= \langle{h_0^{(\bH_0)}}\rangle$, see again \Cref{not2_1}.
\begin{prop}\label{prop821}
Let $\si'\in E(\bG)$, $\chi\in\Irr(\GF)^{\spa{\si'}}$ and some $\wt \chi\in\Irr(\chi^{\wGF} )$. Assume 
\begin{asslist}
\item $Z_0\leq\ker(\chi)$;
\item $\restr \wt\chi|{\GF}\neq \chi$ and $\chi$ is $\wh\bG^F$-invariant;
\item there exist some $(\wt s,\phi)\in \wt \JJ(\wt \chi)$, $s_0\in \bH_0\cap \wt s\Z(\wbH)$, $\si \in\HF(\si ')^*\subseteq \HF E(\bH)$ (hence acting on $\wt\bG$, see \ref{not}, and therefore $\wt\bH =\wt\bG^*$) with $\si.(\wt s, \phi)=( \wt s z, \phi)$ for some $z\in \Z(\wt\bH)$ and $[s_0,\si]\in\Z(\bH_0)\setminus \langle{h_0^{(\bH_0)}}\rangle$. 
\end{asslist}
Then no $\wh \chi\in{\Irr}(\wh\bG^F)$ with $\wh \chi(x Z_0)=\chi(x)$ for all $x\in \GF$, is  $\si'$-invariant.
\end{prop}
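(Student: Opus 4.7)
I plan to argue by contradiction using the Jordan decomposition $\wt\JJ$ from \Cref{JorTilde}: supposing that some $\wh\chi \in \Irr(\wh G)$ with $\wh\chi(gZ_0) = \chi(g)$ for all $g\in\GF$ is $\sigma'$-invariant, I will produce a linear character $\hat z \in \Lin(\wGF/\GF)$ that encodes the $\sigma'$-twist of $\wt\chi$, compute its image in $\Z(\bH_0)^F$ via duality, and show that $\sigma'$-invariance of $\wh\chi$ is incompatible with the condition $[s_0,\sigma] \notin \langle h_0^{(\bH_0)}\rangle$ of~(iii). The main tools are the $E(\GF)$-equivariance of $\wt\JJ$, the compatibility of $\wt\JJ$ with the translation action of $\Z(\wt\bH)^F$ on rational series (see \cite[Prop.~8.26]{CE04}), and the duality isomorphism $\Lin(\wGF/\GF) \cong \Z(\bH_0)^F$ of \cite[Prop.~11.4.12]{DiMi2}.

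First, I will extract the twist. By $E(\GF)$-equivariance, $\wt\JJ(\wt\chi^{\sigma'}) = \sigma.\wt\JJ(\wt\chi) = \overline{(\wt s z,\phi)}$. Since replacing $\wt s$ by $\wt s z$ corresponds on the character side to multiplication by a linear character $\hat z \in \Lin(\wGF/\GF)$ associated with the class of $z$ in $\Z(\wt\bH)^F/\Z(\wt\bH)^{\circ F}$, this gives $\wt\chi^{\sigma'} = \wt\chi\cdot\hat z$. Writing $\wt s = s_0 y$ with $y\in \Z(\wt\bH)$ (possible by the choice in (iii)) and using that $\wt\pi\colon\wt\bH\to\bH$ has connected kernel $\Z(\wt\bH)^\circ$ while $\wt\pi|_{\bH_0} = \pi$, we obtain the identification $\Z(\wt\bH)/\Z(\wt\bH)^\circ \cong \Z(\bH_0)$ under which $\sigma(\wt s)\wt s^{-1}$ reduces to $\sigma(s_0)s_0^{-1} = [s_0,\sigma]^{-1}$. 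Therefore $\hat z$ corresponds to $[s_0,\sigma]^{-1} \in \Z(\bH_0)^F$, and by (iii) this element does not lie in $\langle h_0^{(\bH_0)}\rangle$.

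Second, I will show that $\sigma'$-invariance of $\wh\chi$ forces $\hat z \in \langle h_0^{(\bH_0)}\rangle$, the desired contradiction. By (ii), $\wGF_\chi$ has index~$2$ in $\wGF$ and its image in $\calZ_F$ under~(\ref{cZF}) is the class of $h_0 \in Z_0$; dually, the annihilator of $\wGF_\chi/\GF\Z(\wGF)$ in $\Lin(\wGF/\GF\Z(\wGF))$ is precisely $\langle h_0^{(\bH_0)}\rangle$. Now the quotient $\wh G/(\GF/Z_0)$ has order~$2$ (computed from the exact sequence $1\to Z_0\to\bG\to\bG/Z_0\to 1$ via Lang--Steinberg, noting $H^1(F,Z_0) = Z_0$ since $[Z_0,F]=1$), and Clifford theory applied to the chains $\GF/Z_0 \unlhd \wh G$ and $\GF \unlhd \wGF_\chi$ identifies the set of extensions of $\chi$ to $\wh G$ with the set of extensions of $\chi$ to $\wGF_\chi$ (both being torsors under the same cyclic group of order~$2$). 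The $\sigma'$-invariant extension $\wh\chi$ thus corresponds to a $\sigma'$-invariant extension $\chi' \in \Irr(\wGF_\chi \mid \chi)$; since $\chi' \in \Irr(\restr{\wt\chi}|_{\wGF_\chi})$, the equality $\wt\chi^{\sigma'} = \wt\chi\cdot\hat z$ combined with $\chi'^{\sigma'} = \chi'$ forces $\restr{\hat z}|_{\wGF_\chi} = 1$, i.e.\ $\hat z \in \langle h_0^{(\bH_0)}\rangle$, contradicting the first point.

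The main obstacle is the Clifford-theoretic matching in the third paragraph: the groups $\wh G = (\bG/Z_0)^F$ and $\wGF_\chi \cdot Z_0/Z_0$ are distinct subgroups of the common ambient group $(\wbG/Z_0)^F$, and one needs to verify that the two parametrizations of extensions of $\chi$ (to $\wh G$ on one side, to $\wGF_\chi$ on the other) are compatibly intertwined by the action of $\sigma'$. The verification amounts to comparing the connecting map $H^0(F,\wh\bG) \to H^1(F,Z_0)$ with the dual pairing recalled in (\ref{cZF}) and checking that both single out the same order-$2$ subgroup of $\Lin(\wGF/\GF)$, namely $\langle h_0^{(\bH_0)}\rangle$.
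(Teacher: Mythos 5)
Your first step is sound and coincides with the paper's: the equivariance of $\wt\JJ$ (\Cref{JorTilde}) together with its compatibility with the $\Lin(\wGF/\GF)$-action gives $\wt\chi^{\si'}=\wt\chi\,\hat z$ for a linear character $\hat z$ dual to $z$, hence to $[s_0,\si]\inv$. The proof breaks down at the decisive third paragraph. First, the extensions of $\chi$ to $\wGF_\chi$ do not form a torsor under a cyclic group of order $2$: they form a torsor under $\Lin(\wGF_\chi/\GF)$, which contains $\Lin(\GF\Z(\wGF)/\GF)$ and is in general much larger, so the asserted matching with the two extensions of $\chi$ to $\wh\bG^F$ is not what Clifford theory provides. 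More seriously, any comparison of extensions across the two different overgroups $\calL\inv(Z_0)\cap\bG$ and $\wGF_\chi$ must keep track of the central characters on $\wh Z=\calL\inv(Z_0)\cap\Z(\wbG)$, and these are not $\si'$-stable: since $\wt\chi^{\si'}=\wt\chi\,\hat z$, the central character $\nu$ of $\wt\chi$ on $\Z(\wGF)$ is moved by $\si'$ by the (possibly nontrivial) character $\restr{\hat z}|{\Z(\wGF)}$. This is precisely why the paper's criterion \Cref{lem520} for the existence of a $\si'$-invariant extension of $\chi$ to $\wh\bG^F$ is not ``$\hat z$ is trivial on $\wGF_\chi$'' but the coupled condition $\mu(tx)=\wh\nu^{\si'}(x)\,\wh\nu(x\inv)$ for some extension $\wh\nu$ of $\nu$ to $\wh Z$; your dichotomy simply discards the right-hand side, which need not vanish on the relevant elements.

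Deciding whether such a $\wh\nu$ exists is the actual content of the proposition, and the paper settles it only by transferring the same $(\mu,\nu)$-condition to a maximal torus (\Cref{lem5_25}) and computing there with the explicit duality between $\bG/Z_0$ and $\bH_0/Z^\diamond$ (\Cref{prop5_24}); it is at that point, via $\si''(\wh s)\neq\wh s$, that the hypothesis $[s_0,\si]\notin\spa{h_0^{(\bH_0)}}$ is used. Your proposed repair of the ``main obstacle'' --- comparing the connecting map $H^0(F,\wh\bG)\to H^1(F,Z_0)$ with the pairing of (\ref{cZF}) --- stays entirely on the level of $\Lin(\wGF/\GF)$ and never touches $\wh Z$, so it cannot close this gap. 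Note finally that your predicted criterion is consistent with both \Cref{prop821} and \Cref{cor821}, but that consistency is exactly what has to be proved; as it stands, the equivalence between $\si'$-invariant extendibility to $\wh\bG^F$ and triviality of $\restr{\hat z}|{\wGF_\chi}$ is asserted, not established.
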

The proof of this statement is completed at the end of the present chapter. We show first how characters of $\wh \bG^F$ can be studied via characters of $\wGF$ and our extendibility question reduced to checking certain linear characters. 
 Recall that $\calL\colon \wt\bG\to\wt\bG$ is the Lang map $x\mapsto x\inv F(x)$.

\begin{lem}\label{lem520} Let $\tau\colon \wt\bG\to\wt\bG$ be a bijective algebraic endomorphism commuting with $F$.
Let $\bK\leq \bG$ be a closed reductive $\spann<F,\tau>$-stable subgroup containing a maximal torus (so $\Z(\bG)\leq\bK$). Let $\wt\bK:=\bK \Z(\wt \bG)$, $\bK^F\unlhd\wh K:=\calL\inv(Z_0)\cap\bK$,  and $\wh Z:=\calL\inv (Z_0)\cap \Z(\wbG)$ (see \Cref{not2_1}). Let $\psi\in\Irr(\bK^F\mid 1_{Z_0})^{\spann<\wh\bK^F,\tau>}$ and $\wt\psi\in\Irr(\wKF\mid \psi)$. Let $\mu\in\Lin(\wt \bK^F/\bK^F)$ such that $\wt \psi^{\tau}=\wt \psi \mu$, and $\{\nu \}=\Irr( \restr \wt \psi|{\Z(\wGF)})$. Then the following are equivalent:
\begin{asslist}
		\item there exists some $\wh \psi \in \Irr(\wh \bK^F)^{\spa{\tau}}$ with $\restr\wh\psi|{\bK^F}=\psi$ ;
		\item there exists some $\wh \nu \in\Irr(\wh Z\mid \nu) $ such that $\mu(tx)=\wh \nu^{\tau} (x)\wh \nu(x^{-1})$ for every $t\in \wh K$ and $x\in \wh Z$ with $tx\in \wKF_\psi$.
\end{asslist}
\end{lem}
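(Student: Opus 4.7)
The plan is a direct Clifford--theoretic comparison of two torsors of extensions, both of size $2$, identifying them via the regular embedding. An extension $\wh\psi$ of $\psi$ to $\wh\bK^F=\wh K/Z_0$ amounts to an extension of $\psi$ to $\wh K$ (the condition that $Z_0$ lies in its kernel being automatic since $Z_0\leq \ker\psi$). Because $\wh K/\bK^F\xrightarrow{\calL}Z_0$ is cyclic of order $2$ and $\psi$ is $\wh K$-invariant, extensions of $\psi$ to $\wh K$ form a torsor under $\Lin(\wh K/\bK^F)\cong \Lin(Z_0)$. Similarly, since $\wh Z/\Z(\wbG)^F\cong Z_0$ via Lang (using the connectedness of $\Z(\wbG)$), extensions $\wh\nu\in\Irr(\wh Z\mid\nu)$ form a torsor under $\Lin(Z_0)$. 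I will define an explicit $\Lin(Z_0)$-equivariant bijection between these two torsors that intertwines $\tau$-invariance with the relation in (ii).

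The construction is as follows. Given $\wh\nu$, set
\[
\wh\psi_{\wh\nu}(t):=\wt\psi(tx)\,\wh\nu(x)^{-1}
\]
for any $x\in\wh Z$ with $tx\in\wKF_\psi$; such $x$ exists since $\calL\colon\Z(\wbG)\twoheadrightarrow Z_0$ is surjective and we only need $\calL(x)=\calL(t)$. Well-definedness of $\wh\psi_{\wh\nu}$ uses that two valid choices $x,x'$ differ by an element of $\Z(\wbG)^F$, on which $\wh\nu$ restricts to $\nu$, the central character of $\wt\psi$. Lifting $\wt\psi$ to a representation $\tilde\rho$ of $\wKF_\psi$ and defining $\wh\rho(t):=\tilde\rho(tx)\wh\nu(x)^{-1}$ yields a representation of $\wh K$ (multiplicativity follows from centrality of $x\in\Z(\wbG)$ and multiplicativity of $\wh\nu$ on the abelian group $\wh Z$), so $\wh\psi_{\wh\nu}$ is a genuine character extending $\psi$. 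Varying $\wh\nu$ by a character $\xi$ of $\wh Z/\Z(\wbG)^F\cong Z_0$ changes $\wh\psi_{\wh\nu}$ by the pullback of $\xi$ via $\calL$, proving $\Lin(Z_0)$-equivariance and hence that $\wh\nu\mapsto\wh\psi_{\wh\nu}$ exhausts all extensions.

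The key computation is the $\tau$-action. Since $\tau$ commutes with $F$ and $\psi^\tau=\psi$, the stabilizer $\wKF_\psi$ is $\tau$-stable, so $\tau(t)\tau(x)\in\wKF_\psi$ whenever $tx\in\wKF_\psi$, and one may compute
\[
\wh\psi_{\wh\nu}(\tau(t))=\wt\psi(\tau(tx))\,\wh\nu(\tau(x))^{-1}=\wt\psi(tx)\,\mu(tx)\,\wh\nu^\tau(x)^{-1},
\]
using $\wt\psi^\tau=\wt\psi\mu$. Dividing by $\wh\psi_{\wh\nu}(t)=\wt\psi(tx)\wh\nu(x)^{-1}$ shows that $\wh\psi_{\wh\nu}$ is $\tau$-invariant if and only if $\mu(tx)=\wh\nu^\tau(x)\wh\nu(x^{-1})$ for all admissible pairs $(t,x)$, which is exactly (ii). Combined with the bijection of torsors this gives the claimed equivalence.

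The main obstacle is organisational rather than conceptual: one must check compatibilities carefully (well-definedness of $\wh\psi_{\wh\nu}$, its multiplicativity as a character, and consistency of (ii) in $(t,x)$). Consistency of (ii) reduces to the facts that $\mu$ is trivial on $\bK^F$ (forced by $\psi^\tau=\psi$), and that, for $x\in\Z(\wbG)^F$, the identity $\mu(x)=\nu^\tau(x)\nu(x)^{-1}$ is tautological, exactly matching the values of $\mu$ on $\wKF\cap \wh K\cdot\wh Z$ needed to make (ii) coherent across choices. This mirrors the argument of \cite[Prop.~1.18]{TypeD1}, with the addition of the element $t\in\wh K$ reflecting the extra group $\wh K$ beyond the situation of loc.\ cit.
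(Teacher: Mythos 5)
Your overall architecture is the natural one (the paper itself prints no argument for \Cref{lem520}, referring instead to the identical proof of \cite[Prop.~1.18]{TypeD1}): two two-element sets of extensions, each a torsor under $\Lin(Z_0)$ via $\calL$, an explicit equivariant bijection between them, and a computation of the $\tau$-twist. The flaw is in the construction itself: you define $\wh\psi_{\wh\nu}(t):=\wt\psi(tx)\,\wh\nu(x)^{-1}$ using the values of $\wt\psi$. But $\psi$ is only assumed $\wh\bK^F$- and $\tau$-invariant, not $\wKF$-invariant, so $\restr{\wt\psi}|{\bK^F}$ is in general a sum of several $\wKF$-conjugates of $\psi$. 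Taking $t\in\bK^F$ and $x=1$ shows $\restr{\wh\psi_{\wh\nu}}|{\bK^F}=\restr{\wt\psi}|{\bK^F}\neq\psi$, so $\wh\psi_{\wh\nu}$ is not an extension of $\psi$ and the claimed bijection of torsors does not land in the set relevant for (i). This is exactly the case the lemma is built for: in its application inside the proof of \Cref{prop821} one has $\restr{\wt\chi}|{\GF}\neq\chi$, and then $\wh\psi_{\wh\nu}(1)=\wt\chi(1)>\chi(1)$. Your sentence about ``lifting $\wt\psi$ to a representation $\tilde\rho$ of $\wKF_\psi$'' does not repair this: restricting a representation affording $\wt\psi$ to $\wKF_\psi$ is not $\psi$-isotypic on $\bK^F$, so the resulting character is still not an extension of $\psi$.

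The repair is to work with the Clifford correspondent $\psi'\in\Irr(\wKF_\psi\mid\psi)$ of $\wt\psi$ instead of $\wt\psi$. One must then check two points that your write-up hides by conflating $\wt\psi$ with a character of $\wKF_\psi$ over $\psi$: first, $\restr{\psi'}|{\bK^F}=\psi$, i.e.\ multiplicity one (trivial for tori, and given by \cite[Thm~15.11]{CE04} resp.\ \cite[Prop.~10]{L88} for $\bK=\bG$; in the stated generality of the lemma this deserves a word); second, $\psi'{}^{\tau}=\psi'\,\restr\mu|{\wKF_\psi}$, which follows from $\wt\psi^{\tau}=\wt\psi\mu$ by uniqueness in the Clifford correspondence, both sides lying over $\psi^\tau=\psi$ and inducing $\wt\psi^\tau$. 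With $\psi'$ in place of $\wt\psi$, your computation shows $\wh\psi_{\wh\nu}^{\tau}=\wh\psi_{\wh\nu}\cdot c_{\wh\nu}$, where $c_{\wh\nu}(t)=\mu(tx)\,\wh\nu^{\tau}(x)^{-1}\wh\nu(x)$ is independent of the admissible choice of $x$ (this uses $\restr\mu|{\Z(\wGF)}=\nu^{\tau}\nu^{-1}$, obtained by comparing central characters in $\wt\psi^\tau=\wt\psi\mu$) and defines a linear character of $\wh K/\bK^F$; one should then conclude by Gallagher that $\wh\psi_{\wh\nu}$ is $\tau$-invariant iff $c_{\wh\nu}=1$, rather than ``dividing by $\wh\psi_{\wh\nu}(t)$'', since character values may vanish. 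With these corrections your torsor-equivariance argument does give the equivalence of (i) and (ii); as written, however, the key construction fails precisely in the non-$\wGF$-invariant situation the lemma is meant to handle.
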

\begin{proof}
The statement is a slight variation of  \cite[Prop. 2.18]{TypeD1}. The proof is the same.
\end{proof}

When applying \Cref{lem520} to verify \Cref{prop821}, the characters $\mu$ and $\nu$ can be computed locally from characters of tori. More precisely in the following \Cref{lem5_25} we see that $\mu$ and $\nu$ can be computed using some $\wt\theta\in\Irr(\wt\bT^F)$, where $\wt\bT$ is an $F$-stable maximal torus of $\wt\bG$. Again $\theta$ defines a $\wh \bT^F$-invariant character $\ov \theta$ on $\bT^F/Z_0$, where $\wh\bT:=\bT/Z_0$. 
According to \Cref{prop5_24}, a statement on duality, we then get that $\theta$ has no $\si'$-invariant extension $\wh \theta\in \Irr(\wh \bT^F)$, thus allowing to use \Cref{lem520}.
\begin{align*}
	\xymatrix{\wh \chi &\wh \bG^F&&&\wGF\ar@{-}[dl] &\wt \chi \\
		&\ov \chi& \GF/Z_0\ar@{-}[ul]& \GF\ar[l] & \chi}
\end{align*}

\begin{align*}
	\xymatrix{
		\wh \theta & \wh\bT^F\ar@{-}[dr] &&& \wt\bT^F\ar@{-}[dl]& \wt \theta \\
		&\ov \theta&\bT^F/Z_0& \bT^F\ar[l] &\theta }
\end{align*}

For the study of the character $\theta\in\Irr(\bT^F)$ we use the following statement on duality. 
Recall that for a given torus $\bS$ we denote by $\II XS@{X(\bS)}$ its character lattice and by $\II YS@{Y(\bS)}$ its cocharacter lattice. 

As before we denote by ${h_0^{(\bH_0)}}$, the unique non-trivial $\gamma$-fixed element of $\Z(\bH_0)$. We recall $Z^\diamond :=\langle h_0^{(\bH_0)}\rangle \leq \Z(\bH_0)$ already introduced in the proof of \Cref{prop_nontrivFF0action}.

For pairs of groups in duality, like $(\bG,\bH)$, $(\wbG,\wbH)$ and below $(\bG/Z_0,\bH_0/Z^\diamond)$, we recall, in the case of say  $(\bG,\bH)$ the set $\ov{\frak X}(\bG,F)$ of $\GF$-conjugacy classes of pairs $(\bT,\theta)$ where $\bT$ is an $F$-stable maximal torus of $ \bG$ and $\theta\in\Irr(\bT^F)$ (see \cite[2.3.20]{GM}), and the set $\ov{\frak Y}(\bH,F)$ of $\HF$-conjugacy classes of pairs $(\bT^*,s)$ where $\bT^*$ is an $F$-stable maximal torus of $\bG^*=\bH$ and $s\in \bT^*{}^F$ (see \cite[2.5.12]{GM}). The relation $(\bT,\theta)\stackrel{\bG}{\longleftrightarrow}(\bT^*,s)$ recalled in (\ref{Ttheta}) is defined through the common parametrization of $F$-stable maximal tori in $\bG$ and $\bH$ by $F$-conjugacy classes of the Weyl group and establishes a bijection $\ov{\frak X}(\bG,F)\longleftrightarrow\ov{\frak Y}(\bH,F)$, see \cite[Cor.~2.5.14]{GM} or \cite[Sect. 2.6]{CS13}.
\begin{prop}\label{prop5_24} Recall $\wh \bG:=\bG/Z_0$ and set $\wh \bH:=\bH_0/Z^\diamond$. Let $\wh \pi:\wh \bH\lra \bH$ be the canonical epimorphism. 
	\begin{thmlist}
		\item For any $\rho\in E(\bG)$ with associated $\rho^*\in E(\bH)$ as in (\ref{sigma*}) and any relation $(\wbT,\theta)\stackrel{\wbG}{\longleftrightarrow}(\wbT^*,\wt t)$ as in (\ref{Ttheta}), we have $(\wbT,\theta)^\rho\stackrel{\wbG}{\longleftrightarrow}\,{}^{\rho^*}\!(\wbT^*,\wt t)$. 
		\item Recalling the duality between $(\bG,\bT_0,F) $ and $(\bH,\bS_0,F)$ given by $\delta:X(\bT_0)\lra Y(\bS_0)$ from \ref{duals}, there is a duality between $(\wh \bG, \wh\bT_0,F) $ and $(\wh \bH,\wh\bS_0,F)$ given by some $\wh \delta:X(\wh \bT_0)\lra Y(\wh \bS_0)$ for$\wh\bT_0=\bT_0/Z_0$ and $ \wh \bS_0=\wh\pi\inv(\bS_0)$ such that 
        \[
        \delta\circ \alpha=\beta^*\circ \wh \delta \ \ \ \text{ for }  \al :X(\wh \bT_0)\lra X(\bT_0) \und \beta^* :Y(\wh \bS_0)\lra Y(\bS_0)
        \] the natural morphisms associated with $\bT_0\to \wh\bT_0=\bT_0/Z_0$ and  $\wh \bS_0 \lra \bS_0$. 

		\item Let $\bT\leq\bG$ and $\bT^*\leq\bH$ be two $F$-stable maximal tori  and set $\wh\bT=\bT/Z_0$, ${\wh \bT^*:=\wh\pi\inv(\bT^*)}$. 
		Let $s\in (\bT^*)^ F $ and $\theta\in\Irr(\bT^F\mid 1_{Z_0})$ such that $(\bT,\theta)\stackrel\bG {\longleftrightarrow} (\bT^*,s)$ in the sense of (\ref{Ttheta}). Then there exist
	 $\wh s\in (\wh \bT^*)^{F}$ and $\wh \theta\in \Irr(\wh \bT^F)$ such that $(\wh \bT,\wh \theta)\stackrel{\wh\bG} {\longleftrightarrow} (\wh \bT^*,\wh s)$, $\wh \pi(\wh s)=s$ and $\wh \theta(tZ_0)=\theta(t)$ for every $t\in \bT^F$.
	\end{thmlist}
\end{prop}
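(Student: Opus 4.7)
The plan is to treat the three parts in order.

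Part (a) is naturality of the bijection (\ref{Ttheta}) with respect to the anti-involution $\si\mapsto\si^*$ of (\ref{sigma*}). That bijection is built from a common parametrization of pairs by $F$-conjugacy classes in the Weyl group using a compatible reference torus, while $\si^*$ is constructed in \ref{duals} (satisfying \cite[Def.~2.1]{CS13}) to be adjoint to $\si$ through $\delta$. Unwinding the definitions yields the claim, in the spirit of \cite[Cor.~2.4]{NTT}.

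For part (b), set $\wh\bT_0=\bT_0/Z_0$, let $\bS_0^{(\bH_0)}:=\pi\inv(\bS_0)\leq\bH_0$, and note $\wh\bS_0=\bS_0^{(\bH_0)}/Z^\diamond$. The quotient isogenies induce
\[X(\wh\bT_0)=\{\chi\in X(\bT_0)\mid \chi(Z_0)=1\}\subseteq X(\bT_0)\quad\text{and}\quad Y(\wh\bS_0)\subseteq Y(\bS_0),\]
both of index $2$. The natural candidate for $\wh\delta$ is the restriction of $\delta$, so the content of (b) is the equality $\delta(X(\wh\bT_0))=Y(\wh\bS_0)$. Through the canonical descriptions $\Z(\bG)\cong \Hom(X(\bT_0)/\ZZ\Phi,\FF^\times)$ and the analogous identification for $\Z(\bH_0)$, $\delta$ induces a perfect pairing $\Z(\bG)\times\Z(\bH_0)\to\FF^\times$, and the required equality amounts to $Z_0$ and $Z^\diamond$ being mutual annihilators under this pairing. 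Since both are $\gamma$-fixed subgroups by \Cref{not2_1} and $\delta$ intertwines $\gamma$ with $\gamma^*$, this reduces to a direct check in an abelian group of order $4$ equipped with an order-$2$ automorphism: the subgroup of fixed points equals, through the induced adjoint action on its character group, the annihilator of the dual fixed subgroup.

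For part (c), $\theta$ factors through $\bT^F Z_0/Z_0$, a subgroup of the abelian group $\wh\bT^F=(\bT/Z_0)^F$, and therefore extends to some $\wh\theta\in\Irr(\wh\bT^F)$. The duality of (b) assigns to $\wh\theta$ a unique $\wh s\in(\wh\bT^*)^F$ satisfying $(\wh\bT,\wh\theta)\stackrel{\wh\bG}{\longleftrightarrow}(\wh\bT^*,\wh s)$. Compatibility of $\wh\delta$ with $\delta$ through the sublattice inclusion $X(\wh\bT_0)\subseteq X(\bT_0)$ and its dual then forces $\wh\pi(\wh s)=s$, since inflation of characters along the natural map $\bT^F\to\wh\bT^F$ corresponds dually to the surjection $\wh\pi\colon \wh\bT^*\to\bT^*$.

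The main obstacle is the centre-annihilator computation in (b), which singles out the distinguished subgroups $Z_0$ and $Z^\diamond$ via the $\gamma$-action and the specific choices of $h_0$ and $h_0^{(\bH_0)}$ in \Cref{not2_1}. Once this identification is in place, (a) is routine bookkeeping on Weyl-group parametrizations and (c) follows from abelian character extension and dualization.
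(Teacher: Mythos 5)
Your proposal is correct and follows essentially the same route as the paper: part (a) rests on the Bonnaf\'e/NTT equivariance, part (b) on the root-datum duality realized by restricting $\delta$ to the index-two sublattices, and part (c) on the compatibility of torus duality with the central isogeny $\bT\to\wh\bT$ (the paper phrases this by choosing $y\in X(\bT)$ with $\theta=\kappa\circ\restr y|{\bT^F}$, which is equivalent to your functoriality argument). The only presentational difference is that in (b) you spell out, via the induced pairing $\Z(\bG)\times\Z(\bH_0)\to\FF^\times$ and the $\gamma$-fixed-point computation identifying $Z_0$ and $Z^\diamond$ as mutual annihilators, the lattice equality $\delta(X(\wh\bT_0))=Y(\wh\bS_0)$ that the paper attributes to ``the usual proof of $\bG/Z_0\cong\bH_0/Z^\diamond\cong\SO_{2l}(\FF)$ via root data''.
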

\begin{proof} For (a), see Bonnaf\'e's theorem recalled in (\ref{rem_ntt}) and its proof in \cite[Sect.~2]{NTT}.  For (b) the usual proof of $\bG/Z_0\cong \bH_0/Z^\diamond\cong \SO_{2l}(\FF)$ via root data gives our claim noting also that $ \beta^* :Y(\wh \bS_0)\lra Y(\bS_0)$ can be defined functorially as $ Y(\restr \wh\pi|{\wh \bS_0})$.

For (c) assume that $\theta$ is obtained from $s$ via the standard construction recalled in \cite[Sect.~1.G]{Cedric}. In particular there exists a character $y\in X(\bT)$ such that $\theta=\kappa \circ \restr y|{\bT^F}$ via a given fixed group injection $\kappa: \FF^\times\hookrightarrow \ov \QQ_\ell^\times$. Without loss of generality we can assume that $\wh \theta= \kappa\circ \restr\wh y|{\wh \bT^F} $ where $\wh y\in X(\wh \bT)$ with $\al(\wh y)=y$. Now $\delta(y)= \beta^*(\wh \delta(\wh y))$. 
	By the construction of $s$ we see that the element $\wh s$ associated to $\wh \theta$ and constructed using $y$ satisfies $\wh \pi (\wh s)=s$. This then proves part (c).
\end{proof}

\begin{lem}\label{lem5_25}
	Let $\si '\in E(\bG)$, $\wt\chi\in\cE (\wt\bG^F, (\wt s))$, $z\in \Z(\wt\bH)$, $\chi\in \Irr(\restr{\wt\chi}|{\GF})$, $\si\in\HF(\si')^*$ as in \Cref{prop821}. Let
	$\Irr(\restr{\wt\chi}|{\Z(\wGF)})=\{\nu \}$ and let $\mu\in\Lin(\wGF)$ be associated to $z$ by duality. Then $\wt \chi^ {\si ' }=\wt \chi\mu$ and the following hold.
	
	\begin{thmlist}
		
		\item  There exists some $\si''\in\Cent_{\wt\bH}(\wt s)^F\si $, a $\spa{\si'',F}$-stable maximal torus $\wt\bT^*$ of $\Cent_\wbH(\wt s)$, some $g\in\wGF$ and some $(\wbT,\wt\theta)$ with $(\wbT,\wt\theta)\stackrel{\wbG}{\longleftrightarrow}(\wt\bT^*,\wt s)$ in the sense of (\ref{Ttheta}), $(\wt\bT,\wt\theta)^{g\si'}=(\wt\bT,\restr \mu|{\wbT^F}\wt\theta)$ and $\restr \wt\theta|{\Z(\wGF)}=\nu$.

	\item	Defining
		$\bT:=\wbT\cap \bG$, no extension of $\theta:=\restr \wt \theta|{\bT^F}$ to $\wh T=\calL\inv({Z_0})\cap\bT$ is $g\si'$-invariant.
	\end{thmlist}
\end{lem}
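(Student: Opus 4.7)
The opening identity $\wt\chi^{\sigma'}=\wt\chi\mu$ should follow directly from the equivariance of the Jordan decomposition $\wt\JJ$ recalled in \Cref{JorTilde}: by the assumption (iii) of \Cref{prop821} the action of $\sigma'$ on $\wt\chi$ corresponds on the dual side to $\sigma.(\wt s,\phi)=(\wt s z,\phi)$, and the central shift $\wt s\mapsto \wt s z$ with $z\in\Z(\wbH)$ translates via the standard duality $\Z(\wbH)^F\cong\Lin(\wGF/\GF)$ into multiplication by the linear character $\mu$.

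For part (a), the plan is to realize $\wt\chi$ as a constituent of $R^\wbG_\wbT(\wt\theta)$ for a well-chosen dual pair. Since $\Z(\wbH)$ is connected, $\Cent_\wbH(\wt s)$ is connected reductive and stable under both $F$ and $\sigma$ (thanks to $z\in\Z(\wbH)$); a standard Lang argument then produces a $\langle F,\sigma\rangle$-stable maximal torus $\wt\bT^*$ of $\Cent_\wbH(\wt s)$ containing $\wt s$, with $\sigma'':=\sigma\in\HF(\sigma')^*$. Choosing a dual pair $(\wbT,\wt\theta)\stackrel{\wbG}{\longleftrightarrow}(\wt\bT^*,\wt s)$ for which $\wt\chi$ is a constituent of $R^\wbG_\wbT(\wt\theta)$ ensures the central character condition $\restr\wt\theta|{\Z(\wGF)}=\nu$ automatically. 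Now \Cref{prop5_24}(a) yields $(\wbT,\wt\theta)^{\sigma'}\stackrel{\wbG}{\longleftrightarrow}{}^{(\sigma')^*}(\wt\bT^*,\wt s)$. Writing $(\sigma')^*=h^{-1}\sigma$ with $h\in\HF$, this equals $(h^{-1}\wt\bT^*h,h^{-1}\wt s z h)$, which lies in the $\wHF$-orbit of $(\wt\bT^*,\wt s z)$. Translating the central shift $\wt s\mapsto \wt s z$ across duality into the twist $\wt\theta\mapsto\restr\mu|{\wbT^F}\wt\theta$ and absorbing the $\wHF$-conjugation into some $g\in\wGF$ produces the required equality $(\wbT,\wt\theta)^{g\sigma'}=(\wbT,\restr\mu|{\wbT^F}\wt\theta)$.

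For part (b), the strategy is to combine \Cref{prop5_24}(c) with \Cref{prop5_24}(a) applied to $(\wh\bG,\wh\bH)$ where $\wh\bH=\bH_0/Z^\diamond$. \Cref{prop5_24}(c) furnishes a canonical extension $\wh\theta\in\Irr(\wh\bT^F)$ of $\theta$ corresponding to $(\wh\bT^*,\wh s)$ in $\wh\bG$-duality, where $\wh\bT^*=\wh\pi^{-1}(\bT^*)$ and $\wh s$ is the image in $\wh\bH$ of the lift $s_0\in\bH_0$. Since $Z_0\leq\ker\theta$ and $\wh T/Z_0\cong\wh\bT^F$, extensions of $\theta$ to $\wh T$ correspond to extensions to $\wh\bT^F$ and form a torsor under $\Lin(\wh\bT^F/\bT^F)$, a group of order at most $2$. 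By \Cref{prop5_24}(a) in $\wh\bG$, the action of $g\sigma'$ on $(\wh\bT,\wh\theta)$ corresponds on the dual side to applying $(\sigma')^*$ to $\wh s$, which lifts to $\sigma$ acting on $s_0\in\bH_0$; the hypothesis $[s_0,\sigma]\in\Z(\bH_0)\setminus Z^\diamond$ from assumption (iii) of \Cref{prop821} forces $(\sigma')^*(\wh s)$ to differ from $\wh s$ by the unique nontrivial element of $\Z(\wh\bH)=\Z(\bH_0)/Z^\diamond$. Dualizing this central shift yields $\wh\theta^{g\sigma'}=\restr\lambda|{\wh\bT^F}\cdot\wh\theta$ for some nontrivial $\lambda\in\Lin(\wh\bG^F)$. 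Since $g\sigma'$ fixes the unique nontrivial element of $\Lin(\wh\bT^F/\bT^F)$ whenever this group is nontrivial, every extension $\wh\theta\mu_0$ of $\theta$ satisfies $(\wh\theta\mu_0)^{g\sigma'}=\restr\lambda|{\wh\bT^F}\cdot\wh\theta\mu_0\neq\wh\theta\mu_0$, whence no extension is $g\sigma'$-invariant.

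The main technical obstacle will be the book-keeping of the combined action of the inner twist $g\in\wGF$ and the outer automorphism $\sigma'\in E(\bG)$ through the Bonnaf\'e-type equivariance of \Cref{prop5_24}(a), which is phrased only for $E(\bG)$: the inner $g$ has to be tracked through its associated diagonal automorphism and absorbed into $\wHF$-conjugacy on the dual side. A secondary subtlety is ensuring that the chosen pair $(\wbT,\wt\theta)$ in part (a) realizes $\wt\chi$ as a constituent of $R^\wbG_\wbT(\wt\theta)$ while simultaneously accommodating $\sigma$-stability of $\wt\bT^*$, which should follow from the standard Harish--Chandra freedom in Deligne--Lusztig theory.
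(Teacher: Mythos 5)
Your overall route is the paper's: the identity $\wt\chi^{\si'}=\wt\chi\mu$ from the equivariance of $\wt\JJ$ together with the compatibility of Jordan decomposition with the action of $\Lin(\wGF/\GF)$, then \Cref{prop5_24}(a) plus absorption of the dual-side conjugacy into some $g\in\wGF$ for (a), and descent to $\wh\bG$ via \Cref{prop5_24}(c) together with $[s_0,\si]\notin\spa{h_0^{(\bH_0)}}$ for (b). But two steps fail as written. First, you secure $\restr \wt\theta|{\Z(\wGF)}=\nu$ by demanding that $\wt\chi$ occur in $R_\wbT^\wbG(\wt\theta)$ for the chosen pair, and you yourself flag the tension with the stability of $\wt\bT^*$. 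That tension is real: for a maximally split (or otherwise canonically $\si''$-stable) torus $\wt\bT^*$ of $\Cent_\wbH(\wt s)$ the constituency $\langle\wt\chi,R_\wbT^\wbG(\wt\theta)\rangle\neq0$ is in general unobtainable (for instance when the Jordan correspondent of $\wt\chi$ is a cuspidal unipotent character of $\Cent_\wbH(\wt s)^F$), and no ``Harish--Chandra freedom'' rescues it. It is also unnecessary: all characters of $\cE(\wGF,[\wt s])$ lie over one and the same character of $\Z(\wGF)$, and by the character formula applied to $R_\wbT^\wbG(\wt\theta)$ this common central character is $\restr \wt\theta|{\Z(\wGF)}$ for \emph{any} pair $(\wbT,\wt\theta)$ dual to $(\wt\bT^*,\wt s)$ --- this is exactly how the paper obtains the last condition in (a), with no constituency requirement.

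Second, your claim that a ``standard Lang argument'' produces a maximal torus of $\Cent_\wbH(\wt s)$ stable under both $F$ and $\si$ itself (so that $\si''=\si$) is unjustified: two commuting Steinberg endomorphisms need not jointly stabilize a maximal torus unless one of them is corrected by an inner element, and this is precisely why the statement only asks for some $\si''\in\HF(\si')^*$. The paper's fix is to take $\wt\bT^*$ maximally split in $(\Cent_\wbH(\wt s),F)$; since $\si$ commutes with $F$ it sends maximally split tori to maximally split tori, which form a single $\Cent_\wbH(\wt s)^F$-orbit, so $\wt\bT^*$ is $\spa{F,\si''}$-stable for some $\si''\in\Cent_\wbH(\wt s)^F\si$, and this adjustment is harmless in (b) because $[\si'',s_0]=[\si,s_0]$. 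Finally, in (b) your argument needs $\restr \lambda|{\wh\bT^F}\neq1$, which you assert without proof; the paper avoids this issue by working inside the torus-level duality $\Irr(\wh\bT^F)\xrightarrow{\sim}(\wh\bT^*)^F$, $\wh\theta\mapsto\wh s$, whose equivariance reduces $g\si'$-invariance of $\wh\theta$ to $\si''$-invariance of $\wh s$; the latter fails by assumption (iii) of \Cref{prop821}, and since $\ov\theta$ has only two extensions, neither one is $g\si'$-invariant.
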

\begin{proof} If $(\wt s,\phi)\in\wt\JJ(\wt \chi)$, then $(\wt sz,\phi)\in\wt\JJ(\mu\wt \chi)$, see for instance \cite[Thm.~4.7.1(3)]{GM}. On the other hand by the equivariance of $\wt\JJ$ from \Cref{JorTilde}, we have $(\wt sz,\phi)=\si.(\wt s,\phi)\in\wt\JJ(\wt \chi^{\si '})$. This implies $\mu\wt\chi =\wt\chi^{\si '}$. 
	
	For (a) let $\wt \bT^*$ be a maximally split torus of $\Cent_{\wt\bH}(\wt s)$. As this group is reductive and $\spann<F, \si>$-stable, we see that $\si(\wt\bT^*)$ is also a maximally split torus of $\Cent_{\wt\bH}(\wt s)$. Accordingly $\wt\bT^*$ and $\si(\wt\bT^*)$ are $\Cent_{\wt\bH}(\wt s)^F$-conjugate. Hence $\wt\bT^*$ is $\spa{F,\si''}$-stable for some $\si''\in\Cent_{\wt\bH}(\wt s)^F\si $. Let $(\wt \bT, \wt \theta) $ be such that 
	$$(\wbT, \wt \theta)	\stackrel{\wbG}{\longleftrightarrow} (\wbT^*,\wt s)$$
	in the sense of  (\ref{Ttheta}). Using \Cref{prop5_24}(a) we deduce $(\wbT, \wt \theta)^{\si'}	\stackrel{\wbG}{\longleftrightarrow} {}^{(\si')^*}(\wbT^*,\wt s)$ with the latter also a $\wt\bH^F$-conjugate of ${}^{\si}(\wbT^*,\wt s)=({}^{\si}\wbT^*,\wt sz)$, which in turn is $\Cent_{\wt\bH}(\wt s)^F$-conjugate to $(\wbT^*,\wt sz)$ by what has been said before. We get $(\wbT, \wt \theta)^{\si'}	\stackrel{\wbG}{\longleftrightarrow} ({}^{ }\wbT^*,\wt sz)$.  We can multiply the left side by $\restr{\mu\inv}|{\wbT^F}$ while multiplying the right side by $z\inv$ (see proof of \cite[Prop.~2.5.21]{GM}), so that $(\wbT^{\si'}, \wt \theta^{\si'}\restr{\mu\inv}|{\wbT^F})	\stackrel{\wbG}{\longleftrightarrow} ({}^{ }\wbT^*,\wt s).$ Combining the latter with $(\wt\bT,\wt\theta)	\stackrel{\wbG}{\longleftrightarrow} ({}^{ }\wbT^*,\wt s)$, we get that $(\wbT^{\si'}, \wt \theta^{\si'}\restr{\mu\inv}|{\wbT^F})$ and $(\wt\bT,\wt\theta)$ are $\wGF$-conjugate, hence our claim that for some $g\in \wGF$, we have $(\wt\bT,\wt\theta)^{g\si'}=(\wt\bT,\restr \mu|{\wbT^F}\wt\theta)$.

It is well-known that for every $\chi\in \cE(\wGF,[\wt s])$, $\restr \chi|{\Z(\wGF)}$  is a multiple of $\restr \wt \theta|{\Z(\wGF)}$, see for instance \cite[Lem. 2.2]{MaH0}. 

For the proof of (b) recall that the relation $(\wt \bT,\wt \theta) \stackrel{\wt\bG}{\longleftrightarrow} (\wt \bT^*,\wt s)$ implies
$$(\bT,\theta)\stackrel{\bG}{\longleftrightarrow} (\wt\pi(\wt\bT^*),\wt\pi(\wt s)),$$ see \cite[Lem.~9.3]{Cedric}.
Define $\bT^*:=\wt\pi(\wt\bT^*)$, $\wh\bT=\bT/Z_0$, and $\wh\bT^*=\bT^*/Z^\diamond$. 
Let $\ov \theta\in \Irr(\bT^F/Z_0)$ be the character determined by $\theta$. According to \Cref{prop5_24}(c) there exists some extension $\wh \theta$ of $\ov \theta$ to $\wh \bT^F$ and $\wh s=s_0z Z^\diamond $ for some $z\in \Z(\bH_0)$ with 
$$ (\wh \bT,\wh \theta) \stackrel{\wh \bG}{\longleftrightarrow} (\wh \bT^*,\wh s).$$
Letting $\la$ be the linear character of ${\wh \bG}^F$ associated with $z Z^\diamond$, one also gets $$ (\wh \bT,\wh \theta') \stackrel{\wh \bG}{\longleftrightarrow} (\wh \bT^*,\wh s')$$ for $\wh \theta ':=\wh \theta\restr{\la\inv}|{\wh\bT^F}^{}$ and $\wh s':=\wh sz^{-1}=s_0Z^\diamond$, by using the same argument as in the above proof of (a).

This means that duality provides a natural isomorphism $X(\wh\bT^F)\xrightarrow{\sim}(\wh\bT^* )^F$ sending $\wh\theta'$ (seen as having values in $\FF^\times$, a subgroup of $\CC^\times$) to $\wh s'$. This map sends $ \wh \theta'{}^{g\si '} $ to  ${}^{\si ''}\wh s'{} $ by the equivariance recalled in (1.9) which applies to $g\sigma$ whose dual map differs from $\sigma^*$ by an element of $\bG^*{}^F$. So we have $\wh \theta'{}^{g\si '}=\wh \theta'{}$ if and only if ${}^{\si ''}\wh s'{} = {}^{}\wh s'$.
By the assumption (iii) of \Cref{prop821} on $\wt s$ and $s_0$ we have  $[\si,s_0]\in \Z(\bH_0)\setminus \langle h_0^{(\bH_0)}\rangle $. Then also $[\si '',s_0]=[\si,s_0]\in \Z(\bH_0)\setminus \langle h_0^{(\bH_0)}\rangle $ since $\sigma$ and $\sigma''$ differ by an element of $\bH^F$. Therefore $\si''(\wh s') \neq \wh s'$ and $\wh \theta'$ is not $g\si'$-invariant, while $\ov \theta$ is $g\si'$-invariant. This shows that no extension $\wh\theta$ of $\ov\theta$ is $g\si'$-invariant since there are only two extensions of $\ov \theta$. 
\end{proof}

We can now prove \Cref{prop821}. 
\begin{proof}[Proof of \Cref{prop821}:]
Let $\wh Z=\calL\inv({Z_0})\cap\Z(\wbG)$, $\wh G=\calL\inv( {Z_0})\cap\bG$ be defined as in \Cref{lem520}, let 
$\nu\in\Irr(\restr \chi| { \Z(\wGF)})$, $\mu\in\Lin(\wGF)$ with $\wt \chi^{\si'}=\wt \chi \mu$ and $(\wt\bT ,\wt\theta)$ be as in \Cref{lem5_25}. Part (a) of the latter tells us that we also have $\wt \theta^{g\si'}=\wt\theta\restr \mu|{\wbT^F}$ and $\nu \in \Irr(\restr\wt \theta|{\Z(\GF)})$. According to \Cref{lem5_25}(b) the character $\theta$ has no $g\si '$-invariant extension to $\wh T$ and hence by \Cref{lem520} with $\bK =\bT$ and $\tau =g\si'$, $\mu$ and $\nu$ do not satisfy the equivalent assumption (ii) of that lemma. In other words there exists no $\wh \nu\in\Irr(\wh Z\mid \nu)$ with $\mu(tx)=\wh \nu^ {\si'}(x) \wh \nu(x\inv) $ for every $t\in \wh T$ and $x\in \wh Z$ with $tx\in\wbT^F$ ($g\si'$ can be replaced with $\si'$ when conjugating elements of the center). 
Then every such $\wh \nu\in\Irr(\wh Z\mid \nu)$ satisfies $\mu(tx)\neq \wh \nu^ {\si'}(x) \wh \nu(x\inv) $ for at least some $t\in \wh T$ and $x\in \wh Z$ with $tx\in\wbG^F_\chi$, noting that $\wh G$ stabilizes $\chi$ by the assumption \ref{prop821}(ii) and $\wh\bG^F=\wh G/{Z_0}$. Applying now the other direction of  \Cref{lem520} with $\bK =\bG$, this implies that $\chi$ has no ${\si'}$-invariant extension to $\wh \bG^F$. This is our claim taking into account $\wh\bG^F=\wh G/{Z_0}$ again. 
\end{proof}
The above considerations lead also to the following statement which gives a criterion to ensure the existence of a $\si'$-invariant extension. This won't be used here but is given for completeness.
\begin{cor}\label{cor821}
	Let $\si'\in E(\bG)$, $\chi\in\Irr(\GF)^{\spa{\si'}}$ and some $\wt \chi\in\Irr(\wGF\mid \chi)$ such that 
	\begin{asslist}
		\item  $Z_0\leq\ker(\chi)$;
		\item $\restr \wt\chi|{\GF}\neq \chi$ and $\chi$ is $\wh\bG^F$-invariant;
		\item there exist $(\wt s,\phi)\in \wt \JJ(\wt \chi)$, $\si\in\HF(\si')^*$ with $\si.(\wt s, \phi)=( \wt s z, \phi)$ for some $z\in \Z(\wbH)$, and some $s_0\in \bH_0\cap \wt s\Z(\wbH)$ satisfies $[s_0,\si]\in \spa{ h_0^{(\bH_0)}}$. 
	\end{asslist}
	Then every $\wh \chi\in{\Irr}(\wh\bG^F)$ with $\wh \chi(xZ_0)=\chi(x)$ for all $x\in \GF$ is $\si'$-invariant.
\end{cor}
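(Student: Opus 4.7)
The proof runs parallel to that of \Cref{prop821}, with the sign change in hypothesis~(iii) flipping the conclusion of the key duality comparison. First I would apply \Cref{lem5_25}(a) to $\wt\chi$ and $\si'$ to produce a pair $(\wbT,\wt\theta)\stackrel{\wbG}{\longleftrightarrow}(\wt\bT^*,\wt s)$, an element $g\in\wGF$, and some $\si''\in\HF(\si')^*$ with $\wt\bT^*$ being $\spa{F,\si''}$-stable, $(\wbT,\wt\theta)^{g\si'}=(\wbT,\restr\mu|{\wbT^F}\wt\theta)$ and $\restr\wt\theta|{\Z(\wGF)}=\nu$; the proof of \Cref{lem5_25}(a) in fact yields $\si''\in\Cent_{\wt\bH}(\wt s)^F\si$, and $\mu\in\Lin(\wGF/\GF)$ is the character associated with $z$ by duality, so that $\wt\chi^{\si'}=\wt\chi\mu$. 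I would then redo the duality argument of \Cref{lem5_25}(b): via \Cref{prop5_24}(c) there is an extension $\wh\theta$ of $\theta:=\restr\wt\theta|{\bT^F}$ to $\wh T:=\calL^{-1}(Z_0)\cap\bT$ with $(\wh\bT,\wh\theta)\stackrel{\wh\bG}{\longleftrightarrow}(\wh\bT^*,\wh s)$ and $\wh s=s_0 Z^\diamond$. By \Cref{prop5_24}(a), $\wh\theta^{g\si'}=\wh\theta$ is equivalent to $\si''(\wh s)=\wh s$; since $\wt s\in s_0\Z(\wbH)$ forces every $h\in\Cent_\wbH(\wt s)$ to centralize $s_0$ (the central factor cancels), one has $[s_0,\si'']=[s_0,\si]$, which by hypothesis~(iii) lies in $\spa{h_0^{(\bH_0)}}$. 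Hence $\si''(\wh s)=\wh s$ and $\wh\theta$ is $g\si'$-invariant.

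I would next invoke \Cref{lem520} with $\bK=\bT$, $\psi=\theta$, $\tau=g\si'$ in the direction (i)$\Rightarrow$(ii) to extract $\wh\nu\in\Irr(\wh Z\mid\nu)$ satisfying the compatibility $\mu(tx)=\wh\nu^{g\si'}(x)\wh\nu(x^{-1})$ for every $t\in\wh T$ and $x\in\wh Z$ with $tx\in\wbT^F$. The same $\wh\nu$ satisfies the analogous condition for $\bK=\bG$: for fixed $x\in\wh Z$ the requirement $tx\in\wGF$ forces $\calL(t)=\calL(x)^{-1}\in Z_0$ via the Lang map, determining $t$ up to $\GF$; since $\mu$ is trivial on $\GF$, the value $\mu(tx)$ depends only on $x$ and is the same whether $t$ is drawn from $\wh T$ or from $\wh G$. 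Moreover hypothesis~(ii) gives $\wh G\cdot\Z(\wGF)\subseteq\wGF_\chi$, so no pair $(t,x)$ on the $\bG$ side is omitted from the constraint set. Thus the torus and global versions of condition~(ii) of \Cref{lem520} reduce to the same constraint on the central character $\wh\nu\in\Irr(\wh Z)$, and \Cref{lem520} in the direction (ii)$\Rightarrow$(i) with $\bK=\bG$ yields a $\si'$-invariant extension of $\chi$ to $\wh\bG^F$.

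Finally, the two extensions of $\chi$ to $\wh\bG^F=\wh G/Z_0$ differ by the unique non-trivial linear character of $\wh\bG^F/X\cong Z_0\cong\ZZ/2$, which is automatically $\si'$-fixed (being the unique character of its order); hence the existence of one $\si'$-invariant extension implies that every extension is $\si'$-invariant, as claimed. The main obstacle is the careful transfer between the torus and global versions of condition~(ii) of \Cref{lem520}, but as noted this transfer is essentially automatic once one observes that both conditions reduce to the same constraint on the single character $\wh\nu\in\Irr(\wh Z\mid\nu)$; the role of hypothesis~(ii) is precisely to guarantee that the global constraint set is not strictly larger than the torus one.
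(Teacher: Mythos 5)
Your proposal is correct and follows essentially the same route as the paper, whose proof of this corollary is simply a pointer back to \Cref{prop821}: the flipped hypothesis $[s_0,\si]\in\spa{h_0^{(\bH_0)}}$ gives, via \Cref{prop5_24} and the argument of \Cref{lem5_25}(b), a $g\si'$-invariant extension of $\theta$, and then \Cref{lem520} is applied in both directions exactly as you do. The only content you add is making explicit the transfer of condition (ii) of \Cref{lem520} from $\bK=\bT$ to $\bK=\bG$ (via the Lang-map observation that $\mu(tx)$ depends only on $x$, and that hypothesis (ii) makes the stabilizer condition vacuous), which the paper leaves implicit; your argument for it is sound, apart from the harmless imprecision that it is the products $tx\in\wGF$ with $t\in\wh G$, $x\in\wh Z$ (rather than the set $\wh G\cdot\Z(\wGF)$ itself) that lie in $\wGF_\chi$.
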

\begin{proof}
	This follows from the same arguments given in the proof of \Cref{prop821}. The assumption $[s_0,\si]\in \spa{ h_0^{(\bH_0)}}$ implies  that in the situation of \Cref{lem5_25}, $\theta$ has a $\si''$-invariant extension to $\wh \bT^F$. 
\end{proof}
   
\subsection{Extending characters of $\wt \EE(\cC)$} We go back to $D=\spann<\restr F_0|{\GF}, \gamma>$ 
satisfying  \Cref{hyp76}, $\cC\in\Cl_{\textrm{ss}}(\bH)^{\spa{F_0,\gamma}}$ and $\wt \EE(\cC)\subseteq \cE(\GF,\cC)^D$ from \Cref{prop818}.

With the help of \Cref{prop5_13}, \Cref{prop522} and \Cref{prop821} above, we are finally in the position to see that half the characters in $\wt \EE(\cC)$ do not extend to  $\GF D$. 
\begin{prop}\label{cor822}
The set $\wt \EE(\cC)$ is a union of  $\wGF$-orbits such that each $\wGF$-orbit consists of two $D$-invariant characters, where one character extends to $\GF D$ and the other one does not extend to $\GF D$. 
Consequently there is some $\wGF$-transversal $\EEnull(\calC)$ in $\wt\EE(\calC)$ such that no $\chi\in\EEnull(\calC)$ extends to $\GF D$, and moreover $\wt\EE(\calC)\setminus \EEnull(\calC)$ is also a $\wGF$-transversal such that  every $\chi\in\wt\EE(\calC)\setminus \EEnull(\calC)$ extends to $\GF D$.
\end{prop}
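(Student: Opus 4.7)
The plan is to combine the extension dichotomy of Proposition~\ref{prop820} with the non-extendibility statement of Proposition~\ref{prop821}, and to argue that the two characters in each $\wGF$-orbit of $\wt\EE(\cC)$ fall into opposite cases of the dichotomy. For any $\chi\in\wt\EE(\cC)$, Lemma~\ref{lemfrakC1} (when $\cC\in\frakC_1$) and Proposition~\ref{prop522}(a,b) (when $\cC\in\frakC_0$) supply the needed structural properties: $h_0\in\ker(\chi)$, the $\wh\bG^F$-invariance of $\chi$, and the fact that the $\wGF$-orbit of $\chi$ consists of exactly two $D$-invariant characters $\chi,\chi'$. Both therefore descend to distinct irreducible characters of $X:=\GF/Z_0$ fixed by $\wh X:=\wh\bG^F\cdot D$ viewed inside $(\wbG/Z_0)^F\rtimes D$. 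Choosing $d\in\wh\bG^F\setminus X$, $f=\restr F_0|{\wh G}$ and $g=\gamma$, the quotient $\wh X/X$ is an abelian $2$-group with direct product decomposition $\spa{\eps(d)}\times\spa{\eps(f)}\times\spa{\eps(g)}$ required by Proposition~\ref{prop820}.

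Next, Proposition~\ref{prop5_13} (for $\cC\in\frakC_1$) and Proposition~\ref{prop522}(c) (for $\cC\in\frakC_0$) furnish precisely the data required by Proposition~\ref{prop821} with $\sigma'=F_0$; hence no extension of $\chi$ to $\wh\bG^F$ is $F_0$-invariant. This verifies the assumption of Proposition~\ref{prop820} and yields the dichotomy: $\chi$ extends to exactly one of $\spa{X,f,g}=\GF D/Z_0$ or $\spa{X,f,dg}$, and the same conclusion applies to $\chi'$.

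The crucial step is to show that $\chi$ and $\chi'$ realize opposite cases. Pick $t\in\wGF\setminus\wGF_\chi$ with $\chi'=\chi^t$ and let $\ov t$ denote its image in $(\wbG/Z_0)^F$. Because $F_0$ fixes $\Z(\bG)$ pointwise, $\ov t$ normalizes $\spa{X,f}$, so for any extension $\phi$ of $\chi$ to $\spa{X,f}$ the character $\phi^{\ov t}$ is an extension of $\chi'$. A commutator computation in $(\wbG/Z_0)^F\rtimes D$ then shows $\gamma(\ov t)\equiv\ov t\cdot d\pmod{X\cdot\Z(\wbG/Z_0)^F}$: indeed $\gamma(\ov t)\ov t^{-1}$ lies in the stabilizer of $\chi$, which is $\wh\bG^F\cdot\Z(\wbG/Z_0)^F$, and the $\gamma$-action on the outer diagonal class of $t$ in $\Z(\bG)_F$ multiplies it by $h_0$ by the description in \Cref{not2_1}, which corresponds precisely to the non-trivial coset of $d$ in $\wh\bG^F/X$. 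Combining this with $\phi^d=\phi\nu$ from Proposition~\ref{prop821} and the $\ov t$- and $g$-invariance of the sign character $\nu$ of $\spa{X,f}/X$, one derives a relation of the form $(\phi^{\ov t})^g=(\phi^g)^{\ov t}\nu$. Hence $\phi^g=\phi$ (case (ii) for $\chi$) forces $(\phi^{\ov t})^g=\phi^{\ov t}\nu$ (case (i) for $\chi'$), and conversely, proving that $\chi$ and $\chi'$ always lie in opposite cases.

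It follows that exactly one of $\chi,\chi'$ extends to $\GF D$, and taking the non-extending representative from each $\wGF$-orbit in $\wt\EE(\cC)$ produces the required transversal $\EEnull(\cC)$, whose complement is likewise a $\wGF$-transversal of extending representatives. The main obstacle is the commutator calculation identifying $\gamma(\ov t)\ov t^{-1}$ with the class of $d$ modulo $X\cdot\Z(\wbG/Z_0)^F$; this requires a careful choice of lift $t\in\wGF$ and a close tracking of the $\gamma$-action on $\Z(\wbG)/Z_0$.
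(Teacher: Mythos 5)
Your overall route is the paper's: the structural facts from \Cref{lemfrakC1} and \Cref{prop5_13} (for $\cC\in\frakC_1$), resp.\ \Cref{prop522} (for $\cC\in\frakC_0$), feed \Cref{prop821} with $\si'=F_0$, which supplies the hypothesis of \Cref{prop820} for $X=\GF/Z_0$, and the remaining task is to show the two $D$-invariant characters in each $\wGF$-orbit of $\wt\EE(\cC)$ land in opposite branches of that dichotomy. (Minor attribution slip: $\phi^d=\phi\nu$ is established inside the proof of \Cref{prop820}; \Cref{prop821} only provides the non-$F_0$-invariance of extensions to $\wh\bG^F$.)

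The genuine gap is exactly at the point you flag and leave open. For an arbitrary $t\in\wGF$ with $\chi^t=\chi'$, the hypothesis $[\Z(\bG),F_0]=1$ only gives $F_0(t)t^{-1}\in\GF\Z(\wGF)$, not $\GF Z_0$; hence in $(\wbG/Z_0)^F$ one has $\ov t f\ov t^{-1}\in f\,X\,\ov Z$, where $\ov Z$ is the image of $\Z(\wGF)$, and this central factor need not lie in $\spa{X,f}$ at all, so $\ov t$ need not normalize $\spa{X,f}$; moreover a central element $z$ with $F_0(z)\neq z$ does not centralize $f$ and can a priori move extensions of $\chi$ to $\spa{X,f}$ by a nontrivial linear character, so it cannot just be discarded. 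The same caveat undermines your congruence $\gamma(\ov t)\equiv\ov t d$, which only holds modulo $X\cdot\ov Z$. The paper sidesteps this by not lifting into $\wGF$: it takes $h'\in\Z(\bG)\setminus\spa{h_0}$ and $d'\in\bG$ with $F(d')=d'h'$. A short Lang-map computation then gives $F_0(d')d'^{-1}\in\GF$ and $\gamma(d')d'^{-1}\in\calL\inv(h_0)\cap\bG$, i.e.\ $[d',F_0]=1$ and $[d',\gamma]=d$ genuinely modulo inner automorphisms of $\GF$; consequently conjugation by $d'$ carries the pair $(\spa{X,f,g},\chi)$ to $(\spa{X,f,dg},\chi^{d'})=(\spa{X,f,dg},\chi')$, and since extendibility is preserved under conjugation, the dichotomy of \Cref{prop820} forces $\chi$ and $\chi'$ into opposite cases with no bookkeeping of individual extensions. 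If you replace your $\ov t$ by this $d'$ (or show a lift $t\in\wGF$ can be chosen with $F_0(t)t^{-1}\in\GF Z_0$, which is not automatic), your argument closes and is then the same as the paper's.
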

\begin{proof}
Let $\chi\in \wt\EE(\calC)$ as above. The character $\chi$ satisfies the assumptions of \Cref{prop821} for $\si '=F_0$ according to \Cref{lemfrakC1} and \Cref{prop5_13} when $\cC\in \frakC_1$ and \Cref{prop522} when $\cC\in \frakC_0$. 
Consequently $\chi$, seen as a character $\chi '$ of $\GF/\spannh$, has no $F_0$-invariant extension to $\wh G/\spannh$. 

As explained before \Cref{prop821} the groups and elements for \Cref{prop820} can be chosen such that the group-theoretic assumptions from \Cref{prop820} are satisfied with $X:=\GF/\spann<h_0>$, $\wh X:=\wh G\rtimes D$ with $d\in \wh G\setminus (\GF/\spann<h_0>)$, $f:=\restr F_0|{\wh G}$ and $c:=\gamma$.
Recall $\chi'$ the character of $\GF/\spannh$ that inflates to $\chi$. We observe that $\chi'$ is $\wh X$-invariant, see \Cref{lem817} and \Cref{prop522}, respectively. On the other hand no extension of $\chi'$ to $\wh G$ is $F_0$-invariant by \Cref{prop821}. 

 \Cref{prop820} implies that $\chi'$ extends either to $(\GF/\spannh ) \rtimes D$ or to $(\GF/\spannh)\rtimes \spann<f,dc>$. Accordingly $\chi$ extends to $\GF\rtimes D$ or to $\GF \spann<\gamma,\restr F_0|{\GF} d>$.
Let $h'\in \Z(\bG)\setminus \spannh$ and $d'\in  \bG$ with $ F(d')= d' h'$, or in other words $d'$ induces the diagonal outer automorphism of $\GF$ associated to $h'\in \Z(\bG)=\Z(\bG)_F$ by (\ref{cZF}). Then $[d',F_0]=1$ and $[d',\gamma]= d$ in $\Out(\GF)$ since $[F_0,\Z(\bG)]=1$ and $\gamma(h')=h_0h'$. We then get
    \[(\GF \rtimes D)^{d'}=\GF \spann<f,dc>.\]
If $\chi$ has no extension to $\GF \rtimes D$ and extends to $\GF \spann<f,dc>$, then $\chi^{d'}$ has no extension to $(\GF\rtimes D)^{d'}= \GF \spann<f,dc>$ and extends to $(\GF \spann<f,dc>)^{d'}=\GF\rtimes D$. 
 We can argue analogously in the case where $\chi$ extends to $\GF\rtimes D$. 

This proves that every $\wGF$-orbit in $\wt \EE(\cC)$ consists of one character that extends to $\GF D$ and another one that doesn't. Taking all characters in $\wt\EE(\cC)$ without an extension $\GF D$ defines a $\wGF$-transversal as required. 
	\end{proof}
Next we describe all characters of $\GF$ that extend to $\GF D$. 
\begin{cor}\label{cor_EE} 
Let $\frakC=\Cl_{\textrm{ss}}(\bH)^{\spann<  F_0{},\gamma>}$ be as in \Cref{frakC}, $\EE(\cC)$ as in \Cref{cor822} for every $\cC\in \frakC$ and let $\EEnull :=\bigcup _{\calC\in\frakC} \EEnull(\calC)$.
Then every $\chi\in\Irr(\GF)^D\setminus \EEnull$ extends to $\GF D$. 
\end{cor}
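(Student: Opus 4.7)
The plan is to show that the hypothesis of \Cref{cor79} applies to the family $\{\EEnull(\calC)\}_{\calC\in\frakC}$, so that the conclusion of \Cref{cor79} gives exactly the statement we want. Recall \Cref{cor79} requires, for every $\calC\in\frakC$, that
\begin{enumerate}
\item[(i)] $|\EEnull(\calC)|=\tfrac12|\cE(\GF,\calC)^D|-\tfrac12|\cE(\bG^{F_0},\calC)^{\spa{\gamma}}|$, and
\item[(ii)] no $\chi\in\EEnull(\calC)$ extends to $\GF D$.
\end{enumerate}
The two conditions then imply that every $\chi\in\Irr(\GF)^D\setminus\EEnull$ extends to $\GF D$.

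First I would verify (ii). The set $\EEnull(\calC)$ has been constructed as a $\wGF$-transversal in $\wt\EE(\calC)$ by \Cref{cor822} with the explicit property that no character in it extends to $\GF D$. This disposes of (ii) uniformly in $\calC$, regardless of whether $\calC\in\frakC_0$ or $\calC\in\frakC_1$.

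Next I would handle (i) by splitting on the partition $\frakC=\frakC_0\sqcup\frakC_1$ from \Cref{frakC}. If $\calC\in\frakC_1$, then $\cE(\GFnull,\calC)^{\spa{\gamma}}=\emptyset$ by \Cref{lem5_11}, $\wt\EE(\calC)=\cE(\GF,\calC)^D$, and each $\wGF$-orbit in $\wt\EE(\calC)$ consists of two $D$-invariant characters by \Cref{lemfrakC1}(b); hence a $\wGF$-transversal has cardinality $\tfrac12|\cE(\GF,\calC)^D|$, which is precisely the value required in (i) (this is the content of \Cref{lemfrakC1}(c)). If $\calC\in\frakC_0$, the cardinality equality (i) is exactly \Cref{prop818}.

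Having verified (i) and (ii) for every $\calC\in\frakC$, an application of \Cref{cor79} to $\EEnull=\bigcup_{\calC\in\frakC}\EEnull(\calC)$ yields the conclusion. The proof is therefore a matter of assembling previously established cardinality equalities and non-extendibility statements; no further obstacle remains because all the substantive work, namely the construction of $\wt\EE(\calC)$, the counting via Shintani descent, and the Jordan-decomposition argument leading to non-extendibility, has already been carried out in \Cref{thm77}, \Cref{lemfrakC1}, \Cref{prop818}, and \Cref{cor822}.
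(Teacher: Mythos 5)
Your argument is correct and coincides with the paper's own proof: both verify assumption (ii) of \Cref{cor79} via \Cref{cor822} and assumption (i) via \Cref{lemfrakC1} for $\frakC_1$ and \Cref{prop818} for $\frakC_0$, then apply \Cref{cor79}. Your write-up merely spells out the $\frakC_0/\frakC_1$ case split slightly more explicitly than the paper does.
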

\begin{proof} For every $\cC\in \frakC$ the set  $\EE(\cC)$ contains no character that extends to $\GF D$ by \Cref{cor822} and 
$$ |\EEnull(\calC)| =\frac 1 2 
|\cE(\GF,\calC)^D|-\frac 1 2  |\cE(\GFnull,\calC)^ {\spann<\gamma>}|,$$
see \Cref{prop818} and \Cref{lemfrakC1}, respectively for $\calC\in \frakC_0$ or  $\calC\in \frakC_1$. The assumptions of \Cref{cor79} are ensured and we get our claim  by applying it. 
\end{proof}

The sets $\EE(\cC)$, $\wt\EE(\cC)$ and $\ov\EE(\cC)$ were defined using $s$ and $\wc A(s)$. They then seem to be not uniquely determined by $\cC$.
But by the above $\EE(\calC)$ is the set of characters in $\cE(\GF,\cC)^D$, that have no extension to $\GF D$. Hence this set only depends on $\cC$ (and of course $D$). This proves that the sets $\ov\EE(\calC)=\Pi_\wGF(\EE(\cC))$ introduced in \Cref{cor816} and $\wt \EE(\calC)$ deduced from it are actually uniquely determined by $\cC$, as well. 
 
Taking together the results from above we give a criterion to show that  $\EE(\cC)$ is empty for some conjugacy class $\cC$. This won't be used here but is stated for future reference.
\begin{cor}
Let $\cC\in \Cl_{\textrm{ss}}(\bH)$ be $D$-stable. 
\begin{thmlist}
    \item If $|\cE(\GF,\cC)^D|=|\cE(\GFnull,\cC)^{\spa \gamma}|$, then all $D$-invariant characters of $\cE(\GF,\cC)^D$ extend to $\GF \rtimes D$.
    \item If $\cC$ contains exactly two $\HF$-conjugacy classes and at least one $\gamma$-stable $\HFnull$-conjugacy class, then every character in $\cE(\GF,\cC)^D$ extends to $\GF \rtimes D$.
\end{thmlist}
\end{cor}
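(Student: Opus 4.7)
The plan is to derive both parts from Corollary \ref{cor_EE}, which asserts that every $\chi\in\Irr(\GF)^D\setminus\EEnull$ extends to $\GF D$; the task reduces in each case to showing that the specific component $\EEnull(\cC)$ is empty.

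For part (a), I would first observe that in both cases $\cC\in\frakC_0$ and $\cC\in\frakC_1$ the set $\EEnull(\cC)$ is a $\wGF$-transversal of $\wt\EE(\cC)$ satisfying the counting identity
\[|\EEnull(\cC)|= \tfrac{1}{2}|\cE(\GF,\cC)^D|-\tfrac{1}{2}|\cE(\GFnull,\cC)^{\spa\gamma}|,\]
namely by \Cref{prop818} when $\cC\in\frakC_0$ and by \Cref{lemfrakC1}(c) combined with the vanishing of $\cE(\GFnull,\cC)^{\spa\gamma}$ from \Cref{lem5_11} when $\cC\in\frakC_1$. The assumed equality of cardinalities then forces $|\EEnull(\cC)|=0$, and \Cref{cor_EE} gives the conclusion.

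For part (b), I would argue that the two hypotheses force $\cC\in\frakC_0$ with $|\AHs|=2$, and then apply the final sentence of \Cref{prop818}, which asserts that $|\AHs|=4$ whenever $\wt\EE(\cC)\neq\emptyset$. Indeed, the existence of a $\gamma$-stable $\HFnull$-class inside $\cC$ places $\cC$ in $\frakC_0$ by definition of $\frakC_0$. As recalled in \Cref{not5_10}, $F$ and $F_0$ then act trivially on $\AHs$, so $\AHs_F=\AHs$, and the splitting (\ref{CFs_a}) identifies the number of $\HF$-conjugacy classes in $\cC$ with $|\AHs_F|$. The assumption of exactly two $\HF$-classes therefore yields $|\AHs|=2$, and the contrapositive of the cited sentence gives $\wt\EE(\cC)=\emptyset$. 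Hence $\EEnull(\cC)\subseteq\wt\EE(\cC)=\emptyset$ and \Cref{cor_EE} applies.

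The only point requiring some care is the uniformity of the counting formula across the two cases $\frakC_0$ and $\frakC_1$, but once \Cref{lem5_11} is invoked the statement of \Cref{lemfrakC1}(c) reads exactly as \Cref{prop818} with the term $|\cE(\GFnull,\cC)^{\spa\gamma}|$ equal to zero. I anticipate no serious obstacle beyond this bookkeeping.
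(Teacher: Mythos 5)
Your proposal is correct and follows essentially the same route as the paper: reduce both parts to showing $\EE(\cC)=\emptyset$ via \Cref{cor_EE}, using the counting identity of \Cref{prop818}/\Cref{lemfrakC1} for (a) and the ``$\wt\EE(\cC)\neq\emptyset$ forces $|\AHs|=4$'' criterion of \Cref{prop818} for (b). Your part (b) even makes explicit the step $\AHs_F=\AHs$ (via \Cref{not5_10} and the splitting (\ref{CFs_a})) that the paper leaves implicit.
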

\begin{proof}
According to \Cref{cor_EE} all characters in $\cE(\GF,\cC)\setminus\EE(\cC)$ extend to $\GF D$ while the characters in $\EE(\cC)$ have no extension to $\GF D$.
Hence in both parts it is sufficient to prove that $\EE(\cC)=\emptyset$.

By construction the set $\EE(\cC)$ defined above satisfies the equation \[|\EE(\cC)|= \frac 1 2 |\cE(\GF,\cC)^D| - \frac 1 2 |\cE(\GFnull,\cC)^{\spa \gamma}|,\]
see \Cref{prop818} and \Cref{lemfrakC1}. Hence the assumed equation $|\cE(\GF,\cC)^D|=|\cE(\GFnull,\cC)^{\spa \gamma}|$ implies $|\EE(\cC)|=0$ in part (a). The statement follows. 

In the situation of (b) we see that  $\cC\in\frakC_0$, see \Cref{frakC}. On the other hand the set $\EE(\cC)$ is empty unless $|\AHs|=4$ for some $s\in \cC$, see \Cref{prop818}.
\end{proof}

\begin{rem} There exists classes $\cC$ such that $\EE(\cC)\neq \emptyset$ for infinitely many $l\geq 4$. For instance in \cite[Rem. 3.6]{CS22} one can find a description of an element $s$ of $\frakC_1$ (see \Cref{frakC}) in rank 4. The construction can be transferred to higher ranks. Taking any unipotent character of $\Cent^\circ_\bH(s)^F$ and using Jordan decomposition leads to $\EE(\cC)\neq \emptyset$

Another challenge is to find a conjugacy class $\cC\in \frakC_0$ such that $\EE(\cC)\neq \emptyset$. Following the considerations in \Cref{sec5C} it is sufficient to find a character in the set $\UU_{exc}^{[\gamma']}$ from \Cref{cor816} associated to some $s\in \cC$. 

Let $q_0$ be a prime power with $4\mid (q_0-1)$ and let $q=q_0^2$. Let $\varpi\in \FF^\times$ with $\varpi^2=-1$ and let $\xi\in\FFtimes$ such that $\xi^{q}= \xi$ and $\xi^{q_0}\in \spa{\varpi} \xi$. We let $a$ be an odd integer and $d\geq 4$. Let $l=2d+4a+4$ and let $\bG=\tD_{l,\text{sc}}(\FF)$, $\bH=\tD_{l,\text{ad}}(\FF)$ with $F=F_q$. Let $s$ be the image in $\bH$ of the element $\prod_{i=1}^l \h_{e_i}(\zeta_i)\in \bG$ in the parametrization of \cite[Not. 3.3]{TypeD1}. We assume the coefficients $\zeta_i$ belong to $\{1,\varpi, \xi, \varpi \xi, \xi^{q_0}, \varpi\xi^{q_0}\}$ and the multiplicities are chosen so that $\Cent^\circ_{\bH}(s)$ is of type $\tD_{d}\times \tD_{d}\times \tA_a\times \tA_a \times \tA_a\times \tA_a$. Then $\Cent^\circ_{\bH}(s)^F$ has a commutator subgroup isomorphic to some central product $\tD_{d,\text{sc}}(q).\tD_{d,\text{sc}}(q). \SL_{a+1}(q). \SL_{a+1}(q).  \SL_{a+1}(q). \SL_{a+1}(q)  $. The group $\AHs$ contains an element that acts on $\Cent_{\bH}^\circ(s)$ by simultaneous graph automorphisms on the two type $\tD$ factors and with trivial action on the type $\tA$ factors. Another element $b\in \AHs$ swaps the two type $\tD$ factors and permutes the four type $\tA$ factors. Let $\phi_\tD\in\UCh(\tD_{d}(q))$ be a unipotent character stable under the graph automorphism of order 2 (i.e. associated with a non-degenerate symbol) and let $\phi_\tA$, $\phi_\tA'\in\UCh(\SL_{a+1}(q))$ be two distinct unipotent characters. Then $\phi_\tD.\phi_\tD.\phi_\tA.\phi'_\tA. \phi'_\tA.\phi_\tA$ defines a unipotent character $\phi$ of $\Cent_{\bH}^\circ(s)^F$, such that $\phi_\tA$ is a unipotent character of the factors corresponding to the eigenvalues $\xi$ and $\varpi\xi^{q_0}$. Then $\phi$ satisfies $|\AHs^F_\phi|=2$. In the terminology of \Cref{cor816}, $\phi$  is $\gamma$-stable and belongs to the set $\UU_{exc}^{[\gamma]}$ defined using $F_{q_0}$ composed with an inner automorphism of $\bH$ as $F_0$. Denoting by $\cC$ the $\bH^F$-conjugacy class containing $s$ and using $\Gamma_{s,F,F_0}$ from \Cref{labelEGFFnull}, one gets a character $\Gamma_{s,F,F_0}(\phi)\in \ov\cE(\GF,\cC)$ with one constituent in $\EE(\cC)$.
\end{rem}
\subsection{Proofs of Theorems A, B and C}\label{ssec5D}
We can now finish the proof of  \Cref{Ainfty_D} that is, verify Condition $\Ai$ from \ref{Ainfty} for the group $\GF$ assuming \Cref{hyp_cuspD_ext} in lower ranks.
\begin{proof}[Proof of \Cref{Ainfty_D}] In the cases of $G={}^2\tD_{l,\text{sc}}(q)$ or $\tD_{l,\text{sc}}(q)$ for an even $q$, the group $\bG =\tD_{l,\text{sc}}(\ov\FF_2)$ has connected (trivial) center, so condition $\Ap$ is empty. The extendibility part of $\Ai$ along with \Cref{hyp_cuspD_ext} are then implied by \Cref{prop_ext_wG}. 

We now assume $q$ to be odd and we  wish to apply \Cref{propE4}. By assumption \Cref{hyp_cuspD_ext} holds for cuspidal characters of groups of lower ranks and hence by \Cref{thm_typeD1} there exists some $E(\GF)$-stable $\wGF$-transversal in $\Irr(\GF)$. This is Assumption (i) of \Cref{propE4}.

Assumption (ii) of \Cref{propE4} requires that, for every non-cyclic $2$-subgroup $D = \spann<\restr F_0|{\GF},\gamma>\leq \uE(\GF)$, each $D$-stable $\wGF$-orbit sum $\ov \chi$ contains some $D$-invariant character $\chi\in\Irr(\ov \chi)^D$ that extends to $\GF D$. Additionally we have to show that every $\chi\in\Irr(\ov \chi)^D$ extends to $\GF D$, unless $|\Irr(\ov \chi)|= 2$.  By \Cref{rem89} we may assume \Cref{hyp76} and apply the results obtained in the present chapter.

For each $\ov \chi\in \big(\Pi_\wGF(\Irr(\GF))\big)^D$ 
there exists some character $\chi\in \Irr(\ov \chi)^D$, as Assumption (i) of \Cref{propE4} holds in the form (transversality of actions of $\wGF$ and $E(\GF)$) given by \cite[Lem.~2.4]{TypeD1} applied with $\wt X=\wbG^F$, $\wh Y= \GF E(\GF) $. Let $\EE=\bigcup _{\calC\in\frakC} \EEnull(\calC)$ be the set from \Cref{cor_EE}, so that $\chi$ extends to $\GF D$ unless $\chi\in \EE$. 

If $\chi\in \EE$, then  \Cref{cor822} tells us that $\Irr(\ov \chi)=\{\chi ,\chi '\}$ where one of those two characters extends to $\GF D$ and the other one doesn't. In both cases this verifies Assumption (ii) of \Cref{propE4}. This also shows that if $\ov \chi\in \big(\Pi_\wGF(\Irr(\GF))\big)^D$ and $|\Irr(\ov\chi)|\neq 2$ then all elements of $\Irr(\ov\chi)^D$ extend to $\GF D$. This gives all our claims.
\end{proof}
We conclude with the proof of the three main statements announced in the Introduction. 
\begin{proof}[Proof of \Cref{thm1}] Let us first review types different from $\tD$ and $^2\tD$. For types $\tA$ and $^2\tA$, this is \cite[Thm 4.1]{CS17A}, for type $\tC$ this is \cite[Thm 3.1]{CS17C} and for types $\tB$ and $\tE$ this is \cite[Thm B]{CS18B}. Other types have $\wG$ acting by inner automorphisms on $G$ since $\Z(G)=1$, and in those types $E(G)$ is cyclic, so that $\Ai$ is immediate.
	
The remaining types are $\tD_l$ and $^2\tD_l$ for $l\geq 4$. In the cases of $G={}^2\tD_{l,\text{sc}}(q)$ or $\tD_{l,\text{sc}}(q)$ for an even $q$, we have seen in the above proof of \Cref{Ainfty_D} that both $\Ai$ and \Cref{hyp_cuspD_ext} are implied by \Cref{prop_ext_wG}. 

We now treat the cases where $q$ is odd. The case of $G=\tD_{4,\text{sc}}(q)$ is implied by \Cref{Ainfty_D} since in that case \Cref{hyp_cuspD_ext} for smaller ranks is empty. The statement for $G=\tD_{l,\text{sc}}(q)$ with $l>4$ is shown by an induction on $l$. We can then assume \Cref{hyp_cuspD_ext} for all $l'<l$. Then \Cref{Ainfty_D} gives Condition $\Ai$ for $G=\tD_{l,\text{sc}}(q)$.

When $G= {}^2\tD_{l,\text{sc}}(q)$, \Cref{thm41} gives our claim since \Cref{hyp_cuspD_ext} in all smaller ranks $l'$ is ensured by what we have just proved about untwisted type $\tD$. 
Note that the above reasoning uses the untwisted case to solve the twisted case while a statement like \Cref{prop82} also shows dependency in the other direction.
\end{proof}

\begin{proof}[Proof of Theorem B]
 By \Cref{thm1} the assumption $\Ap$ of \Cref{thm66} is ensured. So we obtain the $\calZ_F\rtimes E(\GF)$-equivariant Jordan decomposition  in the form of \Cref{thm66}. The form given in Theorem B then comes by defining $\chi_{s,\phi} := \JJ\inv(\ov{(s,\phi)})$.\end{proof}

\begin{proof}[Proof of \Cref{thmC}]
	By the reduction Theorem B of \cite{IMN} it is sufficient to prove that the inductive McKay conditions from \cite[Sect.~10]{IMN} hold for every non-abelian simple group and the prime $3$. This is known for simple groups not of Lie type thanks to \cite{ManonLie}. For simple groups of Lie type,
 Theorem 2.4 of \cite{CS18B} tells us that we just have to check the conditions $\Ai$, $A(1)$, $B(1)$, $A(2)$ and $B(2)$  for any $\bG^F$ where $\bG=\bG_{\mathrm{sc}}$ is simple simply connected and $F\colon\bG\to\bG$ is a Frobenius endomorphism. Now $\Ai$ is ensured by Theorem A while $A(1)$, $B(1)$, $A(2)$ and $B(2)$ hold in all types by \cite[Thm~3.1]{MS16}.
\end{proof}

\printindex


\begin{thebibliography}{SpSt70}
\bibitem[Atlas]{atlas}  {\sc J.H. Conway, R.T. Curtis, S.P. Norton, R.A. Parker, and R A. Wilson},
{\em{$\Bbb{ATLAS}$} of Finite Groups},
{Oxford University Press}, Oxford, 1985.

\bibitem[B]{Cedric} {\sc C.~Bonnaf{\'e}}, {\em Sur les caract\`eres des groupes r\'eductifs finis \`a centre non connexe: applications aux groupes sp\'eciaux lin\'eaires et unitaires}. Ast\'erisque {\bf306} (2006).

	
		
		\bibitem[Bra63]{Brauer}{\sc	R. Brauer,} Representations of finite groups, {\it Lectures on Modern Mathematics}, Wiley, New York,
		Vol. I, 133--175  (1963).

		\bibitem[Bro07]{Broue}{\sc M.~ Brou\'e}, Private communication (2007).



		\bibitem[CE]{CE04}
		{\sc M.~Cabanes and M.~Enguehard}, \emph{Representation Theory of Finite
			Reductive Groups}. Volume~1 of \emph{New Mathematical Monographs},
		Cambridge University Press, Cambridge, 2004.



		
		\bibitem[CS13]{CS13}
		{\sc M.~Cabanes and B.~Sp\"ath}, Equivariance and extendibility in finite reductive groups with connected center. \emph{Math. Z. \bf275}, 689--713 (2013).

	\bibitem[CS17a]{CS17A} {\sc M.~Cabanes and B.~Sp\"ath}, Equivariant character correspondences and inductive McKay condition for type A.
		\emph{J. Reine Angew. Math. \bf 728}, 153--194 (2017).

	\bibitem[CS17b]{CS17C} {\sc M.~Cabanes and B.~Sp\"ath}, The inductive McKay condition for type C.
	\emph{Represent. Theory \bf21}, 61--81 (2017).

	\bibitem[CS19]{CS18B}
	{\sc M.~Cabanes and B.~Sp\"ath}, Descent {e}qualities and the inductive {M}c{K}ay condition for types B and E. {\it Adv. Math. \bf 356}, 106820 (2019).

	\bibitem[CS24]{CS22}
	{\sc M.~Cabanes and B.~Sp\"ath}, On semisimple classes and component groups in type ${\mathsf{D}}$, {\it Vietnam J. Math. \bf 52}, 435--444 (2024).
	

		\bibitem[C]{Carter2}
		{\sc R.W.~Carter}, 
		\emph{Finite Groups of Lie Type. Conjugacy Classes and Complex Characters.} John Wiley \& Sons Inc., New York, 1985.
		

 
	
	\bibitem[DM90]{DM90} 
	{\sc F.~Digne and J.~Michel}, \newblock {On Lusztig's parametrization of characters of finite groups of Lie type.} 
	\newblock \emph{Ast\'erisque} {\bf 181-182}, 113--156 (1990).

		\bibitem[DM]{DiMi2}
		{\sc F.~Digne and J.~Michel}, \newblock \emph{Representations of Finite Groups of Lie Type.}
		\newblock Cambridge University Press, Cambridge, 2020.

		\bibitem[DM21]{DiMi21}
		{\sc F.~Digne and J.~Michel}, 
		Commutation of Shintani descent and Jordan decomposition. 
		{\em Indag. Math.} 32 {\bf (6)}, 1229--1239 (2021). 

	
	

		\bibitem[FS23]{FS} {\sc Z.~Feng and B.~Sp\"ath}, Unitriangular basic sets, Brauer characters and coprime actions. 
		\newblock {\em Represent. Theory \bf27}, 115-148 (2023).

		\bibitem[GM]{GM} {\sc M.~Geck and G.~Malle}, \emph{The Character Theory of Finite Groups of Lie Type: A Guided Tour}. Cambridge University Press, Cambridge 2020.

		\bibitem[GLS]{GLS3}
		{\sc D.~Gorenstein, R.~Lyons, and R.~Solomon}, \emph{The Classification of the Finite Simple Groups. Number 3}. American Mathematical Society, Providence, RI, 1998.

		\bibitem[I]{Isa}
		{\sc I.M.~Isaacs}, \emph{Character Theory of Finite Groups}. Academic Press, New York, 1976.

		\bibitem[IMN07]{IMN} {\sc I.M.~Isaacs, G.~Malle, and G.~Navarro}, A reduction theorem for the McKay conjecture. \emph{Invent. Math. \bf170}, 33--101 (2007).

		\bibitem[Li19]{Li}
		{\sc C.~Li},
		An equivariant bijection between irreducible Brauer characters and weights for $\Sp(2n,q)$. \newblock \emph{J. Algebra \bf 539}, 84--117 (2019).

		\bibitem[L88]{L88}
		{\sc G.~Lusztig}, \newblock On the representations of reductive groups with disconnected centre, {\em in: Orbites unipotentes et repr\'esentations, I}. 
		\newblock \emph{Ast\'erisque} {\bf 168}, 157--166 (1988).

        \bibitem[Ma07]{MaH0}{\sc G.~Malle}, Height 0 characters of finite groups of Lie type. \emph{Represent. Theory \bf11} (2007), 192--220.

		\bibitem[Ma08a]{ManonLie} {\sc G.~Malle}, The inductive McKay condition for simple groups not of Lie type. \emph{Comm. Algebra \bf36}, 455--463 (2008).

		\bibitem[Ma08b]{MaExt}
		{\sc G.~Malle}, Extensions of unipotent characters and the inductive McKay condition. \emph{J. Algebra \bf320}, 2963--2980 (2008).

\bibitem[Ma17a]{Ma17}
{\sc G.~Malle}, Cuspidal characters and automorphisms. {\it Adv. Math. {\bf 320}}, 887--903 (2017).

	\bibitem[Ma17b]{MaLoGlo}
{\sc G.~Malle}, Local-global conjectures in the representation theory of finite groups. In: {\it Representation theory - current trends and perspectives.} Eur. Math. Soc., Z\"urich, 519--539 (2017).
		
		
		\bibitem[MS16]{MS16}
		{\sc G.~Malle and B.~Sp\"ath}, Characters of odd degree. {\it Ann. Math. (2) \bf 184}, 869--908 (2016).

		\bibitem[MT]{MT} {\sc G. Malle and D. Testerman}, 
		\emph{Linear Algebraic Groups and Finite Groups of Lie Type}. Cambridge Studies in Advanced Mathematics, 133, Cambridge University Press, Cambridge 2011.
		
		

		\bibitem[MK72]{McK} {\sc J.~McKay}, Irreducible representations of odd degree. \emph{J. Algebra \bf20}, 416--418 (1972).



		\bibitem[N]{Navarro_book}{\sc G.~Navarro}, \emph{Character Theory and the McKay Conjecture}. Cambridge University Press, Cambridge 2018. 

		\bibitem[NTT08]{NTT}
		{\sc G.~Navarro, P. H.~Tiep and A.~Turull}, Brauer characters with cyclotomic field of values. 
		{\em J.~Pure Appl. Algebra} {\bf 212}, 628--635 (2008).
		%
		
		\bibitem[R22]{R_Adv}
		{\sc L.~Ruhstorfer},
		Jordan decomposition for the Alperin--McKay conjecture.
		{\em Adv~Math. {\bf 394}}, 108031 (2022).

		\bibitem[R25]{R_iAM3}
		{\sc L.~Ruhstorfer},
		The Alperin-McKay conjecture for the prime $2$.
{\em Ann. Math. (2) \bf 201(2)}, 379--457 (2025).

		\bibitem[S12]{S12}{\sc B.~Sp{\"a}th},
		\newblock Inductive {M}c{K}ay condition in defining characteristic.
		\newblock {\em Bull. Lond. Math. Soc.} {\bf 44}, 426--438 (2012).

		\bibitem[S23]{TypeD1}{\sc B.~Sp{\"a}th},
		\newblock Extensions of characters in type D and the inductive McKay condition, I.
		\newblock {\em Nagoya Math. J. \bf 252}, 906--958 (2023).

	\end{thebibliography}
\end{document}